\theoremstyle{plain}
\newtheorem{theorem}{Theorem}[section]
\newtheorem*{theorem*}{Theorem}
\newtheorem*{satz*}{Satz}
\newtheorem{proposition}[theorem]{Proposition}
\newtheorem{lemma}[theorem]{Lemma}
\newtheorem{corollary}[theorem]{Corollary}
\theoremstyle{definition}
\newtheorem{definition}[theorem]{Definition}
\newtheorem{assumption}[theorem]{Assumption}
\theoremstyle{remark}
\newtheorem{remark}[theorem]{Remark}
\numberwithin{equation}{section}
\DeclareMathOperator{\id}{id}
\newcommand{\ev}{\mathbb{E}}
\newcommand{\pr}{\mathbb{P}}
\newcommand{\R}{\mathbb{R}}
\renewcommand{\P}{\mathcal{P}}
\newcommand{\F}{\mathcal{F}}
\renewcommand{\L}{\mathcal{L}}
\newcommand{\M}{\mathcal{M}}
\renewcommand{\d}{\mathrm{d}}
\newcommand{\define}{\mathpunct{:}}
\newcommand{\bb}[1]{\mathbb{#1}}
\renewcommand{\bf}[1]{\mathbf{#1}}
\renewcommand{\cal}[1]{\mathcal{#1}}
\begin{document}

\noindent
\begin{center}
    \Large
    \textbf{Control of McKean--Vlasov SDEs with Contagion Through Killing at a State-Dependent Intensity}

    \vspace{1em}

    \normalsize
    Philipp Jettkant\footnote[1]{Department of Mathematics, Imperial College London, UK, \href{mailto:p.jettkant@imperial.ac.uk}{p.jettkant@imperial.ac.uk}.}  \& Ben Hambly\footnote[2]{Mathematical Institute, University of Oxford, UK, \href{mailto:ben.hambly@maths.ox.ac.uk}{ben.hambly@maths.ox.ac.uk}.}

\end{center}

\vspace{1em}

\begin{abstract}
We consider a novel McKean--Vlasov control problem with contagion through killing of particles and common noise. Each particle is killed at an exponential rate according to an intensity process that increases whenever the particle is located in a specific region. The removal of a particle pushes others towards the removal region, which can trigger cascades that see particles exiting the system in rapid succession. We study the control of such a system by a central agent who intends to preserve particles at minimal cost. Our theoretical contribution is twofold. Firstly, we rigorously justify the McKean--Vlasov control problem as the limit of a corresponding sequences of controlled finite particle systems. Our proof is based on a controlled martingale problem and tightness arguments. Secondly, we connect our framework with models in which particles are killed once they hit the boundary of the removal region. We show that these models appear in the limit as the exponential rate tends to infinity. As a corollary, we obtain new existence results for McKean--Vlasov SDEs with singular interaction through hitting times which extend those in the established literature. We conclude the paper with numerical investigations of our model applied to government control of systemic risk in financial systems.
\end{abstract}

\section{Introduction}

McKean--Vlasov or mean-field control (MFC) problems arise naturally as the infinite population limit of finite systems of controlled particles interacting through their empirical distribution. The dynamics of a representative particle in the infinite population limit are described by a McKean--Vlasov stochastic differential equation (SDE). Along with its close companion mean-field games, introduced independently by Lasry and Lions \cite{lasry_mfg_2007} and Caines, Huang, and Malham\'e \cite{caines_mfg_2006}, McKean--Vlasov control has received widespread attention in recent years \cite{anderson_maximum_principle_2011, lauriere_dp_mfc_2014, carmona_fbsde_smp_2015, pham_dynamic_programming_mkv_2017, djete_mvoc_dpp_2022, cardaliaguet_rate_mfc_2023}.

In this paper, we study the optimal control of a McKean--Vlasov SDE that features contagion through killing of particles in the presence of a common noise $W^0$. The state of the representative particle is described by a diffusion process $X = (X_t)_{t \geq 0}$ on the real line with McKean--Vlasov dynamics. The particle has a cumulative intensity process $\Lambda = (\Lambda_t)_{t \geq 0}$ that increases according to a state-dependent intensity $\lambda(X_t)$. Once the accumulated intensity $\Lambda_t$ exceeds a critical threshold, modelled by a standard exponential random variable $\theta$, the particle is removed from the system. Hence, the underlying McKean--Vlasov SDE does not depend on the conditional law $\L(X_t \vert W^0)$ of all particles but rather on the conditional subprobability distribution $\nu_t = \pr(X_t \in \cdot,\, \theta > \Lambda_t \vert W^0)$ of the remaining particles. Furthermore, the removal of a particle feeds back into the system by pushing the remaining particles instantaneously towards regions of high killing intensity. We refer to this mechanism as contagion. The force of the contagion is proportional to the fraction of killed particles $L_t = \pr(\theta > \Lambda_t \vert W^0)$, called the loss.

Our goal is to connect this McKean--Vlasov control problem with the control of a corresponding finite particle system. In the uncontrolled setting without killing or contagion, it is well-known that as the particle system's population size $N$ tends to infinity, the particles become independent and their empirical measure converges to the law of the solution of a McKean--Vlasov SDE -- the so-called mean-field limit. This phenomenon is known as propagation of chaos \cite{kac_kinetic_theory_1956, mckean_prop_of_chaos_1969, sznitman_prop_chaos_1991}. For controlled systems, however, the optimisation step makes establishing the convergence to the McKean--Vlasov control problem more challenging. A general approach to resolve this issue uses a relaxed formulation for the control problem and an associated martingale problem. This technique was applied in \cite{lacker_limit_theory_mve} to study a broad class of McKean--Vlasov control problems and extended in \cite{mkv_control_limit_2020} to systems with common noise.

The existing literature, however, does not cover our model because of the presence of the killing and contagion mechanisms. On the level of the particle system, killing leads to a sudden loss of mass in the empirical measure of the remaining particles, while the contagion mechanism introduces jumps into the particles' trajectories. These innovations bring about technical challenges that we need to address in this work. The first is that the control $\gamma^{N, i} = (\gamma^{N, i}_t)_{t \geq 0}$ acting on particle $i \in \{1, \dots, N\}$ generally depends on the associated critical threshold $\theta_i \sim \text{Exp}(1)$, because, naturally, the controller takes the killing of particles into account. To ensure that the controller cannot anticipate the killing events, this dependence occurs only through the information revealed by the indicators
\begin{equation*}
    I^{N, i}_t = \bf{1}_{\theta_i > \Lambda^{N, i}_t},
\end{equation*}
where $\Lambda^{N, i}_t$ is the cumulative intensity of the $i$th particle. To benefit from the regularisation of the mean-field limit afforded by the exponential random variable $\theta$, we must ensure that this information structure survives the passage to the infinite particle limit. We achieve this by studying the compensators of the processes $I^{N, i}_t$ and their convergence as $N \to \infty$. While this guarantees that the loss $L_t$ becomes smooth in the mean-field limit, the state-intensity pair $(X_t, \Lambda_t)$ will generally still depend on $\theta$. As a consequence, $\theta$ does not fully smooth out the discontinuous dependence of the subprobability distribution $\nu_t = \pr(X_t \in \cdot,\, \theta > \Lambda_t \vert W^0)$ on $\Lambda_t$, so the McKean--Vlasov SDE describing the dynamics of the representative particle proves difficult to analyse. To deal with this issue, we establish the equivalence between optimisation over controls with and without dependence on $\theta$ in the MFC problem, under the assumption that the control problem has a linear-convex structure. We show that from any given control, possibly depending on $\theta$, we can construct a new control, which is independent of $\theta$ and achieves the same or a lower cost. The latter control can be chosen to be in closed-loop form $g(t, X_t, \nu_t)$. Interestingly, the feedback function $g$ does not depend on the intensity $\Lambda_t$ or the indicator $I_t = \bf{1}_{\theta > \Lambda_t}$. This perhaps surprising result is a consequence of the memoryless property of the exponential distribution of $\theta$. The independence from $\theta$, makes it possible to rewrite $\nu_t$ in the form $\ev[e^{-\Lambda_t} \delta_{X_t} \vert W^0]$, leading to a well-posed McKean--Vlasov SDE for the mean-field limit. A related equivalence result was recently obtained in the context of the control of conditional processes through different means by Carmona, Lauri\`ere \& Lions \cite{carmona_nonstandard_2024}.

\subsection{Related Literature}

The killing mechanism in this article appeared in earlier work on mean-field models of loss from default in large portfolios \cite{dai_pra_large_portfolio_losses_2009, cvitanic_lln_self_2012, giesecke_default_typical_2013, giesecke_large_portfolio_2015}. While in the first two references the intensity depends on an underlying state process (as is the case in our model), the latter two articles follow a reduced-form approach and directly model the instantaneous default intensity $\lambda_t$ of constituents in a portfolio.
% This framework follows a reduced-form approach and directly models the instantaneous default intensity $\lambda_t$ of constituents in a portfolio, whereas in our model the intensity depends on the underlying state process $X_t$. 
In both cases, however, killing occurs when the cumulative intensity $\Lambda_t$ exceeds an exponential random variable and feeds back into the remaining system. \cite{giesecke_default_typical_2013} proves propagation of chaos for the system of intensity processes by proving that their empirical measure converges to the solution of a nonlinear partial differential equation which describes the evolution of the mean-field limit. \cite{giesecke_large_portfolio_2015} extends the result to the common noise setting. Subsequent works study the central limit theorem \cite{spiliopoulos_fluctuation_2014}, large deviations \cite{spiliopoulos_ld_2015}, and parameter estimation \cite{giesecke_interence_large_systems_2020} for the reduced-form mean-field model. The arguments in \cite{giesecke_default_typical_2013, giesecke_large_portfolio_2015} no longer apply once controls enter the picture, so we cannot draw on their ideas in this paper. 

The removal according to a state-dependent intensity $\lambda(X_t)$ can at least heuristically be viewed as a regularisation of models with absorption. In these models, a member of the population is killed (or absorbed) once its state $X_t$ hits the negative half-line. Formally, this corresponds to the choice $\lambda(x) = \infty \bf{1}_{(-\infty, 0)}(x)$. McKean--Vlasov SDEs with absorption have been studied by various authors and we can distinguish between two strands within the literature. The first considers ``smooth'' interaction through hitting times, i.e.\@ the feedback through the loss $L_t$, which measures the number of particles killed up to time $t$, may appear in the drift and possibly the diffusion coefficient of the state equation \cite{hambly_mckean_vlasov_absorbing_2017, hambly_spde_model_2019, campi_mfg_absorption_2018, campi_mfg_hitting_2021, burzoni_mean_field_absorption_2023, hambly_smooth_feedback_2023}. In the ``singular'' variant the loss $L_t$ acts instantaneously, as it does in our model, and appears as a finite variation term in the dynamics of the representative particle \cite{nadtochiy_singular_hitting_2019, nadtochiy_hitting_network_2020, hambly_mckean_vlasov_blow_up_2019, cuchiero_minimality_2023, blow_ups_common_noise_ledger_2021, delarue_supercooled_uniqueness_2022, bayraktar_mve_hitting_2024}. In a system with absorption, the population's response as a function of the control is merely continuous if the interaction through the hitting times is smooth and can even exhibit discontinuities in the singular setting. Thus, classical methods of optimal control such as the stochastic maximum principle and the value function approach are not applicable. This shortcoming is our main motivation for studying the regularised model and explains why there are few publications \cite{cuchiero_optimal_bailout_2023, bayraktar_mf_sys_risk_2023} that study the control of absorbing McKean-Vlasov systems. Nonetheless, we are able to connect both frameworks by showing that the regularised model with intensity-based killing converges in a suitable sense to the absorption model with singular interaction as the intensity function $\lambda$ approaches $\infty\mathbf{1}_{(-\infty, 0)}$ (cf.\@ Theorem \ref{thm:convergence_struc}). As a corollary, we obtain a novel existence result for McKean--Vlasov SDEs with singular interaction through hitting times in the presence of common noise which extends those in the established literature \cite{hambly_mckean_vlasov_absorbing_2017, hambly_spde_model_2019, hambly_mckean_vlasov_blow_up_2019, blow_ups_common_noise_ledger_2021}.

The paper \cite{cuchiero_optimal_bailout_2023}, mentioned above, analyses the optimal control of absorbing dynamics with singular interaction, interpreted as a model for government bailouts (similar to the financial model we describe in Subsection \ref{sec:toy_model}). Leveraging the specific structure of their chosen cost functional, they deduce the convergence of the corresponding controlled particle system to the McKean--Vlasov control problem through tightness arguments. Their proof technique does not extend to models with nonlinear dependence on the control, so is not applicable to our work. As a consequence their framework is less flexible than ours and, in particular, does not include common noise. For their numerical computations, they rely on an alternative regularisation procedure, which traps particles at zero with high probability instead of absorbing them. Their numerical results are broadly in line with ours and show that the optimal control is of bang-bang type, meaning that the central agent only intervenes if a particle's state nears the killing region.

Yet another regularisation technique, first introduced in \cite{hambly_spde_model_2019}, was recently analysed in \cite{hambly_smooth_feedback_2023} from the convergence angle. The regularisation proceeds by mollification of the instantaneous action of the loss $L_t$, which leads to a model with smooth but path-dependent interaction through hitting times. \cite{hambly_smooth_feedback_2023} shows that one recovers the singular model in the limit as the mollification tends to zero. That paper does not touch on controlled versions of the problem.

Finally, let us mention the ``Up the River'' problem formulated by Aldous \cite{aldous_up_the_river_2002}, where a unit drift is distributed among a finite number of Brownian particles on the positive half-line which are absorbed at zero. The goal is to keep as many particles as possible alive for all times. \cite{tang_up_the_river_2018} shows that the asymptotically optimal strategy as $N \to \infty$ is to simply push the laggard, i.e.\@ to allocate the entire drift to the particle closest to the absorbing boundary. \cite{bayraktar_mf_sys_risk_2023} extends this framework by introducing singular interaction through hitting times amongst the particles.

\subsection{A Model for Government Interventions in Financial Systems} \label{sec:toy_model}

One application of our framework is in the modelling of systemic risk and government interventions in financial markets. In this context, the members of the finite particle system, labelled $i = 1$,~\ldots, $N$, represent financial institutions with mutual obligations or common exposures, for instance commercial banks in an interbank lending market or hedge funds investing in government bonds. In the case of commercial banks, the state $X^i_t$ measures an institution's level of equity at time $t$. Whenever the equity breaches a given threshold, the institution's default intensity becomes positive. Default occurs once the accumulated intensity $\Lambda^i_t$ exceeds the exponential random variable $\theta_i$. A defaulted entity does not pay back its obligations to its creditors in full, which diminishes the equity of the crediting institutions. This may trigger further defaults that wipe out more equity, and so on. In this way, a large number of financial institutions can go bankrupt in a short period of time with devastating consequences for the economy. To prevent the development of such default cascades a government or another central authority, such as a central bank, may intervene in the financial system, either by supporting market activity, e.g.\@ through quantitative easing, or by bailing out institutions outright. We can formulate such an intervention as an optimal control problem, in which the central agent weighs the negative externalities of market interference, such as moral hazard or higher inflation, with the cost of a potential systemic crisis. 

A simple model for government interventions in financial systems could read as follows: the processes $X^i = (X^i_t)_{t \geq 0}$, called \textit{distance-to-breach}, have the dynamics
\begin{equation*}
    X^i_t = (\xi_i - c) + \int_0^t \gamma^i_s \, \d s + \sigma W^i_t + \sigma_0 W^0_t - \alpha L^N_t, \quad \Lambda^i_t = \int_0^t \lambda(X^i_s) \, \d s
\end{equation*}
where $\xi_1$,~\ldots, $\xi_N$ are the banks' initial capital positions, $W^0$, $W^1$,~\ldots, $W^N$ are independent Brownian motions, $\gamma^1$,~\ldots, $\gamma^N$ are capital injections from the government, and $L^N_t = \frac{1}{N}\sum_{j = 1}^N \bf{1}_{\theta_i \leq \Lambda^i_t}$ denotes the fraction of banks that have defaulted up to time $t \geq 0$. We assume $\gamma^i_t \geq 0$, so the government can only provide capital to the banks but not withdraw it again. The diffusion $\sigma W^i_t + \sigma_0 W^0$ represents fluctuations in the value of assets held by the bank, which can be correlated among institutions through the common noise $W^0$, $\alpha$ measures the size of the banks' mutual obligations, and $c > 0$ is a capital buffer. If we choose the intensity function $\lambda$ to be $\lambda(x) = \lambda_0 \max\{-x, 0\}$ for a positive constant $\lambda_0$, then a bank is at risk of default once the equity breaches the buffer $c$. We capture the government's objective through the cost
\begin{equation} \label{eq:cost_toy_model}
    J^N(\gamma^1, \dots, \gamma^N) = \frac{1}{N} \sum_{i = 1}^N \ev\biggl[\int_0^{T \land \tau_i} w \gamma^i_t \, \d t + L^N_T\biggr],
\end{equation}
where $T > 0$ is some finite time horizon and $w$ weighs the negative externalities of a bailout with the cost of a systemic crisis as measured by the fraction of defaulted entities at time $T$. We provide numerical illustrations for this model in Section \ref{sec:num_sim}.

Let us briefly note that there are other applications of our framework, for instance it could also be used to model opinion dynamics in advertising or political campaigns. Here the particles are the target audience of the campaign and the state $X^i_t$ represents an audience member's level of satisfaction with a product or a political candidate. If people's satisfaction wanes, for instance because of a damaging news report, they abandon the product or candidate, which negatively impacts other people's views, who are in turn more likely to change their opinion. 
In response, the product originator or political candidate may attempt to shore up confidence by increasing their spending on advertisements to maintain people's satisfaction.

\subsection{Main Contributions and Structure of the Paper}

This paper makes three main contributions. Firstly, in Section \ref{sec:main_results_convergence} we introduce a new and flexible mean-field control model which features killing of particles and a contagion mechanism. Ours is the first work that considers these features in such a general setting and, in particular, in the presence of common noise. 

Secondly, in Section \ref{sec:ps} we prove the convergence of the controlled particle system for the regularised model to the corresponding McKean--Vlasov control problem. We achieve this by introducing a relaxed formulation of the McKean--Vlasov control problem (cf.\@ Subsection \ref{sec:main_results_ps}) and expressing the underlying McKean--Vlasov SDE for the state process as a controlled martingale problem. In Subsections \ref{sec:finite_ps} to \ref{sec:mgale_problem} we show that any controlled sequence of particle systems converges subsequentially to a solution of this martingale problem. As such, the limiting solution yields a control for the relaxed formulation. To establish the convergence we carefully analyse the compensators of the indicator processes $I^{N, 1}$,~\ldots, $I^{N, N}$ and prove their asymptotic decorrelation. 

For relaxed controls without dependence on $\theta$, for which the mean-field limit is well-posed, we can use the theory from Djete, Possama\"i, and Tan \cite{mkv_control_limit_2020} to show that the relaxed formulation is equivalent to the strong formulation of the control problem, see Subsections \ref{sec:existence_uniqueness} and \ref{sec:prove_ps}. This leaves a potential gap between the relaxed formulation for controls with and without dependence on $\theta$. In a linear-convex setting, we can bridge the gap by constructing a control without dependence on $\theta$ from any arbitrary relaxed control while maintaining or improving the associated cost. The construction, carried out in Subsection \ref{eq:proof_all_equiv}, exploits the mimicking result for stochastic Fokker--Planck equations by Lacker, Shkolnikov \& Zhang \cite{lacker_mimicking_2020} together with measurable selection arguments. It yields a control in closed-loop form that, surprisingly, only depends on the state $X_t$ and the subprobability $\nu_t$, but not the intensity $\Lambda_t$ or the indicator $I_t = \bf{1}_{\theta > \Lambda_t}$.

Our third contribution is to connect the regularised framework with the absorbing model in Section \ref{sec:sin_limit}. As for the regularised model, we adopt a relaxed formulation (cf.\@ Subsection \ref{sec:main_results_singular}) and introduce a suitable martingale problem. Then, in Subsections \ref{sec:reg_to_sing} to \ref{sec:proves_singular} we show that any sequence of regularised models with intensity functions $\lambda^n(x)$, that tend to $\infty \bf{1}_{(-\infty, 0)}(x)$ in a suitable sense, subsequentially converges to a solution of the martingale problem. This, in particular, implies a novel and very general existence theorem for McKean--Vlasov SDEs with singular interaction through hitting times in the presence of common noise, that extends existing results \cite{hambly_mckean_vlasov_absorbing_2017, hambly_spde_model_2019, hambly_mckean_vlasov_blow_up_2019, blow_ups_common_noise_ledger_2021}. However, in contrast to the regularised model, we cannot in general conclude that the relaxed control for the absorbing model, induced by the solution to the martingale problem, is optimal. This is due to the lack of uniqueness for McKean--Vlasov SDEs with singular interaction through hitting times \cite{nadtochiy_singular_hitting_2019, hambly_mckean_vlasov_blow_up_2019, blow_ups_common_noise_ledger_2021}. Only if we revert to a simpler setting, in which the underlying McKean--Vlasov SDE has constant coefficients and the cost functions satisfy certain monotonicity properties, can we deduce that the limiting control is indeed optimal, see Section \ref{sec:sing_simple}.

In Section \ref{sec:num_sim} we provide numerical simulations for the McKean--Vlasov control problem arising from our model for government interventions in financial systems.

\section{Main Results} \label{sec:main_results_convergence}

In this section, we state and explain the main results of this paper. We begin by listing some frequently used notation. We write $x \land y$ and $x \lor y$ for the minimum and maximum of two real numbers $x$ and $y$. We use $\F_1 \lor \F_2$ (or $\bb{F}^1 \lor \bb{F}^2$) to designate the $\sigma$-algebra (or filtration) generated by the two $\sigma$-algebras $\F_1$ and $\F_2$ (or filtrations $\bb{F}^1$ or $\bb{F}^2$). For a random variable $\xi$ and a stochastic process $X = (X_t)_{0 \leq t \leq T}$, we denote by $\bb{F}^{\xi, X} = (\F^{\xi, X}_t)_{0 \leq t \leq T}$ the filtration defined by $\F^{\xi, X}_t = \sigma(\xi, X_s \define 0 \leq s \leq t)$. The space of c\`adl\`ag functions from $[0, T]$ to a Polish space $E$ is denoted by $D_E[0, T]$ and for an element $f \in D_{\R^d}[0, T]$, we set $\lvert f\rvert^{\ast}_t = \sup_{0 \leq s \leq t} \lvert f_s\rvert$ for $t \in [0, T]$. If $(E, d)$ is a metric space and $p \geq 1$, we let $\P^p(E)$ be the space of probability measures $\mu$ on $E$ such that $\int_E d^p(e, x) \, \d \mu(x) < \infty$ for some $e \in E$. The space $\P^p(E)$ is endowed with the $p$-Wasserstein metric. Note that this definition is independent of the choice of $e \in E$.

\subsection{Convergence of the Nearly Optimally Controlled Particle System} \label{sec:main_results_ps}

We introduce a general particle system for our contagion model. As before the state space of the particle will be one-dimensional (excluding the cumulative intensity). This is sufficient to highlight the novel features of the model and is in line with the majority of the literature on McKean--Vlasov SDEs with interaction through hitting times, which inspired our work. Most results that concern the convergence of the nearly optimally controlled particle system to the regularised model generalise straightforwardly to higher dimensions. 

In what follows, we assume that the processes $X^i$ and $\Lambda^i$, $i = 1$~\ldots, $N$, follow the dynamics
\begin{align} \label{eq:ps}
\begin{split}
    \d X^i_t &= b(t, X^i_t, \nu^N_t, \gamma^i_t) \, \d t + \sigma(t, X^i_t, \nu^N_t) \, \d W^i_t + \sigma_0(t, X^i_t, \nu^N_t) \, \d W^0_t \\
    &\ \ \ - \alpha(t, X^i_{t-}, \nu^N_{t-}) \, \d L^N_t, \\
    \d \Lambda^i_t &= \lambda(t, X^i_t, \nu^N_t) \, \d t
\end{split}
\end{align}
with initial conditions $X^i_0 = \xi_i \sim \nu_0$ and $\Lambda^i_0 = 0$. The driving noises $W^0$, $W^1$,~\ldots, $W^N$ are Brownian motions and we introduce the \textit{empirical subprobability distribution}
\begin{equation}
    \nu^N_t = \frac{1}{N} \sum_{i = 1}^N I^i_t \delta_{X^i_t}
\end{equation}
as well as the \textit{loss} $L^N_t = 1 - \nu^N_t(\R)$, where $I^i_t = \bf{1}_{\theta_i > \Lambda^i_t}$ and $\theta_1$,~\ldots, $\theta_N$ each follow a standard exponential distribution. The indicator process $I^i$ models the killing of particles, which are triggered once $\Lambda^i_t \geq \theta_i$.

We assume that the collection $((\xi_i, \theta_i, W^i)_i, W^0)$ is independent and introduce the filtration $\bb{F}^N =(\F^N_t)_{0 \leq t \leq T}$ defined by
\begin{equation*}
    \F^N_t = \sigma\bigl(\xi_i,\, \theta_i,\, W^i_s,\, W^0_s \define 0 \leq s \leq t,\, i = 0, \dots, N\bigr).
\end{equation*}
The processes $\gamma^1$,~\ldots, $\gamma^N$ are \textit{controls}, which take values in a non-empty, convex, and compact subset $G$ of $\R^d$. Characterising the set of admissible controls is slightly subtle. First, let us note that if $\gamma^1$,~\ldots, $\gamma^N$ are $\bb{F}^N$-progressively measurable, then there exists a unique strong solution to SDE \eqref{eq:ps} (see the discussion at the outset of Section \ref{sec:finite_ps}). Now, we call $\bb{F}^N$-progressively measurable $G$-valued processes $\gamma^1$,~\ldots, $\gamma^N$ \textit{admissible controls (for the particle system)} if $\gamma^1$,~\ldots, $\gamma^N$ are progressively measurable with respect to the filtration $\bb{F}^{N, \bf{I}} = (\F^{N, \bf{I}}_t)_{0 \leq t \leq T}$ given by
\begin{equation} \label{eq:filtration_ps}
    \F^{N, \bf{I}}_t = \sigma\bigl(\xi_i,\, I^i_s,\, W^i_s,\, W^0_s \define 0 \leq s \leq t,\, i = 1, \dots, N\bigr).
\end{equation}

\begin{remark} \label{rem:controls}
Admissible controls depend on $\theta_1$,~\ldots, $\theta_N$ only through the information revealed by the indicator processes $I^1$,~\ldots, $I^N$ up to the current time. The evolution of the indicator processes depends on the controls themselves, so that with respect to $I^1$,~\ldots, $I^N$ admissible controls are in feedback form. This feature makes the definition slightly awkward, but is necessary to ensure that the controller cannot anticipate the killing of particles. The nonanticipativity guarantees that the indicator process $I^i$ has a compensator, given by $\int_0^{\cdot} I^i_t \lambda(t, X^i_t, \nu^N_t) \, \d t$, meaning that $M^i = I^i + \int_0^{\cdot} I^i_t \lambda(t, X^i_t, \nu^N_t) \, \d t$ is a martingale. For distinct particles, these martingales decorrelate asymptotically (cf.\@ Lemma \ref{lem:compensated_martingale_l_2}), so that
\begin{equation*}
    L^N_t = \frac{1}{N} \sum_{i = 1}^N (1 - I^i_t) = \frac{1}{N} \sum_{i = 1}^N (1 - M^i_t) + \frac{1}{N} \sum_{i = 1}^N \int_0^t I^i_s \lambda(s, X^i_s, \nu^N_s) \, \d s
\end{equation*}
and $\frac{1}{N} \sum_{i = 1}^N \int_0^t I^i_s \lambda(s, X^i_s, \nu^N_s) \, \d s$ have the same weak limit points. Consequently, the loss becomes continuous, even differentiable, in the limit.
\end{remark}

The central planner attempts to minimise the \textit{cost functional}
\begin{equation} \label{eq:cost_functional_ps}
    J^N(\gamma^1, \dots, \gamma^N) = \frac{1}{N} \sum_{i = 1}^N\ev\biggl[\int_0^{\tau_i \land T} f(t, X^i_t, \nu^N_t, \gamma^i_t) \, \d t + \psi(\nu^N_T)\biggr]
\end{equation}
for some final time horizon $T > 0$ over admissible controls $\gamma^1$,~\ldots, $\gamma^N$. Here $\tau_i = \inf\{t > 0 \define \Lambda^i_t \geq \theta_i\}$ is the killing time of particle $i$. We denote the infimum of $J^N$ over admissible controls by $V^N$ and call it the \textit{value}. Our goal is to study the behaviour of the controlled particle system as $N$ tends to infinity.

First, let us discuss the coefficients of SDE \eqref{eq:ps} as well as the cost functions in Equation \eqref{eq:cost_functional_ps}. For $p \geq 1$, we denote by $\M^p_{\leq 1}(\R)$ the space of $p$-integrable subprobability measures, i.e.\@ the space of measures $v$ on $\R$ for which $v(\R) \leq 1$ and $M_p^p(v) = \int_{\R} \lvert x \rvert^p \, \d v(x) < \infty$. We equip $\M^p_{\leq 1}(\R)$ with the metric
\begin{equation} \label{eq:subprob_metric}
    d_p(v_1, v_2) = W_p(m_1, m_2) + \lvert v_1(\R) - v_2(\R)\rvert,
\end{equation}
where $m_i = v_i + (1 - v_i(\R))\delta_0$, $W_p$ denotes the $p$-Wasserstein distance for probability measures on $\R$, and $v_1$, $v_2 \in \M^p_{\leq 1}(\R)$. This turns $\M^p_{\leq 1}(\R)$ into a Polish space (cf.\@ Proposition \ref{prop:space_of_subprobs}). We show in Proposition \ref{prop:krd} that $d_1$ coincides with the Kantorovich--Rubinstein distance on the space of subprobability measures with finite first moment. Hence, the metrics $d_p$ can be viewed as a strengthening of the weak convergence of measures. Clearly, the \textit{loss function} $\cal{M}^p_{\leq 1}(\R) \to [0, 1]$, $v \mapsto 1 - v(\R)$ is continuous, in fact $1$-Lipschitz continuous, with respect to the metric $d_p$.

Let us now state the assumptions on the coefficients and cost functions for the particle system.

\begin{assumption} \label{ass:red_form_cont_prob}
Let $b \define [0, T] \times \R \times \M^1_{\leq 1}(\R) \times G \to \R$, $\sigma$, $\sigma_0$, and $\alpha \define [0, T] \times \R \times \M^1_{\leq 1}(\R) \to \R$, $\lambda \define [0, T] \times \R \times \M^1_{\leq 1}(\R) \to [0, \infty)$, $f \define [0, T] \times \R \times \M^2_{\leq 1}(\R) \times G \to \R$, and $\psi \define \M^2_{\leq 1}(\R) \to \R$ be measurable. Let $G$ be a non-empty, convex, and compact subset of $\R^d$ and $\nu_0 \in \P^2(\R)$. We assume there exists a constant $C > 0$, such that
\begin{enumerate}[noitemsep, label = (\roman*)]
    \item \label{it:growth} the coefficients $\sigma$, $\sigma_0$, and $\alpha$ are bounded by $C$ and for all $t$, $x$, $v$, $g$ we have
    \begin{align*}
        \lvert b(t, x, v, g)\rvert + \lvert \lambda(t, x, v) \rvert \leq C(1 + \lvert x \rvert + M_1(v));
    \end{align*}
    \item \label{it:continuity} the coefficient $b$ is continuous in $g$ and for $h \in \{\sigma, \sigma_0, \alpha, \lambda\}$ and all $t$, $x$, $x'$, $v$, $v'$, $g$ we have
    \begin{align*}
        \lvert b(t, x, v, g) - b(t, x', v', g)\rvert + \lvert h(t, x, v) - h(t, x', v')\rvert \leq C(\lvert x - x'\rvert + d_1(v, v'));
    \end{align*}
    \item \label{it:growth_cost} for all $t$, $x$, $v$, $g$ we have
    \begin{equation*}
        \lvert f(t, x, v, g)\rvert \leq C(1 + \lvert x \rvert^2 + M_2^2(v)), \qquad \lvert \psi(v) \rvert \leq C(1 + M_2^2(v));
    \end{equation*}
    \item \label{it:continuity_cost} the running cost $f$ is continuous in $x$, $v$, $g$ and the terminal cost $\psi$ is continuous.
\end{enumerate}
\end{assumption}

We will often use subscripts to indicate constants for a specific coefficient or cost function. E.g.\@ we may write $\lvert b(t, x, v, g)\rvert \leq C_b(1 + \lvert x \rvert + M_1(v))$.

\begin{remark}
Some remarks about Assumption \ref{ass:red_form_cont_prob} are in order. First note that the system's coefficients are defined for elements of $\cal{M}^1_{\leq 1}(\R)$ and are Lipschitz continuous with respect to the metric $d_1$. Since $d_1 \leq d_2$ on $\cal{M}^2_{\leq 1}(\R)$, continuity (as well as Lipschitz continuity) with respect to $d_1$ is stronger than that with respect to $d_2$. We need this more restrictive assumption to obtain existence and uniqueness of the mean-field limit \eqref{eq:mfl} of the particle system (see Proposition \ref{prop:existence_uniqueness_stability}). In contrast, the cost functionals are continuous functions on $\cal{M}^2_{\leq 1}(\R)$. Otherwise the growth and continuity conditions, Items \ref{it:growth}, \ref{it:continuity}, \ref{it:growth_cost}, and \ref{it:continuity_cost}, are standard.

Note that we did not make any assumptions regarding the interplay of $\lambda$ and $\alpha$, so at this level of generality, the particle system does not necessarily feature contagion. However, one natural situation, in which contagion arises, is where $\lambda(t, x, v)$ is positive for $x < 0$ and vanishes for $x \geq 0$ and $\alpha(t, x, v) > 0$. Then, particles are killed on the negative half-line and the killing of a particle pushes the remaining system closer to $(-\infty, 0)$. We already saw a special case of this, namely the financial model from Subsection \ref{sec:toy_model}, and will revisit this setup in the context of the singular model, see Subsection \ref{sec:main_results_singular} below.
\end{remark}

\subsubsection{The Strong Mean-Field Control Problem}

Our objective is to prove that the optimal cost or value $V^N$ of the $N$-particle system converges to the value of a MFC problem as the number of particles becomes infinite. If we suppose that the particles become asymptotically independent given the common noise $W^0$, we would expect that $\nu^N_t$ converges weakly to the random measure $\pr(X_t \in \cdot,\, \theta > \Lambda_t \vert W^0)$. This suggests the \textit{mean-field limit}
\begin{align} \label{eq:mfl}
\begin{split}
    \d X_t &= b(t, X_t, \nu_t, \gamma_t) \, \d t + \sigma(t, X_t, \nu_t) \, \d W_t + \sigma_0(t, X_t, \nu_t) \, \d W^0_t - \alpha(t, X_t, \nu_t) \, \d L_t, \\
    \d \Lambda_t &= \lambda(t, X_t, \nu_t) \, \d t
\end{split}
\end{align}
with initial conditions $X_0 = \xi \sim \nu_0$ and $\Lambda_0 = 0$, Brownian motions $W$ and $W^0$, \textit{conditional subprobability distribution} $\nu_t = \pr(X_t \in \cdot,\, \theta > \Lambda_t \vert W^0)$, and \textit{loss} $L_t = 1 - \nu_t(\R)$. Letting $I_t = \bf{1}_{\theta > \Lambda_t}$, one may suspect that the compensator of $I$ is given by $\int_0^{\cdot} I_t \lambda(t, X_t, \nu_t) \, \d t$, so that $M = I + \int_0^{\cdot} I_t \lambda(t, X_t, \nu_t) \, \d t$ is a martingale. Since $\theta$ is independent of $W^0$, this suggests that $\ev[M_t \vert W^0] = \ev[M_0 \vert W^0] = 1$, so that
\begin{equation*}
    L_t = 1 - \ev[I_t \vert W^0] = 1 - \ev[M_t \vert W^0] + \int_0^t \ev[I_s \lambda(s, X_s, \nu_s) \vert W^0] \, \d s = \int_0^t \langle \nu_s, \lambda(s, \cdot, \nu_s)\rangle \, \d s,
\end{equation*}
where we used in the last equality that $\nu_t = \pr(X_t \in \cdot,\, \theta > \Lambda_t \vert W^0) = \ev[I_t \delta_{X_t} \vert W^0]$. Hence, the loss should become differentiable in the mean-field limit. Note further that if $\theta$ were independent of $(\gamma, W^0)$ and, therefore, of $(X, \Lambda, W^0)$, then a simple computation would show that $\nu_t = \ev[e^{-\Lambda_t} \delta_{X_t}\vert W^0]$, in which case the above McKean--Vlasov SDE is essentially standard and can be rewritten in the form
\begin{align} \label{eq:mfl_wo_theta}
\begin{split}
    \d X_t &= b(t, X_t, \nu_t, \gamma_t) \, \d t + \sigma(t, X_t, \nu_t) \, \d W_t + \sigma_0(t, X_t, \nu_t) \, \d W^0_t \\
    &\ \ \ - \alpha(t, X_t, \nu_t)\langle \nu_t, \lambda(t, \cdot, \nu_t)\rangle \, \d t, \\
    \d \Lambda_t &= \lambda(t, X_t, \nu_t) \, \d t
\end{split}
\end{align}
with $\nu_t = \ev[e^{-\Lambda_t} \delta_{X_t}\vert W^0]$.

However, a simple example shows that this independence will in general not hold. Indeed, choosing $\gamma^i_t = I^i_t$ in the particle system, leads to the control $\gamma_t = I_t$ for the mean-field limit. But in this case, $\nu_t = \ev[I_t \delta_{X_t}\vert W^0]$ and $\ev[e^{-\Lambda_t} \delta_{X_t}\vert W^0]$ will typically not coincide. The dependence between $\theta$ and $\gamma$ inhibits the smoothing effect of the exponential random variable $\theta$, leading to irregularities in the mean-field limit. Fortunately, if $b$ is linear, $f$ is convex in the control, and $\sigma$ is nondegenerate, we can show that minimising the cost over ``nonsmooth'' controls with nonanticipative dependence on $\theta$ (such as $\gamma_t = I_t$) and ``smooth'' controls without any dependence on $\theta$ yield the same optimal value (see Theorem \ref{thm:all_equiv}). Because of their desirable regularity properties, we adopt the latter type of controls for the \textit{strong formulation} of the MFC problem. That is, an \textit{admissible strong control} is an $\bb{F}^{\xi, W, W^0}$-progressively measurable processes with values in $G$. For an admissible strong control, we can guarantee strong existence and uniqueness for McKean--Vlasov SDE \eqref{eq:mfl} (cf.\@ Proposition \ref{prop:existence_uniqueness_stability}). Here we call $(X, \nu)$ a \textit{strong solution} to McKean--Vlasov SDE \eqref{eq:mfl} if $X$ is $\bb{F}^{X_0, W, W^0}$-adapted and $\nu$ is $\bb{F}^{W^0}$-adapted. The \textit{cost functional} for the strong MFC problem is given by
\begin{equation} \label{eq:strong_cost}
    J(\gamma) = \ev\biggl[\int_0^{\tau \land T} f(t, X_t, \nu_t, \gamma_t) \, \d t + \psi(\nu_T)\biggr] = \ev\biggl[\int_0^T e^{-\Lambda_t} f(t, X_t, \nu_t, \gamma_t) \, \d t + \psi(\nu_T)\biggr],
\end{equation}
where $\tau = \inf\{t > 0 \define \Lambda_t \geq \theta\}$. Finally, we denote the \textit{value}, i.e.\@ the infimum of the cost functional over admissible strong controls, by $V$.

To establish the convergence of the particle system \eqref{eq:ps} to the McKean--Vlasov SDE \eqref{eq:mfl}, we will proceed through compactness arguments and subsequential convergence. However, to obtain the tightness of the sequence of controls $(\gamma^{N, 1}$, \dots, $\gamma^{N, N})$, we will have to embed them into a larger space with a weaker topology, leading to a relaxed formulation of the control problem. Moreover, subsequential weak limits of the subprobabilities $(\nu^N)_N$ typically fail to be adapted to the filtration $\bb{F}^{W^0}$ generated by the common noise $W^0$. So, in addition, we have to consider weak solutions to McKean--Vlasov SDE \eqref{eq:mfl}, where the mean-field component $\nu$ may be adapted to a filtration which extends $\bb{F}^{W^0}$ in an appropriate sense. We address these two issues in the following.

\subsubsection{The Relaxed Mean-Field Control Problem}

To introduce the relaxed setup, we need some preparation. We set $\bf{M}^p = \cal{M}^p_{\leq 1}(\R)$ for $p \geq 1$ and equip $D[0, T]$, $D_{[0, 1]}[0, T]$, and $D_{\bf{M}^2}[0, T]$ with the $J1$-metric (see \cite[Section 3.3]{whitt_stoch_limits_2002} for a definition). Next, we introduce the space $\bb{M}_T(G)$ of measures on $[0, T] \times G$ whose time marginal is the Lebesgue measure, topologised by the weak convergence of measures (induced by the L\'evy--Prokhorov metric). We can identify an admissible strong control $\gamma$ with an $\bb{M}_T(G)$-valued random variable $\Gamma$, called a relaxed control, via $\d \Gamma(t, g) = \d \delta_{\gamma_t} \d t$. We define the Polish spaces $\cal{S} = D[0, T] \times C([0, T]) \times \bb{M}_T(G) \times D_{[0,1]}[0, T]$ and $\Omega_0 = \P^2(\cal{S}) \times C([0, T])$. The space $\cal{S}$ can accommodate the variables $X$, $W$, $\Gamma$, and $I$ and the meaning of $\Omega_0$ will become clear in a moment. Finally, we set $\Omega_{\ast} = \cal{S} \times \Omega_0$ and let $\F^{\ast} = \cal{B}(\Omega_{\ast})$ be its Borel $\sigma$-algebra. We denote the canonical random element on $\Omega_{\ast}$ by $\Theta^{\ast} = (X^{\ast}, W^{\ast}, \Gamma^{\ast}, I^{\ast}, \mu^{\ast}, B^{\ast})$, set $\Theta^0 = (\mu^{\ast}, B^{\ast})$, and let $\F^0$ be the $\sigma$-algebra generated by $\Theta^0$. The process $X^{\ast}$ will play the role of the weak solution to the McKean--Vlasov SDE \eqref{eq:mfl}, $W^{\ast}$ and $B^{\ast}$ are the idiosyncratic and common noise, $I^{\ast}$ indicates whether the particle is alive at a given time, and $\Gamma^{\ast}$ is a relaxed control. We explain $\mu^{\ast}$ next: to any probability measure $\pr_0 \in \P(\Omega_0)$, we can associate a distribution $\pr_{\ast}$ on $\Omega_{\ast}$ by setting
\begin{equation} \label{eq:prob_associate}
    \pr_{\ast}(A \times A_0) = \int_{A_0} m(A) \, \d \pr_0(m, b)
\end{equation}
for Borel subsets $A \subset \cal{S}$ and $A_0 \subset \Omega_0$. We denote the expectation with respect to $\pr_{\ast}$ by $\ev_{\ast}$. This construction implies that
\begin{equation} \label{eq:mu}
    \mu^{\ast} = \L_{\ast}(X^{\ast}, W^{\ast}, \Gamma^{\ast}, I^{\ast} \vert \F^0),
\end{equation}
where $\L_{\ast}$ is the law under $\pr_{\ast}$. Next, we define the random variable $\nu^{\ast} \define \Omega_{\ast} \to D_{\bf{M}^2}[0, T]$ given by $\nu^{\ast}_t = \int_{\cal{S}} p_t \delta_{x_t} \, \d \mu^{\ast}(x, w, \mathfrak{g}, p)$. Then $\nu^{\ast}$ is a function $\P^2(\cal{S}) \to D_{\bf{M}^2}[0, T]$ of $\mu^{\ast}$ and by \eqref{eq:mu} it holds that $\nu^{\ast}_t = \ev_{\ast}[I^{\ast}_t \delta_{X^{\ast}_t} \vert \F^0]$, so $\nu^{\ast}_t$ is the conditional subprobability of $X^{\ast}_t$ given the common information $\F^0$.

We consider two filtrations $\bb{F}^0 = (\F^0_t)_{0 \leq t \leq T}$ and $\bb{F}^{\ast} = (\F^{\ast}_t)_{0 \leq t \leq T}$ on $\Omega_{\ast}$ defined by
\begin{align} \label{eq:filtration_canon}
\begin{split}
    \F^0_t &= \sigma\bigl(\langle \mu^{\ast}_s, \varphi\rangle,\, B^{\ast}_s \mathpunct{:} \varphi \in C_b(\cal{S}),\, 0 \leq s \leq t\bigr), \\
    \F^{\ast}_t &= \sigma\bigl(X^{\ast}_s,\, W^{\ast}_s,\, \Gamma^{\ast}([0, s] \times A)\mathpunct{:} A \in \cal{B}(G),\, 0 \leq s \leq t\bigr) \lor \F^0_t.
\end{split}
\end{align}
Here $\mu^{\ast}_t$ is the pushforward of $\mu^{\ast}$ under the map $\cal{S} \ni (x, w, \mathfrak{g}, p) \mapsto (x_{\cdot \land t}, w, \mathfrak{g}_t, p_{\cdot \land t})$, where for $\mathfrak{g} \in \bb{M}_T(G)$, $\mathfrak{g}_t$ is defined through
\begin{equation} \label{eq:measure_stop}
    \mathfrak{g}_t(A_1 \times A_2) = \mathfrak{g}((A_1 \cap [0, t]) \times A_2) + \lvert A_1 \cap (t, T] \rvert \delta_{g_0}(A_2)
\end{equation}
for measurable $A_1 \subset [0, T]$ and $A_2 \subset G$ and some fixed $g_0 \in G$. In particular,
\begin{equation} \label{eq:mu_t}
    \mu^{\ast}_t = \L_{\ast}(X^{\ast}_{\cdot \land t}, W^{\ast}, \Gamma^{\ast}_t, I^{\ast}_{\cdot \land t} \vert \F^0_T) = \L_{\ast}(X^{\ast}_{\cdot \land t}, W^{\ast}, \Gamma^{\ast}_t, I^{\ast}_{\cdot \land t} \vert \F^0_t)
\end{equation}
under $\pr_{\ast}$. Note that the map $D[0, T] \ni x \mapsto x_{\cdot \land t}$ is measurable with respect to the Borel $\sigma$-algebra induced by the $J1$-topology, since the latter is generated by the projection maps $D[0, T] \ni x \mapsto x_t$ for $t \in [0, T]$. Hence, the pushforward is well-defined. Lastly, we introduce the processes $M^{\ast}$ and $\Lambda^{\ast}$ on $\Omega_{\ast}$ given by $M^{\ast}_t = I^{\ast}_t + \int_0^t I^{\ast}_s \lambda(s, X^{\ast}_s, \nu^{\ast}_s) \, \d s$ and $\Lambda^{\ast}_t = \int_0^t \lambda(s, X^{\ast}_s, \nu^{\ast}_s) \, \d s$.

\begin{remark}
Equality \eqref{eq:mu_t} is a strengthening of the \textit{immersion property}. The filtration $\bb{F}^0$ is said to be \textit{immersed in} $\bb{F}^{\ast}$ \textit{under} $\pr_{\ast}$ if $\pr_{\ast}(A \vert \F^0_t) = \pr_{\ast}(A \vert \F^0_T)$ for all $A \in \F^{\ast}_t$. Equation \eqref{eq:mu_t} implies that the previous equality even holds for all sets $A \in \F^{\ast}_t \lor \sigma(W^{\ast})$. This property is necessitated by the common noise to show that the smooth relaxed control formulation from Definition \ref{def:rel_cntrl} below yields the same value as the strong MFC problem introduced above, see \cite[Remark 2.4]{mkv_control_limit_2020} for more details.
\end{remark}

\begin{definition} \label{def:rel_cntrl}
We call $\pr_0 \in \P(\Omega_0)$ an \textit{admissible relaxed control rule} with initial condition $\nu_0 \in \P^2(\R)$ if
\begin{enumerate}[noitemsep, label = (\roman*)]
    \item \label{it:law_integrability} $\L_{\ast}(X^{\ast}_0) = \nu_0$ and $\ev_{\ast}\int_0^T \lvert X^{\ast}_t \rvert^2 \, \d t$ is finite;
    \item \label{it:independence} $W^{\ast}$ and $B^{\ast}$ are $\bb{F}^{\ast}$-Brownian motions and the pair $(X^{\ast}_0, W^{\ast})$ is independent of $\F^0_T$ under $\pr_{\ast}$;
    \item \label{it:martingale} $M^{\ast}$ is an $(\bb{F}^{I^{\ast}} \lor \bb{F}^{\ast})$-martingale under $\pr_{\ast}$ and for all bounded $\bb{F}^{\ast}$-predictable processes $H$, it holds $\pr_{\ast}$-a.s.\@ that $\ev_{\ast}[\int_0^T H_t \, \d M^{\ast}_t \vert \F^0_T] = 0$;
    \item \label{it:sde} for all $t \in [0, T]$,
    \begin{align} \label{eq:relaxed_weak_mfl}
    \begin{split}
        X^{\ast}_t = X^{\ast}_0 &+ \int_{[0, t] \times G} b(s, X^{\ast}_s, \nu^{\ast}_s, g) \, \d \Gamma^{\ast}(s, g) + \int_0^t \sigma(s, X^{\ast}_s, \nu^{\ast}_s) \, \d W^{\ast}_s \\
        &+ \int_0^t \sigma_0(s, X^{\ast}_s, \nu^{\ast}_s) \, \d B^{\ast}_s - \int_0^t \alpha(s, X^{\ast}_s, \nu^{\ast}_s) \langle \nu^{\ast}, \lambda(s, \cdot, \nu^{\ast}_s)\rangle \, \d s
    \end{split}
    \end{align}
    holds $\pr_{\ast}$-almost surely.
\end{enumerate}
Here $\pr_{\ast}$ is the probability measure on $\Omega_{\ast}$ associated to $\pr_0$ via \eqref{eq:prob_associate}. 

An admissible relaxed control $\pr_0$ is called \textit{smooth} if $\ev[I^{\ast}_t \bf{1}_A \vert \F^0_T] = \ev[e^{-\Lambda^{\ast}_t} \bf{1}_A \vert \F^0_T]$ holds $\pr_{\ast}$-a.s.\@ for all $A \in \F^{\ast}_T$.
\end{definition}

\begin{remark}
Definition \ref{def:rel_cntrl} borrows elements from Definition 2.5 in \cite{mkv_control_limit_2020}. However, in contrast to \cite{mkv_control_limit_2020}, Equation \eqref{eq:mu_t} is built into our probabilistic setup and not a postulate of Definition \ref{def:rel_cntrl}. Note that \cite[Definition 2.13]{mkv_control_limit_2020} introduces a more restrictive notion of relaxed control rule for the case of a controlled diffusion coefficient $\sigma$, which is necessary to ensure that any relaxed control rule can be approximated in a suitable way by a sequence of strong controls. Since in this article we assume that $\sigma$ is uncontrolled, we can ignore this issue.

The first part of Item \ref{it:martingale} of Definition \ref{def:rel_cntrl} ensures that the control does not anticipate the killing of particles. The second property expresses compatibility between $M^{\ast}$ and the common information $\F^0_T$. 

Note that the definition is stated without any reference to an exponentially distributed random variable, in contrast to the particle system. All information is subsumed in the indicator process $I^{\ast}$. 
% TODO: find reference to such a result
However, it can be shown that by extending $\Omega_{\ast}$, if necessary, one can find a standard exponential random variable $\theta^{\ast}$ such that $I^{\ast}_t = \bf{1}_{\theta^{\ast} > \Lambda^{\ast}_t}$. Then the control rule is smooth if $\theta^{\ast}$ can be chosen such that it is independent of $\F^{\ast}_T$ conditional on $\F^0_T$.
\end{remark}

The \textit{cost functional} $\cal{J}$ for relaxed control rules $\pr_0 \in \P(\Omega_0)$ is defined as
\begin{equation} \label{eq:relaxed_cost}
    \cal{J}(\pr_0) = \ev_{\ast}\biggl[\int_{[0, T] \times G} I^{\ast}_t f(t, X^{\ast}_t, \nu^{\ast}_t, g) \, \d \Gamma^{\ast}(t, g) + \psi(\nu^{\ast}_T)\biggr].
\end{equation}
We denote the infimum of $\cal{J}(\pr_0)$ over admissible relaxed control rules and smooth relaxed control rules by $V_{\textup{rl}}$ and $V_{\textup{srl}}$, respectively. Obviously, we have $V_{\textup{rl}} \leq V_{\textup{srl}}$. Similarly to the strong formulation, under a \textit{smooth} relaxed control rule $\pr_0$, we can write $\nu^{\ast}_t = \ev_{\ast}[I^{\ast}_t \delta_{X^{\ast}_t} \vert \F^0_T] = \ev_{\ast}[e^{-\Lambda^{\ast}_t} \delta_{X^{\ast}_t} \vert \F^0_T]$, which considerably simplifies the analysis of McKean--Vlasov SDE \eqref{eq:relaxed_weak_mfl}. Moreover, for a smooth relaxed control rule $\pr_0$, the cost functional becomes 
\begin{equation*}
    \cal{J}(\pr_0) = \ev_{\ast}\biggl[\int_{[0, T] \times G} e^{-\Lambda^{\ast}_t} f(t, X^{\ast}_t, \nu^{\ast}_t, g) \, \d \Gamma^{\ast}(t, g) + \psi(\nu^{\ast}_T)\biggr].
\end{equation*}
Since every admissible strong control $\gamma$ together with the corresponding unique strong solution $X$ of McKean--Vlasov SDE \eqref{eq:mfl} induces an admissible smooth relaxed control rule in $\P(\Omega_0)$ (cf.\@ \cite[Lemma 4.4]{djete_mvoc_dpp_2022}), we obtain the trivial inequality $V_{\textup{srl}} \leq V$. Here as above, for any $G$-valued process $\gamma$ we can define a $\bb{M}_T(G)$-valued random variable $\Gamma$ by $\d \Gamma(t, g) = \d\delta_{\gamma_t}(g) \d t$. We call $\Gamma$ the \textit{relaxed control associated to} $\gamma$.

\begin{theorem} \label{thm:convergence_ps}
Let Assumption \ref{ass:red_form_cont_prob} be satisfied. Let $(\gamma^{N, 1}, \dots, \gamma^{N, N})_N$ be a sequence of controls for the particle system and denote the associated relaxed controls by $\Gamma^{N, 1}$,~\ldots, $\Gamma^{N, N}$. Define $\mu^N = \frac{1}{N}\sum_{i = 1}^N \delta_{(X^{N, i}, W^i, \Gamma^{N, i}, I^{N, i})}$. Then, the sequence $(\mu^N, W^0)_N$ is tight on $\Omega_0$ and every subsequential limit $\pr_0 \in \P(\Omega_0)$ yields an admissible relaxed control rule. Consequently, $\liminf_{N \to \infty} V^N \geq V_{\textup{rl}}$.
\end{theorem}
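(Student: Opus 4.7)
I would first establish tightness of the triple $(\mu^N, \nu^N, W^0)_N$ on $\Omega_0$. Since $W^0$ is a fixed Brownian motion, only the other two components require work. The symmetric construction of the particle system makes the family $((X^{N,i}, W^i, \Gamma^{N,i}, \theta_i))_{i=1}^N$ exchangeable, so by a standard criterion it suffices to prove tightness of the single-coordinate law $\L(X^{N,1}, W^1, \Gamma^{N,1}, \theta_1)$ on $\cal{S}$. Uniform moment bounds and a modulus-of-continuity estimate on $X^{N,1}$ come from Burkholder--Davis--Gundy, the boundedness of $\sigma$, $\sigma_0$, $\alpha$ and the linear growth of $b$ in Assumption \ref{ass:red_form_cont_prob} \ref{it:growth}, together with a Grönwall argument. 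Tightness of $\Gamma^{N,1}$ in $\bb{M}_T^2(G)$ uses the coercivity Assumption \ref{ass:red_form_cont_prob} \ref{it:coercivity} with $p_f > 2$, which in combination with the $\epsilon_N$-optimality (applied against a fixed admissible reference control) yields a uniform bound $\sup_N \ev \int_0^T \lvert g \rvert^{p_f} \, \d \Gamma^{N,1}(t, g) < \infty$. Tightness of $\nu^N$ in $D_{\bf{M}}[0, T]$ then follows from these moment bounds via an Aldous-type criterion, exploiting that the total variation of $L^N$ is bounded by one and that each atomic jump has size $O(1/N)$.

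\textbf{Identification of the subsequential limit.} Extract a convergent subsequence with limit $(\mu^{\ast}, \nu^{\ast}, B^{\ast}) \sim \pr_0$. I would verify the three conditions of Definition \ref{def:rel_cntrl}. Condition \ref{it:law_integrability} follows from weak convergence and uniform integrability ensured by Assumption \ref{ass:red_form_cont_prob} \ref{it:growth} and \ref{it:coercivity}. For \ref{it:independence}, Lévy's characterization applied via the relevant martingale problem shows that $W^{\ast}$ and $B^{\ast}$ are $\bb{F}^{\ast}$-Brownian motions; the independence of $\theta^{\ast}$ from $\F^{\ast}_T$ relies on Remark \ref{rem:controls}: admissible controls depend symmetrically on $(\theta_i)_i$ only through the indicator processes $I^i$, so this dependence becomes negligible as $N \to \infty$. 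For \ref{it:sde}, I would apply Itô's formula in the particle system to smooth test functions $\varphi$ and pass to the limit in the resulting controlled martingale problem, using continuity of the coefficients (Assumption \ref{ass:red_form_cont_prob} \ref{it:continuity}) and the continuous mapping theorem. The decisive step is identifying the limit of the singular contribution $\int_0^{\cdot} \alpha \, \d L^N$ with the absolutely continuous mean-field drift $\int_0^{\cdot} \alpha(s, X^{\ast}_s, \nu^{\ast}_s) \langle \nu^{\ast}_s, \lambda(s, \cdot, \nu^{\ast}_s) \rangle \, \d s$ appearing in \eqref{eq:relaxed_weak_mfl}, together with the conditional-law representation $\nu^{\ast}_t = \pr(X^{\ast}_t \in \cdot,\, \theta^{\ast} > \Lambda^{\ast}_t \vert \F^0_T)$.

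\textbf{Optimality.} Combining continuity of $f$ and $\psi$ with uniform integrability from Assumption \ref{ass:red_form_cont_prob} \ref{it:growth_cost} and \ref{it:coercivity} gives $J^N(\gamma^{N,1}, \dots, \gamma^{N,N}) \to J_0(\pr_0)$ along the extracted subsequence. Together with the $\epsilon_N$-optimality $J^N \leq V^N + \epsilon_N$, a complementary upper bound $\limsup V^N \leq V$ obtained by feeding any admissible strong mean-field control back into the particle system, the admissibility of $\pr_0$ (so $J_0(\pr_0) \geq V_0$), and the identity $V_0 = V$ established in Subsection \ref{sec:existence_uniqueness} via the machinery of \cite{mkv_control_limit_2020}, the chain
\begin{equation*}
    V_0 \leq J_0(\pr_0) = \lim J^N(\gamma^{N,1}, \dots, \gamma^{N,N}) \leq \lim V^N \leq V = V_0
\end{equation*}
collapses to equalities, proving that $\pr_0$ is optimal and $J_0(\pr_0) = V_0 = V$.

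\textbf{Main obstacle.} The crux is the SDE identification in condition \ref{it:sde}. The singular contagion term $\int_0^{\cdot} \alpha \, \d L^N$ must become absolutely continuous in the limit, which rests on the asymptotic independence of the controls from $(\theta_i)_i$ (Remark \ref{rem:controls}) \emph{and} on the pathwise continuity of the killing time $\tau$ as a function of the state $X$ in the mean-field limit -- the latter resting on the nondegeneracy Assumption \ref{ass:red_form_cont_prob} \ref{it:intensity} to prevent the representative particle from sitting on $\{0\}$ at the killing time. Handling the subprobability $\nu^{\ast}$ (rather than a probability), and the fact that conditional killing percolates through the whole system, are the technical innovations beyond the standard common-noise relaxed-control framework of \cite{mkv_control_limit_2020}.
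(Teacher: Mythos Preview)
Your high-level architecture matches the paper's: tightness via moment bounds and coercivity, identification via a controlled martingale problem, and the sandwich $V_0 \leq J_0(\pr_0) = \lim J^N \leq V = V_0$. However, there is one concrete error and one significant underestimation of where the real work lies.

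\textbf{The exchangeability claim is false.} The $\epsilon_N$-optimal controls $\gamma^{N,1},\dots,\gamma^{N,N}$ need not be symmetric: admissibility only requires progressive measurability with respect to the filtration generated by \emph{all} of $(\xi_j, W^j)_j$, $W^0$, and the symmetric functionals $\langle\eta^N_t,\varphi\rangle$, so $\gamma^{N,i}$ may depend on the data in an arbitrary asymmetric way. Hence the family $(X^{N,i}, W^i, \Gamma^{N,i}, \theta_i)_i$ is in general not exchangeable, and the reduction to a single coordinate fails. The paper instead invokes Sznitman's criterion that tightness of $\mu^N$ on $\P(\cal{S})$ is equivalent to tightness of the \emph{averaged} law $\frac{1}{N}\sum_i \L(X^{N,i}, W^i, \Gamma^{N,i}, \theta_i)$ on $\cal{S}$, which needs no exchangeability; the moment and modulus bounds you cite are then applied in averaged form.

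\textbf{The martingale structure enters earlier than you think.} You locate the decoupling of $(\theta_i)_i$ from the system only at the SDE-identification stage, but it is already needed for tightness of $\nu^N$. To run an Aldous/Ethier--Kurtz argument you must bound $\ev[|I^i_t - I^i_s| \mid \F^{N,\bf{I}}_s]$, and since $\theta_i$ is \emph{not} independent of $\F^{N,\bf{I}}_s$ (the controls depend on it through the indicators), this cannot be read off directly from the exponential law. The paper resolves this by proving (Lemma~\ref{lem:martingale_property}) that $M^i_t = I^i_t + \int_0^t I^i_s \lambda(s, X^i_s, \nu^N_s)\,\d s$ is an $\bb{F}^{N,\bf{I}}$-martingale, via an auxiliary particle system in which particle $i$'s killing does not feed back into the rest. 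The same martingale, together with a decorrelation estimate $\ev\bigl|\frac{1}{N}\sum_i(M^i_t-1)\bigr|^2 = O(1/N)$ (Lemma~\ref{lem:compensated_martingale_l_2}), is what forces the pure-jump part of $X^{N,i}$ to vanish in $L^2$-sup and hence all limit points to be concentrated on $C([0,T])$. You do not mention this continuity of the limit, but it is essential: without it the $J1$-continuity arguments and It\^o's formula used in the martingale-problem identification (Proposition~\ref{prop:conv_to_ad_re_cr}) and in the identification $\nu_t = \pr_{\ast}(X_t\in\cdot,\,\theta>\Lambda_t\mid\F^0_T)$ (Proposition~\ref{prop:id_of_nu}) do not go through. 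Your remark that ``each atomic jump has size $O(1/N)$'' is not by itself enough --- there are up to $N$ such jumps, and it is the decorrelation of the $M^i$ that makes their aggregate effect vanish.
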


% Note that because of the appearance of the quantity $I^{\ast}_t$ (or $e^{-\Lambda^{\ast}_t}$ in the smooth case) in $\cal{J}$, we do not expect the effective running cost to be jointly convex in $X^{\ast}$, $\Lambda^{\ast}$, and $\Gamma^{\ast}$ -- even for simple choices of $f$, such as $f(t, x, \nu, g) = \lvert g \rvert^2$. Consequently, we cannot prove that optimal control rules (of any kind) are unique, so the subsequential convergence of asymptotically optimally controlled particle system is the best we can do. 
In the absence of controls, the previous subsequential convergence result together with uniqueness of the mean-field limit \eqref{eq:mfl} (Proposition \ref{prop:existence_uniqueness_stability}) implies propagation of chaos for the particle system.

\begin{corollary} \label{cor:prop_of_chaos}
Let Assumption \ref{ass:red_form_cont_prob} be satisfied and further suppose that the drift coefficient does not depend on the control, i.e.\@ $b(t, x, v, g) = b_0(t, x, v)$ for some function $b_0 \define [0, T] \times \R \times \M^1_{\leq 1}(\R) \to \R$. Then $(X^{N, 1}, \nu^N)_N$ converges weakly on $D[0, T] \times D_{\bf{M}^2}[0, T]$ to the unique strong solution of the McKean--Vlasov SDE \eqref{eq:mfl}.
\end{corollary}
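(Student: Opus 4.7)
The plan is to specialise the tightness machinery of Theorem \ref{thm:convergence_ps} to a fixed control and then use the strong uniqueness of \eqref{eq:mfl} to pin down the limit. Since $b(t, x, v, g) = b_0(t, x, v)$ and $\sigma$, $\sigma_0$, $\alpha$, $\lambda$ are independent of $g$ by hypothesis, the state equation \eqref{eq:ps} does not feel the controls, so I would choose identical constant controls $\gamma^{N, i} \equiv g_0 \in G$, which are trivially admissible. The tightness arguments from the proof of Theorem \ref{thm:convergence_ps} do not rely on any optimality of the controls, so they apply verbatim and yield tightness of $(\mu^N, \nu^N, W^0)_N$ on $\Omega_0$ together with the fact that each subsequential limit $\pr_0 \in \P(\Omega_0)$ is an admissible relaxed control rule.

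Because $b$ has no $g$-dependence, the SDE \eqref{eq:relaxed_weak_mfl} satisfied by $(X^\ast, \nu^\ast)$ under $\pr_\ast$ reduces to \eqref{eq:mfl}. Theorem \ref{thm:convergence_value} gives strong existence and pathwise uniqueness, so by a Yamada--Watanabe-type argument on the canonical space $\Omega_\ast$ the weak solution is unique in law; in particular, $\L_\ast(X^\ast, \nu^\ast, B^\ast) = \L(X, \nu, W^0)$ for every subsequential limit, and hence the whole sequence $(\mu^N, \nu^N, W^0)_N$ converges in distribution. To pass from empirical-measure convergence to convergence of the pair $(X^{N, 1}, \nu^N)$, I would exploit the exchangeability of $(X^{N, 1}, \dots, X^{N, N})$ under identical controls: for $\phi \in C_b(D[0, T])$ and $\psi \in C_b(D_{\bf{M}}[0, T])$,
\begin{equation*}
    \ev\bigl[\phi(X^{N, 1}) \psi(\nu^N)\bigr] = \ev\bigl[\langle \mu^N, \phi \circ \pi_X\rangle \psi(\nu^N)\bigr],
\end{equation*}
where $\pi_X \define \cal{S} \to D[0, T]$ denotes the projection on the $X$-coordinate. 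Passing to the limit and using $\langle \mu^\ast, \phi \circ \pi_X\rangle = \ev_\ast[\phi(X^\ast) \vert \F^0_T]$ from the construction \eqref{eq:prob_associate}, together with the $\F^0_T$-measurability of $\nu^\ast$, the right-hand side converges to $\ev_\ast[\phi(X^\ast) \psi(\nu^\ast)] = \ev[\phi(X) \psi(\nu)]$, yielding the claimed weak convergence.

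The main technical obstacle is the Yamada--Watanabe-type passage from pathwise uniqueness to uniqueness in law on the enlarged canonical space $\Omega_\ast$, which must accommodate both the conditional-law coefficient $\nu^\ast$ and the auxiliary exponential $\theta^\ast$; once this is available, everything else follows from routine application of the tightness results developed in the proof of Theorem \ref{thm:convergence_ps} combined with the exchangeability argument above.
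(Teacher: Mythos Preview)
Your proposal is correct and follows essentially the same approach as the paper: tightness via the machinery behind Theorem \ref{thm:convergence_ps}, identification of subsequential limits as solutions of \eqref{eq:mfl} on the canonical space, a Yamada--Watanabe step to pin down the law, and exchangeability to pass from the empirical measure to $(X^{N,1}, \nu^N)$. The paper packages the technical obstacle you flag---that under $\pr_\ast$ the conditioning in $\nu^\ast_t$ is on $\F^0_T$ rather than on $B^\ast$ alone---into Proposition \ref{prop:existence_uniqueness_stability}, whose second assertion shows that when $b$ is control-independent the conditional subprobability actually depends only on the common noise, after which Yamada--Watanabe applies directly.
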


As we indicated above, a relaxed control rule that is obtained as the subsequential limit of the particle system may not be smooth in the sense of Definition \ref{def:rel_cntrl}. However, at the current level of generality, with only Assumption \ref{ass:red_form_cont_prob} in place, we are not able to establish the equivalence between the strong MFC problem and the relaxed MFC problem with (possibly nonsmooth) control rules. If we restrict ourselves to smooth relaxed control rules, the situation is different.

\begin{theorem} \label{thm:smooth_equiv}
Let Assumption \ref{ass:red_form_cont_prob} be satisfied. Then $V_{\textup{srl}} = V \geq \limsup_{N \to \infty} V^N$.
\end{theorem}

Since the smooth formulation leads to an essentially standard MFC problem, we can draw on existing results by Djete, Possama\"i, and Tan \cite{mkv_control_limit_2020} to prove the above theorem.

Combining Theorems \ref{thm:convergence_ps} and \ref{thm:smooth_equiv} allows us to deduce that
\begin{equation*}
    V_{\textup{srl}} = V \geq \limsup_{N \to \infty} V^N \geq \liminf_{N \to \infty} V^N \geq V_{\textup{rl}}.
\end{equation*}
Thus, if we can prove that $V_{\textup{srl}} = V_{\textup{rl}}$, it follows that all expressions in the above coincide. However, to establish this missing link, we have to restrict ourselves to a linear-convex setup. \begin{assumption} \label{ass:aff_conv}
Let $b_0 \define [0, T] \times \R \times \cal{M}^1_{\leq 1}(\R) \to \R$ satisfy the same properties as $\lambda$ in Assumption \ref{ass:red_form_cont_prob} and let $b_1 \define [0, T] \times \R \times \cal{M}^1_{\leq 1}(\R) \to \R^d$ be measurable and bounded.
We assume that
\begin{enumerate}[noitemsep, label = (\roman*)]
    \item \label{it:affine} for all $t$, $x$, $v$, $g$, we have $b(t, x, v, g) = b_0(t, x, v) + b_1(t, x, v) \cdot g$;
    \item \label{it:convex} for all $t$, $x$, $v$, the map $G \ni g \mapsto f(t, x, v, g)$ is convex;
    \item \label{it:unif_nondegen} there exists $c > 0$ such that for all $t$, $x$, $v$, we have $\sigma^2(t, x, v) \geq c$.
\end{enumerate}
\end{assumption}

Under Assumption \ref{ass:aff_conv}, we can use the mimicking result for stochastic Fokker--Planck equations by Lacker, Shkolnikov \& Zhang \cite{lacker_mimicking_2020} together with measurable selection arguments to construct a smooth relaxed control rule from any given (possibly nonsmooth) relaxed control rule, which achieves the same or a lower cost. In fact, we can do better.

\begin{definition} \label{def:closed_loop}
A smooth relaxed control rule $\pr_0 \in \P(\Omega_0)$ is called a \textit{closed-loop control rule} if there exists a measurable map $g_{\ast} \define [0, T] \times \R \times \M^1_{\leq 1}(\R) \to G$ such that $\pr_{\ast}$-a.s.\@ $\d \Gamma^{\ast}(t, g) = \delta_{g_{\ast}(t, X^{\ast}_t, \nu^{\ast}_t)}(g)\d t$. We refer to $g_{\ast}$ as the \textit{feedback function}.
\end{definition}

Note that the feedback function $g_{\ast}$ of a closed-loop control does not depend on the cumulative intensity $\Lambda^{\ast}_t$ or the indicator $I^{\ast}$. Moreover, its dependence on the conditional joint law $\L_{\ast}(X^{\ast}_t, \Lambda^{\ast}_t \vert \F^0_T)$ is only through the conditional subprobability distribution $\nu^{\ast}_t$. 

We denote the infimum of $\cal{J}$ over closed-loop controls by $V_{\textup{cl}}$. Since a closed-loop control rule is by definition a smooth relaxed control rule, we immediately see that $V_{\textup{cl}} \geq V_{\textup{srl}}$.

\begin{theorem} \label{thm:all_equiv}
Let Assumptions \ref{ass:red_form_cont_prob} and \ref{ass:aff_conv} be satisfied. Then for any relaxed control rule $\pr_0 \in \P(\Omega_0)$, there exists a closed-loop control rule $\pr_0' \in \P(\Omega_0)$ such that $\cal{J}(\pr_0') \leq \cal{J}(\pr_0)$. In particular,
\begin{equation*}
    \lim_{N \to \infty} V^N = V_{\textup{rl}} = V_{\textup{cl}} = V_{\textup{srl}} = V.
\end{equation*}
\end{theorem}

The above result not only establishes the equivalence between the strong and the relaxed formulation of the MFC problem, but also between the optimisation over open- and the closed-loop controls. It differs from existing equivalence results for MFC with common noise, such as \cite[Theorem 8.3]{lacker_mimicking_2020}, in that the feedback function $g_{\ast}$ only depends on the first component of the full state $(X^{\ast}_t, \Lambda^{\ast}_t)$ (or, alternatively, $(X^{\ast}_t, I^{\ast}_t)$) and only on $\nu^{\ast}_t$ instead of the joint law $\L_{\ast}(X^{\ast}_t, \Lambda^{\ast}_t \vert \F^0_T)$. This dimension reduction is crucial for the computational feasibility of the numerical simulations discussed in Section \ref{sec:num_sim}.

\subsection{Convergence to the Singular Limit} \label{sec:main_results_singular}

The McKean--Vlasov SDE \eqref{eq:mfl} can be viewed as a regularisation of a model with absorption and singular interaction through the hitting time. In this model particles are not killed according to an exponential clock but are removed from the system once their state hits zero. It is well-known that in this case, the system may exhibit jumps, which originate from discontinuities in the loss: a macroscopic portion of the system is removed at once (cf.\@ \cite[Theorem 1.1]{hambly_mckean_vlasov_blow_up_2019}). Under suitable assumptions (see Assumption \ref{ass:struc_limit}) we can obtain the singular framework as a limit of our regularised model as the intensity function $\lambda$ converges to $(t, x, v) \mapsto \infty \bf{1}_{(-\infty, 0)}$ in a suitable sense. 

To deal with the singular framework, we have to adapt the probabilistic setup introduced above Theorem \ref{thm:convergence_ps}. For convenience, we shall reappropriate some of the symbols used there. We set $\cal{S} = D[-1, T + 1] \times C([0, T + 1]) \times \bb{M}_T(G)$ and $\Omega_0 = \P^2(\cal{S}) \times C([0, T + 1])$. We comment on the enlargement of the time domain in Remark \ref{rem:j1_to_m1}. In contrast to the regularised model, we equip the space of c\`adl\`ag functions with the $M1$-topology (we refer the reader to \cite[Section 12.3]{whitt_stoch_limits_2002} for a definition and basic properties of the $M1$-metric and -topology). Then, we set $\Omega_{\ast} = \cal{S} \times \Omega_0$ and let the canonical random element on $\Omega_{\ast}$ be $\Theta^{\ast} = (X^{\ast}, W^{\ast}, \Gamma^{\ast}, \mu^{\ast}, B^{\ast})$. We define the filtrations $\bb{F}^0 = (\F^0_t)_{0 \leq t \leq T + 1}$ and $\bb{F} = (\F_t)_{0 \leq t \leq T + 1}$ analogously to \eqref{eq:filtration_canon}, where we extend $\mathfrak{g} \in \bb{M}_T(G)$ beyond $T$ by $\mathfrak{g}(A_1 \times A_2) = \mathfrak{g}((A_1 \cap [0, T]) \times A_2) + \lvert A_2 \cap (T, T + 1]\rvert \delta_{g_0}(A_1)$ for any measurable $A_1 \subset [0, T + 1]$, $A_2 \subset G$, and some arbitrary element $g_0 \in G$. As in Equation \eqref{eq:mu_t} we have
\begin{equation} \label{eq:mu_t_sin}
    \mu^{\ast}_t = \L_{\ast}(X^{\ast}_{\cdot \land t}, W^{\ast}, \Gamma^{\ast}_t \vert \F^0_T) = \L_{\ast}(X^{\ast}_{\cdot \land t}, W^{\ast}, \Gamma^{\ast}_t \vert \F^0_t).
\end{equation}

In addition to these random variables, we define the random time $\tau^{\ast} = \inf\{0 < t < T + 1 \define X^{\ast}_t \leq 0\}$ with the convention $\inf \emptyset = \infty$ and the flow of subprobability distributions $\nu^{\ast} \define \Omega_{\ast} \to D_{\bf{M}^2}[-1, T + 1]$ given by $\nu^{\ast}_t = \int_{\cal{S}} \delta_{x_{0-}} \, \d \mu^{\ast}(x, w, \mathfrak{g})$ if $t \in [-1, 0)$ and
\begin{equation} \label{eq:subprobability_sin}
    \nu^{\ast}_t = \int_{\cal{S}} \bf{1}_{\{\inf_{0 \leq s \leq t} x_s > 0\}} \delta_{x_t} \, \d \mu^{\ast}(x, w, \mathfrak{g})
\end{equation}
otherwise. By Equation \eqref{eq:mu_t_sin} we have $\nu^{\ast}_t = \pr_{\ast}(X^{\ast}_t \in \cdot,\, \tau^{\ast} > t \vert \F^0_T)$. Finally, we set $L^{\ast}_t = 1 - \nu^{\ast}_t(\R)$. Note that by definition $L^{\ast} = (L^{\ast}_t)_{-1 \leq t \leq T + 1}$ is c\`adl\`ag and nondecreasing with $L^{\ast}_t = 0$ on $[-1, 0)$.

\begin{remark} \label{rem:j1_to_m1}
The enlargement of the time domain as well as the shift from the $J1$- to the weaker $M1$-topology on the c\`adl\`ag space is necessary to derive the tightness and subsequential convergence of the sequence of losses as we let the intensity tend to $\infty \bf{1}_{(-\infty, 0)}$ (cf.\@ Proposition \ref{prop:sequence_tightness}). Indeed, in general, the loss of the limiting system will have jumps. However, no sequence of continuous functions can converge in $J1$ to a function with jumps. Even in the $M1$-topology a sequence of continuous functions can only converge to a function that is continuous at the interval endpoints. We make sure of this property by extending the time domain and imposing continuity at the new endpoints. 
\end{remark}

We extend the coefficients to $[0, T + 1]$ in the following way: we define $b$ and $\sigma_0$ equal to zero after $T$ and set $\sigma$ to one after $T$. We also assume that $\alpha$ is simply a continuous function $[0, T] \to \R$, which we extend to $[0, T + 1]$ in a continuous way and such that it vanishes on $[T + 1/2, T]$. The exact shape of $\alpha$ on $[T, T + 1]$ does not matter, since the cost function, which we introduce below, only depends on our setup for times in $[0, T]$.

\begin{definition} \label{def:rel_cntr_struc}
We call $\pr_0 \in \P(\Omega_0)$ an \textit{admissible relaxed control rule} (for the singular model) with initial condition $\nu_{0-} \in \P^2(\R)$ if
\begin{enumerate}[noitemsep, label = (\roman*)]
    \item $\L_{\ast}(X^{\ast}_{0-}) = \nu_{0-}$ and $\ev_{\ast}\int_0^T \lvert X^{\ast}_t \rvert^2 \, \d t$ is finite;
    \item \label{it:independence_sin} $W^{\ast}$ and $B^{\ast}$ are $\bb{F}^{\ast}$-Brownian motions and the pair $(X^{\ast}_0, W^{\ast})$ is independent of $\F^0_T$ under $\pr_{\ast}$;
    % \item \label{it:immersion_sin} for $t \in [0, T]$ we have $\pr_{\ast}(A \vert \F^0_t) = \pr_{\ast}(A \vert \F^0_T)$ holds $\pr_{\ast}$-a.s.\@ for any $A \in \F^{\ast}_t \lor \sigma(W^{\ast})$;
    % \item for all $t \in [-1, 0)$ we have $X^{\ast}_t = X^{\ast}_{0-}$ holds $\pr_{\ast}$-almost surely;
    \item for all $t \in [-1, 0)$ we have $X^{\ast}_t = X^{\ast}_{0-}$ and for all $t \in [0, T + 1]$,
    \begin{align} \label{eq:relaxed_weak_mfl_struc}
    \begin{split}
        X^{\ast}_t = X^{\ast}_{0-} &+ \int_{[0, t] \times G} b(s, X^{\ast}_s, \nu^{\ast}_s, g) \, \d \Gamma^{\ast}(s, g) + \int_0^t \sigma(s, X^{\ast}_s, \nu^{\ast}_s) \, \d W^{\ast}_s \\
        &+ \int_0^t \sigma_0(s, X^{\ast}_s, \nu^{\ast}_s) \, \d B^{\ast}_s - \int_0^t \alpha(s) \, \d L^{\ast}_s \quad \pr_{\ast}\textup{-almost surely}.
    \end{split}
    \end{align}
\end{enumerate}
Here $\pr_{\ast}$ is the probability measure on $\Omega_{\ast}$ associated to $\pr_0$.
\end{definition}
% \begin{remark} \label{rem:weak_solution}
% Below we study SDE \eqref{eq:relaxed_weak_mfl_struc} in an uncontrolled setting, i.e.\@ where we have $b(t, x, \nu, g) = b_0(t, x, \nu)$ for some function $b_0 \define [0, T] \times \R \times \cal{M}^2_{\leq 1}(\R) \to \R$. In that case, we call a relaxed control rule $\pr_0 \in \P(\Omega_0)$ a weak solution of McKean--Vlasov SDE \eqref{eq:relaxed_weak_mfl_struc} .
% \end{remark}
% Here we adopt the convention that integrals against functions of bounded variation with jumps include the left endpoint, i.e.\@ $\int_s^t h(u) \, \d x(u) = \int_{[s, t]} h(u) \, \d x(u)$ for any $0 \leq s \leq t \leq T$, left-continuous $h \define [0, T] \to \R$ and bounded variation function $x \define [0, T] \to \R$. 
The definition of an admissible relaxed control rule implies that after time $T + 1/2$ the process $X^{\ast}$ simply diffuses according to the idiosyncratic noise $W^{\ast}$, that is $X^{\ast}_t = X^{\ast}_{(T + 1/2)-} + W^{\ast}_t - W^{\ast}_{T + 1/2}$ for $t \in [T + 1/2, T + 1]$. This together with the nondegeneracy of the diffusion coefficients ensures that the random time $\tau$ is $\pr_{\ast}$-a.s.\@ continuous for any admissible control rule $\pr_0 \in \cal{P}(\Omega_0)$ (see Proposition \ref{prop:subprobabilities_converge}).

Unfortunately, in the singular case, we cannot use the same cost functional as for the regularised model. Indeed, functions of the form $D_{\bf{M}^2}[-1, T+ 1] \ni v \mapsto \psi(v_T)$ for continuous $\psi \define \cal{M}^2_{\leq 1}(\R) \to \R$ are generally not $M1$-continuous (or even upper semicontinuous) at elements of $D_{\bf{M}^2}[-1, T+ 1]$ that jump at $T$. Since the process ${\nu}^{\ast}$ might have a discontinuity at $T$ with positive probability under an admissible relaxed control rule, the terminal cost in \eqref{eq:relaxed_cost} is not suitable. However, we can replace the terminal cost by an expression of the form $D_{\bf{M}^2}[-1, T+ 1] \ni v \mapsto \frac{1}{\epsilon} \int_{T - \epsilon}^T \psi(v_t) \, \d t$ with $\epsilon > 0$ small. Indeed, this function is itself continuous and it approximates $D_{\bf{M}^2}[-1, T+ 1] \ni v \mapsto \psi(v_T)$ well for flows $v$ that are continuous at $T$. Generalising this idea slightly, we introduce the \textit{cost function}
\begin{equation}
    \cal{J}_{\textup{sg}}(\pr_0) = \ev_{\ast}\biggl[\int_{[0, \tau^{\ast} \land T] \times G} f(t, X^{\ast}_t, \nu^{\ast}_t, g) \, \d \Gamma^{\ast}(t, g) + \int_0^T \psi(t, \nu^{\ast}_t) \, \d t\biggr]
\end{equation}
for any admissible relaxed control rule $\pr_0 \in \P(\Omega_0)$, where $\psi \define [0, T] \times \cal{M}^2_{\leq 1}(\R) \to \R$. We define the \textit{value} $V_{\textup{sg}}$ as the infimum of $\cal{J}_{\textup{sg}}(\pr_0)$ over all admissible control rules $\pr_0 \in \P(\Omega_0)$.

Next, we consider the sequence of regularised approximations. Let $\lambda^n \define [0, T] \times \R \times \cal{M}^1_{\leq 1}(\R) \to \R$ be a family of functions satisfying the same assumptions as $\lambda$ in Assumption \ref{ass:red_form_cont_prob}. As for the other coefficients we extend $\lambda^n$ to a function $[0, T + 1] \times \R \times \cal{M}^1_{\leq 1}(\R) \to \R$ by setting $\lambda(t, x, \nu) = \lambda(T, x, \nu)$ for $t \in (T, T + 1]$. Then for a sequence of admissible strong controls $(\gamma^n)_n$ we can uniquely solve the McKean--Vlasov SDE
\begin{align} \label{eq:mfl_sequence}
\begin{split}
    \d X^n_t &= b(t, X^n_t, \nu^n_t, \gamma^n_t) \, \d t + \sigma(t, X^n_t, \nu^n_t) \, \d W_t+ \sigma_0(t, X^n_t, \nu^n_t) \, \d W^0_t - \alpha(t) \, \d L^n_t, \\
    \d \Lambda^n_t &= \lambda^n(t, X^n_t, \nu^n_t) \, \d t
\end{split}
\end{align}
with initial conditions $X^n_0 = \xi$ and $\Lambda^n_0 = 0$, conditional subprobability distribution $\nu^n_t = \pr(X^n_t \in \cdot,\, \theta > \Lambda^n_t \vert W^0)$, and loss $L^n_t = 1 - \nu^n_t(\R)$. Our goal is to show that the sequence $(\L(X^n, W, \Gamma^n \vert W^0), W^0)_n$ converges to an admissible relaxed control rule (for the singular model) as $\lambda^n$ tends to $\infty \bf{1}_{(-\infty, 0)}$. Here $\Gamma^n$ is the relaxed control associated to $\gamma^n$, i.e.\@ $\d \Gamma^n(t, g) = \d \delta_{\gamma^n_t} \d t$. We impose the following assumptions.

\begin{assumption} \label{ass:struc_limit}
Let the coefficients $b$, $\sigma$, and $\sigma_0$ and the cost $f$ satisfy Assumption \ref{ass:red_form_cont_prob}. Let $\alpha \define [0, T] \to \R$, $\psi \define [0, T] \times \cal{M}^2_{\leq 1}(\R) \to \R$, and $\lambda^n \define [0, T] \times \R \times \cal{M}^1_{\leq 1}(\R) \to [0, \infty)$, $n \geq 1$, be measurable. We assume that there exists a $C > 0$, such that
\begin{enumerate}[noitemsep, label = (\roman*)]
    \item \label{it:alpha} the function $\alpha$ is continuous and nonnegative;
    \item \label{it:ellipticity} for all $t$, $x$, $v$ we have $\sigma^2(t, x, v) + \sigma_0^2(t, x, v) \geq \frac{1}{C}$;
    \item \label{it:intensity_convergence} 
    \begin{enumerate}
        \item the intensity functions $\lambda^n$ satisfy the same assumptions as $\lambda$ in Assumption \ref{ass:red_form_cont_prob};
        \item for all $t$, $v$ we have $\lambda^n(t, x, v) = 0$ if $x \geq 0$;
        \item for all $t$ and any sequence $(x_n)_n$ in $\R$ with $\limsup_{n \to \infty} x_n < 0$, we have
    \begin{equation*}
        \inf_{v \in \cal{M}^1_{\leq 1}(\R)} \lambda^n(t, x_n, v) \to \infty;
    \end{equation*}
    \end{enumerate}
    \item \label{it:terminal_cost_cont} the map $\psi$ is continuous in $v$ and for all $t$, $v$ we have $\lvert \psi(t, v)\rvert \leq C(1 + M_2^2(v))$.
\end{enumerate}
\end{assumption}

\begin{remark} \label{rem:ass_singular}
The reason that $\alpha$ is assumed to be a continuous function of time is that otherwise we cannot guarantee the convergence $\int_0^{\cdot} \alpha(t, X^n_t, \nu^n_t) \, \d L^n_t \Rightarrow \int_0^{\cdot} \alpha(t, X^{\ast}_{t-}, \nu^{\ast}_{t-}) \, \d L^{\ast}_t$ due to the possible presence of common jumps in the trajectories of $\nu^{\ast}$ and $L^{\ast}$.

The nondegeneracy of the diffusion coefficients, Assumption \ref{ass:struc_limit} \ref{it:ellipticity} ensures the subsequential weak convergence of the killing times $\tau_n = \inf\{0 < t < T + 1 \define \Lambda^n_t \geq \theta\}$ to $\tau^{\ast}$ (cf.\@ Proposition \ref{prop:subprobabilities_converge}). It replaces the smoothing provided by the exponential random variable $\theta$ in the regularised model.
\end{remark}

\begin{theorem} \label{thm:convergence_struc}
Let Assumption \ref{ass:struc_limit} be satisfied and fix a sequence $(\gamma^n)_n$ of admissible controls. Next, let $X^n$ be the solution to McKean--Vlasov SDE \eqref{eq:mfl_sequence} with control $\gamma^n$ and intensity function $\lambda^n$, and denote the relaxed control associated to $\gamma^n$ by $\Gamma^n$. Then the sequence $(\L(X^n, W, \Gamma^n \vert W^0), W^0)_n$ is tight on $\Omega_0$, every subsequential limit $\pr_0 \in \P(\Omega_0)$ yields an admissible relaxed control rule (for the singular model), and
\begin{equation} \label{eq:cost_reg_seq}
    J_n(\gamma^n) = \ev\biggl[\int_0^T e^{-\Lambda^n_t} f(t, X^n_t, \nu^n_t, \gamma^n_t) \, \d t + \int_0^T \psi(t, \nu^n_t) \, \d t \biggr]
\end{equation}
converges to $\cal{J}_{\textup{sg}}$ along the corresponding subsequence.
\end{theorem}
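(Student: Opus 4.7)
The plan is to follow the general strategy of the proof of Theorem \ref{thm:convergence_ps}: establish tightness of the random elements, extract a subsequential limit, and identify it as an admissible relaxed control rule for the singular model. The key differences are that we must work in the $M1$-topology on path space to accommodate jumps in the limiting loss process, and that the convergence of the killing mechanism now relies on the asymptotic singularity of $\lambda^n$ rather than on individual indicator processes.

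For the tightness step, I would first derive uniform moment estimates on $(X^n)_n$ using the boundedness of $\sigma$, $\sigma_0$, and $\alpha$, the linear growth of $b$ in $(x, \nu, g)$, and the assumed uniform integrability of $\int_0^T \lvert \gamma^n_t\rvert^2 \, \d t$. Since $L^n$ is nondecreasing and bounded by $1$, the contagion term $\int_0^t \alpha(s) \, \d L^n_s$ is of bounded variation with uniformly controlled total variation, which is the key ingredient for $M1$-tightness. I would combine this with a standard Aldous-type criterion on the continuous martingale components to obtain tightness of $(X^n, W, \Gamma^n)$ on $\mathcal{S}$; tightness of the random measures $\mu^n = \L(X^n, W, \Gamma^n \vert W^0)$ then follows by a standard empirical-measure criterion, and tightness of $W^0$ is trivial.

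Having extracted a subsequential limit $\pr_0 \in \P(\Omega_0)$, I would pass to the limit in the SDE \eqref{eq:mfl_sequence} to arrive at \eqref{eq:relaxed_weak_mfl_struc}. The drift, diffusion, and common-noise terms are treated by continuous-mapping arguments together with the continuity conditions in Assumption \ref{ass:red_form_cont_prob} \ref{it:continuity}. The contagion term is considerably simpler than in the regularised setting because $\alpha$ depends on time alone, so one only needs $L^n \to L^{\ast}$ in a sense strong enough to pass to the limit against a continuous function. The main obstacle, which I expect to absorb most of the work, is identifying $L^{\ast}_t = 1 - \nu^{\ast}_t(\R)$, i.e.\@ showing that the loss from exponential killing converges to the loss from absorption at zero. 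Here Assumption \ref{ass:struc_limit} \ref{it:intensity_convergence} plays the central role: since $\lambda^n$ blows up on $\{x < 0\}$, a particle whose state enters the removal region is killed almost instantaneously, so that $e^{-\Lambda^n_t} \to \bf{1}_{\tau^{\ast} > t}$ for the limiting hitting time $\tau^{\ast}$. The nondegeneracy condition \ref{it:ellipticity}, together with the forced diffusive behaviour on $[T + 1/2, T + 1]$ baked into Definition \ref{def:rel_cntr_struc}, is essential to prevent $X^{\ast}$ from accumulating on $\{0\}$ and to ensure that $\tau^{\ast}$ is almost surely a continuity point of $X^{\ast}$, so that Proposition \ref{prop:subprobabilities_converge} delivers the weak convergence $\nu^n_t \to \nu^{\ast}_t$ needed for the identification.

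The cost convergence in \eqref{eq:cost_reg_seq} then splits into two pieces. For the running cost, the convergence $e^{-\Lambda^n_t} \to \bf{1}_{\tau^{\ast} > t}$ combined with uniform integrability of $\int_0^T \lvert \gamma^n_t \rvert^2 \, \d t$ and the growth bound Assumption \ref{ass:red_form_cont_prob} \ref{it:growth_cost} allows us to apply a Vitali-type argument. The averaged terminal cost satisfies $\int_0^T \psi(t, \nu^n_t) \, \d t \to \int_0^T \psi(t, \nu^{\ast}_t) \, \d t$ by continuity of $\psi$, the weak convergence of $\nu^n_t$ for Lebesgue-almost every $t$, and bounded convergence; indeed, this is precisely the reason $J^{\textup{sg}}_0$ is formulated as a time-averaged functional rather than a pointwise terminal cost, since $\nu^{\ast}$ may be discontinuous at $T$. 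Altogether, the subtlest step by some margin is the hitting-time analysis in the $M1$-topology, which I expect to be the technical heart of the argument.
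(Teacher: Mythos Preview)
Your proposal is essentially correct and follows the same overall architecture as the paper: tightness via the decomposition $X^n = \xi + \beta^n + Z^n - F^n$ (drift, martingale, bounded-variation contagion) with $M1$-compactness for the monotone piece; identification of the loss via the blow-up of $\lambda^n$ combined with a downcrossing argument driven by nondegeneracy; convergence of $\nu^n$ in $L^2([-1,T+1];\bf{M})$ as in Proposition~\ref{prop:subprobabilities_converge}; and cost convergence via Vitali-type arguments, exploiting the time-averaged form of the terminal cost.

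The one place where the paper is more specific than your sketch is the identification of the limiting SDE. Rather than ``passing to the limit in the SDE directly via continuous-mapping'', the paper removes the jump part and works with $Y = X - I^{\alpha}(L)$, formulating a controlled martingale problem for the triple $(Y,W,B)$ (Proposition~\ref{prop:conv_to_ad_re_cr_stru}). This matters because the pre-limit processes $\cal{M}^{n,\varphi}$ involve $L^n$ while the limiting martingale $\cal{M}^{\varphi}$ involves $L = 1 - \nu(\R)$, and these are \emph{not} the same functional of the data; one must first show $I^{\alpha}(L^n) \Rightarrow I^{\alpha}(L)$ on $D[-1,T+1]$ (Corollary~\ref{cor:conv_of_integrals_nondecreasing}) so that $Y^n$ and $Y$ can be compared. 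Your description of the contagion term as ``considerably simpler'' is right in spirit, but the bookkeeping of matching the approximating martingale problem to the limiting one is where the work actually sits.
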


In the statement of the theorem, we extend $X^n$ and $\nu^n$ to $[-1, T + 1]$ in the same way as $X^{\ast}$ and $\nu^{\ast}$ in Definition \ref{def:rel_cntr_struc}. Implicit in Theorem \ref{thm:convergence_struc} is a (weak) existence theorem for McKean--Vlasov SDE \eqref{eq:relaxed_weak_mfl_struc} which generalises the existing literature \cite{hambly_mckean_vlasov_absorbing_2017, hambly_spde_model_2019, hambly_mckean_vlasov_blow_up_2019, blow_ups_common_noise_ledger_2021}.

\begin{remark} \label{rem:conv_of_nu}
Theorem \ref{thm:convergence_struc} does not say anything about the subsequential weak convergence of the flow of subprobabilities $(\nu^n)_n$ on $D_{\bf{M}^2}[-1, T + 1]$ to $\nu^{\ast}$, where we recall that $\bf{M}^2 = \cal{M}^2_{\leq 1}(\R)$. In fact, at this time, we are not sure whether this convergence holds. The issue lies in the possibility that the feedback $\int_0^t \alpha(s) \, \d L^n_s$ may push a nonnegligible number of particles sufficiently far below zero before they are killed. Such a situation could arise if the limiting system jumps with positive probability at some time $t \in [0, T]$. It is then conceivable that we can find a sequence of ($\bb{F}^{W^0}$-stopping) times $t_n \in [0, T]$ with $t_n \to t$ such that the sequence $(\nu^n_{t_n}((-\infty, 0)))_n$ is bounded away from zero, which precludes weak convergence to $\nu^{\ast}$.

In Proposition \ref{prop:subprobabilities_converge} below we instead prove that $(\nu^n)_n$ converges weakly along a subsequence on the space $L^2([-1, T + 1]; \bf{M}^2)$ of square-integrable functions $[-1, T + 1] \to \bf{M}^2$. 
\end{remark}

Unlike in Theorem \ref{thm:convergence_ps}, even if we assume that the sequence $(\gamma^n)_n$ of admissible controls in Theorem \ref{thm:convergence_struc} satisfies $J_n(\gamma^n) \leq \inf_{\gamma} J_n(\gamma) + \epsilon_n$ with $\epsilon_n \to 0$, we cannot conclude that $J_n(\gamma^n)$ converges to the infimum $V_{\textup{sg}}$ of $\cal{J}_{\textup{sg}}$, since there is no uniqueness theory for the McKean--Vlasov SDE \eqref{eq:relaxed_weak_mfl_struc}, which represents the singular model. In fact, pathwise and weak uniqueness are known to fail even for constant coefficients and without controls (unless solutions are restricted to a smaller class of so-called physical solutions). We refer the reader to \cite{delarue_supercooled_uniqueness_2022, hambly_mckean_vlasov_blow_up_2019} for more details. Consequently, it may not be possible to approximate every solution to McKean--Vlasov SDE \eqref{eq:relaxed_weak_mfl_struc} by the sequence of regularised models \eqref{eq:mfl_sequence}. If this is the case for all (nearly) optimally controlled solutions of \eqref{eq:relaxed_weak_mfl_struc}, then $J_n(\gamma^n)$ will not converge to $V_{\textup{sg}}$.

Therefore, we specialise to a simpler situation, where the diffusion coefficients $\sigma$ and $\sigma_0$ are only functions of time, $\lambda^n$ depends solely on $x$, the drift coefficient is given by $b(t, g) = b_0(t) + b_1(t) g$ for bounded and measurable functions $b_0$, $b_1 \define [0, T] \to \R$, and $G$ is a subset of $\R$. Then the McKean--Vlasov SDE \eqref{eq:relaxed_weak_mfl_struc} becomes
\begin{equation} \label{eq:struct_simple}
    \d X^{\ast}_t = \int_G (b_0(t) + b_1(t) g) \, \d \Gamma^{\ast}(t, g) + \sigma(t) \, \d W^{\ast}_t  + \sigma_0(t) \, \d B^{\ast}_t - \alpha(t) \, \d L^{\ast}_t.
\end{equation}
with $L^{\ast}_t = \pr_{\ast}(\tau \leq t \vert \F^0_T)$. While we still cannot guarantee uniqueness of the McKean--Vlasov SDE \eqref{eq:struct_simple}, we can exploit the monotonicity of the equation in the loss function. To this end, we formulate the following assumption.

\begin{assumption} \label{ass:monotonicity}
We assume that 
\begin{enumerate}[noitemsep, label = (\roman*)]
    \item for all $t$, $x$, $v$ we have $f(t, x, v, 0) = 0$;
    \item for all $t$, $x$, $x'$, $v$, $v'$, $g$, $g$ with $x \geq x'$, $v \geq v'$, $g \leq g'$ we have
\begin{equation*}
    f(t, x, v, g) \leq f(t, x', v', g'),\qquad \psi(t, v) \leq \psi(t, v').
\end{equation*}
\end{enumerate}
Here we say that $v \leq v'$ for two elements $v$, $v' \in \cal{M}^2_{\leq 1}(\R)$ if $v([x, \infty)) \leq v'([x, \infty))$ for all $x \in \R$.
\end{assumption}
Being small with respect to the partial order introduced in Assumption \ref{ass:monotonicity} heuristically means that mass is predominately found on the lower half-line or has already disappeared.

\begin{proposition} \label{prop:conv_struc_simple_coeff}
Let Assumptions \ref{ass:struc_limit} and \ref{ass:monotonicity} be satisfied and suppose that the coefficients and $G$ have the form outlined above Equation \eqref{eq:struct_simple}. Let $(X^n, \nu^n, \gamma^n)_n$ be as in Theorem \ref{thm:convergence_struc} and additionally assume that $J_n(\gamma^n) \leq V_n + \epsilon_n$ for a sequence $(\epsilon_n)_n$ that converges to zero. Then $\lim_{n \to \infty} V_n = \lim_{n \to \infty} J_n(\gamma^n) = V_{\textup{sg}}$, where $V_n$ denotes the infimum of $J_n(\gamma)$ over all admissible strong controls $\gamma$.
\end{proposition}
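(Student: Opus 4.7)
The plan is to establish the two-sided bound $\liminf_n V_n \geq V^{\text{sg}}_0 \geq \limsup_n V_n$ and then combine it with the $\epsilon_n$-optimality estimate $|V_n - J_n(\gamma^n)| \leq \epsilon_n \to 0$ to conclude that both $\lim_n V_n$ and $\lim_n J_n(\gamma^n)$ exist and equal $V^{\text{sg}}_0$.

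For the lower bound I first verify the uniform integrability hypothesis of Theorem \ref{thm:convergence_struc} for the sequence $(\int_0^T |\gamma^n_t|^2\, dt)_n$. Using any fixed $g_0 \in G$ as a constant control and the quadratic growth in Assumption \ref{ass:red_form_cont_prob}\ref{it:growth_cost} together with standard uniform moment bounds on $(X^n, \nu^n)$, one gets $V_n \leq J_n(g_0) \leq C$, so that $J_n(\gamma^n) \leq C + \epsilon_n$. The coercivity Assumption \ref{ass:red_form_cont_prob}\ref{it:coercivity} then yields $\sup_n \ev\int_0^T |\gamma^n_t|^{p_f}\, dt < \infty$ with $p_f > 2$, which by H\"older and de~la Vall\'ee-Poussin gives the required uniform integrability. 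Theorem \ref{thm:convergence_struc} produces, along a subsequence, a limit $(\L(X^n, W, \Gamma^n \mid W^0), W^0) \to \pr^{\infty}_0$ with $\pr^{\infty}_0$ an admissible relaxed control rule for the singular model and $J_n(\gamma^n) \to J^{\text{sg}}_0(\pr^{\infty}_0) \geq V^{\text{sg}}_0$, so $\liminf_n V_n \geq V^{\text{sg}}_0$.

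For the upper bound I fix $\delta > 0$ and pick an admissible rule $\pr_0^*$ with $J^{\text{sg}}_0(\pr_0^*) \leq V^{\text{sg}}_0 + \delta$. Because the drift $b(t, g) = b_0(t) + b_1(t) g$ is affine in $g$, the state equation \eqref{eq:struct_simple} is sensitive to $\Gamma^*$ only through its first moment, so a chattering/mollification argument (possibly after enlarging the probability space) produces a strong adapted control $\gamma^*$ whose induced rule $\hat\pr_0^*$ satisfies $J^{\text{sg}}_0(\hat\pr_0^*) \leq V^{\text{sg}}_0 + 2\delta$. Applying $\gamma^*$ to the regularised system with intensity $\lambda^n$ yields solutions $X^n$ with cost $J_n(\gamma^*)$, and Theorem \ref{thm:convergence_struc} applied to the constant sequence $\gamma^n \equiv \gamma^*$ extracts a subsequential limit $J_n(\gamma^*) \to J^{\text{sg}}_0(\tilde\pr_0^*)$ for some admissible singular rule $\tilde\pr_0^*$ also driven by $\gamma^*$.

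The crux is a pathwise physical-minimality comparison for $\tilde\pr_0^*$: since each regularised loss $L^n$ is continuous in $t$, any jumps of the limit $\tilde L^*$ can be no larger than the minimal jump consistent with the singular constraint, so $\tilde L^*$ is pointwise minimal, $\tilde X^*$ is pathwise maximal, and $\tilde \nu^*$ is maximal in the partial order of Assumption \ref{ass:monotonicity} among all admissible singular rules with control $\gamma^*$. This argument relies on the nondegeneracy Assumption \ref{ass:struc_limit}\ref{it:ellipticity} and one-dimensional crossing estimates in the spirit of \cite{hambly_smooth_feedback_2023, delarue_supercooled_uniqueness_2022}. Granted this, Assumption \ref{ass:monotonicity}---monotonicity of $f$ in $(x, v)$ and of $\psi$ in $v$---gives $J^{\text{sg}}_0(\tilde\pr_0^*) \leq J^{\text{sg}}_0(\hat\pr_0^*) \leq V^{\text{sg}}_0 + 2\delta$. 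Combining with $V_n \leq J_n(\gamma^*)$ and sending $\delta \to 0$ completes the proof. The main obstacles are the physical-minimality claim for $\tilde\pr_0^*$ and ensuring the chattering step $\pr_0^* \leadsto \hat\pr_0^*$ is compatible with the nonlinear mean-field interaction entering through $\nu^*$ in the cost.
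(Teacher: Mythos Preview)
Your lower bound is correct and matches the paper exactly: coercivity gives uniform integrability, then Theorem~\ref{thm:convergence_struc} passes to a subsequential singular limit whose cost bounds $V_0^{\text{sg}}$ from below.

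Your upper bound, however, takes a different and more fragile route than the paper. You want to (i) reduce a nearly optimal singular rule $\pr_0^*$ to a strong control $\gamma^*$ via chattering, (ii) feed $\gamma^*$ into the regularised system, (iii) pass to the singular limit $\tilde\pr_0^*$, and then (iv) invoke \emph{physical minimality} of $\tilde\pr_0^*$ among all singular solutions with control $\gamma^*$ to compare its cost with that of $\hat\pr_0^*$. Steps (i) and (iv) are both problematic. The chattering reduction is delicate precisely because the cost depends on the nonlinear mean-field term $\nu^*$, as you note; and the minimality claim you need is only established in the paper (Proposition~\ref{eq:minimal_convergence}) for the uncontrolled case $b_1 = 0$ with \emph{monotone} $\lambda^n$, neither of which holds here. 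The references you cite do not give this in the controlled setting with general $\lambda^n$.

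The paper avoids both obstacles by a direct and elementary comparison. Instead of passing to the singular limit and then comparing two singular solutions, it works entirely at the regularised level: on the canonical space $(\Omega_*, \F_*, \pr_*)$ of the singular rule $\pr_0$, it solves the \emph{regularised} McKean--Vlasov SDE with the (slightly modified) relaxed control $\tilde\Gamma$ obtained from $\Gamma^*$ by setting it to zero after $\tau^*$. A Picard-type iteration (see \eqref{eq:approximation_scheme}) shows inductively that the regularised loss $L^n_t$ is dominated by the singular loss $L_t$, hence $X^n_t \geq X^*_t$ on $[0,\tau^*]$ and $\nu^n_t \geq \nu^*_t$ in the partial order of Assumption~\ref{ass:monotonicity}. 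Monotonicity of $f$ and $\psi$ (and the normalisation $f(t,x,v,0)=0$, which makes the truncation of $\Gamma^*$ at $\tau^*$ costless) then gives the regularised cost bound $J_0^n \leq J_0^{\text{sg}}(\pr_0) \leq V_0^{\text{sg}} + \epsilon$ \emph{for every} $n$, with no limit needed. Finally, the passage from the relaxed control $\tilde\Gamma$ on $\Omega_*$ to admissible strong controls $\gamma^n$ on the original space is handled in one stroke by Theorem~\ref{thm:convergence_ps} (equality of strong and relaxed values for the regularised problem), which replaces your chattering step entirely.

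In short: the key idea you are missing is to compare the regularised solution $X^n$ directly with the singular solution $X^*$ on the same probability space via the monotone Picard iteration, rather than first taking the singular limit of $X^n$ and then trying to identify that limit as minimal.
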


\begin{remark}
We can disintegrate $\Gamma^{\ast}$ as $\d \Gamma^{\ast}(t, g) = \Gamma^{\ast}_t (g) \, \d t$ for an $\bb{F}^{\ast}$-predictable process $(\Gamma^{\ast}_t)_t$ with values in $\P(G)$ (see \cite[Lemma 3.2]{lacker_mfg_controlled_mgale_2015}). Then, if we set $\gamma^{\ast}_t = \int_G g \, \d \Gamma^{\ast}_t$ for $t \in [0, T]$, the process $\gamma^{\ast} = (\gamma^{\ast}_t)_{0 \leq t \leq T}$ is $\bb{F}^{\ast}$-progressively measurable and the state equation \eqref{eq:struct_simple} becomes
\begin{equation*}
    \d X^{\ast}_t = (b_0(t) + b_1(t) \gamma^{\ast}_t) \, \d t + \sigma(t) \, \d W^{\ast}_t  + \sigma_0(t) \, \d B^{\ast}_t - \alpha(t) \, \d L^{\ast}_t.
\end{equation*}
Moreover, assuming that the running cost $f$ is convex in $g$, Jensen's inequality implies
\begin{equation*}
    \cal{J}_{\textup{sg}}(\pr_0) \geq \ev_{\ast}\biggl[\int_0^{\tau^{\ast} \land T} f(t, X^{\ast}_t, \nu^{\ast}_t, \gamma^{\ast}_t) \, \d t + \int_0^T \psi(t, \nu^{\ast}_t) \, \d t\biggr].
\end{equation*}
Consequently, in this natural setup, it is always possible to choose an optimal control that takes the form of an $\bb{F}^{\ast}$-progressively measurable $G$-valued process as opposed to a random measure on $[0, T] \times G$.
\end{remark}

As we mentioned before, even in the absence of controls, i.e.\@ if $b_1(t) = 0$, McKean--Vlasov SDE \eqref{eq:struct_simple} does not exhibit (weak or pathwise) uniqueness. Hence, obtaining a weak convergence result in the spirit of Corollary \ref{cor:prop_of_chaos} seems hopeless. However, if we assume that the $(\lambda^n)_n$ form a nondecreasing sequence in the sense that $\lambda^m(x) \geq \lambda^n(x)$ whenever $m \geq n$, then the same will be true for $(L^n)_n$, i.e.\@ $L^m_t \geq L^n_t$ if $m \geq n$. Consequently, the random variables $L^n_t$ converge a.s.\@ for each $t \in [0, T + 1]$. This allows us to define a c\`adl\`ag process $L = (L_t)_{0 \leq t \leq T + 1}$ as follows: we define $\ell_t$ as the a.s.\@ limit of $L^n_t$ for $t \in [0, T + 1] \cap (\mathbb{Q} \cup \{T + 1\})$, where $\bb{Q}$ are the rationals. Then we set $L_t = \lim_{s \searrow t, s \in (t, T + 1] \cap \bb{Q}} \ell_s$ for $t \in [0, T + 1)$ and $L_{T + 1} = \ell_{T + 1}$.
% for a dense countable subset $\bb{T} \subset [0, T + 1]$, we define $\ell_t$ as the a.s.\@ limit of $L^n_t$ for $t \in \bb{T}$. Then we set $L_t = \lim_{s \searrow t, s \in \bb{T}} \ell_s$. 
We show in the proof of Proposition \ref{eq:minimal_convergence} that $L^n_t \to L_t$ a.s.\@ for any $t \in [0, T + 1]$ with $\pr(\Delta L_t = 0) = 1$ and that there exists a c\`adl\`ag process $X$ such that
\begin{equation} \label{eq:minimal}
    \d X_t = b_0(t) \, \d t + \sigma(t) \, \d W_t + \sigma_0(t) \, \d W^0_t - \alpha(t) \, \d L_t
\end{equation}
and $L_t = \pr(\tau \leq t \vert W^0)$, where $\tau = \inf\{t > 0 \define X_t \leq 0\}$. That is, $(X, L)$ is a strong solution to McKean--Vlasov SDE \eqref{eq:struct_simple}. Moreover, we prove that the process $L$ is \textit{minimal} in the sense that if $(X', L')$ is any other solution of McKean--Vlasov SDE \eqref{eq:minimal} on the same probability space then $L_t \leq L'_t$ for all $t \in [0, T + 1]$.

For the purpose of the following proposition we extend $(X^n, L^n)$ and $(X, L)$ to the interval $[-1, T + 1]$ by setting $X^n_t = X_t = X_{0-}$ and $L^n_t = L_t = 0$ on $[-1, 0)$.

\begin{proposition} \label{eq:minimal_convergence}
Let Assumptions \ref{ass:struc_limit} and \ref{ass:monotonicity} be satisfied and suppose that the coefficients have the form outlined above Equation \eqref{eq:struct_simple}. Assume further that $b_1 = 0$ and that $\lambda^m(x) \geq \lambda^n(x)$ for $x \in \R$ whenever $m \geq n$. Then $(X^n, L^n)_n$ converges a.s.\@ in $D[-1, T + 1] \times D[-1, T + 1]$ to the minimal solution $(X, L)$ of McKean--Vlasov SDE \eqref{eq:minimal}.
\end{proposition}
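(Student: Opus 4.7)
The strategy is to exploit the simple structure of the state equation: with $b_1 = 0$ and coefficients $b_0, \sigma, \sigma_0, \alpha$ depending only on $t$, we have the explicit decomposition $X^n_t = Y_t - \int_0^t \alpha(s)\,\d L^n_s$, where $Y_t := X_{0-} + \int_0^t b_0(s)\,\d s + \int_0^t \sigma(s)\,\d W_s + \int_0^t \sigma_0(s)\,\d W^0_s$ is independent of $n$ and the cumulative integral against $L^n$ is $\sigma(W^0)$-measurable. This reduces the convergence of $(X^n, L^n)$ to that of the scalar nondecreasing processes $L^n$ alone.

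The heart of the proof is the a.s.\ monotonicity $L^n_t \leq L^m_t$ for $m \geq n$. My plan is to establish this by coupling the associated $N$-particle approximations from Corollary \ref{cor:prop_of_chaos} across different values of $n$ on a common probability space, using identical Brownian motions, identical initial data, and crucially identical exponential thresholds $\theta_1,\dots,\theta_N$. Inducting on the ordered sequence of killing events, the first killing time of each particle under $\lambda^m$ precedes that under $\lambda^n$ since $\lambda^m \geq \lambda^n$ pointwise and trajectories agree before any event; once killings begin, the larger accumulated feedback $A^{m,N} \geq A^{n,N}$ pushes the surviving particles in system $m$ further into the killing region, which (together with the pointwise dominance of $\lambda^m$) perpetuates the ordering of cumulative intensities and hence of killing times. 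Passing to the mean-field limit via Corollary \ref{cor:prop_of_chaos} transfers $L^{n,N} \leq L^{m,N}$ to $L^n \leq L^m$ a.s.

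Given monotonicity, the pointwise limits $\ell_t := \lim_n L^n_t$ exist a.s.\ for each rational $t$, and $L$ is defined as in the paragraph preceding the proposition. At any $t$ with $\pr(\Delta L_t = 0) = 1$, $L^n_t \uparrow L_t$ a.s.\ and hence $\int_0^t \alpha(s)\,\d L^n_s \to \int_0^t \alpha(s)\,\d L_s$ by monotone convergence of Stieltjes integrals; setting $X_t := Y_t - \int_0^t \alpha(s)\,\d L_s$ yields a candidate limit. To identify $(X, L)$ as a solution of \eqref{eq:minimal}, I invoke Theorem \ref{thm:convergence_struc}: every distributional subsequential limit of the conditional laws $\L(X^n, W, \Gamma^n \mid W^0)$ is an admissible relaxed control rule for the singular model, and the a.s.\ identification from the monotonicity step pins down the subsequential limit uniquely, so the full sequence converges. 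The relation $L_t = \pr(\tau \leq t \mid W^0)$ with $\tau = \inf\{s > 0 \define X_s \leq 0\}$ then follows from \eqref{eq:subprobability_sin} together with the blow-up $\lambda^n \to \infty\ind{1}_{(-\infty,0)}$, which forces $\exp(-\int_0^t \lambda^n(X^n_s)\,\d s) \to \ind{1}_{\inf_{s \leq t} X_s > 0}$.

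Minimality follows from the same coupling argument, now comparing $\lambda^n$ against the singular intensity: for any other solution $(X', L')$ of \eqref{eq:minimal} on the same probability space, $L^n \leq L'$ for all $n$, hence $L \leq L'$ in the limit. For the upgrade to $D[-1, T+1]$-convergence, I rely on the classical fact that pointwise a.s.\ convergence of nondecreasing c\`adl\`ag functions to a c\`adl\`ag limit implies $M1$-convergence \cite[Chap.~12]{whitt_stoch_limits_2002}, which propagates from $L^n \to L$ to $X^n \to X$ via the continuous (in $M1$) functional $\ell \mapsto Y - \int_0^\cdot \alpha\,\d\ell$, using continuity of $Y$. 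The main obstacle is the monotonicity step: the mean-field fixed-point equation for $L^n$ is not obviously monotone in $n$ because $\lambda^n$ need not be monotone in $x$, so the particle-level coupling with shared exponential thresholds, combined with Corollary \ref{cor:prop_of_chaos}, is essential to bypass this obstruction.
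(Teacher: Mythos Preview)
Your argument for the convergence of $(X^n, L^n)$ in $D[-1,T+1]$ is essentially the paper's: both rely on the pointwise monotone convergence of the nondecreasing processes $L^n$, Whitt's criterion \cite[Theorem 12.5.1(iv)]{whitt_stoch_limits_2002} for $M1$-convergence of monotone functions, and the explicit decomposition $X^n = Y - \int_0^\cdot \alpha\,\d L^n$ with $Y$ independent of $n$. Your route to the monotonicity $L^n \leq L^m$ via coupled particle systems and Corollary \ref{cor:prop_of_chaos} is more elaborate than the paper's (which simply asserts the inequality in the paragraph preceding the proposition); be aware that the inductive step of your coupling needs $\lambda^n(X^{m,j}_s) \geq \lambda^n(X^{n,j}_s)$ once the trajectories have separated, which is not guaranteed without $\lambda^n$ being nonincreasing in $x$. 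A cleaner alternative is the Picard iteration \eqref{eq:approximation_scheme} used in the proof of Proposition \ref{prop:conv_struc_simple_coeff}, which at least circumvents the particle layer.

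The minimality argument, however, has a genuine gap. You propose to obtain $L^n \leq L'$ for an arbitrary solution $(X',L')$ of \eqref{eq:minimal} by ``the same coupling argument, now comparing $\lambda^n$ against the singular intensity''. This does not work: the singular intensity $\infty\cdot\bf{1}_{(-\infty,0)}$ does not fit the regularised particle-system framework, and more fundamentally, an arbitrary (possibly non-physical, non-minimal) solution $(X',L')$ need not be the limit of any particle system or any regularised sequence---indeed, the whole point of the proposition is that such limits single out the \emph{minimal} solution. The paper's proof instead uses the Picard iterates $(X^{n,m},L^{n,m})$ from \eqref{eq:approximation_scheme} and inductively shows $L^{n,m}_t \leq L'_t$ directly: assuming $L^{n,m-1}\leq L'$ one gets $X^{n,m}\geq X'$ on $[0,\tau']$, hence $X^{n,m}$ hits $(-\infty,0]$ no earlier than $\tau'$; since $\Lambda^{n,m}$ only increases after that hitting time one has $1-e^{-\Lambda^{n,m}_t}\leq \bf{1}_{\tilde\tau_{n,m}\leq t}\leq \bf{1}_{\tau'\leq t}$, and conditioning on the common noise gives $L^{n,m}_t\leq L'_t$. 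Passing $m\to\infty$ then $n\to\infty$ yields $L\leq L'$. This comparison with $L'$ is pathwise and does not require any approximation of $(X',L')$; it is the key device you are missing.
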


\subsection{Future Work}

% There are a various avenues for future work. Firstly, one may include controls in a subset of the currently uncontrolled coefficients $\sigma$, $\sigma_0$, $\alpha$, and $\lambda$. Controls in the diffusion coefficient in front of the idiosyncratic noise are covered by Djete, Possama\"i, and Tan \cite{mkv_control_limit_2020} and could be added to our model with minor modifications. Controlling the common noise coefficient $\sigma_0$ remains a challenging topic even for more standard McKean--Vlasov control problems. 
There are a various avenues for future work. Firstly, one may include controls in a subset of the currently uncontrolled coefficients $\sigma$, $\sigma_0$, $\alpha$, and $\lambda$, the latter being of particular interest in the context of the financial model from Section \ref{sec:toy_model}. For example, one could consider a situation, where the central bank controls the threshold at which $x \mapsto \lambda(t, x, v)$ becomes positive, corresponding to a change in the regulatory capital requirements. If we simultaneously adjust the terminal cost in \eqref{eq:cost_toy_model} so that it only counts banks with a sufficiently healthy capital position, then the central bank has to trade off the risk of contagion during a systemic crisis with the health of the banking system after the crisis. 

Another related topic for future research is the study of other types of control problems, such as impulse control, singular control or stochastic target problems, in the same framework. Particularly for the adjustment of the default threshold discussed in the preceding paragraph, impulse controls, that only allow for finitely (or countably) many changes to the capital requirements, are suitable. In the context of a stochastic target problem, the goal could be to keep a certain fraction of particles (or banks in the context of the model from Section \eqref{eq:cost_toy_model}) alive until time $T$. The controller's objective is then to find an initial position (corresponding to an up-front bailout at the initial time) that ensures the target is satisfied at the final time.

Lastly, one could allow the particles to control their own dynamics as well. This extension could be realised in the form of a Stackelberg mean-field game or a mean-field game with major and minor players. In the former scenario, the central planner is the leader who announces a policy to which the particles, which compete amongst each other, react. In the case of a mean-field game with major and minor players, the central planner does not have a first mover advantage. Instead, the central planner (major player) and the particles (minor players) all compete on an equal footing. Through this extension, moral hazard can be introduced into the financial model from Section \eqref{eq:cost_toy_model}. If the banks anticipate a bailout by the government, they will increase the riskiness of their investments to achiever higher returns because potential losses are partly externalised through the bailout.

\section{Convergence of the Nearly Optimally Controlled Particle System} \label{sec:ps}

The goal of this section is to establish the subsequential convergence of the (approximately optimally) controlled particle system to an (optimal) relaxed control rule. In particular, we prove the main results Theorem \ref{thm:convergence_ps}, Corollary \ref{cor:prop_of_chaos} as well as Theorems \ref{thm:smooth_equiv} and \ref{thm:all_equiv}.

\subsection{The Finite Particle System} \label{sec:finite_ps}

Our first objective is to analyse the particle system and deduce tightness on $\Omega_0$ of the sequence $(\mu^N, W^0)_N$ as defined in Theorem \ref{thm:convergence_ps}. Throughout this subsection Assumption \ref{ass:red_form_cont_prob} is in place.

Initially, we show that the particle system \eqref{eq:ps} is well-posed and has a unique strong solution for any $\bb{F}^N$-progressively measurable processes $\gamma^1$,~\ldots, $\gamma^N$. Note that these inputs do not necessarily need to be admissible controls as defined in Subsection \ref{sec:main_results_ps}. To be precise, by a strong solution we mean a solution adapted to the filtration $\bb{F}^N$, while uniqueness is understood in a pathwise sense. That is, any two solutions of SDE \eqref{eq:ps} with the same initial conditions, exponential clocks, Brownian motions, and inputs $\gamma^1$,~\ldots, $\gamma^N$ coincide almost surely. We may inductively construct such a solution on $[0, S)$ for any $S > 0$ as follows: set $\varrho_0 = 0$ and $D_0 = \varnothing$, and define $\xi_{0, i} = \xi_i$, $i = 1$,~\ldots, $N$. Next, suppose that a stopping time $\varrho_{n - 1} \leq S$, a random subset $D_{n - 1}$ of $\{1, \dots, N\}$, and the $\F^N_{\varrho_{n - 1}}$-measurable random variables $\xi_{n - 1, i}$, $i = 1$,~\ldots, $N$, are defined for $n \geq 1$. Then we let $Y^{n, i}$ denote the unique solution to
\begin{equation*}
    \d Y^{n, i}_t = b(t, Y^{n, i}_t, \nu^{n, N}_t, \gamma^i_t) \, \d t + \sigma(t, Y^{n, i}_t, \nu^{n, N}_t) \, \d W^i_t + \sigma_0(t, Y^{n, i}_t, \nu^{n, N}_t) \, \d W^0_t
\end{equation*}
on the random interval $[\varrho_{n - 1}, S]$, where $Y^{n, i}_{\varrho_{n - 1}} = \xi_{n - 1, i}$ and $\nu^{n, N}_t = \frac{1}{N}\sum_{i = 1}^N \bf{1}_{i \notin D_{n - 1}} \delta_{Y^{n, i}_t}$. Such a solution exists e.g.\@ by \cite[Theorem 3.3.1]{zhang_bsde_2017}, which can be readily extended to random initial times. Then, we define $\varrho_n$ as the minimum between $S$ and the first killing time of a previously alive particle, i.e.\@ the first time $t \geq \varrho_{n - 1}$ such that $\theta_i \leq \int_0^t \lambda(s, Y^{n, i}_s, \nu^{n, N}_s) \, \d s$ for some $i \notin D_{n - 1}$. We obtain the set $D_n$ by adding the index of the killed particle to $D_{n - 1}$. If two particles are removed at the same time, we deal with them iteratively, selecting the particle with the smaller index first. Finally, we set $\xi_{n, i} = Y^{n, i}_{\varrho_{n - 1}-} - \frac{1}{N}\alpha(\varrho_{n - 1}, Y^{n, i}_{\varrho_{n - 1}-}, \nu^{n, N}_{\varrho_{n - 1}-})$. Since at most $N$ killings can occur $\varrho_{N + 1} = S$ and it is not difficult to see that the processes $X^1$,~\ldots, $X^N$ defined by $X^i_t = Y^{n, i}_t$ for $t \in [\varrho_{n - 1}, \varrho_n)$, $n = 1$,~\ldots, $N + 1$, form a solution to \eqref{eq:ps} on $[0, S)$. Uniqueness is an immediate consequence of the uniqueness on the individual intervals $[\varrho_{n - 1}, \varrho_n)$.

From now on, we assume that $\gamma^1$,~\ldots, $\gamma^N$ are admissible controls. The proof of the following uniform square-integrability result for the particle trajectories is entirely standard, so we omit it.

\begin{lemma} \label{lem:l_2_bound_ps}
The probability distribution $\bigl(\frac{1}{N}\sum_{i = 1}^N \L(X^i)\bigr)_N$ on $C([0, T])$ is uniformly square-integrable.
\end{lemma}

Recall the processes $\Lambda^i_t = \int_0^t \lambda(s, X^i_s, \nu^N_s) \, \d s$ and $I^i_t = \bf{1}_{\theta_i > \Lambda^i_t}$, and for $i = 1$,~\ldots, $N$ define $M^i$ by
\begin{equation*}
    M^i_t = I^i_t + \int_0^t I^i_s \lambda(s, X^i_s, \nu^N_s) \, \d s
\end{equation*}
for $t \geq 0$. If the intensity function $\lambda$ were only to depend on time, it would be easy to see that $\int_0^{\cdot} I^i_t \lambda(t) \, \d t$ is the compensator of $I^i$, so that $M^i$ is a martingale. The same is still true if $\lambda$ depends on $\theta_i$ through the indicator $I^i_t$ as we show in Proposition \ref{prop:martingale_property} below. The main challenge in the proof is to disentangle $\theta_i$ from $I^i_t$, which we achieve by introducing an auxiliary particle system. 

\begin{proposition} \label{prop:martingale_property}
The processes $M^i$ for $i = 1$,~\ldots, $N$ are martingales with respect to the filtration $\bb{F}^{N, \bf{I}}$ defined in \eqref{eq:filtration_ps}.
\end{proposition}

\begin{proof}
Let us introduce the stochastic process $\bf{Y}$ given by $\bf{Y}_t = ((\xi_i, W^i_t)_i, W^0_t)$, so that the filtration $\bb{F}^{N, \bf{I}}$ is generated by the pair $(\bf{Y}, \bf{I})$. In particular, since $\gamma^1$,~\dots, $\gamma^N$ are admissible, for $i = 1$,~\ldots, $N$, we can find a measurable function $\Phi^i \define [0, T] \times (\R \times C([0, T]))^N \times C([0, T]) \times (D_{[0, 1]}[0, T])^N \to G$ such that $\gamma^i_t = \Phi^i_t(\bf{Y}_{\cdot \land t}, \bf{I}_{\cdot \land t})$ for $\textup{Leb} \otimes \pr$-a.e.\@ $(t, \omega) \in [0, T] \times \Omega$. Now, let us fix an index $i \in \{1, \dots, N\}$. We can modify the solution $(X^1, \dots, X^N)$ of the particle system \eqref{eq:ps} constructed at the beginning of the section such that the modification $(\tilde{X}^1, \dots, \tilde{X}^N)$ coincides with the original solution up to $\tau_i = \inf\{t > 0 \define \Lambda^i_t \geq \theta_i\}$ but is independent of $\theta_i$. We achieve this by obtaining $(\tilde{X}^1, \dots, \tilde{X}^N)$ as the solution of the SDE
\begin{align} \label{eq:ps_one_out}
\begin{split}
    \d \tilde{X}^j_t &= b\bigl(t, \tilde{X}^j_t, \tilde{\nu}^N_t, \Phi^j_t(\bf{Y}_{\cdot \land t}, \tilde{\bf{I}'}_{\cdot \land t})\bigr) \, \d t + \sigma(t, \tilde{X}^j_t, \tilde{\nu}^N_t) \, \d W^j_t + \sigma_0(t, \tilde{X}^j_t, \tilde{\nu}^N_t) \, \d W^0_t \\
    &\ \ \ - \alpha(t, \tilde{X}^j_{t-}, \tilde{\nu}^N_{t-}) \, \d \tilde{L}^N_t, \\
    \d \tilde{\Lambda}^j_t &= \lambda(t, \tilde{X}^j_t, \tilde{\nu}^N_t) \, \d t,
\end{split}
\end{align}
with initial conditions $\tilde{X}^j_0 = \xi_j$ and $\tilde{\Lambda}^j_0 = 0$, where $\tilde{\bf{I}}$ has entries $\tilde{I}^k$ given by $\tilde{I}^k_t = \bf{1}_{\theta_k > \tilde{\Lambda}^k_t}$ for $t \geq 0$, $\tilde{\bf{I}}'$ arises from $\tilde{\bf{I}}$ by replacing the $i$th entry with the function $[0, \infty) \to [0, 1]$ that is constantly equal to $1$, and $\tilde{\nu}^N_t = \frac{1}{N} \sum_{k \neq i} \tilde{I}^k_t \delta_{\tilde{X}^k_t} + \frac{1}{N} \delta_{\tilde{X}^i_t}$. Using a similar construction as at the beginning of the section, we can show that SDE \eqref{eq:ps_one_out} has a unique strong solution $(\tilde{X}^1, \dots, \tilde{X}^N)$, which is independent of $\theta_i$, as it does not appear anywhere in the construction. Moreover, since $(X^1, \dots, X^N)$ satisfies the same SDE up to $\tau_i$, it follows from the pathwise uniqueness that $(\tilde{X}^1, \dots, \tilde{X}^N)$ and $(X^1, \dots, X^N)$ coincide on $[0, \tau_i)$.
% Let us sketch how to achieve this on some bounded time interval $[0, S)$ for $S > 0$. Firstly, when we remove particles at the times $\varrho_1$,~\ldots, $\varrho_N$, we ignore the $i$-th particle, so we obtain a collection $\tilde{\varrho}_1$,~\ldots, $\tilde{\varrho}_{N - 1}$ of the $N - 1$ ordered killing times of the particles $j \neq i$. The corresponding set of indices of those particles that have been killed up to time $\tilde{\varrho}_n$ is denoted by $\tilde{D}^n \subset \{1, \dots, N\} \setminus \{i\}$. Secondly, on the intervals $[\tilde{\varrho}_{n - 1}, \tilde{\varrho}_n)$, $n = 1$,~\ldots, $N$, with $\tilde{\varrho}_0 = 0$ and $\tilde{\varrho}_N = S$, we replace the controls $\gamma^j_t$ by $\tilde{\gamma}^{n, j}_t = \Phi^i_t(\bf{Y}_{\cdot \land t}, \tilde{\bf{I}}^n_{\cdot \land t})$, where the $j$-th entry $\tilde{I}^{n, j}$ of $\tilde{\bf{I}}^n$ is given by $\tilde{I}^{n, j}_s = 1$ if $j \in \tilde{D}^n$ and $I^{n, j} = 1$ otherwise. 
Hence, it holds that $\tilde{I}^j_t = I^j_t$ for $t \in [0, \tau_i)$ and $j = 1$,~\ldots, $N$. This in turn implies that the process $\tilde{M}^i_t = \tilde{I}^i_t + \int_0^t \tilde{I}^i_s \lambda(s, \tilde{X}^i_s, \tilde{\nu}^N_t) \, \d s$ coincides with $M^i$ up to $\tau_i$. But both $\tilde{M}^i$ and $M^i$ are constant after $\tau_i$, so $\tilde{M}^i$ and $M^i$ must be equal.

We now show that $\tilde{M}^i$ is an $\bb{F}^{N, \bf{I}}$-martingale. Firstly, $\tilde{M}^i$ is $\bb{F}^{N, \bf{I}}$-adapted and integrable at all times since it is bounded in $L^2$ by Lemma \ref{lem:l_2_bound_ps}. Thus, it remains to verify the martingale property. Let $\Psi \define (\R \times C([0, T]))^N \times C([0, T]) \times (D_{[0, 1]}[0, T])^N \to \R$ be bounded and measurable. Our goal is to show that $\ev[(\tilde{M}^i_t - \tilde{M}^i_s) \Psi(\bf{Y}_{\cdot \land s}, \bf{I}_{\cdot \land s})] = 0$ for all $t \geq s \geq 0$, as this implies the martingale property for $\tilde{M}^i$ if we let $\Psi$ range over all bounded and measurable functions. Note that if $s < \tau_i$, we have $\bf{I}_{\cdot \land s} = \tilde{\bf{I}}_{\cdot \land s}$, and, otherwise, $\tilde{M}^i_t - \tilde{M}^i_s$ vanishes. Consequently, we get
\begin{equation} \label{eq:equality_with_tilde}
    \ev\bigl[(\tilde{M}^i_t - \tilde{M}^i_s) \Psi(\bf{Y}_{\cdot \land s}, \bf{I}_{\cdot \land s})\bigr] = \ev\bigl[(\tilde{M}^i_t - \tilde{M}^i_s) \Psi(\bf{Y}_{\cdot \land s}, \tilde{\bf{I}}_{\cdot \land s})\bigr].
\end{equation}
The difference $\tilde{I}^i_t - \tilde{I}^i_s$ can only be nonzero if $\tilde{I}^i_s$, and therefore $\tilde{I}^i_u$ for $0 \leq u \leq s$, is equal to $1$. Thus, we get
\begin{equation*}
    \bb{E}\bigl[(\tilde{I}^i_t - \tilde{I}^i_s) \Psi(\bf{Y}_{\cdot \land s}, \tilde{\bf{I}}_{\cdot \land s})\bigr] = \bb{E}\bigl[(\tilde{I}^i_t - \tilde{I}^i_s) \Psi(\bf{Y}_{\cdot \land s}, \tilde{\bf{I}}'_{\cdot \land s})\bigr].
\end{equation*}
The process $\tilde{\Lambda}^i$ as well as the random variable $\Psi(\bf{Y}_{\cdot \land s}, \tilde{\bf{I}}'_{\cdot \land s})$ are independent of $\theta_i$. Since $\theta_i$ follows a standard exponential distribution, we can explicitly compute
\begin{align} \label{eq:exponential_calc}
\begin{split}
    \bb{E}\bigl[(\tilde{I}^i_t - \tilde{I}^i_s) \Psi(\bf{Y}_{\cdot \land s}, \tilde{\bf{I}}'_{\cdot \land s})\bigr] &= \bb{E}\bigl[\bigl(e^{-\tilde{\Lambda}^i_t} - e^{-\tilde{\Lambda}^i_s}\bigr) \Psi(\bf{Y}_{\cdot \land s}, \tilde{\bf{I}}'_{\cdot \land s})\bigr] \\
    &= \bb{E}\biggl[\Psi(\bf{Y}_{\cdot \land s}, \tilde{\bf{I}}'_{\cdot \land s}) \int_s^t -\lambda(u, \tilde{X}^i_u, \tilde{\nu}^N_u) e^{-\tilde{\Lambda}^i_u}\, \d u\biggr] \\
    &= \bb{E}\biggl[\Psi(\bf{Y}_{\cdot \land s}, \tilde{\bf{I}}'_{\cdot \land s}) \int_s^t -\tilde{I}^i_u \lambda(u, \tilde{X}^i_u, \tilde{\nu}^N_u) \, \d u\biggr].
\end{split}
\end{align}
Again, $-\int_s^t \tilde{I}_u\lambda(u, \tilde{X}^i_u, \tilde{\nu}^N_u) \, \d u$ is only nonzero if $\tilde{I}^i_s = 1$, whence
\begin{equation*}
    \bb{E}\biggl[\Psi(\bf{Y}_{\cdot \land s}, \tilde{\bf{I}}'_{\cdot \land s}) \int_s^t -\tilde{I}^i_u \lambda(u, \tilde{X}^i_u, \tilde{\nu}^N_u)\, \d u\biggr] = \bb{E}\biggl[\Psi(\bf{Y}_{\cdot \land s}, \tilde{\bf{I}}_{\cdot \land s}) \int_s^t -\tilde{I}^i_u \lambda(u, \tilde{X}^i_u, \tilde{\nu}^N_u)\, \d u\biggr].
\end{equation*}
Combining this equality with the previous two equations and rearranging implies that $\ev[(\tilde{M}^i_t - \tilde{M}^i_s) \Psi(\bf{Y}_{\cdot \land s}, \tilde{\bf{I}}_{\cdot \land s})] = 0$, so in view of Equation \eqref{eq:equality_with_tilde}, we get the desired equality $\ev[(\tilde{M}_t - \tilde{M}_s) \Psi(\bf{Y}_{\cdot \land s}, \bf{I}_{\cdot \land s})] = 0$.
\end{proof}

The martingale property of $M^i$ provides the crucial step in the proof of tightness of the sequence $(\mu^N, \nu^N, W^0)_N$, which is the subject of Proposition \ref{prop:tightness_ps} below. However, this does not imply that the loss and, therefore, the particle trajectories become continuous in the limit. To establish that we need to prove that the martingales $M^i$, $i = 1$,~\ldots, $N$, decorrelate asymptotically. 

\begin{lemma} \label{lem:compensated_martingale_l_2}
It holds that $\bb{E}\bigl\lvert\frac{1}{N}\sum_{i = 1}^N (M^i_t - 1)\bigr\rvert^2 = O(1/N)$ as $N \to \infty$.
\end{lemma}

\begin{proof}
For simplicity, we set $Z^i_t = M^i_t - 1$. We have that
\begin{equation*}
    \bb{E}\biggl\lvert \frac{1}{N} \sum\limits_{i = 1}^N Z^i_t \biggr\rvert^2 = \frac{1}{N^2}\sum\limits_{i = 1}^N \bb{E}\lvert Z^i_t \rvert^2 + \frac{1}{N^2} \sum\limits_{i = 1}^N \sum\limits_{j \neq i} \bb{E}[Z^i_t Z^j_t].
\end{equation*}
Lemma \ref{lem:l_2_bound_ps} implies that the expectations $\bb{E}\lvert Z^i_t \rvert^2$ are bounded uniformly in $N \geq 1$. Consequently, the first summand on the right-hand side is of order $1/N$ as required. Next, we prove that $\bb{E}[Z^i_t Z^j_t] = 0$ whenever $i \neq j$, so that the second summand vanishes. We have that
\begin{equation*}
    \bb{E}[Z^i_t Z^j_t] = \bb{E}[[Z^i, Z^j]_t] = \bb{E}\biggl[\sum_{0 \leq s \leq t} \Delta I^i_s \Delta I^j_s\biggr],
\end{equation*}
where $[\cdot, \cdot]$ denotes the quadratic covariation between two c\`adl\`ag semimartingales. Recall that $\tau_i = \inf\{t > 0 \define \Lambda^i_t \geq \theta_i\}$. We claim that $\pr(\tau_i = \tau_j < \infty) = 0$, which implies that the right-hand side above and, therefore, $\bb{E}[Z^i_t Z^j_t]$ equals zero. We can define a particle system $\tilde{X}^1$,~\ldots, $\tilde{X}^N$ similar to \eqref{eq:ps_one_out}, introduced in the proof of Proposition \ref{prop:martingale_property}. However, instead of only removing the possibility of killing the $i$th particle, we remove the possibility of killing particles $i$ and $j$, so the new particle system coincides with the original one up to time $\tau_i \land \tau_j$ and is independent of $(\theta_i, \theta_j)$. Hence, for $k = i$, $j$, the processes $\Lambda^k$ and $\tilde{\Lambda}^k = \int_0^{\cdot} \lambda(s, \tilde{X}^k_s, \tilde{\nu}^N_s) \, \d s$ agree up to $\tau_i \land \tau_j$, so that $\{\tau_i = \tau_j < \infty\} = \{\tilde{F}^i_{\theta_i} = \tilde{F}^j_{\theta_j} < \infty\}$, where $\tilde{F}^k_p = \inf\{t > 0 \define \tilde{\Lambda}^k_t \geq p\}$ for $p \in [0, \infty)$. However, $(\tilde{\Lambda}^i, \tilde{\Lambda}^j)$ and, therefore, $(\tilde{F}^i, \tilde{F}^j)$ are independent of $(\theta_i, \theta_j)$, so that
\begin{align*}
    \pr(\tau_i = \tau_j < \infty) &= \pr(\tilde{F}^i_{\theta_i} = \tilde{F}^j_{\theta_j} < \infty) \\
    &=\ev \int_{[0, \tilde{\Lambda}^i_{\infty}) \times [0, \tilde{\Lambda}^j_{\infty})} e^{-(p + q)} \bf{1}_{\tilde{F}^i_p = \tilde{F}^j_q} \, \d p \d q \\
    &= \ev \int_{[0, \infty)^2} \lambda(s, \tilde{X}^i_s, \tilde{\nu}^N_s)\lambda(r, \tilde{X}^j_r, \tilde{\nu}^N_r) e^{-(\tilde{\Lambda}^i_s + \tilde{\Lambda}^j_r)} \bf{1}_{s = r} \, \d s \d r \\
    &= 0.
\end{align*}
Here, we applied a change of variables in the third equality and used that at $p = \tilde{\Lambda}^k_s$, the function $p \mapsto \tilde{F}^k_p$ attains the value $\inf\{r > 0 \define \tilde{\Lambda}^k_r \geq \tilde{\Lambda}^k_s\} = s$. This establishes the claim and concludes the proof.
\end{proof}

Let us associate to each control $\gamma^i$ a random measure $\Gamma^i$ on $[0, T] \times G$ defined through $\d \Gamma^i(t, g) = \d \delta_{\gamma^i_t} \d t$. Obviously, $\Gamma^i$ takes values in $\bb{M}_T(G)$, the space measures on $[0, T] \times G$ with the Lebesgue measure as the time-marginal. Since $G$ is compact, the same is true for $\bb{M}_T(G)$. We can now prove the tightness of the particle system.

\begin{proposition} \label{prop:tightness_ps}
Let $(\gamma^{N, 1}, \dots, \gamma^{N, N})_{N \geq 1}$ be a sequence of admissible controls. Then, the sequences $(\mu^N, W^0)_N$ and $(\nu^N)_N$ are tight on $\Omega_0$ and $D_{\bf{M}^2}[0, T]$, respectively. Moreover, any limit point of $\bigl(\frac{1}{N}\sum_{i = 1}^N \L^{\pr}(X^{N, i})\bigr)_N$ is concentrated on the space of continuous functions $C([0, T])$.
\end{proposition}

\begin{proof}
We begin by proving tightness of $(\nu^N)_N$. By Lemma \ref{lem:weak_conv_upgrade}, the sequence $(\nu^N)_N$ is tight on $D_{\bf{M}^2}[0, T]$ if it is tight on $D_{\bf{M}^1}[0, T]$ (here $\bf{M}^1 = \cal{M}^1_{\leq 1}(\R)$) and is uniformly square-integrable in the sense that
\begin{equation*}
    \lim_{R \to \infty} \sup_{N \geq 1} \ev \sup_{t \in [0, T]} \int_{\R} \bf{1}_{\lvert x \rvert \geq R} \lvert x \rvert^2 \, \d \nu^N_t(x) = 0.
\end{equation*}
The latter holds true by Lemma \ref{lem:l_2_bound_ps}, so it remains to show tightness of $(\nu^N)_N$ on $D_{\bf{M}^1}[0, T]$. The space $\cal{M}^1_{\leq 1}(\R)$ equipped with the distance $d_1$ is a complete separable metric space. Thus by Theorem 8.6 of Chapter 3 in \cite{ethier_convergence_markov_1986} to prove tightness it is enough to show that (i) $(\nu^N_t)_N$ is tight for all $t \in [0, T]$ and (ii) there exists a sequence of random variables $(\zeta_N)_N$ with $\sup_N \bb{E} \zeta_N < \infty$ such that for all $0 \leq s \leq t \leq T$,
\begin{equation} \label{eq:d_1_bound}
    \bb{E}\bigl[d_1(\nu^N_t, \nu^N_s) \big\vert \F^{N, \bf{I}}_s\bigr] \leq \lvert t - s \rvert^{\frac{1}{2}} \bb{E}[\zeta_N \vert \F^{N, \bf{I}}_s].
\end{equation}
By the second part of Proposition \ref{prop:space_of_subprobs}, the sequence $(\nu^N_t)_N$ is tight on $\bf{M}^1$ if and only if the sequence $(\nu^N_t + (1 - \nu^N_t(\R))\delta_0)_N$ is tight on $\P^1(\R)$. By \cite[Proposition B.1]{lacker_mfg_controlled_mgale_2015}, the latter sequence is tight if $\ev[\nu^N_t + (1 - \nu^N_t(\R))\delta_0]$, $N \geq 1$, is precompact in $\P^1(\R)$ and the sequence of first moments $M_1(\nu^N_t + (1 - \nu^N_t(\R))\delta_0)$, $N \geq 1$, is uniformly integrable. Both follow from the uniform bound on $\frac{1}{N} \sum_{i = 1}^N \ev (\lvert X^i \rvert^{\ast}_T)^2$ that we established in Lemma \ref{lem:l_2_bound_ps}. This gives (i).  

For (ii), by definition of the metric $d_1$ from \eqref{eq:subprob_metric} and using the fact that $\nu^N_t + (1 - \nu^N_t(\R))\delta_0 = \frac{1}{N}\sum_{i = 1}^N \bigl(I^i_t \delta_{X^i_t} + (1 - I^i_t)\delta_0\bigr) = \frac{1}{N}\sum_{i = 1}^N \delta_{I^i_t X^i_t}$, we see that
\begin{align} \label{eq:split_of_metric}
    d_1(\nu^N_t, \nu^N_s) &= W_1\biggl(\frac{1}{N}\sum_{i = 1}^N \delta_{I^i_t  X^i_t}, \frac{1}{N}\sum_{i = 1}^N  \delta_{I^i_s X^i_s}\biggr) + \lvert L^N_t - L^N_s\rvert \notag \\
    &\leq \frac{1}{N} \sum_{i = 1}^N \bigl\lvert I^i_t X^i_t - I^i_s X^i_s\bigr\rvert + \frac{1}{N}\sum_{i = 1}^N \lvert I^i_t - I^i_s\rvert \notag \\
    &\leq \frac{1}{N} \sum\limits_{i = 1}^N \lvert X^i_t - X^i_s\rvert + \frac{1}{N} \sum\limits_{i = 1}^N (1 + \lvert X^i_s\rvert) \lvert I^i_t - I^i_s\rvert.
\end{align}
We will derive appropriate bounds in terms of a random variable $\zeta_N$ for both summands on the right-hand side above. We start with the first term. Define $\chi^k_N = \frac{1}{N} \sum_{i = 1}^N (\lvert X^i \rvert^{\ast}_T)^k$ and note that by Lemma \ref{lem:l_2_bound_ps} that $\sup_{N \geq 1}\bb{E} \chi^k_N < \infty$ for $k = 1$, $2$. Then we estimate using Assumption \ref{ass:red_form_cont_prob} \ref{it:growth} and the Burkholder-Davis-Gundy inequality
\begin{align*}
    \bb{E}\bigl[\lvert &X^i_t - X^i_s\rvert \big\vert \cal{F}^{N, \bf{I}}_s\bigr] \\
    &\leq \lvert t - s \rvert^{\frac{1}{2}} \Bigl(C_b \sqrt{T}\bb{E}\bigl[1 + \lvert X^i \rvert^{\ast}_T + \chi^1_N \big\vert \cal{F}^{N, \bf{I}}_s\bigr] + C_1 C_{\sigma} \Bigr)  + C_{\alpha} \bb{E}\bigl[\lvert L^N_t - L^N_s \rvert \big\vert \cal{F}^{N, \bf{I}}_s\bigr]
\end{align*}
for some constant $C_1 > 0$. Here we exploited that $\int_s^t \alpha(u, X^i_{u-}, \nu^N_{u-}) \, \d L^N_u \leq C_{\alpha} \lvert L^N_t - L^N_s \rvert$. Summing over $i = 1$,~\ldots, $N$, and dividing by $N$ yields
\begin{align} \label{eq:aux_ineq}
\begin{split}
    \frac{1}{N}\sum\limits_{i = 1}^N \bb{E}\bigl[\lvert X^i_t& - X^i_s\rvert \big\vert \cal{F}^{N, \bf{I}}_s\bigr] \\
    &\leq \lvert t - s\rvert^{\frac{1}{2}} \Bigl(C_b \bigl(\sqrt{T} + 2\sqrt{T} \bb{E}[\chi^1_N \vert \cal{F}^{N, \bf{I}}_s]\bigr) + C_1C_{\sigma}\Bigr) \\
    &\ \ \ + C_{\alpha} \bb{E}\bigl[\lvert L^N_t - L^N_s \rvert \big\vert \cal{F}^{N, \bf{I}}_s\bigr].
\end{split}
\end{align}
The first term on the right-hand side is of the desired form. We deal with the expression involving the loss increment next. We can do this together with the second summand in Equation \eqref{eq:split_of_metric}. First note that $\lvert I^i_t - I^i_s\rvert = I^i_s - I^i_t \leq M^i_s - M^i_t + \int_s^t \lvert \lambda(u, X^i_u, \nu^N_u)\rvert \, \d u$. The process $M^i$ is a martingale with respect to the filtration $\bb{F}^{N, \bf{I}}$ by Proposition \ref{prop:martingale_property}. Thus, since $\varphi(X^i_s)$ is $\F^{N, \bf{I}}_s$-measurable for any measurable function $\varphi \define \R \to \R$, assuming that $\varphi$ is of at most linear growth we obtain that
\begin{align*}
    \bb{E}\bigl[\lvert \varphi(X^i_s) \rvert \lvert I^i_t - I^i_s\rvert \big\vert \cal{F}^{N, \bf{I}}_s\bigr] &\leq \lvert \varphi(X^i_s)\rvert \bb{E}\bigl[ M^i_t - M^i_s \big\vert \cal{F}^{N, \bf{I}}_s\bigr] \\
    &\ \ \ + C_{\lambda} \lvert t - s\rvert \lvert \varphi(X^i_s)\rvert  \bb{E}\bigl[1 + \lvert X^i\rvert^{\ast}_T + \chi^1_N\big\vert \cal{F}^{N, \bf{I}}_s\bigr] \\
    &= C_{\lambda} \lvert t - s\rvert \lvert \varphi(X^i_s)\rvert  \bb{E}\bigl[1 + \lvert X^i\rvert^{\ast}_T + \chi^1_N\big\vert \cal{F}^{N, \bf{I}}_s\bigr].
\end{align*}
Averaging over all particles and choosing $\varphi(x) = x$ allows us to bound the second term on the right-hand side of Equation \eqref{eq:split_of_metric} by $2 C_{\lambda} \sqrt{T} \lvert t - s\rvert^{\frac{1}{2}} \bigl(1 + \bb{E}[\chi^2_N \vert \cal{F}^{N, \bf{I}}_s]\bigr)$. Setting $\varphi(x) = 1$ yields a bound for the loss in \eqref{eq:aux_ineq}:
\begin{equation*}
    C_{\alpha} \bb{E}\bigl[\lvert L^N_t - L^N_s \rvert \big\vert \cal{F}^{N, \bf{I}}_s\bigr] \leq 2 C_{\alpha} C_{\lambda} \sqrt{T} \lvert t - s\rvert^{\frac{1}{2}} \bigl(1 + \bb{E}[\chi^1_N \vert \cal{F}^{N, \bf{I}}_s]\bigr).
\end{equation*}
Hence, upon defining
\begin{equation*}
    \zeta_N = 2(C_b + C_{\alpha} C_{\lambda}) \sqrt{T} (1 + \chi^1_N) + C_1 C_{\sigma} + 2 C_{\lambda} \sqrt{T} (1 + \chi^2_N),
\end{equation*}
we obtain the desired bound \eqref{eq:d_1_bound} with $\sup_N \bb{E} \zeta_N < \infty$.

Next, we address the tightness of $\mu^N$. It is enough to prove tightness of the empirical laws of $(X^{N, i})_i$, $(W^i)_i$, $(\Gamma^{N, i})_i$, and $(I^{N, i})_i$ for $N \geq 1$ separately. For the Brownian motions, this is obvious, and for the controls tightness follows from the compactness of $\bb{M}_T(G)$. Thus it remains to show that $\frac{1}{N} \sum_{i = 1}^N \delta_{X^{N, i}}$ is tight on $\P^2(D[0, T])$ and $\frac{1}{N} \sum_{i = 1}^N \delta_{I^{N, i}}$ is tight on $\P(D_{[0, 1]}[0, T])$. However, by Proposition 2.2 (ii) of \cite{sznitman_prop_chaos_1991} this boils down to proving that $\frac{1}{N} \sum_{i = 1}^N \L(X^{N, i})$ and $\frac{1}{N} \sum_{i = 1}^N \L(I^{N, i})$ are relatively compact in $\P^2(D[0, T])$ and $\P(D_{[0, 1]}[0, T])$, respectively. The latter is equivalent to tightness of $\frac{1}{N} \sum_{i = 1}^N \L(X^{N, i})$ and $\frac{1}{N} \sum_{i = 1}^N \L(I^{N, i})$ on $D[0, T]$ and $D_{[0, 1]}[0, T]$, respectively, together with uniform square-integrability of $\frac{1}{N} \sum_{i = 1}^N \L(X^{N, i})$ by Theorem 7.12 of \cite{villan_ot_2003}. In both cases, tightness follows from essentially the same argument we used to establish tightness of $\nu^N$ above and uniform square-integrability holds by Lemma \ref{lem:l_2_bound_ps}.

Let us now move to the last statement, that any limit point of $\bigl(\frac{1}{N}\sum_{i = 1}^N \L^{\pr}(X^{N, i})\bigr)_N$ is concentrated on $C([0, T])$. We can write $X^{N, i}$ as the sum of a continuous process $C^{N, i}$ and the purely discontinuous martingale $J^{N, i}$ defined by
\begin{equation*}
    J^{N, i}_t = \frac{1}{N} \sum_{j = 1}^N\int_0^t \alpha(s, X^{N, i}_{s-}, \nu^N_{s-}) \, \d M^{N, j}_s.
\end{equation*}
We will prove below that $\ev \sup_{0 \leq t \leq T} \lvert J^{N, i}_t\rvert^2$ converges to zero as $N \to \infty$. Consequently, the families $\bigl(\frac{1}{N}\sum_{i = 1}^N \L^{\pr}(X^{N, i})\bigr)_N$ and $\bigl(\frac{1}{N}\sum_{i = 1}^N \L^{\pr}(C^{N, i})\bigr)_N$ have the same weak limit points on $D[0, T]$. However, the processes $C^{N, i}$ are continuous and the space of continuous functions $C([0, T])$ is closed in $D[0, T]$ with respect to the topology of convergence in $J1$. Thus, any limit point of $\bigl(\frac{1}{N}\sum_{i = 1}^N \L^{\pr}(C^{N, i})\bigr)_N$ and, therefore, $\bigl(\frac{1}{N}\sum_{i = 1}^N \L^{\pr}(X^{N, i})\bigr)_N$ is concentrated on $C([0, T])$ as desired. It remains to show that $J^{N, i}$ vanishes in the $L^2$-$\sup$-limit. For brevity, let us set $H^{N, i}_t = \alpha(t, X^{N, i}_{t-}, \nu^N_{t-})$ and $\bar{M}^N_t = \frac{1}{N}\sum_{j = 1}^N (M^{N, j}_t - 1)$. Then by the Burkholder-Davis-Gundy inequality,
\begin{align*}
    \ev\sup_{0 \leq t \leq T} \lvert J^{N, i}\rvert^2 &\leq 4 \ev \int_0^T \lvert H^{N, i}_t\rvert^2 \, \d [\bar{M}^N]_t \leq 4 C_{\alpha}^2 \ev [\bar{M}^N]_T = 4 C_{\alpha}^2 \ev\biggl\lvert \frac{1}{N}\sum_{j = 1}^N (M^{N, j}_T - 1) \biggr\rvert^2.
\end{align*}
The expression on the right-hand side is in $O(1/N)$ by Lemma \ref{lem:compensated_martingale_l_2}, which concludes the proof.
\end{proof}

\subsection{Properties of the Limit System} \label{sec:limit_system}

As in the previous subsection, we impose Assumption \ref{ass:red_form_cont_prob} and fix admissible controls $\gamma^1$,~\ldots, $\gamma^N$ for the particle system. Then, in view of Proposition \ref{prop:tightness_ps}, the sequence $(\mu^N, W^0)_N$ is tight. For notational convenience in this and the subsequent two subsections, we suppress the asterisk in the superscript of the canonical variables $\Theta^{\ast} = (X^{\ast}, W^{\ast}, \Gamma^{\ast}, I^{\ast}, \mu^{\ast}, B^{\ast})$ on the space $\Omega_{\ast}$. E.g.\@ we write $X$ instead of $X^{\ast}$. We also set
\begin{equation*}
    \Theta^{N, i} = (X^{N, i}, W^i, \Gamma^{N, i}, I^{N, i}, \mu^N, W^0)
\end{equation*}
for $i = 1$,~\ldots, $N$ and $\Theta^{N, 0} = (\mu^N, W^0)$. The first goal of this subsection is to verify that all subsequential limits $\pr_0$ of the particle system $(\mu^N, W^0)_N$ satisfy Property \ref{it:martingale} of Definition \ref{def:rel_cntrl}. Secondly, we will show that for any subsequential limit $\pr_0 \in \P(\Omega_0)$ of $(\mu^N, W^0)_N$, we have that $\L(\nu^N)$ converges weakly to $\L_{\ast}(\nu)$ on $D_{\bf{M}^2}[0, t]$ along the chosen subsequence, where $\pr_{\ast}$ is the probability distribution on $\Omega_{\ast}$ associated to $\pr_0$ via \eqref{eq:prob_associate} and $\L_{\ast}$ denotes the law under $\pr_{\ast}$.

Note that throughout this and the following subsections, we will generically use $\pr_{\ast}$ to denote the probability distribution associated to $\pr_0$ via \eqref{eq:prob_associate} without mentioning it explicitly. We start with the following lemma.

\begin{lemma} \label{lem:weak_conv_particle}
Let $\pr_0$ be a subsequential limit of $(\mu^N, W^0)_N$, then $\frac{1}{N}\sum_{i = 1}^N \L(\Theta^{N, i})$ converges weakly to $\pr_{\ast}$ along the same subsequence. In particular, the process $X$ has $\pr_{\ast}$-a.s.\@ continuous trajectories and $\pr_{\ast}(\Delta I_t = 1) = 0$ for all $t \in [0, T]$.
\end{lemma}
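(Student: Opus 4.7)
The plan is to prove weak convergence by testing against arbitrary bounded continuous functions on $\Omega_{\ast}$ and exploiting that $\mu^N$ is the empirical measure of the tuples $(X^{N, i}, W^i, \Gamma^{N, i}, \theta_i)$. Fix $\varphi \in C_b(\Omega_{\ast})$ and abbreviate $\omega_0^N = (\mu^N, \nu^N, W^0)$. Since the last three coordinates of $\Theta^{N, i}$ do not depend on $i$ and $\mu^N$ is the empirical measure of the first four, one computes
\begin{equation*}
    \int_{\Omega_{\ast}} \varphi \, \d\Bigl(\frac{1}{N}\sum_{i = 1}^N \L(\Theta^{N, i})\Bigr) = \ev\Bigl[\frac{1}{N}\sum_{i = 1}^N \varphi(\Theta^{N, i})\Bigr] = \ev\bigl[\bar{\varphi}(\omega_0^N)\bigr],
\end{equation*}
where $\bar{\varphi} \define \Omega_0 \to \R$ is defined by $\bar{\varphi}(m, v, b) = \int_{\cal{S}} \varphi(s, m, v, b) \, \d m(s)$.

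The key technical step, and the main (though mild) obstacle in the argument, is to verify that $\bar{\varphi} \in C_b(\Omega_0)$. Boundedness is immediate. For continuity, I would fix a convergent sequence $(m_n, v_n, b_n) \to (m, v, b)$ in $\Omega_0$ and invoke the Skorokhod representation theorem for $m_n \to m$ in $\P(\cal{S})$ to realise $\cal{S}$-valued random variables $S_n \sim m_n$ and $S \sim m$ on an auxiliary probability space with $S_n \to S$ almost surely. Joint continuity of $\varphi$ then yields $\varphi(S_n, m_n, v_n, b_n) \to \varphi(S, m, v, b)$ almost surely, and bounded convergence gives $\bar{\varphi}(m_n, v_n, b_n) \to \bar{\varphi}(m, v, b)$.

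With this continuity in hand, the hypothesised weak convergence $\L(\omega_0^N) \to \pr_0$ along the chosen subsequence implies
\begin{equation*}
    \ev[\bar{\varphi}(\omega_0^N)] \to \int_{\Omega_0} \bar{\varphi}(m, v, b) \, \d \pr_0(m, v, b) = \int_{\Omega_0} \int_{\cal{S}} \varphi(s, m, v, b) \, \d m(s) \, \d \pr_0(m, v, b) = \int_{\Omega_{\ast}} \varphi \, \d \pr_{\ast},
\end{equation*}
where the last identity is the defining relation \eqref{eq:prob_associate} of $\pr_{\ast}$. Since $\varphi \in C_b(\Omega_{\ast})$ was arbitrary, this gives $\frac{1}{N}\sum_{i = 1}^N \L(\Theta^{N, i}) \to \pr_{\ast}$ weakly along the subsequence.

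For the final assertion, the $D[0, T]$-marginal of the limiting measure $\pr_{\ast}$ is the weak limit of $\frac{1}{N}\sum_{i = 1}^N \L(X^{N, i})$ on $D[0, T]$ along the subsequence. By the last statement of Proposition \ref{prop:tightness_ps}, any such limit is concentrated on $C([0, T])$, so the law of $X$ under $\pr_{\ast}$ is supported on $C([0, T])$, i.e.\ $X$ has $\pr_{\ast}$-almost surely continuous trajectories.
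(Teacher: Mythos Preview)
Your proof is correct and follows essentially the same route as the paper: both reduce the weak convergence to testing $\ev[\bar{\varphi}(\mu^N,\nu^N,W^0)]$ for the map $\bar{\varphi}(m,v,b)=\langle m,\varphi(\cdot,m,v,b)\rangle$, observe that this map is in $C_b(\Omega_0)$, and then invoke the definition \eqref{eq:prob_associate} of $\pr_{\ast}$; the continuity of $X$ is deduced from Proposition~\ref{prop:tightness_ps} in the same way. The only cosmetic difference is that you spell out the continuity of $\bar{\varphi}$ via Skorokhod representation, whereas the paper simply asserts it.
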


\begin{proof}
For ease of notation assume that convergence holds along the entire sequence. Let $\varphi \in C_b(\Omega_{\ast})$, so the map $\Omega_0 \ni (m, v, b) \mapsto \langle m, \varphi(\cdot, m, b)\rangle$ is continuous. Then, the continuous mapping theorem implies
\begin{equation*}
    \frac{1}{N}\sum_{i = 1}^N \ev \varphi(\Theta^{N, i}) = \ev \langle \mu^N, \varphi(\cdot, \Theta^{N, 0}) \rangle \to \int_{\Omega_0} \langle m, \varphi(\cdot, m, b)\rangle \, \d \pr_0(m, b).
\end{equation*}
However, $\int_{\Omega_0} \langle m, \varphi(\cdot, m, b)\rangle \, \d \pr_0(m, b) = \ev_{\ast}\langle \mu, \varphi(\cdot, \Theta^0)\rangle = \ev_{\ast}\varphi(\Theta)$, which gives the desired convergence.

Next, by Proposition \ref{prop:tightness_ps} any limit point of $\bigl(\frac{1}{N}\sum_{i = 1}^N \L^{\pr}(X^{N, i})\bigr)_N$ is concentrated on $C([0, T])$. By the above, the law of $X$ under $\pr_{\ast}$ is precisely such a limit point, so that $X$ has a.s.\@ continuous trajectories under $\pr_{\ast}$.

Lastly, since $\frac{1}{N}\sum_{i = 1}^N \L(I^{N, i})$ converges weakly to $\L_{\ast}(I)$ on $D_{[0, 1]}[0, 1]$, we can find a dense set of times $\bb{T} \subset [0, T]$ including $0$ and $T$ such that $\frac{1}{N}\sum_{i = 1}^N \L(I^{N, i}_t) \Rightarrow \L_{\ast}(I_t)$. Now, let us fix an arbitrary $t \in [0, T]$. Then, for any $t_0$, $t_1 \in \bb{T}$ with $t_0 \leq t \leq t_1$, we have that
\begin{align*}
    \pr_{\ast}(\Delta I_t = 1) &\leq \ev_{\ast}[I_{t_1} - I_{t_0}]\\
    &= \lim_{N \to \infty} \frac{1}{N}\sum_{i = 1}^N \ev\bigl[I^{N, i}_{t_1} - I^{N, i}_{t_0}\bigr] \\
    &= \lim_{N \to \infty} \frac{1}{N}\sum_{i = 1}^N \ev \int_{t_0}^{t_1} I^{N, i}_s \lambda(s, X^{N, i}_s, \nu^N_s) \, \d s \\
    &\leq \lim_{N \to \infty} \frac{1}{N}\sum_{i = 1}^N C_{\lambda}\bigl(1 + 2\ev(\lvert X^{N, i}\rvert^{\ast}_T)^2\bigr) (t_1 - t_0).
\end{align*}
By Lemma \ref{lem:l_2_bound_ps}, the right-hand side is bounded by $C(t_1 - t_0)$ for a constant $C > 0$ independent of $t_0$ and $t_1$. Hence, letting $t_0 \nearrow t$ and $t_1 \searrow t$, implies $\pr_{\ast}(\Delta I_t = 1) = 0$. 
\end{proof}

Let us recall the process $M$ given by $M_t = I_t + \int_0^t I_s \lambda(s, X_s, \nu_s) \, \d s$. We can verify the following properties for $M$ under any limiting probability $\pr_0$.

\begin{proposition} \label{prop:compensator_limit}
Let $\pr_0$ be a subsequential limit of $(\mu^N, W^0)_N$. Then $M$ is an $(\bb{F}^I \lor \bb{F}^{\ast})$-martingale under $\pr_{\ast}$ and $\ev_{\ast}[\int_0^T H_t \, \d M_t \vert \F^0_T] = 0$ holds $\pr_{\ast}$-a.s.\@ for all bounded $\bb{F}^{\ast}$-predictable processes $H$.
\end{proposition}

\begin{proof}
We will again assume that $(\mu^N, W^0)_N$ converges along the entire sequence. To show that $M$ is an $(\bb{F}^I \lor \bb{F}^{\ast})$-martingale under $\pr_{\ast}$ it suffices to prove that $\ev_{\ast}[(M_t - M_s) \Phi(\Theta_{\cdot \land s})] = 0$ for all $\Phi \in C_b(\Omega_{\ast})$ and $0 \leq s \leq t \leq T$, where $\Theta_{\cdot \land s} = (X_{\cdot \land s}, W_{\cdot \land s}, \Gamma_s, I_{\cdot \land s}, \pi^{\#}_s \mu, B_{\cdot \land s})$, $\pi_s \define \cal{S} \to \cal{S}$ is given by $(x, w, \mathfrak{g}, p) \mapsto (x_{\cdot \land s}, w, \mathfrak{g}_s, p_{\cdot \land s})$, and $\mathfrak{g}_s$ is defined in \eqref{eq:measure_stop}. We will write the expectation $\ev_{\ast}[(M_t - M_s) \Phi(\Theta_{\cdot \land s})]$ as the limit of the corresponding expression $\frac{1}{N}\sum_{i = 1}^N \ev\bigl[(M^{N, i}_t - M^{N, i}_s) \Phi(\Theta^{N, i}_{\cdot \land s})\bigr]$ for the particle system and then use that $M^{N, 1}$,~\ldots, $M^{N, N}$ are martingales by Proposition \ref{prop:martingale_property} to conclude that the expectation vanishes. To make this work, we need to verify that $\frac{1}{N}\sum_{i = 1}^N \ev\bigl[(M^{N, i}_t - M^{N, i}_s) \Phi(\Theta^{N, i}_{\cdot \land s})\bigr]$ indeed converges to $\ev_{\ast}[(M_t - M_s) \Phi(\Theta_{\cdot \land s})]$ as $N \to \infty$. We achieve this in two steps. First, we will show that the function $\Theta_{\cdot \land s}$ is continuous at $\pr_{\ast}$-a.e.\@ $\omega \in \Omega_{\ast}$ and then that $\frac{1}{N} \sum_{i = 1}^N \L(M^{N, i}_u) \Rightarrow \L_{\ast}(M_u)$ for $u \in [0, T]$. The former, in conjunction with the continuous mapping theorem, implies that $\frac{1}{N}\sum_{i = 1}^N \L(\Phi(\Theta^{N, i}_{\cdot \land s})) \Rightarrow \L_{\ast}(\Phi(\Theta_{\cdot \land s}))$. Thus, passing to a subsequence if necessary, we find that
\begin{equation*}
    \frac{1}{N}\sum_{i = 1}^N \L\bigl((M^{N, i}_t - M^{N, i}_s) \Phi(\Theta^{N, i}_{\cdot \land s})\bigr) \Rightarrow \L_{\ast}\bigl((M_t - M_s) \Phi(\Theta_{\cdot \land s})\bigr)
\end{equation*}
This, together with the uniform boundedness of $\frac{1}{N}\sum_{i = 1}^N \ev\lvert M^{N, i}_u\rvert^2$ in $N \geq 1$ from Lemma \ref{lem:l_2_bound_ps} and the boundedness of $\Phi$ implies that
\begin{equation*}
    \ev_{\ast}[(M_t - M_s) \Phi(\Theta_{\cdot \land s})] = \lim_{N \to \infty} \frac{1}{N} \sum_{i = 1}^N \ev\bigl[(M^{N, i}_t - M^{N, i}_s) \Phi(\Theta^{N, i}_{\cdot \land s})\bigr] = 0,
\end{equation*}
where the last equality holds because $M^{N, i}$ is a martingale with respect to the filtration $\bb{F}^{N, \bf{I}}$ defined in \eqref{eq:filtration_ps} and $\Phi(\Theta^{N, i}_{\cdot \land s}) \in \F^{N, \bf{I}}_s$.

So let us show that $\Theta_{\cdot \land s}$ is continuous at $\pr_{\ast}$-a.e.\@ $\omega \in \Omega_{\ast}$. Note that the map $\Theta_{\cdot \land s}$ is continuous at any point $(x, w, \mathfrak{g}, p, m, b)$, for which $x$ and $p$ are continuous at $s$ and $m(C([0, T]) \times C([0, T]) \times \bb{M}_T(G) \times D^s_{[0, 1]}[0, T]) = 1$, where $D^s_{[0, 1]}[0, T]$ denotes the set of paths in $D_{[0, 1]}[0, T]$ that are continuous at $s$. To verify the last assertion, simply note the stopping map $\pi_s$ is continuous on
\begin{equation*}
    C([0, T]) \times C([0, T]) \times \bb{M}_T(G) \times D^s_{[0, 1]}[0, T] \subset \cal{S},
\end{equation*}
so by the continuous mapping theorem the pushforward $\pi^{\#}_s$ is continuous at distributions $m$ with $m(C([0, T]) \times C([0, T]) \times \bb{M}_T(G) \times D^s_{[0, 1]}[0, T]) = 1$. Let us check that the desired continuity properties hold $\pr_{\ast}$-almost surely. The $\pr_{\ast}$-a.s.\@ continuity of $X$ and $I$ at $s$ are established in Lemma \ref{lem:weak_conv_particle}. Next, since $\mu = \L_{\ast}(X, W, \Gamma, I \vert \F^0_T)$ we get that
\begin{equation*}
    \mu\Bigl(C([0, T]) \times C([0, T]) \times \bb{M}_T(G) \times D^s_{[0, 1]}[0, T])\Bigr) = \pr_{\ast}\bigl(X \in C([0, T]),\, I \in D^s_{[0, 1]}[0, T] \big\vert \F^0_T\bigr).
\end{equation*}
But the expectation of the right-hand side equals $1$ by Lemma \ref{lem:weak_conv_particle}, so $\pr_{\ast}$-a.s.\@ the probability on the right-hand side above is $1$. Consequently, the pushforward $\pi^{\#}_s$ is continuous at $\L_{\ast}(\mu)$-a.e.\@ element of $\P^2(\cal{S})$ and we can conclude that $\Theta_{\cdot \land s}$ is $\pr_{\ast}$-a.s.\@ continuous.

Next, we prove that $\frac{1}{N}\sum_{i = 1}^N \L(M^{N, i}_u) \Rightarrow \L_{\ast}(M_u)$ for $u \in [0, T]$. Since $\pr(\Delta I_u = 1) = 0$ for all $u \in [0, T]$ by Lemma \ref{lem:weak_conv_particle}, we have that $\frac{1}{N} \sum_{i = 1}^N \L(I^{N, i}_u) \Rightarrow \L_{\ast}(I_u)$. The integral $\int_0^{\cdot} I_u \lambda(u, X_u, \nu_u) \, \d u$ is a continuous function of $I$, $X$, and $\nu$, where $\nu$ is given by $\nu_u = \int_{\cal{S}} p_u \delta_{x_u} \, \d \mu(x, w, \mathfrak{g}, p)$. Thus, if we can show that $\L(\nu^N) \Rightarrow \L_{\ast}(\nu)$ on $D_{\bf{M}^2}[0, T]$, then it follows that $\frac{1}{N}\sum_{i = 1}^N \L(M^{N, i}_u) \Rightarrow \L_{\ast}(M_u)$. We know from Proposition \ref{prop:tightness_ps} that $(\nu^N)_{N \geq 1}$ is tight on $D_{\bf{M}^2}[0, T]$. Hence, to show that $\nu^N$ converges weakly to $\nu$, it suffices to show that the finite-dimensional marginals of $\nu^N$ tend to $\nu$. As above, we can show that the map $m \mapsto \int_{\cal{S}} p_u \delta_{x_u} \, \d m(x, w, \mathfrak{g}, p)$ is continuous at any $m$ for which $m(C([0, T]) \times C([0, T]) \times \bb{M}_T(G) \times D^u_{[0, 1]}[0, T]) = 1$. Since $\mu$ is supported on the set of such measures, the continuous mapping theorem yields $\nu^N_u = \int_{\cal{S}} p_u \delta_{x_u} \, \d \mu^N(x, w, \mathfrak{g}, p) \Rightarrow \int_{\cal{S}} p_u \delta_{x_u} \, \d \mu(x, w, \mathfrak{g}, p) = \nu_u$. This weak convergence is easily extended to any finite collection $0 \leq t_1 < \dots t_n \leq T$ of time points in $[0, T]$, so we can conclude that $\L(\nu^N)$ tends to $\L_{\ast}(\nu)$ on $D_{\bf{M}^2}[0, T]$. Consequently, we get the desired convergence $\frac{1}{N}\sum_{i = 1}^N \L(M^{N, i}_u) \Rightarrow \L_{\ast}(M_u)$. This finishes the proof of the first part of the proposition.

For the second statement, we note that it suffices to show that $\ev_{\ast}[\int_0^T H_t \, \d M_t \vert \F^0_T] = 0$ holds $\pr_{\ast}$-a.s.\@ for all bounded $\bb{F}^{\ast}$-predictable simple processes $H = \sum_{k = 0}^{n - 1} H^k \bf{1}_{(t_k, t_{k + 1}]}$ with $0 = t_0 < t_1 < \dots < t_n = T$ and $H^k = h_k(\tilde{\Theta}_{\cdot \land t_k})$ for $h_k \in C_b(\Omega_{\ast})$, where $\tilde{\Theta}_{\cdot \land t_k}$ arises from $\Theta_{\cdot \land t_k}$ by replacing $I_{\cdot \land t_k}$ with the process that is constantly equal to $1$. Indeed, these simple processes are dense in the space of all bounded $\bb{F}^{\ast}$-predictable processes with respect to the metric $(K^1, K^2) \mapsto \ev_{\ast} \int_0^T \lvert K^1_t - K^2_t\rvert^2 \, \d [M]_t$. To prove that $\ev_{\ast}[\int_0^T H_t \, \d M_t \vert \F^0_T] = 0$ for such simple processes $H$, we again appeal to the particle system. As above, we have for all $\Phi \in C_b(\Omega_0)$ that
\begin{align} \label{eq:simple_int_zero}
    \ev_{\ast}\biggl[\biggl(\int_0^T H_t \, \d M_t\biggr)\Phi(\Theta^0)\biggr] &=  \sum_{k = 0}^{n - 1}\ev_{\ast}\Bigl[h_k(\tilde{\Theta}_{\cdot \land t_k})(M_{t_{k + 1}} - M_{t_k})\Phi(\Theta^0)\Bigr] \notag \\
    &= \lim_{N \to \infty} \sum_{k = 0}^{n - 1} \frac{1}{N}\sum_{i = 1}^N \ev\Bigl[h_k\bigl(\tilde{\Theta}^{N, i}_{\cdot \land t_k}\bigr)\bigl(M^{N, i}_{t_{k + 1}} - M^{N, i}_{t_k}\bigr)\Phi(\Theta^{N, 0})\Bigr],
\end{align}
where $\tilde{\Theta}^{N, i}_{\cdot \land t_k}$ arises from $\Theta^{N, i}_{\cdot \land t_k}$ by replacing $I^{N, i}_{\cdot \land t_k}$ with the process that is constantly equal to $1$. Now, by Lemma \ref{lem:compensated_martingale_l_2}, $\frac{1}{N}\sum_{i = 1}^N \bigl(M^{N, i}_{t_{k + 1}} - M^{N, i}_{t_k}\bigr)$ tends to zero in $L^2$, while $h_k\bigl(\tilde{\Theta}^{N, i}_{\cdot \land t_k}\bigr)$ and $\Phi(\Theta^{N, 0})$ are uniformly bounded. Consequently, the expression on the right-hand side of \eqref{eq:simple_int_zero} equals zero. Hence, $\ev_{\ast}[(\int_0^T H_t \, \d M_t)\Phi(\Theta^0)] = 0$ for all $\Phi \in C_b(\Omega_0)$, which yields $\ev_{\ast}[\int_0^T H_t \, \d M_t \vert \F^0_T] = 0$ as required.
\end{proof}

The following result shows that $\nu$ is a $\pr_{\ast}$-a.s.\@ continuous function on $\Omega_{\ast}$ and has $\pr_{\ast}$-a.s.\@ continuous trajectories. This does not only hold if $\pr_{\ast}$ arises as the limit of a particle system, but as soon as $\pr_0$ satisfies suitable regularity properties.

\begin{proposition} \label{prop:id_of_nu}
Let $\pr_0 \in \P(\Omega_0)$ be such that $X$ has $\pr_{\ast}$-a.s.\@ continuous trajectories and $\ev_{\ast}[\int_0^T H_t \, \d M_t \vert \F^0_T] = 0$ holds $\pr_{\ast}$-a.s.\@ for all bounded $\bb{F}^{\ast}$-predictable processes $H$. Then $\nu$ is continuous at $\pr_{\ast}$-a.e.\@ $\omega \in \Omega_{\ast}$ and has $\pr_{\ast}$-a.s.\@ continuous trajectories.
\end{proposition}

\begin{proof}
Let $(\tilde{X}, \tilde{W}, \tilde{\Gamma}, \tilde{I})$ denote the canonical random element on $\cal{S}$ and define $\cal{V} \define \P^2(\cal{S}) \to D_{\bf{M}^2}[0, T]$ by $\cal{V}(m) = \int_{\cal{S}} p_t \delta_{x_t} \, \d m(x, w, \mathfrak{g}, b)$ for $m \in \P^2(\cal{S})$ and $t \in [0, T]$, so that $\nu_t = \cal{V}_t(\mu)$. By assumption, to show that $\nu$ is $\pr_{\ast}$-a.s.\@ continuous on $\Omega_{\ast}$, it suffices to show that $\cal{V}$ is continuous at every $m \in \P^2(\cal{S})$ for which $\tilde{X}$ has $m$-a.s.\@ continuous trajectories and  
\begin{equation} \label{eq:compensate}
    \ev_m[\tilde{I}_t - \tilde{I}_s] = - \int_s^t \ev_m\bigl[\tilde{I}_u \lambda(u, \tilde{X}_u, \cal{V}_u(m))\bigr] \, \d u
\end{equation}
for all $0 \leq s \leq t \leq T$, where $\ev_m$ denotes the expectation on $\cal{S}$ under $m$. Indeed, $\mu$ is supported on measures of this type according to the assumptions of the proposition.

Now, let us define $t_k = kT/K$ for $k = 0$,~\ldots, $K$ and $K \geq 1$, and let $(m^n)_{n \geq 1}$ be a sequence in $\P^2(\cal{S})$ that converges to $m$. Then Equation \eqref{eq:compensate} implies that $m(\Delta \tilde{I}_t = 1) = 0$ for $t \in [0, T]$, so we can find a coupling $\pi^n$ of $m^n$ and $m$, such that $\ev_{\pi^n}[\lvert \tilde{I}^1_{t_k} - \tilde{I}^2_{t_k}\rvert + (\lvert \tilde{X}^1 - \tilde{X}^2\rvert^{\ast}_T)^2] \to 0$ for all $k \in \{0, \dots, K\}$. Here $(\tilde{X}^i, \tilde{W}^i, \tilde{\Gamma}^i, \tilde{I}^i)_{i = 1, 2}$ is the canonical random element on $\cal{S}^2$. Next, setting $v^n_t = \cal{V}_t(m^n)$ and $v_t = \cal{V}_t(m)$, we estimate similarly to \eqref{eq:split_of_metric},
\begin{align} \label{eq:d_1_bound_v}
    \sup_{0 \leq t \leq T} d_1(v^n_t, v_t) &\leq \sup_{0 \leq t \leq T}\Bigl(W_1\Bigl(v^n_t + (1 - v^n_t(\R))\delta_0, v_t + (1 - v_t(\R))\delta_0\Bigr) + \lvert v^n_t(\R) - v_t(\R)\rvert\Bigr) \notag \\
    &\leq \sup_{0 \leq t \leq T} \ev_{\pi^n}\lvert \tilde{I}^1_t\tilde{X}^1_t - \tilde{I}^2_t\tilde{X}^2_t\rvert + \sup_{0 \leq t \leq T}\ev_{\pi^n}\lvert\tilde{I}^1_t - \tilde{I}^2_t\rvert \notag \\
    &\leq \sup_{0 \leq t \leq T} \ev_{\pi^n}\lvert \tilde{X}^1_t - \tilde{X}^2_t\rvert + \sup_{0 \leq t \leq T}\ev_{\pi^n}\bigl[(1 + \lvert \tilde{X}^2_t\rvert) \lvert \tilde{I}^1_t - \tilde{I}^2_t\rvert\bigr].
\end{align}
The first term on the right-hand side tends to zero as $n \to \infty$ by the choice of the coupling $\pi^n$. We deal with the second expression next. By the Cauchy-Schwarz inequality, we have
\begin{align*}
    \sup_{0 \leq t \leq T}\ev_{\pi^n}\bigl[(1 + \lvert \tilde{X}^2_t\rvert) \lvert \tilde{I}^1_t - \tilde{I}^2_t\rvert\bigr]^2 \leq C^2\sup_{0 \leq t \leq T}\ev_{\pi^n}\lvert\tilde{I}^1_t - \tilde{I}^2_t\rvert,
\end{align*}
where $C^2 = \sup_{0 \leq t \leq T} \ev_m[(1 + \lvert \tilde{X}_t\rvert)^2]$. To bound the expression on the right-hand side, we use that
\begin{align*}
    \lvert \tilde{I}^1_t - \tilde{I}^2_t\rvert &\leq \bigl\lvert \tilde{I}^1_{t_k} - \tilde{I}^2_{t_{k + 1}}\bigr\rvert + \bigl\lvert\tilde{I}^1_{t_{k + 1}} - \tilde{I}^2_{t_k}\bigr\rvert \\
    &\leq \bigl\lvert\tilde{I}^1_{t_k} - \tilde{I}^2_{t_k}\bigr\rvert + \bigl\lvert\tilde{I}^1_{t_{k + 1}} - \tilde{I}^2_{t_{k + 1}}\bigr\rvert + 2\bigl\lvert\tilde{I}^2_{t_{k + 1}} - \tilde{I}^2_{t_k}\bigr\rvert
\end{align*}
whenever $t \in [t_k, t_{k + 1}]$. This implies that
\begin{align*}
    \sup_{0 \leq t \leq T}\ev_{\pi^n}\lvert\tilde{I}^1_t - \tilde{I}^2_t\rvert \leq 2\sup_{0 \leq k \leq K} \ev_{\pi^n}\bigl\lvert \tilde{I}^1_{t_k} - \tilde{I}^2_{t_k}\bigr\rvert + 2\sup_{0 \leq k \leq K - 1}\int_{t_k}^{t_{k + 1}} \ev_m\bigl[\tilde{I}_t \lambda(t, \tilde{X}_t, v_t)\bigr] \, \d t,
\end{align*}
where we made use of \eqref{eq:compensate}. The first summand on the right-hand side vanishes as $n \to \infty$ due to our choice of $\pi^n$. Hence, taking the limit superior as $n \to \infty$ on both sides of \eqref{eq:d_1_bound_v} implies that
\begin{equation*}
    \limsup_{n \to \infty} \biggl(\sup_{0 \leq t \leq T} d_1(v^n_t, v_t)\biggr) \leq 2C\sup_{0 \leq k \leq K - 1}\biggl(\int_{t_k}^{t_{k + 1}} \ev_m\bigl[\tilde{I}_t \lambda(t, \tilde{X}_t, v_t)\bigr] \, \d t\biggr)^{1/2}.
\end{equation*}
The right-hand side in turn tends to zero as $K \to \infty$. Consequently, we obtain that $\sup_{0 \leq t \leq T} d_1(v^n_t, v_t) \to 0$ as $n \to \infty$. This establishes continuity at $v$ with respect to the uniform distance on $D_{\bf{M}^2}[0, T]$, where $\bf{M}^2$ is equipped with the metric $d_1$. The continuity can be upgraded to the distance $d_2$ on $\bf{M}^2$ by Lemma \ref{lem:weak_conv_upgrade}. Here the condition \eqref{eq:uniform_square} is satisfied because $(m^n)_n$ is uniformly square-integrable as a convergent sequence in $\P^2(\cal{S})$.

Next, let us discuss the continuity of the trajectories of $\nu$ with respect to $d_2$. Proceeding similarly to \eqref{eq:d_1_bound_v}, we find
\begin{align*}
    d_1(\nu_t, \nu_s) &= W_1\Bigl(\nu_t + (1 - \nu_t(\R))\delta_0, \nu_s + (1 - \nu_s(\R))\delta_0\Bigr) + \lvert L_t - L_s\rvert \\
    &\leq \ev[\lvert I_t X_t - I_s X_s\rvert \vert \F^0_T] + \ev[\lvert I_t - I_s\rvert \vert \F^0_T] \\
    &\leq \ev[\lvert X_t - X_s \rvert \vert \F^0_T] + \ev[(1 + \lvert X_s\rvert) \lvert I_t - I_s \rvert \vert \F^0_T] \\
    &\leq \ev[\lvert X_t - X_s \rvert \vert \F^0_T] + \bigl(\ev\bigl[(1 + \lvert X\rvert^{\ast}_T)^2 \big\vert \F^0_T\bigr]\bigr)^{1/2} \biggl(\int_s^t \ev[I_u \lambda(u, X_u, \nu_u) \vert \F^0_T] \, \d u\biggr)^{1/2}.
\end{align*}
The first term on the right-hand side converges to zero $\pr_{\ast}$-a.s.\@ as $\lvert s - t\rvert \to 0$ by the $\pr_{\ast}$-a.s.\@ continuity of $X$ and square-integrability of $\lvert X \rvert^{\ast}_T$. The latter also implies that the integral with respect to time on the right-hand side vanishes $\pr_{\ast}$-a.s.\@ as $\lvert s - t\rvert \to 0$. Consequently, $\nu$ has $\pr_{\ast}$-a.s.\@ $d_1$-continuous trajectories. From this and the $\pr_{\ast}$-a.s.\@ uniform square-integrability of the family $(\nu_t)_{t \in [0, T]}$, we can conclude that $\nu$ has $\pr_{\ast}$-a.s.\@ continuous trajectories with respect to $d_2$. 
\end{proof}

\subsection{The Controlled Martingale Problem} \label{sec:mgale_problem}

In this subsection, we define a controlled martingale problem associated to the state equation \eqref{eq:relaxed_weak_mfl} of the relaxed formulation of the mean-field control problem from Definition \ref{def:rel_cntrl}. Subsequently, we prove that subsequential limits of the particle system solve the martingale problem and, hence, yield admissible relaxed control rules. Let us fix admissible controls $\gamma^1$,~\ldots, $\gamma^N$.

We introduce the differential operator $\bb{L}$ acting on twice continuously differentiable functions $\varphi \define \R^3 \to \R$ by
\begin{align*}
    \cal{L}\varphi(t, x, y&, z, v, g) \\
    &= \bigl(b(t, x, v, g) - \alpha(t, x, v) \langle v, \lambda(t, \cdot, v)\rangle\bigr) \partial_x \varphi(x, y, z) \\
    &\ \ \ + a(t, x, v) \partial_x^2 \varphi(x, y, z) + \sigma(t, x, v) \partial_{xy}^2 \varphi(x, y, z)+ \sigma_0(t, x, v) \partial_{xz}^2 \varphi(x, y, z) \\
    &\ \ \ + \frac{1}{2} \partial_y^2 \varphi(x, y, z) + \frac{1}{2} \partial_z^2 \varphi(x, y, z)
\end{align*}
for $(t, x, y, z, v, g) \in [0, T] \times \R^3 \times \cal{M}^1_{\leq 1}(\R) \times G$ with $a(t, x, v) = \frac{1}{2}(\sigma^2(t, x, v) + \sigma_0^2(t, x, v))$. The operator $\bb{L}$ is the infinitesimal generator of the process $(X, W, B)$ under an admissible relaxed control rule $\pr_0 \in \P(\Omega_0)$ (see Definition \ref{def:rel_cntrl}). Next, for any $\varphi \in C^2_c(\R^3)$ we define the process $\cal{M}^{\varphi}$ on $\Omega_{\ast}$ by
\begin{equation} \label{eq:mgale}
    \cal{M}^{\varphi}_t(\omega) = \varphi(x_t, w_t, b_t) - \int_{[0, t] \times G} \bb{L}\varphi(s, x_s, w_s, b_s, \nu_t(\omega), g) \, \d \mathfrak{g}(s, g)
\end{equation}
for $\omega = (x, w, \mathfrak{g}, p, m, b) \in \Omega_{\ast}$. We prove that $\cal{M}^{\varphi}$ is a martingale under any probability measure $\pr_{\ast}$ induced by a subsequential limit of $(\mu^N, W^0)_N$.

\begin{proposition} \label{prop:conv_to_ad_re_cr}
Let Assumption \ref{ass:red_form_cont_prob} be satisfied. Then for any $\varphi \in C^2_c(\R^3)$, the process $\cal{M}^{\varphi}$ is an $\bb{F}^{\ast}$-martingale under $\pr_{\ast}$ for any subsequential limit $\pr_0$ of $(\mu^N, W^0)_N$. In particular, the limit $\pr_0$ is an admissible relaxed control rule in the sense of Definition \ref{def:rel_cntrl}.
\end{proposition}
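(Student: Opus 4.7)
The plan is to establish the martingale property of $\cal{M}^{\varphi}$ under $\pr_{\ast}$ by first proving an approximate martingale property at the level of the particle system and then passing to the limit using the weak convergence from Subsection \ref{sec:limit_system}. Concretely, for each $i$ I would apply It\^o's formula with jumps to $\varphi(X^{N,i}_t, W^i_t, W^0_t)$ and express the particle-level analogue $\cal{M}^{\varphi,N,i}$ -- obtained from \eqref{eq:mgale} by substituting the canonical variables on $\Omega_{\ast}$ with $(X^{N,i}, W^i, \Gamma^{N,i}, \nu^N, W^0)$ -- as the sum of two continuous stochastic integrals against $W^i$ and $W^0$ (both Brownian motions in the enlarged filtration $\bb{F}^{N, \bf{I}}$ since the $\theta_j$ are independent of everything else), a stochastic integral against the martingale $\bar{M}^N = \frac{1}{N}\sum_j(M^j - 1)$ from Lemma \ref{lem:martingale_property}, and a Taylor remainder $E^{N,i}_t$. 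The key algebraic cancellation is that the first-order Taylor contribution $-\int_0^t \alpha \partial_x \varphi \, \d L^N_s$ from the jumps of $X^{N,i}$ combines with the compensator term $+\int_0^t \alpha \langle \nu^N, \lambda\rangle \partial_x \varphi \, \d s$ hidden in $\cal{L}\varphi$, via the identity $\d L^N_t = \langle \nu^N_t, \lambda(t, \cdot, \nu^N_t)\rangle\,\d t - \d \bar{M}^N_t$, to leave precisely $\int_0^t \alpha \partial_x \varphi \, \d \bar{M}^N_s$. Since $L^N$ has at most $N$ jumps of size $1/N$, the remainder satisfies $|E^{N,i}_t| \leq C \sum_{s \leq t}(\Delta L^N_s)^2 \leq C/N$ almost surely, so $\cal{M}^{\varphi,N,i}_t - \cal{M}^{\varphi,N,i}_0 = \tilde{M}^{N,i}_t + E^{N,i}_t$ with $\tilde{M}^{N,i}$ a genuine $\bb{F}^{N, \bf{I}}$-martingale.

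Next I would test the martingale property. Fix any bounded continuous $\Phi \colon \Omega_{\ast} \to \R$ that is $\F^{\ast}_s$-measurable, so depends only on $X^{\ast}_{\cdot \land s}$, $W^{\ast}_{\cdot \land s}$, $\Gamma^{\ast}_s$, $\mu^{\ast}_s$, $\nu^{\ast}_{\cdot \land s}$, $B^{\ast}_{\cdot \land s}$ but not on $\theta^{\ast}$. The particle-level functional $\Phi^{N,i}$, obtained by substituting canonical variables with the $i$-th particle's data, is then $\F^{N, \bf{I}}_s$-measurable precisely because it avoids $\theta_i$. Consequently $\ev[(\tilde{M}^{N,i}_t - \tilde{M}^{N,i}_s) \Phi^{N,i}] = 0$, which yields $|\ev[(\cal{M}^{\varphi,N,i}_t - \cal{M}^{\varphi,N,i}_s)\Phi^{N,i}]| = O(1/N)$ uniformly in $i$. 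Averaging over $i$ rewrites the left-hand side as $\ev \langle \mu^N, (\cal{M}^{\varphi}_t - \cal{M}^{\varphi}_s) \Phi \rangle$, and the weak convergence $\frac{1}{N}\sum_i \L(\Theta^{N,i}) \to \pr_{\ast}$ from Lemma \ref{lem:weak_conv_particle} delivers $\ev_{\ast}[(\cal{M}^{\varphi}_t - \cal{M}^{\varphi}_s) \Phi] = 0$, provided the integrand is $\pr_{\ast}$-a.s.\@ continuous and the approximating sequence is uniformly integrable. Continuity holds on the $\pr_{\ast}$-full event $\{X^{\ast} \in C([0, T])\}$ (Lemma \ref{lem:weak_conv_particle}) using Assumption \ref{ass:red_form_cont_prob}: the compact support of $\varphi$ and its derivatives localises the integrand to a bounded region in $x$, taming the linear-growth coefficients. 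Uniform integrability follows from Lemma \ref{lem:l_2_bound_ps} together with the $L^2$-moment bounds on $\nu^{\ast}$ and $\Gamma^{\ast}$. Letting $\Phi$ range over a generating subclass of bounded $\F^{\ast}_s$-measurable functions yields the martingale property of $\cal{M}^{\varphi}$.

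To conclude that $\pr_0$ is an admissible relaxed control rule in the sense of Definition \ref{def:rel_cntrl}, I would verify each item in turn. Item \ref{it:law_integrability}: the initial law $\L_{\ast}(X^{\ast}_0, \theta^{\ast}) = \nu_0 \otimes \text{Exp}(1)$ transfers from the i.i.d.\@ $(\xi_i, \theta_i)$ through weak convergence, and the integrability statement follows from Lemma \ref{lem:l_2_bound_ps} together with the hypothesis on the $\gamma^{N,i}$. Item \ref{it:independence}: the conditional independence of $\theta^{\ast}$ from $(X^{\ast}, W^{\ast}, \Gamma^{\ast})$ given $\F^0_T$ is Lemma \ref{lem:cond_ind_limit}; the Brownian motion property of $W^{\ast}$ and $B^{\ast}$ in $\bb{F}^{\ast}$, together with the independence of $(X^{\ast}_0, W^{\ast})$ and $\F^0_T$, follows from the martingale problem applied to test functions of the form $\varphi(w)$, $\varphi(b)$ and products thereof (all with compactly supported smooth cutoffs), which identifies the quadratic and cross-variations and the required independences. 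For item \ref{it:sde}, choosing $\varphi(x, w, b) = x$, $x^2$, $xw$, $xb$ (again with compact-support cutoffs coinciding with the identity on large balls) extracts from the martingale problem the drift and diffusion characteristics of $X^{\ast}$ and matches them to those of \eqref{eq:relaxed_weak_mfl}; the identification $\nu^{\ast}_t = \pr_{\ast}(X^{\ast}_t \in \cdot,\, \theta^{\ast} > \Lambda^{\ast}_t \vert \F^0_T)$ is exactly Proposition \ref{prop:id_of_nu}.

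The main obstacle, in my view, is the bookkeeping in the first step: correctly isolating the martingale part of $\cal{M}^{\varphi,N,i}$ hinges on a careful Taylor expansion of $\Delta\varphi$ at each jump of $L^N$ and the precise cancellation between the first-order term, the compensator buried in $\cal{L}\varphi$, and the decomposition $\d L^N = \langle \nu^N, \lambda\rangle\,\d t - \d \bar{M}^N$. A secondary difficulty lies in establishing the $\pr_{\ast}$-a.s.\@ continuity of the test functional: the term $\alpha \langle \nu^{\ast}, \lambda\rangle \partial_x \varphi$ in $\cal{L}\varphi$ involves an integral against $\nu^{\ast}$ of the linearly growing $\lambda$, so continuity in $\nu^{\ast}$ under the weaker metric $d_1$ is not immediate and requires upgrading via the uniform second-moment bound on $\nu^N$ to convert $d_1$-convergence into convergence of the relevant pairings.
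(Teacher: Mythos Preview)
Your overall strategy matches the paper's: apply It\^o's formula with jumps at the particle level, isolate the martingale part plus a Taylor remainder of order $1/N$, test against $\F^{\ast}_s$-measurable functionals, and pass to the limit via Lemma \ref{lem:weak_conv_particle}. The cancellation $\d L^N_t = \langle \nu^N_t, \lambda\rangle\,\d t - \d\bar{M}^N_t$ and the bound $|E^{N,i}_t| \leq C/N$ are exactly right.

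There is, however, a genuine measurability gap in your second paragraph. You claim that substituting the $i$-th particle's data into an $\F^{\ast}_s$-measurable $\Phi$ yields an $\F^{N,\bf{I}}_s$-measurable functional ``because it avoids $\theta_i$''. But $\Phi$ depends on $\mu^{\ast}_s$, and your particle substitute is $\mu^N_s$, the pushforward of $\mu^N = \frac{1}{N}\sum_j \delta_{(X^{N,j}, W^j, \Gamma^{N,j}, \theta_j)}$ under $(x,w,\mathfrak{g},p)\mapsto(x_{\cdot\land s}, w, \mathfrak{g}_s, p)$. This pushforward \emph{retains} the $\theta$-coordinate, so $\mu^N_s$ encodes all of $\theta_1,\ldots,\theta_N$ and is \emph{not} $\F^{N,\bf{I}}_s$-measurable (recall that $\F^{N,\bf{I}}_s$ sees the $\theta_j$ only through the indicators $I^j_{\cdot\land s}$). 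Hence $\Phi^{N,i}$ is in general not $\F^{N,\bf{I}}_s$-measurable, and you cannot invoke the martingale property of $\tilde{M}^{N,i}$ against it; equivalently, the $M^j$ fail to be martingales for any filtration containing $\sigma(\mu^N_0)$. The paper repairs this by replacing $\mu^N$ with the surrogate $\tilde{\mu}^N = \bigl(\frac{1}{N}\sum_j \delta_{(X^{N,j},W^j,\Gamma^{N,j})}\bigr) \otimes \text{Exp}(1)$, whose $\theta$-marginal is deterministic. Then $\tilde{\mu}^N_s$ is $\F^{N,\bf{I}}_s$-measurable, the martingale step goes through, and since $\tilde{\mu}^N$ has the same weak limit as $\mu^N$ (the $\theta$-marginal of $\mu^N$ converges to $\text{Exp}(1)$ by the law of large numbers, and $\theta^{\ast}$ is conditionally independent of the rest by Lemma \ref{lem:cond_ind_limit}) the passage to the limit is unaffected.

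Two minor differences worth noting. For the identification of SDE \eqref{eq:relaxed_weak_mfl} from the martingale problem, the paper invokes a representation theorem (Theorem II.7.1 in \cite{ikeda_sde_1989}) directly, avoiding the test-function-by-test-function extraction you propose. For Item \ref{it:independence} of Definition \ref{def:rel_cntrl}, the paper defers to a stability-of-independence result from \cite{mkv_control_limit_2020} rather than rebuilding it from the martingale problem. Both of your alternatives are workable, just longer.
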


\begin{proof}
As usual, we assume that $(\mu^N, W^0)_N$ converges to $\pr_0$ along the whole sequence. We proceed similarly as in the proof of Proposition \ref{prop:compensator_limit}, showing that for any $\Phi \in C_b(\Omega_{\ast})$ and $0 \leq s \leq t \leq T$, the expectation $\ev_{\ast}\bigl[\bigl(\cal{M}^{\varphi}_t(\Theta) - \cal{M}^{\varphi}_s(\Theta)\bigr)\Phi(\Theta_{\cdot \land s})\bigr]$ vanishes. In the proof of Proposition \ref{prop:compensator_limit}, we already established that the map $\omega \mapsto \Theta_{\cdot \land s}(\omega)$ is continuous at $\pr_{\ast}$-a.e.\@ $\omega \in \Omega_{\ast}$. To show the same is true for $\cal{M}^{\varphi}_t(\Theta) - \cal{M}^{\varphi}_s(\Theta)$, we simply choose $E_1 = E_2 = E_3 = \R$, $E_4 = \bf{M}^2 = \cal{M}^2_{\leq 1}(\R)$, and $\Phi = \bb{L}\varphi$ in Corollary \ref{cor:mgale_integral_converges}. This together with the $\pr_{\ast}$-a.s.\@ continuity of $\nu$ on $\Omega_{\ast}$ from Proposition \ref{prop:id_of_nu} implies the continuity of the integral operator appearing in the definition \eqref{eq:mgale} of $\cal{M}^{\varphi}_t$. The continuity of the first summand in \eqref{eq:mgale} is clear, so $\cal{M}^{\varphi}_t$ and, similarly, $\cal{M}^{\varphi}_s$ are $\pr_{\ast}$-a.s.\@ continuous in $\omega \in \Omega_{\ast}$. Hence, by the continuous mapping theorem, we find that
\begin{equation*} \label{eq:conv_mgale_ps}
    \frac{1}{N}\sum_{i = 1}^N \L\Bigl(\bigl(\cal{M}^{\varphi}_t(\Theta^{N, i}) - \cal{M}^{\varphi}_s(\Theta^{N, i})\bigr)\Phi(\Theta^{N, i}_{\cdot \land s})\Bigr) \Rightarrow \L_{\ast}\Bigl(\bigl(\cal{M}^{\varphi}_t(\Theta) - \cal{M}^{\varphi}_s(\Theta)\bigr)\Phi(\Theta_{\cdot \land s})\Bigr)
\end{equation*}
by Lemma \ref{lem:weak_conv_particle}. Next, since $\Omega_{\ast} \ni \omega \mapsto \cal{M}^{\varphi}_u(\omega)$ is of linear growth for any $u \in [0, T]$ and $\Phi$ is bounded, while
\begin{equation*}
    \sup_{N \geq 1} \frac{1}{N}\sum_{i = 1}^N \ev\biggl[\sup_{0 \leq u \leq T} \bigl(\lvert X^{N, i}_u \rvert^2 + M_2^2(\nu^N_u)\bigr)\biggr] < \infty
\end{equation*}
by Lemma \ref{lem:l_2_bound_ps}, where $M_2(v)$ denotes the second moment of $v \in \bf{M}^2$, we deduce from \eqref{eq:conv_mgale_ps} that
\begin{equation*}
    \frac{1}{N}\sum_{i = 1}^N \ev \Bigl[\bigl(\cal{M}^{\varphi}_t(\Theta^{N, i}) - \cal{M}^{\varphi}_s(\Theta^{N, i})\bigr)\Phi(\Theta^{N, i}_{\cdot \land s})\Bigr] \to \ev_{\ast}\bigl[\bigl(\cal{M}^{\varphi}_t(\Theta) - \cal{M}^{\varphi}_s(\Theta)\bigr)\Phi(\Theta_{\cdot \land s})\bigr].
\end{equation*}
Thus, to conclude that $\ev_{\ast}\bigl[\bigl(\cal{M}^{\varphi}_t(\Theta) - \cal{M}^{\varphi}_s(\Theta)\bigr)\Phi(\Theta_{\cdot \land s})\bigr] = 0$, it suffices to show that the left-hand side above vanishes in the limit $N \to \infty$. Applying It\^o's formula for jump diffusions to $\varphi(X^i, W^i, W^0)$ shows that
\begin{align} \label{eq:ito_for_mgale}
\begin{split}
    \cal{M}^{\varphi}_t&(\Theta^{N, i}) - \cal{M}^{\varphi}_s(\Theta^{N, i})\\
    &= \int_s^t \bigl(\sigma(u, X^i_u, \nu^N_u) \partial_x \varphi(X^i_u, W^i_u, W^0_u) + \partial_y \varphi(X^i_u, W^i_u, W^0_u)\bigr)\, \d W^i_u \\
    &\ \ \ + \int_s^t \bigl(\sigma_0(u, X^i_u, \nu^N_u) \partial_x \varphi(X^i_u, W^i_u, W^0_u) + \partial_z \varphi(X^i_u, W^i_u, W^0_u)\bigr) \, \d W^0_u \\
    &\ \ \ + \frac{1}{N}\sum_{j = 1}^N \int_s^t \alpha(u, X^i_{u-}, \nu^N_{u-}) \partial_x\varphi(X^i_{u-}, W^i_u, W^0_u) \, \d M^j_u + E^{\varphi,i}_t - E^{\varphi,i}_s,
\end{split}
\end{align}
where the error term $E^{\varphi,i}$ is given by $E^{\varphi,i}_t = \sum_{0 \leq u \leq t} \Delta \varphi(X^i_u) - \partial_x \varphi(X^i_{u-}) \Delta X^i_u$. By Proposition \ref{prop:martingale_property}, the process
\begin{equation*}
    \frac{1}{N}\sum_{j = 1}^N \int_0^{\cdot} \alpha(u, X^i_{u-}, \nu^N_{u-}) \partial_x\varphi(X^i_{u-}, W^i_u, W^0_u) \, \d M^j_u
\end{equation*}
is a martingale with respect to the filtration $\bb{F}^{N, \bf{I}}$ defined in Equation \eqref{eq:filtration_ps}. By the assumptions on $\sigma$ and $\sigma_0$, the same is true for the integrals with respect to the Brownian motions $W^i$ and $W^0$. Since $\Phi(\Theta^{N, i}_{\cdot \land s})$ is $\F^{N, \bf{I}}_s$-measurable, multiplying both sides of \eqref{eq:ito_for_mgale} by $\Phi(\Theta^{N, i}_{\cdot \land s})$, taking expectation, and summing over $i = 1$,~\ldots, $N$, we obtain
\begin{equation*}
    \frac{1}{N}\sum_{i = 1}^N \ev \Bigl[\bigl(\cal{M}^{\varphi}_t(\Theta^{N, i}) - \cal{M}^{\varphi}_s(\Theta^{N, i})\bigr)\Phi(\Theta^{N, i}_{\cdot \land s})\Bigr] = \frac{1}{N}\sum_{i = 1}^N \ev\bigl[(E^{\varphi,i}_t - E^{\varphi,i}_s)\Phi(\Theta^{N, i}_{\cdot \land s})\bigr].
\end{equation*}
To show that the right-hand side converges to zero, we prove that the error terms $E^{\varphi,i}_t - E^{\varphi,i}_s$ are of order $\frac{1}{N}$. Using Taylor's theorem together with $\Delta X^i_u = - \alpha(u, X^i_{u-}, \nu^N_{u-}) \Delta L^N_u$ yields
\begin{align*}
    E^{\varphi,i}_t - E^{\varphi,i}_s = \frac{1}{2}\sum_{s \leq u \leq t} \alpha^2(u, X^i_{u-}, \nu^N_{u-}) \partial_x^2 \varphi(\zeta^i_u) (\Delta L^N_u)^2
\end{align*}
for some $\zeta^i_u \in [X^i_{u-}, X^i_u]$. Since $L^N$ jumps at most $N$ times and each jump has size $1/N$, this yields $\lvert E^{\varphi,i}_t - E^{\varphi,i}_s\rvert \leq \frac{C_{\alpha}^2}{2} \lVert \partial_x^2 \varphi \rVert_{\infty} N^{-1}$ as desired. Consequently, $\cal{M}^{\varphi}$ is a martingale under $\pr_{\ast}$.

Lastly, we verify that $\pr_0$ is an admissible relaxed control rule. First, note that $\cal{M}^{\varphi}$ is $\pr_{\ast}$-a.s.\@ continuous. Indeed, the integral appearing in Equation \eqref{eq:mgale} varies continuously with $t$, since the time marginal of $\Gamma$ is the Lebesgue measure. Moreover, the process $X$ is $\pr_{\ast}$-a.s.\@ continuous by Lemma \ref{lem:weak_conv_particle}, so the same holds for $t \mapsto \varphi(X_t, W_t, W^0_t)$. 
Then, combining the martingale property of $\cal{M}^{\varphi}$ with Theorem 4.5.2 from \cite{stroock_diffusion_2006}, we conclude that $X$ satisfies the McKean--Vlasov SDE \eqref{eq:relaxed_weak_mfl} under $\pr_{\ast}$. Note that in contrast to \cite[Theorem 4.5.2]{stroock_diffusion_2006} we do not need to enlarge the filtered probability space $(\Omega_{\ast}, \F^{\ast}_T, \bb{F}^{\ast}, \pr_{\ast})$ because all the randomness can be expressed through the Brownian motions $W$ and $B$ on $\Omega_{\ast}$ without the need for an external source of randomness. This together with Proposition \ref{prop:id_of_nu} gives Item \ref{it:sde} in Definition \ref{def:rel_cntrl}. The identity $\L_{\ast}(X_0) = \nu_0$ from Item \ref{it:law_integrability} is obvious and the bound on the second moment of $X$ under $\pr_{\ast}$ follows from Lemma \ref{lem:l_2_bound_ps}. Item \ref{it:independence} is a consequence of Lemma \ref{lem:weak_conv_particle} and the stability of independence under weak convergence, see \cite[Proposition 4.17]{mkv_control_limit_2020} for details. Item \ref{it:martingale} was addressed in Proposition \ref{prop:compensator_limit}. This concludes the proof.
\end{proof}

We have proved that any subsequential limit $\pr_0$ of $(\mu^N, W^0)_N$ converges to an admissible relaxed control rule. Next, we show that the associated costs converge as well.

\begin{proposition} \label{prop:conv_of_cost}
Let Assumption \ref{ass:red_form_cont_prob} be satisfied. Let $(\pr_0^n)_n$ be a sequence of probability measures on $\Omega_0$ that converges to a relaxed control rule $\pr_0 \in \P(\Omega_0)$. Assume further that the family $((X, \nu)^{\#}\pr_{\ast}^n)_n$ is uniformly square-integrable, where $\pr_{\ast}^n$ is the probability measure on $\Omega_{\ast}$ associated to $\pr^n_0$. Then we have $\lim_{n \to \infty} \cal{J}(\pr_0^n) = \cal{J}(\pr_0)$.
\end{proposition}

Here by uniform square-integrability of the laws $((X, \nu)^{\#}\pr_{\ast}^n)_n$ we mean that the family of laws of $\sup_{0 \leq t \leq T} \lvert X_t\rvert$ and $\sup_{0 \leq t \leq T} M_2(\nu_t)$ under $\pr_{\ast}^n$ are uniformly square-integrable.

\begin{proof}
We apply Corollary \ref{cor:mgale_integral_converges} with $E_1 = \R$, $E_2 = [0, 1]$, $E_3 = \bf{M}^2$, and $\Phi(t, x, p, v, g) = p f(t, x, v, g)$ to see that the law of $F = \int_{[0, T] \times G} I_t f(t, X_t, \nu_t, g) \, \d \Gamma(t, g)$ under $\pr_{\ast}^n$ converges weakly to $\L_{\ast}(F)$. Here, we use the continuity of $f$ in $(x, \nu, g)$ from Assumption \ref{ass:red_form_cont_prob} \ref{it:continuity_cost} and the $\pr_{\ast}$-a.s.\@ continuity of the trajectories of $\nu$ with respect to the uniform-topology on $D_{\bf{M}^2}[0, T]$ guaranteed by Proposition \ref{prop:id_of_nu}, which holds due to the admissibility of $\pr_0$. Then, since the family $((X, \nu)^{\#}\pr_{\ast}^n)_n$ is uniformly square-integrable and the running cost function $f$ has at most quadratic growth in its last three variables uniformly in $t \in [0, T]$ by Assumption \ref{ass:red_form_cont_prob} \ref{it:growth_cost}, the laws $F^{\#} \pr_{\ast}^n$, $n \geq 1$, are uniformly integrable. Consequently, we get that $\ev^{\pr_{\ast}^n}F \to \ev_{\ast} F$ as $n \to \infty$. 

Next, let us analyse the terminal cost. From the above, we know that the law of $\nu_T$ under $\pr_{\ast}^n$ converges to the its law under $\pr_{\ast}$. Now, we note again that the family $(\nu^{\#}\pr_{\ast}^n)_n$ is uniformly square-integrable and that the terminal cost function $\psi$ has at most quadratic growth and is continuous by Assumptions \ref{ass:red_form_cont_prob} \ref{it:growth_cost} and \ref{it:continuity_cost}, so that $\ev^{\pr_{\ast}^n}\psi(\nu_T) \to \ev_{\ast}\psi(\nu_T)$ as $n \to \infty$. Combining the convergence of both running and terminal cost yields $\lim_{n \to \infty} \cal{J}(\pr_0^n) = \cal{J}(\pr_0)$.
\end{proof}

We are now in a position to prove Theorem \ref{thm:convergence_ps}.

\begin{proof}[Proof of Theorem \ref{thm:convergence_ps}]
Let $(\gamma^{N, 1}, \dots, \gamma^{N, N})_{N \geq 1}$ be a sequence of admissible controls for the particle system. Then, by Proposition \ref{prop:tightness_ps}, the sequence $(\mu^N, W^0)_{N \geq 1}$ of particle systems is tight on $\Omega_0$ and Proposition \ref{prop:conv_to_ad_re_cr} guarantees that any subsequential limit $\pr_0$ is an admissible relaxed control rule in the sense of Definition \ref{def:rel_cntrl}. Lastly, Proposition \ref{prop:conv_of_cost} implies that along the convergent subsequence $(\mu^{N_k}, W^0)_{k \geq 1}$, we have
\begin{equation*}
    \lim_{k \to \infty} J^{N_k}(\gamma^{N_k, 1}, \dots, \gamma^{N_k, N_k}) = \lim_{k \to \infty} \cal{J}(\pr^{N_k}_0) = \cal{J}(\pr_0) \geq V_{\textup{rl}},
\end{equation*}
where $\pr^N_0 = \L(\mu^N, W^0)$. Since this is true for any convergent subsequence, it follows that $\liminf_{N \to \infty} V^N \geq V_{\textup{rl}}$. This concludes the proof.
\end{proof}

\subsection{Equivalence between Strong and Smooth Relaxed Mean-Field Control problem}

In this subsection, we establish the equivalence between the strong formulation of the mean-field control problem and the relaxed formulation with smooth control rules. 

\subsubsection{Existence, Uniqueness, and Stability of the Mean-Field Limit} \label{sec:existence_uniqueness}

We introduce an alternative formulation of the smooth relaxed control setup from Definition \ref{def:rel_cntrl}. Our goal is to then apply the theory in \cite{mkv_control_limit_2020}, which establishes the equivalence of the strong and smooth relaxed formulation for mean-field control. \cite{mkv_control_limit_2020} proves the equivalence of both formulations for McKean-Vlasov SDEs with Lipschitz continuous coefficients, whereas in our case the coefficients are only locally Lipschitz continuous in the measure argument. The key properties \cite{mkv_control_limit_2020} requires for their proof are existence, uniqueness, and stability in the control argument of the state equation. We show that these properties also hold for locally Lipschitz coefficients.

In the following, let us fix a probability space $(\Omega, \F, \pr)$ equipped with two filtrations $\bb{G}$ and $\bb{F}$ with $\cal{G}_t \subset \F_t$ for $t \in [0, T]$, an $\F_0$-measurable random variable $\xi$ with finite second moment, and two $\bb{F}$-Brownian motions $W$ and $W^0$. We assume that $W^0$ is adapted to $\bb{G}$ and that the pair $(\xi, W)$ is independent of $\cal{G}_T$. Finally, we require that for all $t \in [0, T]$ we have $\pr(A \vert \cal{G}_t) = \pr(A \vert \cal{G}_T)$ a.s.\@ for all $A \in \F_t \lor \sigma(W)$.

We say that an $\bb{M}_T(G)$-valued random variable $\Gamma$ is $\bb{F}$\textit{-progressively measurable} if for all $t \in [0, T]$, the random variable $\Gamma([0, s] \times A)$ is $\F_t$-measurable for any $s \in [0, t]$ and $A \in \cal{B}(G)$. The set of \textit{admissible smooth relaxed controls} consists of all $\bb{F}$-progressively measurable $\bb{M}_T(G)$-valued random variables $\Gamma$. For a given admissible smooth relaxed control $\Gamma$, we consider the McKean--Vlasov SDE
\begin{align} \label{eq:relaxed_weak_mfl_2}
\begin{split}
    X_t = \xi &+ \int_0^t b(s, X_s, \nu_s, g) \, \d \Gamma(s, g) + \int_0^t \sigma(s, X_s, \nu_s) \, \d W_s \\
    &+ \int_0^t \sigma_0(s, X_s, \nu_s) \, \d W^0_s - \int_0^t \alpha(s, X_s, \nu_s)\langle \nu_s, \lambda(s, \cdot, \nu_s)\rangle \, \d s
\end{split}
\end{align}
with $\nu_t = \ev[e^{-\Lambda_t} \delta_{X_t} \vert \cal{G}_T]$ and $\Lambda_t = \int_0^t \lambda(s, X_s, \nu_s) \, \d s$.

When say that the pair $(X, \nu)$ is a \textit{strong solution} of McKean--Vlasov SDE \eqref{eq:relaxed_weak_mfl_2} if (i) the process $X$ is a strong solution to \eqref{eq:relaxed_weak_mfl_2} when viewed as an SDE with random coefficients, the randomness coming from the mean-field component $\nu$ and the control $\Gamma$, and (ii) $\nu_t$ is the conditional subprobability distribution of $X_t$ with respect to $\cal{G}_T$, so there is no additional external information in the conditioning.

\begin{proposition} \label{prop:existence_uniqueness_stability}
Let Assumption \ref{ass:red_form_cont_prob} be satisfied. For any smooth relaxed control $\Gamma$, McKean--Vlasov SDE \eqref{eq:relaxed_weak_mfl_2} has a unique strong solution. If $b$ does not depend on the control, i.e.\@ $b(t, x, v, g) = b_0(t, x, v)$ for some function $b_0 \define [0, T] \times \R \times \cal{M}^1_{\leq 1}(\R) \to \R$, then $\nu_t = \ev[e^{-\Lambda_t} \delta_{X_t} \vert W^0]$ for all $t \in [0, T]$ almost surely.

Moreover, if $(\epsilon_n)_n$ is a sequence of positive real numbers tending to zero and $(\Gamma_n)_{n \geq 1}$ is a sequence of smooth relaxed controls such that $\ev W^2_2(\Gamma_n, \Gamma) \to 0$, then $\ev(\lvert X^n - X \rvert^{\ast}_T)^2 \to 0$ as $n \to \infty$. Here $X^n$ is the unique strong solution to SDE \eqref{eq:relaxed_weak_mfl_2} started from $\xi$ at time $\epsilon_n$ with control $\Gamma_n$. 
\end{proposition}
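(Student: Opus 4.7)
My plan is to build the solution via a Picard iteration on the flow of conditional subprobabilities, identify $\nu$ with a simpler conditional law when $b$ is control-free by exploiting uniqueness, and finally deduce stability through a Gronwall estimate on the difference process. First I will set up a fixed-point map on the complete metric space $\cal{E}$ of $\bb{G}$-adapted continuous maps $[0,T] \to \cal{M}^2_{\leq 1}(\R)$, equipped with $d(\nu,\nu')=(\ev\sup_{t\leq T} d_1(\nu_t,\nu'_t)^2)^{1/2}$. For fixed $\nu\in\cal{E}$, freeze the measure argument in \eqref{eq:relaxed_weak_mfl_2}; the resulting SDE has Lipschitz-in-$x$ coefficients plus a bounded random drift $\alpha(\cdot,\nu_s)\langle \nu_s,\lambda(s,\cdot,\nu_s)\rangle$, hence admits a unique strong solution $X^\nu$ with $\ev(\lvert X^\nu\rvert^{\ast}_T)^2 \leq C(1+\ev\lvert\xi\rvert^2+\ev\int_{[0,T]\times G}\lvert g\rvert^2\,\d\Gamma)$, bounded uniformly in $\nu$. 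Set $\Phi(\nu)_t=\pr(X^\nu_t\in\cdot,\,\theta>\Lambda^\nu_t\mid\cal{G}_T)$; this lies in $\cal{E}$ with a moment bound inherited from $X^\nu$. The subtle point is that $\nu\mapsto\langle\nu,\lambda(s,\cdot,\nu)\rangle$ is only \emph{locally} Lipschitz in $d_1$ because of the linear growth of $\lambda$, but the uniform moment bound on iterates combined with the boundedness of $\alpha$ gives
\[
\lvert \alpha(\cdot,\nu)\langle\nu,\lambda(s,\cdot,\nu)\rangle-\alpha(\cdot,\nu')\langle\nu',\lambda(s,\cdot,\nu')\rangle\rvert\leq C(1+M_2(\nu)+M_2(\nu'))d_1(\nu,\nu'),
\]
which suffices to contract $\Phi$ on a short initial interval; iterating across $[0,T]$ yields existence, and applying the same estimate to any two strong solutions gives uniqueness.

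For the identification when $b(t,x,\nu,g)=b_0(t,x,\nu)$, the SDE \eqref{eq:relaxed_weak_mfl_2} does not involve $\Gamma$, so I apply Step 1 in the sub-framework $\tilde{\bb{G}}=\bb{F}^{W^0}$ to obtain a unique strong solution $(\tilde X,\tilde\nu)$ with $\tilde\nu_t=\pr(\tilde X_t\in\cdot,\,\theta>\tilde\Lambda_t\mid W^0)$. I then verify that this pair is also a strong solution in the ambient $\bb{G}$-framework by checking $\tilde\nu_t=\pr(\tilde X_t\in\cdot,\,\theta>\tilde\Lambda_t\mid\cal{G}_T)$: first, since $\theta$ is independent of $\F_T$, I may write the right-hand side as $\ev[\varphi(\tilde X_t)e^{-\tilde\Lambda_t}\mid\cal{G}_T]$ for any bounded measurable $\varphi$; second, because $(\xi,W)$ is independent of $\cal{G}_T$ and $W^0\in\cal{G}_T$, the pair $(\xi,W)$ and $\cal{G}_T$ are conditionally independent given $W^0$, so any $\sigma(\xi,W,W^0)$-measurable variable---in particular $\tilde X_t$ and $\tilde\Lambda_t$---has the same conditional expectation given $W^0$ and given $\cal{G}_T$. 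Uniqueness from Step 1 then forces $(X,\nu)=(\tilde X,\tilde\nu)$, whence $\nu_t=\pr(X_t\in\cdot,\,\theta>\Lambda_t\mid W^0)$.

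For stability, I extend $X^n$ by $X^n_t=\xi$ on $[0,\epsilon_n]$ and set $Z^n=X^n-X$. Subtracting the two SDEs, the drift difference splits into (i) a boundary piece of size $O(\sqrt{\epsilon_n})$ from integration over $[0,\epsilon_n]$, (ii) a Lipschitz piece bounded by $C\int_{\epsilon_n}^t\int_G(\lvert Z^n_s\rvert+d_1(\nu^n_s,\nu_s))\,\d\Gamma_n(s,g)$ via Assumption \ref{ass:red_form_cont_prob}\ref{it:continuity}, and (iii) the control-sensitivity term $\int_{[0,T]\times G} b(s,X_s,\nu_s,g)(\d\Gamma_n-\d\Gamma)(s,g)$, which vanishes in $L^2$ by $\ev W_2^2(\Gamma_n,\Gamma)\to 0$ together with the continuity and at-most-linear growth of $b$ in $g$ and the second-moment bound on $X$. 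Parallel arguments based on the Burkholder--Davis--Gundy inequality handle $\sigma$ and $\sigma_0$, while the $\alpha\langle\nu,\lambda\rangle$ drift is treated as in Step 1. Finally, $\ev d_1(\nu^n_s,\nu_s)^2\lesssim\ev\lvert Z^n_s\rvert^2+\ev\lvert L^n_s-L_s\rvert^2$, and the loss increment is bounded by $\ev\lvert\Lambda^n_s-\Lambda_s\rvert^2$, which itself reduces to a Lipschitz increment in $Z^n$ and $d_1(\nu^n,\nu)$. Applying Gronwall's inequality to $\psi_n(t)=\ev\sup_{u\leq t}\lvert Z^n_u\rvert^2$ yields $\psi_n(T)\to 0$.

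The principal obstacle is obtaining the contraction in Step 1 despite the only-locally-Lipschitz dependence on $\nu$ through $\lambda$; circumventing it requires propagating an a priori second-moment bound along the iteration, which is allowed by the boundedness of $\alpha$, $\sigma$, and $\sigma_0$ but would fail without it. Steps 2 and 3 are conceptually more routine once Step 1 is in hand.
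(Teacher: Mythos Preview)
Your Step 2 coincides with the paper's argument. Steps 1 and 3, however, share a gap: you have not explained how to close the Gronwall estimate when the effective Lipschitz constant carries the random factor $(1+M_2(\nu)+M_2(\nu'))$. In Step 1 your contraction bound is of the form
\[
\ev\sup_{t\le T_0} d_1(\Phi(\nu)_t,\Phi(\nu')_t)^2 \lesssim T_0\,\ev\int_0^{T_0}(1+M_2(\nu_s)+M_2(\nu'_s))^2\bigl(\lvert X^{\nu}_s-X^{\nu'}_s\rvert^2+d_1(\nu_s,\nu'_s)^2\bigr)\,\d s,
\]
and ``propagating an a priori second-moment bound'' only gives $\ev M_2^2(\nu_t)\le C$, not a deterministic bound on $M_2(\nu_t)$. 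The product does not factor, and Cauchy--Schwarz would demand fourth moments you have not established. The same obstruction arises in Step 3 when you treat the $\alpha\langle\nu,\lambda\rangle$ drift ``as in Step 1''; the integrand of your Gronwall inequality for $\psi_n$ is not of the form $C\psi_n(s)$.

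The paper sidesteps this by recasting the problem as a two-dimensional McKean--Vlasov SDE for $(X_t,\Lambda_t)$ with mean-field term $\mu_t=\L(X_t,\Lambda_t\mid\cal{G}_T)\in\P^2(\R^2)$, mapped to $\nu_t$ via $\langle\Phi(\mu),\varphi\rangle=\int e^{-y}\varphi(x)\,\d\mu(x,y)$, and then invoking a general result (Proposition~\ref{prop:local_lipschitz}) whose proof handles local Lipschitz continuity by \emph{truncation} rather than short-time contraction: one projects $\P^2$ onto $\{M_2\le n\}$, solves the resulting globally Lipschitz equation, and shows the localisation times $\varrho_n=\inf\{t:M_2^2(\mu_t)\ge n\}$ tend to infinity. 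For stability, the paper again stops at $\varrho_k$ to obtain $\ev\lvert X^n_{t\wedge\varrho_k}-X_{t\wedge\varrho_k}\rvert^2\to 0$ with a $k$-dependent Gronwall constant, deduces pointwise convergence in probability as $k\to\infty$, upgrades to $L^2$ via Vitali, and finally upgrades pointwise $L^2$ to $L^2$-sup using tightness of $(X^n)_n$ on $C([0,T])$. Your direct Gronwall on $\psi_n(t)=\ev\sup_{u\le t}\lvert Z^n_u\rvert^2$ omits exactly this localisation and the subsequent two-stage upgrade.
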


In the statement of the proposition starting $X^n$ from $\xi$ at time $\epsilon_n$ means that $X^n$ solves the SDE \eqref{eq:relaxed_weak_mfl_2} on the interval $[\epsilon_n, T]$ with initial condition $X^n_{\epsilon_n} = \xi$.

\begin{proof}[Proof of Proposition \ref{prop:existence_uniqueness_stability}]
% Supplementary Reference Proposition \ref{prop:local_lipschitz} and Subsection \ref{sup:mvsde_loc_lip} and SDE \eqref{eq:local_lipschitz}
Both statements follow from a simple application of Proposition \ref{prop:local_lipschitz}. To get SDE \eqref{eq:relaxed_weak_mfl_2} into the same form as SDE \eqref{eq:local_lipschitz}, we replace the coefficients $b$, $\sigma$, and $\sigma_0$ of the latter by the functions
\begin{align} \label{eq:coeff_redef}
\begin{split}
    (t, (x, y)&, m, g) \mapsto 
    \begin{pmatrix}
    b(t, x, \Phi(m), g) - \alpha(t, x, \Phi(m)) \langle \Phi(m), \lambda(t, \cdot, \Phi(m))\rangle \\
    \lambda(t, x, \Phi(m))
    \end{pmatrix},
    \\
    &(t, (x, y), m) \mapsto 
    \begin{pmatrix}
    \sigma(t, x, \Phi(m)) \\
    0
    \end{pmatrix},
    \quad (t, (x, y), m) \mapsto 
    \begin{pmatrix}
    \sigma_0(t, x, \Phi(m)) \\
    0
    \end{pmatrix},
\end{split}
\end{align}
where $\Phi \define \P^2(\R^2) \to \cal{M}^1_{\leq 1}(\R)$ is defined by $\Phi(m) = \int_{\R^2} e^{-(y \lor 0)} \delta_x \, \d m(x, y)$. 
% Supplementary Reference Assumption \ref{ass:local_lipschitz}
Now, one simply needs to verify Assumption \ref{ass:local_lipschitz} for the coefficients in \eqref{eq:coeff_redef}. We will not provide the details here.
\end{proof}

\begin{corollary} \label{cor:approx_contr}
Let Assumption \ref{ass:red_form_cont_prob} be satisfied. Fix a smooth relaxed control $\Gamma$ and denote the associated solution to the McKean--Vlasov SDE \eqref{eq:relaxed_weak_mfl_2} by $X$. Let $n \geq 1$ and define $t^n_i = T \frac{i}{n}$ for $i = 0$,~\ldots $n$. Then there exists a sequence of bounded $\bb{F}$-progressively measurable $G$-valued processes $(\gamma^n)_n$ with the following properties
\begin{enumerate}[noitemsep, label = (\roman*)]
    \item $\gamma^n_0 = g_0$ for some $g_0 \in G$ and $\gamma^n$ is constant on the intervals $[t^n_i, t^n_{i + 1})$, $i = 0$,~\ldots, $n - 1$;
    \item $\ev W_2^2(\Gamma_n, \Gamma) \to 0$ as $n \to \infty$.
\end{enumerate}
Here $\Gamma_n$ is defined by $\d \Gamma_n(t, g) = \d \delta_{\gamma^n_t}(g) \d t$. In particular, it holds that $\ev(\lvert X^n - X \rvert^{\ast}_T)^2 \to 0$ as $n \to \infty$, where $X^n$ is the unique strong solution to SDE \eqref{eq:relaxed_weak_mfl_2} started from $\xi$ at time $t^n_1$ with control $\Gamma_n$. 
\end{corollary}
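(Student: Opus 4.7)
The plan is to build $(\gamma^n)_n$ by two successive reductions---first a magnitude truncation, then a time discretization---and to conclude the state convergence from the stability half of Proposition \ref{prop:existence_uniqueness_stability}.

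I would first reduce to bounded controls. Fix $R \geq \lvert g_0 \rvert$ and let $\pi_R \define G \to G \cap \{\lvert g \rvert \leq R\}$ be the metric projection onto the closed, convex, non-empty set $G \cap \{\lvert g \rvert \leq R\}$. Pushing $\Gamma$ forward under $(t, g) \mapsto (t, \pi_R(g))$ yields an admissible relaxed control $\Gamma^R$ supported in the ball of radius $R$, and the time-preserving coupling gives $\ev W_2^2(\Gamma^R, \Gamma) \leq \ev \int_{[0, T] \times G} \lvert g - \pi_R(g) \rvert^2 \, \d \Gamma(t, g)$, which vanishes as $R \to \infty$ by dominated convergence together with the $L^2$-integrability of $\Gamma$.

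Next, for bounded $\Gamma^R$ I would pass to a piecewise constant strict approximation. After disintegrating $\d \Gamma^R(t, g) = \Gamma^R_t(\d g) \, \d t$, a quantile-based measurable selection---possibly after enlarging the probability space by an auxiliary independent uniform random variable on $(0, 1)$, which leaves $\ev W_2^2$ unaffected---produces $\F_{t^n_i}$-measurable $G \cap \{\lvert g \rvert \leq R\}$-valued random variables $Z^n_i$ whose regular conditional law given $\F_{t^n_i}$ equals $\Gamma^R_{t^n_i}$. Setting $\bar{\gamma}^{R, n}_0 = g_0$ and $\bar{\gamma}^{R, n}_t = Z^n_i$ on $[t^n_i, t^n_{i + 1})$ for $i = 1, \dots, n - 1$ yields a bounded, adapted, piecewise constant $G$-valued process. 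To show $\ev W_2^2(\bar\Gamma^{R, n}, \Gamma^R) \to 0$, I would exhibit an explicit non-time-preserving coupling: on each short interval $[t^n_i, t^n_{i + 1})$ of length $T/n$, the point mass $\delta_{Z^n_i}$ is matched with $\Gamma^R_{t^n_i}$ through the quantile map at zero $g$-cost, and the accompanying mass is spread across the interval at a time-displacement cost of at most $T/n$ per unit of mass. A diagonal argument with $R = R_n \to \infty$ then delivers a single sequence $(\gamma^n)_n$ with $\ev W_2^2(\Gamma_n, \Gamma) \to 0$, and the state convergence $\ev (\lvert X^n - X \rvert^{\ast}_T)^2 \to 0$ follows immediately from Proposition \ref{prop:existence_uniqueness_stability} applied with $\epsilon_n = t^n_1 = T/n \to 0$.

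The principal obstacle is the chattering step, where the discretization scale $T/n$, the time-displacement cost in the non-time-preserving Wasserstein coupling, and the adaptedness requirement on $Z^n_i$ must be balanced simultaneously. In the most important special case, where $\Gamma$ is already strict---that is, $\d \Gamma(t, g) = \d \delta_{\gamma_t}(g) \, \d t$ for some $\bb{F}$-progressively measurable $G$-valued $L^2$ process $\gamma$---the chattering step collapses to a classical $L^2$-density-of-step-processes argument: one may take $\bar{\gamma}^{R, n}_0 = g_0$ and $\bar{\gamma}^{R, n}_t = \tfrac{n}{T} \int_{t^n_{i - 1}}^{t^n_i} \gamma^R_s \, \d s$ for $t \in [t^n_i, t^n_{i + 1})$ with $i \geq 1$, which is $\F_{t^n_i}$-measurable, lies in $G \cap \{\lvert g \rvert \leq R\}$ by convexity, and converges to $\gamma^R$ in $L^2(\Omega \times [0, T])$ by Lebesgue differentiation.
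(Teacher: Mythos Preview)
Your overall three-step strategy---truncate to bounded $G$, pass from relaxed to strict piecewise-constant controls, then invoke the stability half of Proposition~\ref{prop:existence_uniqueness_stability}---is exactly the paper's, and your truncation step and your strict-case argument via shifted Steklov averages are both fine. The gap is in the relaxed-to-strict step.

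The coupling you describe does not yield $\ev W_2^2(\bar\Gamma^{R,n}, \Gamma^R) \to 0$. For a fixed realisation $(\omega, u)$ the slice $\delta_{Z^n_i(\omega, u)}$ is a Dirac mass while $\Gamma^R_{t^n_i}(\omega)$ is generally spread out, so matching them ``at zero $g$-cost'' is impossible: the transport cost is at least the second moment of $\Gamma^R_{t^n_i}(\omega)$ about $Z^n_i(\omega,u)$, which is of order one and does not vanish with $n$. Concretely, if $\Gamma^R_t = \tfrac12(\delta_{-1} + \delta_1)$ for all $t$ and you use a \emph{single} auxiliary uniform $U$, then every $Z^n_i = Q(U)$ takes the same value, $\bar\Gamma^{R,n} = \text{Leb}_{[0,T]}\otimes\delta_{\pm 1}$, and $W_2(\bar\Gamma^{R,n}, \Gamma^R)$ is bounded away from zero for all $n$. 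Even with independent uniforms $(U_i)_i$ one would need a delicate LLN-based time-rearrangement, not the coupling you sketch. There is also a filtration issue: the corollary requires $\gamma^n$ to be $\bb{F}$-progressively measurable, and enlarging the probability space by auxiliary randomness destroys this.

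The paper resolves both problems by invoking the chattering lemma \cite[Theorem~2.2(b)]{karoui_filter_1988}, which builds strict controls in the \emph{original} filtration $\bb{F}$ by encoding the spread of $\Gamma^R_t$ through fast oscillations in the time variable rather than through external randomness; on bounded $G$ this gives a.s.\ $W_2$-convergence, upgraded to $\ev W_2^2 \to 0$ by Vitali. The subsequent step-process approximation is then handled by \cite[Lemma~4.4]{lipster_statistics_rp_1977}, which your Steklov-average construction essentially reproduces.
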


\begin{proof}
First, we reduce the problem to controls $\Gamma$ of the form $\d \Gamma(t, g) = \delta_{\gamma_t}(g) \d t$ for an $\bb{F}$-progressively measurable $G$-valued processes $\gamma$. If $\Gamma$ is not of that form, we can find a sequence $(\gamma^n)_n$ of $\bb{F}$-progressively measurable $G$-valued processes with associated measure $\Gamma_n$, for which almost surely $\lim_{n \to \infty} W_2(\Gamma_n, \Gamma) = 0$, so that $\ev W_2^2(\Gamma_n, \Gamma) \to 0$ by the dominated convergence theorem. Indeed, the former is guaranteed by the chattering lemma (see e.g.\@ \cite[Theorem 2.2(b)]{karoui_filter_1988}), which is applicable since $G$ is convex by Assumption \ref{ass:red_form_cont_prob}.

The second approximation step from piecewise constant to $\bb{F}$-progressively measurable controls is provided by \cite[Lemma 4.4]{lipster_statistics_rp_1977}.
\end{proof}

\subsubsection{Proof of Theorems \ref{thm:smooth_equiv} and Corollary \ref{cor:prop_of_chaos}} \label{sec:prove_ps}

We start with the proof of Theorem \ref{thm:smooth_equiv}.

\begin{proof}[Proof of Theorem \ref{thm:smooth_equiv}]
First, we establish the equality $V_{\textup{srl}} = V$, where we recall that $V_{\textup{srl}}$ is the infimum of $\cal{J}$ over smooth relaxed control rules and $V$ denotes the optimal cost of the strong mean-field control problem. We achieve this by approximating an arbitrary smooth relaxed control rule $\pr_0$ by a sequence of strong controls $(\gamma^n)_n$. Note that under the probability distribution $\pr_{\ast}$ on $\Omega_{\ast}$ associated to $\pr_0$, the random measure $\Gamma^{\ast}$ is a smooth relaxed control as defined in the paragraph above Proposition \ref{prop:existence_uniqueness_stability} and $X^{\ast}$ solves the McKean--Vlasov SDE \eqref{eq:relaxed_weak_mfl_2}. Now, Corollary \ref{cor:approx_contr} reformulates the conclusions of Lemma 4.3 in \cite{mkv_control_limit_2020} in the context of our setup. Thus, we can execute the subsequent programme in \cite{mkv_control_limit_2020}, Lemma 4.4 and Proposition 4.5, to find a sequence $(\gamma^n)_n$ of strong controls, i.e.\@ $\bb{F}^{\xi, W, W^0}$-progressively measurable $G$-valued processes, on the probability space $(\Omega, \F, \pr)$ with the following property: if we set $\mu^n = \L(X^n, W, \Gamma^n, I^n \vert W^0)$ and $\nu^n = (\ev[e^{-\Lambda^n_t} \delta_{X^n_t} \vert W^0])_{t \in [0, T]}$, where $\Gamma^n$ is the relaxed control associated to $\gamma^n$, $X^n$ is the unique strong solution to McKean--Vlasov SDE \eqref{eq:relaxed_weak_mfl_2} with $\bb{G} = \bb{F}^{W^0}$, and $I^n = \bf{1}_{\theta > \Lambda^n_t}$ for a standard exponential random variable $\theta$ independent of the remaining random variables, and further let $\pr^n_0$ denote the law of $(\mu^n, W^0)$, then $\pr^n_0 \Rightarrow \pr_0$ and $((X^{\ast}, \nu^{\ast})^{\#}\pr_{\ast}^n)_n$ is uniformly square-integrable, where $\pr_{\ast}^n$ is the law on $\P(\Omega_{\ast})$ associated with $\pr_0^n$. Thus, by Proposition \ref{prop:conv_of_cost}, we get that $\lim_{n \to \infty} \cal{J}(\pr^n_0) = \cal{J}(\pr_0)$. In particular, since $X^n$ solves McKean--Vlasov SDE \eqref{eq:mfl}, we get that $J(\gamma^n) \geq V$. Now, we specifically choose $\pr_0$ such that $\cal{J}(\pr_0) \leq V_0 + \epsilon$ for a fixed $\epsilon > 0$. This implies
\begin{equation*}
    V \leq \lim_{n \to \infty} J(\gamma^n) = \lim_{n \to \infty} \cal{J}(\pr^n_0) = \cal{J}(\pr_0) \leq V_{\textup{srl}} + \epsilon.
\end{equation*}
Letting $\epsilon \to 0$ gives $V \leq V_{\textup{srl}}$. Since every $\bb{F}^{\xi, W, W^0}$-progressively measurable $G$-valued process $\gamma$ induces a smooth control rule, we also have the reverse inequality $V_{\textup{srl}} \leq V$, whence $V_{\textup{srl}} = V$.

Next, we show that $\limsup_{N \to \infty} V^N \leq V$. Let us fix an arbitrary $\epsilon > 0$ and choose a strong control $\gamma$ with $J(\gamma) \leq V + \epsilon$. Since $\gamma$ is $\bb{F}^{\xi, W, W^0}$-progressively measurable, we can find a measurable function $g \define [0, T] \times \R \times C([0, T]) \times C([0, T]) \to \R$ such that $\gamma_t = g(t, \xi, W_{\cdot \land t}, W^0_{\cdot \land t})$ for $\textup{Leb} \otimes \pr$-almost every $(t, \omega) \in [0, T] \times \Omega$. Now, we set $\gamma^{N, i}_t = g(t, \xi_i, W^i_{\cdot \land t}, W^0_{\cdot \land t})$, so that the controls $\gamma^{N, 1}$,~\ldots, $\gamma^{N, N}$ are admissible for the particle system. Moreover, it is not difficult to see that the sequence $(\mu^N, W^0)_N$ induced by these controls converges weakly to the smooth relaxed control rule $\pr_0$ induced by the unique strong solution $X$ of the McKean--Vlasov SDE \eqref{eq:mfl} with control $\gamma$ provided by Proposition \ref{prop:existence_uniqueness_stability}. Hence, applying Proposition \ref{prop:conv_of_cost} once again shows that
\begin{align*}
    \limsup_{N \to \infty} V^N &\leq \lim_{N \to \infty} J^N(\gamma^{N, 1}, \dots, \gamma^{N, N}) = \lim_{N \to \infty} \cal{J}(\L(\mu^N, W^0)) \\
    &= \cal{J}(\pr_0) = J(\gamma) \leq V + \epsilon.
\end{align*}
Since $\epsilon > 0$ was arbitrary, we get $\limsup_{N \to \infty} V^N \leq V = V_{\textup{srl}}$ as desired.
\end{proof}

We can immediately move on to the proof of Corollary \ref{cor:prop_of_chaos}. Recall that Corollary \ref{cor:prop_of_chaos} assumes that $b$ is of the form $b(t, x, v, g) = b_0(t, x, v)$, so there is no control present.

\begin{proof}[Proof of Corollary \ref{cor:prop_of_chaos}]
Since $b$ does not depend on the control, the particle system is exchangeable. By Theorem \ref{thm:convergence_ps} the sequence $(\mu^N, W^0)$ subsequentially converges to a probability measure $\pr_0$ on $\Omega_0$ such that $(X^{\ast}, \nu^{\ast})$ solves the McKean--Vlasov SDE \eqref{eq:relaxed_weak_mfl} under $\pr_{\ast}$, where $\pr_{\ast}$ is the probability measure on $\Omega_{\ast}$ associated to $\pr_0$. Using the exchangeability of the particle system together with Lemma \ref{lem:weak_conv_particle} and Proposition \ref{prop:id_of_nu} gives that
\begin{equation*}
    \L(X^{N, 1}, \nu^N) = \frac{1}{N}\sum_{i = 1}^N \L(X^{N, i}, \nu^N) \Rightarrow \L_{\ast}(X^{\ast}, \nu^{\ast})
\end{equation*}
on $D[0, T] \times D_{\bf{M}^2}[0, T]$ along the subsequence from above. By Proposition \ref{prop:existence_uniqueness_stability}, the process $(X^{\ast}, \nu^{\ast})$ is the unique strong solution to the McKean--Vlasov SDE \eqref{eq:mfl} on the probability space $(\Omega_{\ast}, \F^{\ast}, \pr_{\ast})$ with idiosyncratic noise $W^{\ast}$ and common noise $B^{\ast}$. Hence, by the Yamada--Watanabe theorem $\L_{\ast}(X^{\ast}, \nu^{\ast})$ is equal to the law of the unique strong solution  $(X, \nu)$ to McKean--Vlasov SDE \eqref{eq:mfl} (on the setup $(\Omega, \F, \pr)$ with noises $W$ and $W^0$). Thus, $\L(X^{N, 1}, \nu^N)$ converges subsequentially to $\L(X, \nu)$ on $D[0, T] \times D_{\bf{M}^2}[0, T]$. However, by the weak uniqueness of $(X, \nu)$, the weak limit of $\L(X^{N, 1}, \nu^N)$ along any other subsequence must also coincide with $\L(X, \nu)$, so that the weak convergence actually holds along the entire sequence.
\end{proof}

\subsection{Closed-Loop Controls and Equivalence between all Formulations} \label{eq:proof_all_equiv}

In this subsection, we only give the proof of Theorem \ref{thm:all_equiv}.

\begin{proof}[Proof of Theorem \ref{thm:all_equiv}]
Let $\pr_0 \in \P(\Omega_0)$ be a relaxed control rule. Our first goal is to construct a closed-loop control rule $\pr_0'$ such that $\cal{J}(\pr_0') \leq \cal{J}(\pr_0)$. By \cite[Lemma 3.2]{lacker_mfg_controlled_mgale_2015}, we can find an $\bb{F}^{\ast}$-predictable $\P(G)$-valued process$(\Gamma_t)_{t \in [0, T]}$ such that $\d \Gamma(t, g) = \d \Gamma_t(g) \, \d t$. Next, set $\gamma_t = \int_G g \, \d \Gamma$ for $t \in [0, T]$, so that $\int_0^t b(s, X_s, \nu_s, g) \, \d \Gamma(s, g) = \int_0^t b(s, X_s, \nu_s, \gamma_s) \, \d s$ because $g \mapsto b(t, x, v, g)$ is affine. Then, we define measurable functions $r$, $g_{\ast} \define [0, T] \times \R \times \M^1_{\leq 1}(\R) \to \R$ by $r(t, x, v) = -\log\ev_{\ast}[I_t \vert X_t = x,\, \nu_t = v]$ and
\begin{equation*}
    g_{\ast}(t, x, v) = \ev_{\ast}\bigl[I_t e^{r(t, X_t, \nu_t)} \gamma_t \bigr\vert X_t = x,\, \nu_t = v\bigr],
\end{equation*}
so that $\langle \nu_t, \varphi\rangle = \ev[e^{-r(t, X_t, \nu_t)} \varphi(X_t) \vert \nu_t]$ a.s.\@ for any $\varphi \define \R \to \R$ measurable and bounded. Next, fix $\varphi \in C^2_c(\R)$ and apply It\^o's formula to $I_t \varphi(X_t)$, whereby
\begin{align*}
    \d (I_t \varphi(X_t)) &= I_t\L\varphi(t, X_t, \nu_t, \gamma_t) \, \d t + I_t \partial_x \varphi(X_t) \Bigl(\sigma(t, X_t, \nu_t) \, \d W_t + \sigma_0(t, X_t, \nu_t) \, \d B_t\Bigr) \\
    &\ \ \ + \varphi(X_t) \bigl(\d I_t + I_t \lambda(t, X_t, \nu_t) \, \d t\bigr).
\end{align*}
Here the differential operator $\L$ acts on $\varphi \in C^2_c(\R)$ by
\begin{equation} \label{eq:diff_op_sfpe}
    \L\varphi(t, x, v, g) = -\lambda(t, x, v) \varphi(x) + b_{\alpha}(t, x, v, g) \partial_x \varphi(x) + a(t, x, v) \partial^2_x\varphi(x)
\end{equation}
for $(t, x, v, g) \in [0, T] \times \R \times \cal{M}^1_{\leq 1}(\R) \times G$ with
\begin{equation*}
    b_{\alpha}(t, x, v, g) = b(t, x, v, g) - \alpha(t, x, v) \langle v, \lambda(t, \cdot, v)\rangle
\end{equation*}
and $a(t, x, v) = \frac{1}{2}(\sigma^2(t, x, v) + \sigma_0^2(t, x, v))$. We take conditional expectation with respect to $\F^0_T$ on both sides to obtain
\begin{equation*}
    \d \langle \nu_t, \varphi\rangle = \ev\bigl[\L\varphi(t, X_t, \nu_t, \gamma_t) \big\vert \F^0_T\bigr] \, \d t + \bigl\langle \nu_t, \sigma_0(t, \cdot, \nu_t) \partial_x \varphi\bigr\rangle \, \d B_t,
\end{equation*}
where we use the stochastic Fubini theorem (see e.g.\@ \cite[Lemma A.5]{hammersley_weak_ex_uni_2021} or \cite[Lemma B.1]{lacker_mimicking_2020}) to exchange the conditional expectation with respect to $\F^0_T$ and the stochastic integration against $B$. The stochastic integral with respect to $W$ vanishes since $W$ is independent of $\F^0_T$. Lastly, the integral with respect to $I_t - \int_0^t I_s \lambda(s, X_s, \nu_s) \, \d s$ vanishes because $\pr_0$ is a relaxed control, so satisfies Property \ref{it:martingale} of Definition \ref{def:rel_cntrl}.

Now, we may proceed as in the proof of Corollary 1.6 in \cite{lacker_mimicking_2020}, to show that the law $P_t = \L_{\ast}(\nu_t)$ satisfies the Fokker--Planck equation
\begin{align*}
    &\int_{\M^1_{\leq 1}(\R)} \Phi(\langle v, \varphi\rangle) \, \d (P_t - P_0)(v) \\
    &= \sum_{i = 1}^k \int_0^t \biggl(\int_{\M^1_{\leq 1}(\R)}  \partial_{x_i} \Phi(\langle v, \varphi\rangle) \bigl\langle v, \L\varphi_i\bigl(s, \cdot, v, g_{\ast}(s, \cdot, v)\bigr)\bigr\rangle \, \d P_s(v)\biggr) \, \d s \\
    &\ \ \ + \frac{1}{2}\sum_{i, j = 1}^k \int_0^t \biggl(\int_{\M^1_{\leq 1}(\R)}  \partial^2_{x_i x_j} \Phi(\langle v, \varphi\rangle) \langle v, \sigma_0(t, \cdot, v)\partial_x \varphi_i\rangle \langle v, \sigma_0(t, \cdot, v)\partial_x \varphi_j\rangle \, \d P_s(v)\biggr) \, \d s
\end{align*}
for $\varphi = (\varphi_1, \dots, \varphi_k) \in C^2_c(\R; \R^k)$ and $\Phi \in C^2_c(\R^k)$ on $\M^1_{\leq 1}(\R)$. Here we used that
\begin{align*}
    \ev_{\ast}\bigl[I_t b(t, X_t, \nu_t, \gamma_t) \big\vert X_t,\, \nu_t \bigr] &= e^{-r(t, X_t, \nu_t)} \ev_{\ast}\bigl[I_t e^{r(t, X_t, \nu_t)} b(t, X_t, \nu_t, \gamma_t) \big\vert X_t,\, \nu_t \bigr] \\
    &= e^{-r(t, X_t, \nu_t)} b\bigl(t, X_t, \nu_t, \ev_{\ast}\bigl[I_te^{r(t, X_t, \nu_t)}\gamma_t \big\vert X_t, \nu_t\bigr]\bigr) \\
    &= e^{-r(t, X_t, \nu_t)} b\bigl(t, X_t, \nu_t, g_{\ast}(t, X_t, \nu_t)\bigr),
\end{align*}
which holds because because $g \mapsto b(t, x, v, g)$ is affine and $\ev_{\ast}[I_t e^{r(t, X_t, \nu_t)} \vert X_t,\, \nu_t] = 1$, and in turn implies
\begin{align*}
    \ev_{\ast}\bigl[\partial_{x_i}\Phi&(\langle \nu_t, \varphi\rangle) I_t b(t, X_t, \nu_t, \gamma_t) \partial_x \varphi_i(X_t) \bigr] \\
    &= \ev_{\ast}\Bigl[\partial_{x_i}\Phi(\langle \nu_t, \varphi\rangle) \ev_{\ast}\bigl[I_t b(t, X_t, \nu_t, \gamma_t)\big\vert X_t,\, \nu_t\bigr] \partial_x \varphi_i(X_t)\Bigr] \\
    &= \ev_{\ast}\Bigl[\partial_{x_i}\Phi(\langle \nu_t, \varphi\rangle) e^{-r(t, X_t, \nu_t)} b\bigl(t, X_t, \nu_t, g_{\ast}(t, X_t, \nu_t)\bigr) \partial_x \varphi_i(X_t)\Bigr] \\
    &= \ev_{\ast}\Bigl[\partial_{x_i}\Phi(\langle \nu_t, \varphi\rangle) \bigl\langle \nu_t, b\bigl(t, \cdot, \nu_t, g_{\ast}(t, \cdot, \nu_t)\bigr) \partial_x\varphi_i\bigr\rangle\Bigr].
\end{align*}
Hence, we can apply the superposition principle for stochastic Fokker--Planck equations from \cite[Theorem 1.5]{lacker_mimicking_2020}, which can easily be extended to equations with zeroth-order terms in the generator $\L$, to show that there exists a filtered probability space $(\tilde{\Omega}_0, \tilde{\F}^0, \tilde{\bb{F}}^0, \tilde{\pr}_0)$ with $\tilde{\F}^0_T$ countably generated which carries an $\tilde{\bb{F}}^0$-Brownian motion $\tilde{B}$ and a continuous $\tilde{\bb{F}}^0$-adapted $\M_{\leq 1}(\R)$-valued process $\tilde{\nu}$ such that $\L^{\tilde{\pr}_0}(\tilde{\nu}_t) = P_t$ and
\begin{equation*}
    \d \langle \tilde{\nu}_t, \varphi\rangle = \bigl\langle \tilde{\nu}_t, \L\varphi\bigl(t, \cdot, \tilde{\nu}_t, g_{\ast}(t, \cdot, \tilde{\nu}_t)\bigr)\bigr\rangle \, \d t + \bigl\langle \tilde{\nu}_t, \sigma_0(t, \cdot, \tilde{\nu}_t) \partial_x \varphi\bigr\rangle \, \d \tilde{B}_t
\end{equation*}
for all $\varphi \in C^2_c(\R)$ and $t \in [0, T]$. 

Our goal is to construct a closed-loop control rule $\tilde{\pr}_0$ whose feedback function is given by $g_{\ast}$ and such that the law of $\nu_t$ under $\pr_{\ast}$ coincides with $P_t$. We can extend $(\tilde{\Omega}_0, \tilde{\F}^0, \tilde{\bb{F}}^0, \tilde{\pr}_0)$ to a probability space $(\tilde{\Omega}, \tilde{\F}, \tilde{\bb{F}}, \tilde{\pr})$ supporting an $\tilde{\bb{F}}$-Brownian motion $\tilde{W}$ and a $\tilde{\F}_0$-measurable random variable $\tilde{\xi}$ with distribution $\nu_0$, such that $(\tilde{\xi}, \tilde{W})$ is independent of $\tilde{\F}^0_T$ and $\tilde{B}$ is an $\tilde{\bb{F}}$-Brownian motion. Now, we solve the SDE with random coefficients
\begin{equation} \label{eq:sde_random_mckean}
    \d \tilde{X}_t = b_{\alpha}\bigl(t, \tilde{X}_t, \tilde{\nu}_t, g_{\ast}(t, \tilde{X}_t, \tilde{\nu}_t)\bigr) \, \d t + \sigma(t, \tilde{X}_t, \tilde{\nu}_t) \, \d \tilde{W}_t + \sigma_0(t, \tilde{X}_t, \tilde{\nu}_t) \, \d \tilde{B}_t,
\end{equation}
with initial condition $\tilde{X}_0 = \tilde{\xi}$.
Let us set $\tilde{\Lambda}_t = \int_0^t \lambda(s, \tilde{X}_s, \tilde{\nu}_s) \, \d s$ and then define the $\M^2_{\leq 1}(\R^d)$-valued process $\tilde{\nu}' = (\tilde{\nu}'_t)_{0 \leq t \leq T}$ by $\tilde{\nu}'_t = \tilde{\ev}[e^{-\tilde{\Lambda}_t} \delta_{\tilde{X_t}} \vert \tilde{\F}^0_T]$ for $t \in [0, T]$, where $\tilde{\ev}$ denotes the expectation with respect to $\tilde{\pr}$. Proceeding as at the beginning of the proof, we can show that $\tilde{\nu}'$ solves the linear SPDE
\begin{equation} \label{eq:spde_rd_lin}
    \d \langle \tilde{\nu}'_t, \varphi\rangle = \bigl\langle \tilde{\nu}'_t, \L\varphi\bigl(t, \cdot, \tilde{\nu}_t, g_{\ast}(t, \cdot, \tilde{\nu}_t)\bigr)\bigr\rangle \, \d t + \bigl\langle \tilde{\nu}'_t, \sigma_0(t, \cdot, \tilde{\nu}_t)\partial_x \varphi\bigr\rangle \, \d \tilde{B}_t
\end{equation}
for $\varphi \in C_c^2(\R^d)$, with initial condition $\tilde{\nu}'_0 = \nu_0$. Since Assumptions \ref{ass:red_form_cont_prob} and \ref{ass:aff_conv} are in force, this SPDE has a unique solution by Theorem \ref{thm:uniqueness_spde}. But $\tilde{\nu}$ satisfies SPDE \eqref{eq:spde_rd_lin} as well, so we conclude that $\tilde{\nu}'_t = \tilde{\nu}_t$, meaning that $(\tilde{X}, \tilde{\nu})$ solves \eqref{eq:sde_random_mckean} when viewed as a McKean--Vlasov SDE.

Let us now define $\pr_0'$ to be the law of $\bigl(\L^{\tilde{\pr}}(\tilde{X}, \tilde{W}, \tilde{\Gamma}, \tilde{I} \vert \tilde{\F}^0_T), \tilde{B}\bigr)$ under $\tilde{\pr}$, where $\d\tilde{\Gamma}(t, g) = \d \delta_{g_{\ast}(t, \tilde{X}_t, \tilde{\nu}_t)} \, \d t$ and $\tilde{I} = \bf{1}_{\tilde{\theta} > \tilde{\Lambda}_t}$ for a standard exponential random variable $\tilde{\theta}$ on $(\tilde{\Omega}, \tilde{\F}, \tilde{\pr})$, which is independent of the remaining random variables. It follows that $\pr_0'$ is a smooth relaxed control rule in the sense of Definition \ref{def:rel_cntrl} and by construction it is closed-loop. Moreover, by convexity of $f$ in the control argument (see Assumption \ref{ass:aff_conv} \ref{it:convex}), it holds that
\begin{align*}
    \ev_{\ast}&\biggl[\int_{[0, T] \times G} I_t f(t, X_t, \nu_t, g) \, \d \Gamma(t, g) + \psi(\nu_T)\biggr] \\
    &= \ev_{\ast}\biggl[\int_0^T \biggl(e^{-r(t, X_t, \nu_t)} \ev_{\ast}\biggl[\int_G I_t e^{r(t, X_t, \nu_t)} f(t, X_t, \nu_t, g)  \d \Gamma_t(g)\bigg\vert X_t, \, \nu_t\biggr]\biggr) \, \d t + \psi(\nu_T)\biggr] \\
    &\geq \ev_{\ast}\biggl[\int_0^T e^{-r(t, X_t, \nu_t)} f\biggl(t, X_t, \nu_t, \ev_{\ast}\biggl[I_t e^{r(t, X_t, \nu_t)} \int_G g \, \d \Gamma_t(g) \bigg\vert X_t, \, \nu_t\biggr]\biggr) \, \d t + \psi(\nu_T)\biggr] \\
    &= \ev_{\ast}\biggl[\int_0^T e^{-r(t, X_t, \nu_t)} f\bigl(t, X_t, \nu_t, g_{\ast}(t, X_t, \nu_t)\bigr) \, \d t + \psi(\nu_T)\biggr] \\
    &= \ev_{\ast}\biggl[\int_0^T \bigl\langle \nu_t, f\bigl(t, \cdot, \nu_t, g_{\ast}(t, \cdot, \nu_t)\bigr)\bigr\rangle \, \d t + \psi(\nu_T)\biggr] 
\end{align*}
where we applied Jensen's inequality using that
\begin{equation*}
    \F_{\ast} \otimes \cal{B}(G) \ni A \mapsto \ev_{\ast}\biggl[\int_G I_t e^{r(t, X_t, \nu_t)} \bf{1}_A(\cdot, g) \, \d \Gamma_t(g) \bigg\vert X_t, \nu_t\biggr]
\end{equation*}
is a (random) probability measure. But this implies
\begin{align} \label{eq:rel_to_cl}
    \cal{J}(\pr_0) = \ev_{\ast}&\biggl[\int_{[0, T] \times G} I_t f(t, X_t, \nu_t, g) \, \d \Gamma(t, g) + \psi(\nu_T)\biggr] \notag \\
    &\geq \ev_{\ast}\biggl[\int_0^T \bigl\langle \nu_t, f\bigl(t, \cdot, \nu_t, g_{\ast}(t, \cdot, \nu_t)\bigr)\bigr\rangle \, \d t + \psi(\nu_T)\biggr] \notag \\
    &= \tilde{\ev}\biggl[\int_0^T \bigl\langle \tilde{\nu}_t, f\bigl(t, \cdot, \tilde{\nu}_t, g_{\ast}(t, \cdot, \tilde{\nu}_t)\bigr)\bigr\rangle \, \d t + \psi(\tilde{\nu}_T)\biggr] \notag \\
    &= \ev_{\ast}'\biggl[\int_{[0, T] \times G} I_t f(t, X_t, \nu_t, g) \, \d \Gamma(t, g) + \psi(\nu_T)\biggr] \notag \\
    &= \cal{J}(\pr_0'),
\end{align}
where $\ev_{\ast}'$ denotes the expectation with respect to the probability measure on $\Omega_{\ast}$ associated to $\pr_0'$. Hence, $\pr_0'$ is the desired closed-loop control rule. 

Next, minimising the left-hand side of \eqref{eq:rel_to_cl} implies that $V_{\textup{rl}} \geq V_{\textup{cl}} \geq V_{\textup{srl}}$, which, together with the trivial inequality $V_{\textup{rl}} \leq V_{\textup{srl}}$, yields $V_{\textup{rl}} = V_{\textup{cl}} = V_{\textup{srl}}$. Combining this with Theorems \ref{thm:convergence_ps} and \ref{thm:smooth_equiv} furthermore implies that
\begin{equation*}
    \liminf_{N \to \infty} V^N \geq V_{\textup{rl}} = V_{\textup{cl}} = V_{\textup{srl}} = V \geq \limsup_{N \to \infty} V^N.
\end{equation*}
Consequently, all expressions in the above are in fact equal. This concludes the proof.
\end{proof}

\section{Convergence to the Singular Limit} \label{sec:sin_limit}

In this section, we analyse the convergence of the regularised system as we let the regularisation vanish. This will culminate in the proof of Theorem \ref{thm:convergence_struc} as well as Propositions \ref{prop:conv_struc_simple_coeff} and \ref{eq:minimal_convergence}.

\subsection{Subsequential Convergence of the Regularised Sequence} \label{sec:reg_to_sing}

Here as well as in the subsequent three subsections we adopt the convention from Subsection \ref{sec:limit_system} and drop the asterisk in the superscript of variables defined on $\Omega_{\ast}$. Note that this means that we will use the same notation for the idiosyncratic noises on $(\Omega, \F, \pr)$ and $(\Omega_{\ast}, \F_{\ast}, \pr_{\ast})$, but in each case it will be clear from context which random variable we are referring to.

We fix a sequence $(\gamma^n)_n$ of admissible strong controls, that is $\bb{F}^{\xi, W, W^0}$-progressively measurable $G$-valued processes $\gamma^n$. Then we let $X^n$ be the solution to McKean--Vlasov SDE \eqref{eq:mfl_sequence} with input $\gamma^n$, set $\nu^n = \ev[e^{-\Lambda^n_t} \delta_{X^n_t}\vert W^0]$, and denote the relaxed control associated to $\gamma^n$ by $\Gamma^n$. The objective of this section is to establish the subsequential weak convergence of the family $(\mu^n, W^0)_n$, with $\mu^n = \L(X^n, W, \Gamma^n \vert W^0)$, on $\Omega_0$ to an admissible relaxed control rule (for the singular model), cf.\@ Definition \ref{def:rel_cntr_struc}. First, we note that similar to Lemma \ref{lem:l_2_bound_ps} we can show that the family $((\lvert X^n \rvert^{\ast}_T)^2)_n$ is uniformly integrable. This allows us to deduce the following tightness result.

\begin{proposition} \label{prop:sequence_tightness}
Let Assumption \ref{ass:struc_limit} be satisfied. Then the sequence $(\mu^n, W^0)_n$ is tight on $\Omega_0$.
\end{proposition}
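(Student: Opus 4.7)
The plan is to reduce tightness on $\Omega_0 = \P(\cal{S}) \times C([0, T + 1])$ to tightness of each component. The marginal on $C([0, T + 1])$ is automatic since $W^0$ has a fixed law, so the real task is to establish tightness of $(\mu^n)_n$ on $\P(\cal{S})$, where $\cal{S} = D[-1, T + 1] \times C([0, T + 1]) \times \bb{M}_T^2(G)$ and $D[-1, T + 1]$ carries the $M1$-topology. By a standard argument (cf.\@ Proposition 2.2 (ii) of \cite{sznitman_prop_chaos_1991}, or a direct Markov-inequality reduction), this in turn follows from tightness of the intensity measures $\ev\mu^n = \L(X^n, W, \Gamma^n)$ on $\cal{S}$, and hence from tightness of each marginal. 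The $W$-marginal is trivial, and tightness of $(\L(\Gamma^n))_n$ on $\bb{M}_T^2(G)$ is provided by the uniform integrability of $\bigl(\int_0^T \lvert \gamma^n_t\rvert^2 \, \d t\bigr)_n$ via Proposition B.1 of \cite{lacker_mfg_controlled_mgale_2015}. The crux is therefore $M1$-tightness of $(\L(X^n))_n$ on $D[-1, T + 1]$.

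For this I would decompose, for $t \in [0, T+1]$,
\begin{equation*}
    X^n_t = X^n_{0-} + Y^n_t - A^n_t, \qquad A^n_t = \int_0^t \alpha(s) \, \d L^n_s,
\end{equation*}
with $X^n_t \equiv X^n_{0-}$ and $Y^n_t = A^n_t = 0$ on $[-1, 0)$. The process $A^n$ is continuous and nondecreasing since $\alpha \geq 0$ and $L^n$ is nondecreasing, while $Y^n$ gathers the drift, idiosyncratic and common-noise contributions and is therefore continuous. The analogue of Lemma \ref{lem:l_2_bound_ps} in the present setup yields a uniform-in-$n$ bound on $\ev (\lvert X^n\rvert^{\ast}_{T + 1})^2$; combining this with the uniform integrability of $\bigl(\int_0^T \lvert \gamma^n_t\rvert^2 \, \d t\bigr)_n$, the boundedness of $\sigma$ and $\sigma_0$, and the Burkholder--Davis--Gundy inequality, an Aldous-type argument shows that $(Y^n)_n$ is $C$-tight on $C([-1, T + 1])$. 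On the other hand $L^n_t \in [0, 1]$ forces $0 \leq A^n_t \leq \lVert\alpha\rVert_\infty$ uniformly in $(n, t)$, and all trajectories of $A^n$ are nondecreasing, hence lie in a single $M1$-compact set by Helly's selection principle; this renders $(A^n)_n$ automatically tight in $M1$.

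To combine these I would use the elementary observation that if $g$ is continuous and nondecreasing on $[t_1, t_2]$ and $f$ is continuous, then by the intermediate value theorem one may pick $\lambda \in [0, 1]$ with $g(t) = \lambda g(t_1) + (1 - \lambda) g(t_2)$, so that
\begin{equation*}
    \bigl\lvert (f - g)(t) - \lambda (f - g)(t_1) - (1 - \lambda)(f - g)(t_2)\bigr\rvert = \bigl\lvert f(t) - \lambda f(t_1) - (1-\lambda) f(t_2)\bigr\rvert,
\end{equation*}
which is bounded by the uniform continuity modulus $w(f, t_2 - t_1)$. Applied pathwise to $f = Y^n$ and $g = A^n$, this yields that the $M1$-oscillation modulus of $X^n$ at scale $\delta$ is dominated by $w(Y^n, \delta)$; together with the uniform boundedness of $\lvert X^n\rvert^{\ast}_{T+1}$ in probability, it delivers $M1$-tightness of $(X^n)_n$. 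The main obstacle is precisely this compatibility between the $C$-tight continuous part and the merely $M1$-tight monotone part: the modulus estimate above is what guarantees that their difference remains $M1$-well-behaved, and it is also the reason the shift from $J1$ to $M1$ is natural in anticipation of the singular limit, where $L^n$ itself can acquire jumps.
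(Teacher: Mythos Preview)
Your proposal is correct and follows essentially the same approach as the paper: reduce to marginal tightness via \cite[Proposition 2.2 (ii)]{sznitman_prop_chaos_1991}, then decompose $X^n$ into a $C$-tight continuous part and a bounded nondecreasing part coming from $\int_0^\cdot \alpha(s)\,\d L^n_s$. The only difference is cosmetic: the paper splits the continuous part further into drift $\beta^n$ and martingale $Z^n$ (handled via a H\"older-modulus estimate and Kolmogorov's criterion respectively) and then simply asserts that tightness of the three summands implies tightness of $X^n$, whereas you lump $\beta^n + Z^n$ into one piece and give an explicit $M1$-oscillation argument for the combination; your treatment of that last step is in fact more careful than the paper's.
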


\begin{proof}
Clearly $(W^0)_n$ is tight, so we only need to worry about tightness of $(\mu^n)_n$. By \cite[Proposition 2.2 (ii)]{sznitman_prop_chaos_1991} this follows from tightness of $(X^n, W, \Gamma^n)_n$, where $X^n$ and the corresponding subprobability distribution $\nu^n$ are defined above the statement of the proposition. We establish the tightness of each component separately. Tightness of $(\Gamma^n)_n$ follows from compactness of $G$, while $(W)_n$ is trivially tight. Thus, it remains to prove tightness of $(X^n)_n$. We write $X^n = \xi + \beta^n + Z^n - F^n_t$ with
\begin{equation*}
    \beta^n_t = \int_0^t b(s, X^n_s, \nu^n_s, \gamma^n_s) \, \d s, \qquad Z^n_t = \int_0^t \sigma(s, X^n_s, \nu^n_s) \, \d W_s + \int_0^t \sigma_0(s, X^n_s, \nu^n_s) \, \d W^0_s,
\end{equation*}
and $F^n_t = \int_0^t \alpha(s) \, \d L^n_s$ for $t \in [0, T + 1]$, where we recall that we extended the coefficients $b$, $\sigma$, $\sigma_0$, and $\alpha$ to $[0, T + 1]$ in a suitable way, see the paragraph above Definition \ref{def:rel_cntr_struc}. Further setting $\beta^n_t = Z^n_t = F^n_t = 0$ for $t \in [-1, 0)$ turns $\beta^n$, $Z^n$, and $F^n$ into random variables with values in $C([-1, T + 1])$, $C([-1, T + 1])$, and $D[-1, T + 1]$, respectively. We will prove that as such the sequences $(\beta^n)_n$, $(Z^n)_n$, $(F^n)_n$ are tight, which immediately implies tightness of $(X^n)_n$ on $D[-1, T + 1]$. We begin by analysing the process $\beta^n$. For any $s$, $t \in [0, T]$ it holds that
\begin{equation*}
    \lvert \beta^n_t - \beta^n_s \rvert^2 = \biggl\lvert \int_s^t b^n_u \, \d u \biggr\rvert^2 \leq \lvert t - s \rvert \int_s^t \lvert b^n_u \rvert^2 \, \d u \leq \lvert t - s \rvert \int_0^T \lvert b^n_u \rvert^2 \, \d u,
\end{equation*}
where $b^n_u = b(u, X^n_u, \nu^n_u, \gamma^n_u)$. As we mentioned at the beginning of the section, the family $((\lvert X^n \rvert^{\ast}_T)^2)_n$ is uniformly integrable, which implies that the quantity $\ev \int_0^T \lvert b^n_u \rvert^2 \, \d u$ is bounded uniformly in $n \geq 1$. Thus, for any $\epsilon > 0$, we can find $R > 0$ large enough, such that $\pr\bigl(\int_0^T \lvert b^n_u \rvert^2 \, \d u \leq R^2\bigr) \geq 1 - \epsilon$. Now we define $\cal{K}_{\epsilon} \subset C([-1, T + 1])$ to be the set of functions $f \define [-1, T + 1] \to \R$, such that $f(t) = 0$ for $t \in [-1, 0]$ and
\begin{equation*}
    \lvert f(t) - f(s) \rvert \leq R \lvert t - s \rvert^{\frac{1}{2}}
\end{equation*}
for all $s$, $t \in [0, T + 1]$. Clearly, the set $\cal{K}_{\epsilon}$ is a compact subset of $C([-1, T + 1])$, and
\begin{equation*}
    \pr(\beta^n \in \cal{K}_{\epsilon}) \geq \pr\biggl(\int_0^T \lvert b^n_u \rvert^2 \, \d u \leq R^2\biggr) \geq 1 - \epsilon.
\end{equation*}
Consequently, the laws $\L^{\pr}(\beta^n)$, $n \geq 1$, form a tight sequence on $C([-1, T + 1])$ and any subsequential limit is concentrated on the space of functions with $\frac{1}{2}$-H\"older continuous trajectories. 

Next, the bound $\sigma^2(t, x, \nu) + \sigma_0^2(t, x, \nu) \leq 2C_{\sigma}^2$ implied by Assumption \ref{ass:red_form_cont_prob} \ref{it:growth} together with Kolmogorov's tightness criterion imply that the sequence $(\L^{\pr}(Z^n))_n$ is tight on $C([-1, T + 1])$.

Finally, we show that $(\L(F^n))_n$ form a tight family on $D[-1, T + 1]$. It follows from Theorem 12.12.2 of \cite{whitt_stoch_limits_2002} that the set of nondecreasing c\`adl\`ag functions on $[-1, T + 1]$, which are started from zero, constant on $[-1, 0)$ and $[T + 1/2, T]$ (recall that $\alpha$ vanishes on $[T + 1/2, T]$), and bounded by $\lVert \alpha \rVert_{\infty}$, form a compact subset of $D[-1, T + 1]$. Every member of the sequence $(F^n)_n$ is a.s.\@ a member of this set because $\alpha$ is nonnegative by Assumption \ref{ass:struc_limit} \ref{it:alpha}, so $(\L(F^n))_n$ is tight. 
\end{proof}

Recall that $\bf{M}^2 = \cal{M}^2_{\leq 1}(\R)$ and denote by $L^2([-1, T + 1]; \bf{M}^2)$ the space of $\bf{M}^2$-valued measurable functions $v$ on $[-1, T + 1]$ with $\int_{-1}^{T + 1} M_1^2(v_t) \, \d t < \infty$. We endow $L^2([-1, T + 1]; \bf{M}^2)$ with the metric
\begin{equation*}
    (v^1, v^2) \mapsto \biggl(\int_{-1}^{T + 1} d_2^2(v^1_t, v^2_t) \, \d t\biggr)^{1/2},
\end{equation*}
which turns it into a separable complete metric space. Moreover, the inclusion $D_{\bf{M}^2}[-1, T + 1] \to L^2([-1, T + 1]; \bf{M}^2)$ is continuous, so that we can view $\nu$, defined in Equation \eqref{eq:subprobability_sin}, as a random variable with values in $L^2([-1, T + 1]; \bf{M}^2)$.

The reason for introducing the space $L^2([-1, T + 1]; \bf{M}^2)$ is that it is not clear whether the sequence $(\nu^n)_n$ is tight on $D_{\bf{M}^2}[-1, T + 1]$ nor whether it converges subsequentially to $\nu$ in the space $D_{\bf{M}^2}[-1, T + 1]$, cf.\@ Remark \ref{rem:conv_of_nu}. We can, however, establish both statements for the space $L^2([-1, T + 1]; \bf{M}^2)$.

\begin{proposition} \label{prop:subprobabilities_converge}
Let Assumption \ref{ass:struc_limit} be satisfied. Then for any subsequential limit $\pr_0$ of $(\mu^n, W^0)_n$, the map $\nu \define \Omega_{\ast} \to L^2([-1, T + 1]; \bf{M}^2)$ is $\pr_{\ast}$-a.s.\@ continuous. Moreover, $\nu$ is the subsequential weak limit of $(\nu^n)_n$ on $L^2([-1, T + 1]; \bf{M}^2)$.
\end{proposition}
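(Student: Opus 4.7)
The plan is to split the proof into two parts: first, establishing the $\pr_{\ast}$-a.s.\@ continuity of the map $\nu \define \Omega_{\ast} \to L^2([-1, T + 1]; \bf{M})$; second, showing the weak convergence $\nu^n \Rightarrow \nu$ along the subsequence of Proposition \ref{prop:sequence_tightness}.

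For the continuity, I would first argue that under $\pr_{\ast}$ the hitting time $\tau = \inf\{t > 0 \define X_t \leq 0\}$ is $\pr_{\ast}$-a.s.\@ continuous at $X$ in the $M1$-topology. The three ingredients are the nondegeneracy $\sigma^2 + \sigma_0^2 \geq 1/C$ from Assumption \ref{ass:struc_limit}\ref{it:ellipticity}, the fact that $X$ reduces to a shifted Brownian motion on $[T + 1/2, T + 1]$ (so $\tau < T + 1$ $\pr_{\ast}$-a.s.), and the strong Markov property at $\tau$, which together force $X$ to enter the strict negative half-line immediately after $\tau$. Conditioning on $\F^0_T$, the random measure $\mu$ is then $\pr_{\ast}$-a.s.\@ concentrated on paths with continuous hitting time. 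Given a convergent sequence $\omega^k \to \omega$ in $\Omega_{\ast}$, a portmanteau argument applied to the $\mu$-component yields weak convergence of the time-marginals of $\nu(\omega^k)$ to those of $\nu(\omega)$ in $\bf{M}$ for all but countably many $t \in [-1, T + 1]$; Vitali's theorem upgrades this to convergence in $d_1$ using the uniform second moment bound on the $x$-coordinate, and dominated convergence over $t$ delivers $L^2$-convergence.

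For the weak convergence $\nu^n \Rightarrow \nu$, I would first establish tightness of $(\nu^n)_n$ on $L^2([-1, T+1]; \bf{M})$ via a $d_1$-modulus computation analogous to that in the proof of Proposition \ref{prop:tightness_ps}, using the uniform $L^2$-bound on $X^n$. Combined with Proposition \ref{prop:sequence_tightness} this gives tightness of $(\mu^n, \nu^n, W^0)_n$. Along a weakly convergent subsequence with limit $(\mu, \hat{\nu}, W^0)$, I would introduce the auxiliary absorbed measure
\begin{equation*}
    \tilde{\nu}^n_t(\varphi) = \int_{\cal{S}} \varphi(x_t) \bf{1}_{\inf_{0 \leq s \leq t} x_s > 0} \, \d \mu^n(x, w, \mathfrak{g}),
\end{equation*}
which by the continuity established in the first part and the continuous mapping theorem jointly converges with $(\mu^n, W^0)$ to $(\mu, \nu, W^0)$. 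Identifying $\hat{\nu} = \nu$ then reduces to the claim $\lVert \nu^n - \tilde{\nu}^n\rVert_{L^2([-1, T+1]; \bf{M})} \to 0$ in probability. Using that $\lambda^n$ vanishes on $\{x \geq 0\}$, on the event $\{\tau^n > t\}$ (where $\tau^n = \inf\{s > 0 \define X^n_s \leq 0\}$) one has $\Lambda^n_t = 0$, so that $\nu^n_t - \tilde{\nu}^n_t$ is supported on $\{\tau^n \leq t\}$ and carries the weight $e^{-\Lambda^n_t}$.

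The main obstacle lies in controlling this residual. My strategy is to prove that $\ev\bigl[e^{-\Lambda^n_t}\bf{1}_{\tau^n \leq t} \big\vert W^0\bigr] \to 0$ in $L^2([-1, T+1])$ by combining the blow-up $\inf_v \lambda^n(s, x_n, v) \to \infty$ from Assumption \ref{ass:struc_limit}\ref{it:intensity_convergence} with a quantitative lower bound on the Lebesgue time that $X^n$ spends strictly below zero after $\tau^n$, which is available because $\sigma^2 + \sigma_0^2$ is bounded away from zero. Integrating the resulting pointwise estimate over $t \in [-1, T + 1]$ and against the conditional distribution of $X^n$ delivers the desired vanishing in $L^2$, and with it the identification $\hat{\nu} = \nu$.
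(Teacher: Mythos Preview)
Your overall architecture matches the paper's: establish the downcrossing property of $X$ under $\pr_{\ast}$, deduce $\pr_{\ast}$-a.s.\ continuity of $\tau$ and hence of $\nu$, introduce the absorbed flow $\tilde{\nu}^n$, and then close the gap between $\nu^n$ and $\tilde{\nu}^n$. However, two of the steps you sketch contain real gaps.

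\medskip
\textbf{The downcrossing property.} You invoke ``the strong Markov property at $\tau$'' to force $X$ into the strict negative half-line immediately after $\tau$. But at this stage $\pr_0$ is merely a subsequential weak limit of $(\mu^n,W^0)_n$; you have \emph{not} yet shown that $X$ solves the McKean--Vlasov SDE \eqref{eq:relaxed_weak_mfl_struc} under $\pr_{\ast}$ (that is the content of Proposition \ref{prop:conv_to_ad_re_cr_stru}, which in turn relies on the present proposition), and there is no Markov structure available. The paper avoids this circularity by working directly with the decomposition $X = X_{0-} + \beta + Z + F$ inherited from the tightness argument (Proposition \ref{prop:sequence_tightness}): $\beta$ is $\tfrac12$-H\"older, $Z$ is a continuous square-integrable martingale with $\langle Z\rangle_t - \langle Z\rangle_s \geq (C_\sigma^{-1}\wedge 1)(t-s)$ by Assumption \ref{ass:struc_limit}\ref{it:ellipticity}, and $F$ is nondecreasing. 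Applying Dambis--Dubins--Schwarz to $Z^{(\tau)}_t = Z_{(\tau+t)\wedge T_1} - Z_{\tau\wedge T_1}$ gives $\liminf_{t\to 0} Z^{(\tau)}_t/\sqrt{t} = -\infty$ on $\{\tau < T_1\}$, which dominates the H\"older drift and the nondecreasing feedback and yields the downcrossing property without any Markov argument.

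\medskip
\textbf{Controlling the residual $\nu^n-\tilde{\nu}^n$.} Your plan is to bound $\ev\bigl[e^{-\Lambda^n_t}\bf{1}_{\tau^n\leq t}\,\big\vert\,W^0\bigr]$ via a ``quantitative lower bound on the Lebesgue time that $X^n$ spends strictly below zero after $\tau^n$''. This is delicate: you would need such a bound to be uniform in $n$, while the dynamics of $X^n$ after $\tau^n$ involve the $n$-dependent feedback $\int_0^t \alpha(s)\,\d L^n_s$, and it is not clear how to make the sojourn estimate quantitative without additional work. The paper instead proves the pointwise statement $\tau_n\wedge T_1 - \tilde{\tau}_n\wedge T_1 \to 0$ a.s.\ by passing to the limit: along a subsequence where $(X^{n_k},\tilde{\tau}_{n_k}) \Rightarrow (\tilde X,\tilde\tau)$ (using the $\pr_{\ast}$-a.s.\ continuity of $\tau$ from Step 1), Skorokhod's representation and the downcrossing property of the \emph{limit} $\tilde X$ give a continuity point $t\in[\tilde\tau,\tilde\tau+\delta]$ with $\tilde X_t<0$; then $X^{n_k}_s\leq \tilde X_t/2<0$ on a fixed interval for large $k$, so $\Lambda^{n_k}_{\tilde\tau+\delta}\to\infty$ by Assumption \ref{ass:struc_limit}\ref{it:intensity_convergence}, forcing $\limsup_k\tau_{n_k}\leq \tilde\tau$. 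From there
\begin{equation*}
    \ev\int_0^{T_1} d_1^2(\nu^n_t,\tilde\nu^n_t)\,\d t \leq \ev\bigl[(1+(\lvert X^n\rvert^{\ast}_{T_1})^2)(\tau_n\wedge T_1 - \tilde{\tau}_n\wedge T_1)\bigr] \to 0
\end{equation*}
by Vitali's theorem. This sidesteps any uniform-in-$n$ sojourn estimate. Note also that the separate tightness argument for $(\nu^n)_n$ you propose is unnecessary: once $\nu$ is $\pr_{\ast}$-a.s.\ continuous, the continuous mapping theorem gives $\tilde\nu^n = \nu(\Theta^n) \Rightarrow \nu$ directly, and the $L^2$-closeness of $\nu^n$ and $\tilde\nu^n$ finishes the identification.
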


\begin{proof}
We proceed in three steps.

\textit{Step 1}: We begin by showing that the random time $\tau = \inf\{0 < t \leq T + 1 \define X_t \leq 0\}$ is $\pr_{\ast}$-a.s.\@ continuous. Recall the sequence of processes $(\beta^n, Z^n, F^n)_n$ introduced in Proposition \ref{prop:sequence_tightness}. Since the law of these processes is tight on $C([-1, T + 1]) \times C([-1, T + 1]) \times D[-1, T + 1]$, $X^n = \xi + \beta^n + Z^n + F^n$, and $X^n$ converges subsequentially to the law of $X$ under $\pr_{\ast}$, by enlarging the probability space $(\Omega_{\ast}, \F_{\ast}, \pr_{\ast})$ if necessary, we can find processes $\beta$, $Z$, and $F$, such that $(\beta, Z, F)$ is the subsequential weak limit of $(\beta^n, Z^n, F^n)_n$ and $X = X_0 + \beta + Z + F$. We let $\bb{G} = (\cal{G}_t)_{0 \leq t \leq T + 1}$ be the right-continuous extension of the filtration generated by $X$, $\beta$, $Z$, and $F$. 

The proof of Proposition \ref{prop:sequence_tightness} shows that $\beta$ has $\pr_{\ast}$-a.s.\@ $\frac{1}{2}$-H\"older continuous trajectories. The process $Z$ is a continuous $\bb{G}$-martingale since the local martingale property is conserved by weak convergence and $Z$ is square-integrable. Moreover, by Theorem 6.26 of \cite{jacod_limit_theorems_2003} it holds that $\langle Z^n\rangle$ converges weakly on $C([0, T])$ to $\langle Z\rangle$ along a subsequence, which in view of the nondegeneracy condition, Assumption \ref{ass:struc_limit} \ref{it:ellipticity}, allows us to deduce that $(C_{\sigma}^{-1} \land 1) (t - s) \leq \langle Z \rangle_t - \langle Z \rangle_s$ for $0 \leq s \leq t \leq T + 1$. The minimum with $1$ comes from having $Z$ diffuse according to a Brownian motion between time $T$ and $T + 1$. Lastly, the process $F$ is nondecreasing as the weak limit of nondecreasing functions in $D[-1, T + 1]$. We will use this decomposition of $X$ under $\pr_{\ast}$ to prove that $X$ has the downcrossing property. That is, for $\pr_{\ast}$-a.e.\@ $\omega \in \{\tau \leq T + 1\}$, given any $0 \leq \delta \leq T + 1 - \tau$ there exists $t \in [\tau, \tau + \delta]$ such that $X_t < 0$. 

First, note that $X^n$ diffuses according to a Brownian motion on the interval $[T + 1/2, T + 1]$, so the same holds for $X$ under $\pr_{\ast}$. Consequently, we have $\pr_{\ast}(\tau = T + 1) = 0$ and we may assume that $\tau < T + 1$. Now, let us denote the smallest (random) constant $C > 0$ for which $\lvert \beta_t - \beta_s\rvert \leq C \lvert t - s \rvert^{1/2}$ for all $s$, $t \in [0, T_1]$ by $C_{\beta}$. Here for notational simplicity, we set $T_1 = T + 1$. Then, we define $Z^{(T_1 \land \tau)}_t = Z_{T_1 \land (\tau + t)} - Z_{T_1 \land \tau}$ for $t \in [0, T_1]$, which is a martingale with respect to the filtration $(\cal{G}_{T_1 \land (\tau + t)})_{0 \leq t \leq T_1}$ by the optional stopping theorem. It has quadratic variation $q_t = \langle Z^{(T_1 \land \tau)} \rangle_t = \langle Z \rangle_{T_1 \land (\tau + t)} - \langle Z \rangle_{T_1 \land \tau}$, so by the Dambis-Dubins-Schwarz theorem there is a Brownian motion $M$ with $Z^{(T_1 \land \tau)}_t = M_{q_t}$. Hence, on some event $A_0 \subset \{\tau < T_1\}$ with $\pr_{\ast}(A_0) = \pr_{\ast}(\tau < T_1)$, we have
\begin{equation*}
    \liminf_{t \to 0} \frac{Z^{(T_1 \land \tau)}_t}{\sqrt{t}} = \liminf_{t \to 0} \frac{M_{q_t}}{\sqrt{q_t}} \biggl(\frac{q_t}{t}\biggr)^{1/2} \leq \liminf_{t \to 0} (C_{\sigma}^{-1/2} \land 1)\frac{M_{q_t}}{\sqrt{q_t}} = -\infty,
\end{equation*}
where we used that $\liminf_{t \to 0} \frac{M_t}{\sqrt{t}} = -\infty$ a.s.\@ together with the bound $(C_{\sigma}^{-1} \land 1) t \leq q_t$ for $0 \leq t \leq T_1 - \tau$. Consequently, for any $0 < \delta \leq T_1 - \tau$ we find $t \in [0, \delta]$, such that
\begin{equation*}
    X_{\tau + t} \leq X_{\tau + t} - X_{\tau} = \beta_{\tau + t} - \beta_{\tau} + Z^{(T_1 \land \tau)}_t + F_{\tau + t} - F_{\tau} \leq \sqrt{t} \biggl(C_{\beta} + \frac{Z^{(T_1 \land \tau)}_t}{\sqrt{t}}\biggr) < 0,
\end{equation*}
where we used that $X_{\tau} \leq 0$ and that $F$ is nondecreasing. Hence, $X(\omega)$ has the downcrossing property for any $\omega \in A_0$. We shall use this property to prove that $\tau$ is continuous at every $\omega \in A = A_0 \cup \{\tau = \infty\}$. 

First, let us fix $\omega \in A_0$ as well as a sequence $(\omega_n)_n$ in $\Omega_{\ast}$ which converges to $\omega$. On the one hand, employing the downcrossing property, for any $0 < \delta \leq T + 1 - \tau$ there is a $t \in [\tau(\omega), \tau(\omega) + \delta/2]$ with $X_t(\omega) < 0$. Now $X$ has right-continuous trajectories, so we can find a continuity point $s \in [t, \tau(\omega) + \delta]$ for which $X_s(\omega)$ is negative as well. Since $s$ is a continuity point of $X(\omega)$, it holds that $X_s(\omega_n) \to X_s(\omega)$ as $n \to \infty$. In particular, we have that $X_s(\omega_n) < 0$ for all sufficiently large $n$. Consequently, $\tau(\omega_n) \leq \tau(\omega) + \delta$. Then we let $\delta$ go to zero to see $\limsup_{n \to \infty} \tau(\omega_n) \leq \tau(\omega)$. Next we prove the $\liminf_{n \to \infty} \tau(\omega_n) \geq \tau(\omega)$. First note that
\begin{equation*}
    t_0 = \liminf_{n \to \infty} \tau(\omega_n) \leq \limsup_{n \to \infty} \tau(\omega_n) \leq \tau(\omega) < T + 1.
\end{equation*}
Then we select a subsequence $(\tau(\omega_{n_k}))_k$ with $\lim_{k \to \infty} \tau(\omega_{n_k}) = t_0$. By the definition of $M1$-convergence, choosing a further subsequence if necessary, it holds that
\begin{equation*}
    0 \geq \lim_{k \to \infty} X_{\tau(\omega_{n_k})}(\omega_{n_k}) \in [X_{t_0}(\omega), X_{t_0-}(\omega)],
\end{equation*}
where we implicitly use that $X_{t_0}(\omega) \leq X_{t_0-}(\omega)$ since $X(\omega)$ can only have negative jumps. In particular, we obtain $X_{t_0}(\omega) \leq 0$, whence $\liminf_{n \to \infty} \tau(\omega_n) = t_0 \geq \tau(\omega)$ as required. 

To prove continuity of $\tau$ on $A$, it remains to show that $\tau(\omega_n) \to \infty$ whenever $\tau(\omega) = \infty$. But the latter means that $X(\omega)_t > 0$ for all $t \in [0, T + 1]$ and implies that $X(\omega_n) > 0$ on $[0, T + 1]$ if $n$ is large enough, so we get $\tau(\omega_n) = \infty$.

\textit{Step 2}: We proceed by proving that $\nu \define \Omega_{\ast} \to L^2([-1, T + 1]; \bf{M}^2)$ is $\pr_{\ast}$-a.s.\@ continuous. Let us define the inclusion map $\iota \define \cal{S} \to \Omega_{\ast}$, $(x, w, \gamma) \mapsto (x, w, \gamma, \bar{\omega}_0)$ for some arbitrary element $\bar{\omega}_0 \in \Omega_0$. Then we introduce the indicator process $I$ defined by $I_t = \bf{1}_{\tau > t}$, and set $I^0 = I \circ \iota$ as well as $X^0 = X \circ \iota$. Note that $I^0$ and $X^0$ are independent of the choice of $\bar{\omega}_0 \in \Omega_0$, since $I$ and, trivially, $X$ are completely determined by the trajectory of $X$. We saw above that $\pr_{\ast}(\tau = T + 1) = 0$ and by definition $\tau \geq 0$. From this and the continuity of $\tau$ on $A$ it is easy to deduce that $I$ is continuous as a function $\Omega_{\ast} \to D_{[0, 1]}[-1, T + 1]$ on the set $A$. Now for any element $\omega_0 \in \Omega_0$, we define the measurable set $A_{\omega_0} = \{s \in \cal{S} \define (s, \omega_0) \in A\}$. Then,
\begin{equation} \label{eq:tonelli_argument}
    1 = \pr_{\ast}(A) = \ev_{\ast} \pr_{\ast}(A \vert \F^0_T) = \int_{\Omega_0} \int_{\cal{S}} \bf{1}_A(s, \omega_0) \, \d m(s) \d \pr_0(\omega_0) = \int_{\Omega_0} m(A_{\omega_0}) \, \d \pr_0(\omega_0),
\end{equation}
which means that $m(A_{\omega_0}) = 1$ for $\pr_0$-a.e.\@ $\omega_0 \in \Omega_0$. However, $I^0$ is continuous on $A_{\omega_0}$, which means that $\pr_{\ast}$-a.s.\@ the map $(X^0, I^0)$ is continuous at $\mu$-a.e.\@ element of $\cal{S}$. Thus, the continuous mapping theorem implies that $(X^0, I^0)^{\#}\mu$ is $\pr_{\ast}$-a.s.\@ continuous. The random variable $(X^0, I^0)^{\#}\mu$ takes values in $\P^2(D[-1, T + 1] \times D_{[0, 1]}[-1, T + 1])$, so we may invoke Lemma \ref{lem:space_for_sub} together with the continuous mapping theorem to deduce that $\nu \define \Omega_{\ast} \to L^2([-1, T + 1]; \bf{M}^2)$ is $\pr_{\ast}$-a.s.\@ continuous.

\textit{Step 3}: Next, let us define $\tilde{\nu}^n$ by $\tilde{\nu}^n_t = \pr(X^n_t \in \cdot,\, \inf_{0 \leq s \leq t} X^n_s > 0 \vert W^0)$ for $t \in [0, T + 1]$ and $\tilde{\nu}^n_t = \L(\xi)$ if $t \in [-1, 0)$. Since $\nu \define \Omega_{\ast} \to L^2([-1, T + 1]; \bf{M}^2)$ is $\pr_{\ast}$-a.s.\@ continuous, the continuous mapping theorem implies that along a subsequence the flow of subprobabilities $(\tilde{\nu}^n)_n$ converges weakly to $\nu$ on $L^2([-1, T + 1]; \bf{M}^2)$. Indeed, simply note that $\tilde{\nu}^n = \nu(\Theta^n)$ with $\Theta^n = (X^n, W, \Gamma^n, \mu^n, W^0)$. Thus, if we can show that $\ev \int_0^{T_1} d_2^2(\nu^n_t, \tilde{\nu}^n_t) \, \d t \to 0$ in the limit as $n \to \infty$, the second statement of the lemma follows. Let us set $\tau_n = \inf\{0 < t \leq T + 1 \define \Lambda^n_t \geq \theta\}$ and $\tilde{\tau}_n = \inf\{0 < t \leq T + 1 \define X^n_t \leq 0\}$ as well as $I^n_t = \bf{1}_{\tau_n > t}$ and $\tilde{I}^n_t = \bf{1}_{\tilde{\tau}_n > t}$. To establish the desired convergence, we shall use that $\lim_{n \to \infty} (\tau_n - \tilde{\tau}_n) = 0$ almost surely whenever $\limsup_{n \to \infty} \tilde{\tau}_n < \infty$. We will prove this fact first. 

Since $\lambda^n(t, x, v) = 0$ for $x \geq 0$ by Assumption \ref{ass:struc_limit} \ref{it:intensity_convergence}, it holds that $\Lambda^n_t = 0$ for $t \in [0, \tilde{\tau}_n \land (T + 1)]$, which implies that $\tau_n \geq \tilde{\tau}_n$. Hence, it suffices to prove that $\limsup_{n \to \infty} (\tau_n - \tilde{\tau}_n) \leq 0$. Let us fix a subsequence $(n_k)_k$ for which $(X^{n_k}, \tilde{\tau}_{n_k}) \Rightarrow (X, \tau)$. This is possible owing to the $\pr_{\ast}$-a.s.\@ continuity of $\tau$ established above. Then appealing to Skorokhod's representation theorem (and changing the probability space if necessary), we may assume that there exists a $D[-1, T + 1]$-valued random variable $\tilde{X}$ and a random time $\tilde{\tau}$ with values in $[0, T + 1] \cup \{\infty\}$, such that $(X^{n_k}, \tilde{\tau}_{n_k})$ converges to $(\tilde{X}, \tilde{\tau})$ almost surely. On the event $\{\limsup_{n \to \infty} \tilde{\tau}_n < \infty\}$, we can extract a further (random) subsequence, which for simplicity we denote again by $(n_k)_k$, such that $\lim_{k \to \infty} (\tau_{n_k} - \tilde{\tau}_{n_k}) = \limsup_{n \to \infty} (\tau_n - \tilde{\tau}_n)$. Now by the above, with probability one either $\tilde{\tau} = \infty$ or $\tilde{X}$ has the downcrossing property. If $\tilde{\tau} = \infty$ then $\lim_{k \to \infty} \tilde{\tau}_{n_k} = \tilde{\tau} = \infty$, which contradicts $\limsup_{n \to \infty} \tilde{\tau}_n < \infty$. Hence, for a.e.\@ element of $\{\limsup_{n \to \infty} \tilde{\tau}_n < \infty\}$ the process $\tilde{X}$ possesses the downcrossing property. Consequently, given any $\delta > 0$, we find a continuity point $t \in [\tilde{\tau}, \tilde{\tau} + \delta/2]$ of $\tilde{X}$ with $\tilde{X}_t < 0$. Hence, we can choose a sufficiently small $0 < \epsilon \leq \delta/2$ so that for all large enough $k \geq 1$ we have $X^{n_k}_s \leq \tilde{X}_t/2 < 0$ whenever $s \in [t, t + \epsilon]$. This implies that $\lambda^n(s, X^{n_k}_s, \nu^{n_k}_s) \to \infty$ for all $s \in [t, t + \epsilon]$ by Assumption \ref{ass:struc_limit} \ref{it:intensity_convergence}. Hence, we see 
\begin{equation*}
    \Lambda^{n_k}_{\tilde{\tau} + \delta} \geq \Lambda^{n_k}_{t + \epsilon} \geq \Lambda^{n_k}_{t + \epsilon} - \Lambda^{n_k}_t = \int_t^{t + \epsilon} \lambda^{n_k}(s, X^{n_k}_s, \nu^{n_k}_s) \, \d s \to \infty
\end{equation*}
as $n \to \infty$. But this means $\limsup_{k \to \infty} \tau_{n_k} \leq \tilde{\tau} + \delta$. Upon letting $\delta$ to zero, we find that
\begin{equation*}
    \limsup_{n \to \infty} (\tau_n - \tilde{\tau}_n) = \limsup_{k \to \infty} \tau_{n_k} - \tilde{\tau}_{n_k} = \limsup_{k \to \infty} \tau_{n_k} - \tilde{\tau} \leq 0
\end{equation*}
as required. 

In summary, we have $\limsup_{n \to \infty} \tau_n = \limsup_{n \to \infty} \tilde{\tau}_n = \infty$ or $\lim_{n \to \infty} (\tau_n - \tilde{\tau}_n) = 0$ almost surely, which means a.s.\@ $\tau_n \land T_1 - \tilde{\tau}_n \land T_1 \to 0$. Finally, this gives
\begin{align*}
    \ev \int_0^{T_1} d_2^2(\nu^n_t, \tilde{\nu}^n_t) \, \d t &\leq \int_0^{T_1} 2\ev\bigl[(1 + \lvert X^n_t\rvert^2) \lvert I^n_t - \tilde{I}^n_t\rvert\bigr] \, \d t \\
    &\leq 2\ev\bigl[\bigl(1 + (\lvert X^n \rvert^{\ast}_{T_1})^2\bigr) (\tau_n \land T_1 - \tilde{\tau}_n \land T_1)\bigr],
\end{align*}
where we proceed similarly to \eqref{eq:split_of_metric} in the first inequality. In view of Vitali's convergence theorem, the expression on the right-hand side vanishes as we let $n \to \infty$, since the sequence $((\lvert X^n \rvert^{\ast}_T)^2)_n$ is uniformly integrable. This concludes the proof.
\end{proof}

From Proposition \ref{prop:subprobabilities_converge} we can immediately deduce that $(L^n)_n$ converges weakly to $L$ on $L^2([-1, T + 1])$ along a subsequence, say $(n_k)_k$. Here $L$ is the process defined in the paragraph above Remark \ref{rem:j1_to_m1}. However, as in the proof of Proposition \ref{prop:sequence_tightness} we can show that the sequence $(L^n)_n$ is tight on $D[-1, T + 1]$, where we set $L^n_t = 0$ for $t \in [-1, 0)$ and $L^n_t = L^n_T$ for $t \in (T, T + 1]$. Thus $L^n$ converges weakly on $D[-1, T + 1]$ along a further subsequence. But since elements of $D[-1, T + 1]$ coincide if they are equal as elements of $L^2([0, T])$, the subsequential weak limit (along the subsequence of $(n_k)_k$) of $(L^n)_n$ on $D[-1, T + 1]$ must be $L$. Since the limit of $(L^n)_n$ along all such further subsequences of $(n_k)_k$ coincide, we can conclude that $L^n$ converges weakly to $L$ on $D[-1, T + 1]$ along the original subsequence $(n_k)_k$. 

Next we wish to identify the limit of the integrals $(\int_0^{\cdot} \alpha(t) \, \d L^n_t)_n$, which are tight on $D[-1, T + 1]$ according to the proof of Proposition \ref{prop:sequence_tightness}. Let $\cal{I}_0[-1, T + 1]$ denote the set of nondecreasing c\`adl\`ag functions $\ell \define [-1, T + 1] \to \R$ with $\ell_t = 0$ for $t \in [-1, 0)$. This is a closed subset of $D[-1, T + 1]$ and hence Polish. We define the map $I^{\alpha} \define \cal{I}_0[-1, T + 1] \to D[-1, T + 1]$ by $I^{\alpha}_t(\ell) = \int_0^{t \lor 0} \alpha(s) \, \d \ell_s$ for $t \in [-1, T + 1]$, so that $\int_0^{\cdot} \alpha(t) \, \d L^n_t = I^{\alpha}(L^n)$.

\begin{corollary} \label{cor:conv_of_integrals_nondecreasing}
Under the assumptions of Proposition \ref{prop:subprobabilities_converge}, for any subsequential limit $\pr_0$ of $(\mu^n, W^0)_n$, the processes $(L^n)_n$ and $(\int_0^{\cdot} \alpha(t) \, \d L^n_t)_n$ converge weakly to $L$ and $I^{\alpha}(L)$, respectively, on $D[-1, T + 1]$ along the given subsequence. 
\end{corollary}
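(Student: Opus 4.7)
The first assertion, namely $L^n \Rightarrow L$ on $D[-1, T + 1]$, is essentially carried out in the paragraph immediately preceding the corollary statement, so the plan is simply to record that argument: weak convergence $\nu^n \Rightarrow \nu$ on $L^2([-1, T + 1]; \bf{M})$ from Proposition \ref{prop:subprobabilities_converge} combined with the continuous map $v \mapsto (1 - v_t(\R))_t$ yields $L^n \Rightarrow L$ in $L^2([-1, T + 1])$, and this is upgraded to $D[-1, T + 1]$ by tightness (obtained exactly as for $(F^n)_n$ in the proof of Proposition \ref{prop:sequence_tightness}, using that each $L^n$ is nondecreasing, bounded by $1$, null on $[-1, 0)$, and constant on $(T, T + 1]$). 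Any subsequential $D[-1, T + 1]$-limit must agree with $L$ as elements of $L^2([-1, T + 1])$ and hence coincide pointwise since both are nondecreasing and right-continuous.

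For the second assertion I would apply the continuous mapping theorem to the operator $I^{\alpha} \colon \cal{I}_0[-1, T + 1] \to D[-1, T + 1]$, and so the task reduces to showing that $I^{\alpha}$ is $M1$-continuous. Let $\ell^n \to \ell$ in $M1$ with every $\ell^n$ and $\ell$ in $\cal{I}_0[-1, T + 1]$. By the characterisation of $M1$-convergence for monotone functions (see \cite[Corollary 12.5.1]{whitt_stoch_limits_2002}), this is equivalent to $\ell^n_t \to \ell_t$ at every continuity point of $\ell$ together with convergence at the endpoints $-1$ and $T + 1$. A standard consequence is that the associated Lebesgue--Stieltjes measures $d\ell^n$ converge weakly to $d\ell$ on $[-1, T + 1]$. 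Since $\alpha$ is continuous and bounded on $[-1, T + 1]$, the portmanteau theorem yields
\begin{equation*}
    I^{\alpha}(\ell^n)_t = \int_{[0, t \lor 0]} \alpha(s) \, d\ell^n_s \to \int_{[0, t \lor 0]} \alpha(s) \, d\ell_s = I^{\alpha}(\ell)_t
\end{equation*}
at every continuity point $t$ of $\ell$, and also at $t = -1$ and $t = T + 1$. Because the processes $I^{\alpha}(\ell^n)$ and $I^{\alpha}(\ell)$ are themselves nondecreasing (as $\alpha \geq 0$ by Assumption \ref{ass:struc_limit} \ref{it:alpha}), and because continuity points of $\ell$ form a cocountable, hence dense, subset of $[-1, T + 1]$, the same characterisation of $M1$-convergence applied in reverse gives $I^{\alpha}(\ell^n) \to I^{\alpha}(\ell)$ in $M1$.

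Putting the two pieces together and invoking the continuous mapping theorem along the fixed subsequence along which $\L(\mu^n, W^0) \Rightarrow \pr_0$ yields $I^{\alpha}(L^n) \Rightarrow I^{\alpha}(L)$ on $D[-1, T + 1]$, which is the second claim. The main obstacle I would expect is verifying the $M1$-continuity of $I^{\alpha}$, since naive $J1$-style arguments fail in the presence of jumps in $L$; the key input that resolves it is that for monotone c\`adl\`ag functions $M1$-convergence is characterised by pointwise convergence on a dense subset including the endpoints, which translates cleanly into weak convergence of the induced Stieltjes measures.
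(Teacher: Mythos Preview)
Your proof is correct and follows essentially the same route as the paper: both obtain $L^n \Rightarrow L$ from the $L^2$-convergence of Proposition \ref{prop:subprobabilities_converge} combined with tightness of $(L^n)_n$ on $D[-1,T+1]$, and both deduce convergence of $\int_0^{\cdot}\alpha\,\d L^n$ via the continuous mapping theorem after establishing $M1$-continuity of $I^{\alpha}$ through the characterisation of $M1$-convergence for monotone functions and weak convergence of the associated Stieltjes measures. The paper packages this last step into a separate appendix lemma (Lemma \ref{lem:conv_int_non_decr}), stated only for limits $\ell$ that are continuous at the right endpoint, and then invokes the fact that $L$ is $\L_{\ast}(L)$-a.s.\ continuous at $T+1$; your inline argument handles $t=T+1$ directly by integrating the continuous bounded function $\alpha$ against the weakly convergent measures, so you do not need that additional observation.
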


Note that $I^{\alpha}(L)$ is well-defined since $L$ is a nondecreasing c\`adl\`ag process with $L_t = 0$ on $[-1, 0)$ by construction. 

\begin{proof}[Proof of Corollary \ref{cor:conv_of_integrals_nondecreasing}]
Assume that the weak convergence of $(\mu^n, W^0)_n$ to $\pr_0$ holds along the subsequence $(n_k)_k$. We discussed the weak convergence of $(L^n)_n$ along $(n_k)_k$ to $L$ on $D[-1, T + 1]$ above the statement of the corollary, so let us turn to the convergence of $(\int_0^{\cdot} \alpha(t) \, \d L^n_t)_n$. Since both $L^n$ and $L$ are nondecreasing, the weak convergence actually holds on $\cal{I}_0[-1, T + 1]$. By Assumption \ref{ass:struc_limit} \ref{it:alpha} the coefficient $\alpha$ is continuous. Hence, we can apply Lemma \ref{lem:conv_int_non_decr} whereby the map $I^{\alpha}$ is $\L_{\ast}(L)$-a.s.\@ continuous on $\cal{I}_0[-1, T + 1]$, so that by the continuous mapping theorem we have $\int_0^{\cdot} \alpha(t) \, \d L^{n_k}_t = I^{\alpha}(L^{n_k}) \Rightarrow I^{\alpha}(L)$ on $D[-1, T + 1]$.
\end{proof}

\subsection{The Controlled Martingale Problem for the Singular Model} \label{sec:mgproblem_singular}

In this subsection we use a slight variation of the martingale problem from Subsection \ref{sec:mgale_problem} to show that $(\mu^n, W^0)_n$ converges subsequentially to an admissible relaxed control rule. Since the loss process $L_t = 1 - \nu_t(\R)$ might jump with positive probability under the limiting probabilities, we remove it from the state $X$ to obtain a continuous process. More precisely, we introduce the differential operator $\bb{L}$ given by
\begin{align*}
    \bb{L}&\varphi(t, x, y, z, v, g) \\
    &= b(t, x, v, g)\partial_x \varphi(x, y, z) + a(t, x, v) \partial_x^2 \varphi(x, y, z) + \frac{1}{2} \partial_y^2 \varphi(x, y, z) + \frac{1}{2} \partial_z^2 \varphi(x, y, z) \\
    &\ \ \ + \sigma(t, x, v) \partial_{xy}^2 \varphi(x, y, z) + \sigma_0(t, x, v) \partial_{xz}^2 \varphi(x, y, z)
\end{align*}
for any twice continuously differentiable function $\varphi \define \R^3 \to \R$ and $(t, x, y, z, v, g) \in [0, T + 1] \times \R^3 \times \cal{M}^1_{\leq 1}(\R) \times G$, where $a(t, x, v) = \frac{1}{2}(\sigma^2(t, x, v) + \sigma_0^2(t, x, v))$. Now, for any $\varphi \in C^2_c(\R^3)$ we define the process $\cal{M}^{\varphi}$ on $\Omega_{\ast}$ by % $\cal{M}^{\varphi}_t(\omega) = \varphi(x_{0-} - \cal{L}_{0-}(\omega), w_0, b_0)$ for $t \in [1, 0)$ and
\begin{align} \label{eq:mgale_struc}
\begin{split}
    \cal{M}^{\varphi}_t(\omega) &= \varphi\bigl(x_t - I^{\alpha}_t(L(\omega)), w_t, b_t\bigr) \\
    &\ \ \ - \int_{[0, t] \times G} \bb{L}\varphi\bigl(s, x_s - I^{\alpha}_s(L(\omega)), w_s, b_s, \nu_s(\omega), g\bigr) \, \d \mathfrak{g}(s, g)
\end{split}
\end{align}
for $t \in [0, T + 1]$ and $\omega = (x, w, \mathfrak{g}, m, b) \in \Omega_{\ast}$. Here $\mathfrak{g}$ is extended to $(t, g) \in (T, T + 1] \times G$ by $\d \mathfrak{g}(t, g) = \d \delta_{g_0}(g) \d t$ for some arbitrary $g_0 \in G$. We prove that $\cal{M}^{\varphi}$ is a martingale under any probability measure $\pr_{\ast}$ induced by a subsequential limit of $(\mu^n, W^0)_n$, which -- as we demonstrate in the proof of Proposition \ref{prop:conv_to_ad_re_cr_stru} -- implies that $Y = X - I^{\alpha}(L) \define \Omega_{\ast} \to D[-1, T + 1]$ satisfies
\begin{align} \label{eq:continuous_sde_y}
\begin{split}
    \d Y_t &= \int_G b(t, Y_t + I^{\alpha}_t(L), \nu_t, g) \, \d \Gamma(t, g) + \sigma(t, Y_t + I^{\alpha}_t(L), \nu_t) \, \d W_t \\
    &\ \ \ + \sigma_0(t, Y_t + I^{\alpha}_t(L), \nu_t) \, \d B_t
\end{split}
\end{align}
under $\pr_{\ast}$. Note, however, that the martingale problem under consideration is not associated to the SDE
\begin{align} \label{eq:continuous_sde_y_sequence}
\begin{split}
    \d Y^n_t &= b(t, Y^n_t + I^{\alpha}_t(L^n), \nu^n_t, \gamma^n_t) \, \d t + \sigma(t, Y^n_t + I^{\alpha}_t(L^n), \nu^n_t) \, \d W_t \\
    &\ \ \ + \sigma_0(t, Y^n_t + I^{\alpha}_t(L^n), \nu^n_t) \, \d W^0_t
\end{split}
\end{align}
satisfied by the process $Y^n = X^n - I^{\alpha}(L^n)$. Indeed, the processes
\begin{equation*}
    L_t(X^n, W, \Gamma^n, \mu^n, W^0) = \pr\biggl(\inf_{0 \leq s \leq t} X^n_s < 0 \bigg\vert W^0\biggr)
\end{equation*}
and $L^n_t = \pr(\Lambda^n_t \leq \theta \vert W^0)$ are distinct, which leads us to introduce the process $\cal{M}^{n, \varphi}$ on $\Omega$ defined as
\begin{equation*}
    \cal{M}^{n, \varphi}_t = \varphi(Y^n_t, W_t, W^0_t) - \int_0^t \bb{L}\varphi(s, Y^n_s, W_s, W^0_s, \nu^n_s, \gamma^n_s) \, \d s
\end{equation*}
for $t \in [0, T + 1]$ and $\varphi \in C^2_c(\R^3)$. Now, $\cal{M}^{n, \varphi}$ is a martingale for all $\varphi \in C^2_c(\R^3)$ if and only if $Y^n$ solves SDE \eqref{eq:continuous_sde_y_sequence}.

\begin{proposition} \label{prop:conv_to_ad_re_cr_stru}
Let Assumption \ref{ass:struc_limit} be satisfied. Then, for any $\varphi \in C^2_c(\R^3)$, the process $\cal{M}^{\varphi}$ is an $\bb{F}^{\ast}$-martingale under any probability measure $\pr_{\ast}$ induced by a subsequential limit of $(\mu^n, W^0)_n$. In particular, the limit $\pr_0$ is an admissible relaxed control rule in the sense of Definition \ref{def:rel_cntr_struc}.
\end{proposition}

\begin{proof}
Recall the decomposition $X = X_{0-} + \beta + Z + F$ from the proof of Proposition \ref{prop:subprobabilities_converge}. Since $F$ is the subsequential weak limit of the sequence $(I^{\alpha}(L^n))_n$, we can use Corollary \ref{cor:conv_of_integrals_nondecreasing} to identify $F$ as $I^{\alpha}(L)$. Consequently, $Y = X_{0-} + \beta + Z$ has $\pr_{\ast}$-a.s.\@ continuous trajectories, so the same is true for $\cal{M}^{\varphi}$. Moreover, we deduce that along a subsequence 
\begin{equation*}
    (X^n, W, \Gamma^n, \mu^n, W^0, Y^n, \nu^n, I^{\alpha}(L^n)) \Rightarrow (X, W, \Gamma, \mu, B, Y, \nu, I^{\alpha}(L))
\end{equation*}
on $\Omega_{\ast} \times C([-1, T + 1]) \times L^2([-1, T + 1]; \bf{M}^2) \times D[-1, T + 1]$. (Here with slight abuse of notation we use the same symbol $W$ for the idiosyncratic noise on $(\Omega, \F, \pr)$ and $(\Omega_{\ast}, \F_{\ast}, \pr_{\ast})$, which are generally distinct.) Hence, with the choices $E = \R^3 \times \bf{M}^2$ and $\Phi = \bb{L}\varphi$ in Lemma \ref{lem:mgale_integral_converges}, we can deduce that for any bounded and measurable $\Phi \define \Omega_{\ast} \to \R$, which is $\pr_{\ast}$-a.s.\@ continuous, and all $s$, $t \in [0, T + 1]$, we have
\begin{equation} \label{eq:martingale_convergence}
    \ev\bigl[(\cal{M}^{n, \varphi}_t - \cal{M}^{n, \varphi}_s)\Phi(X^n, W, \Gamma^n, \mu^n, W^0)\bigr] \to \ev_{\ast}\bigl[\bigl(\cal{M}^{\varphi}_t(\Theta) - \cal{M}^{\varphi}_s(\Theta)\bigr)\Phi(\Theta)\bigr]
\end{equation}
along a suitable subsequence. Here we used the linear growth of $\Omega_{\ast} \ni \omega \mapsto \cal{M}^{\varphi}_u(\omega)$ for any $u \in [0, T + 1]$ and the boundedness of $\Phi$ together with $\sup_{n \geq 1} \ev\bigl[\sup_{0 \leq u \leq T}\bigl(\lvert X^n_u\rvert^2 + M_2^2(\nu^n_u)\bigr)\bigr] < \infty$ to upgrade weak convergence to convergence in mean, where we recall that $M_2^2(v) = \int_{\R} \lvert x\rvert^2 \, \d v(x)$ for $v \in \bf{M}^2$. We shall use the result \eqref{eq:martingale_convergence} to establish the martingale property of $\cal{M}^{\varphi}$ under $\pr_{\ast}$.

Similarly to the proof of Proposition \ref{prop:conv_to_ad_re_cr}, for any $\Phi \in C_b(\Omega_{\ast})$ and $t \in [0, T + 1]$ let us define the map $\Phi_t \define \Omega_{\ast} \to \R$ by
\begin{equation*}
    \Phi_t(x, w, \mathfrak{g}, m, b) = \Phi(x_{\cdot \land t}, w_{\cdot \land t}, \mathfrak{g}_t, \pi_t^{\#}m, b_{\cdot \land t}),
\end{equation*}
where $\pi_t \define \cal{S} \to \cal{S}$ is given by $(x, w, \mathfrak{g}) \mapsto (x_{\cdot \land t}, w, \mathfrak{g}_t)$. The $\sigma$-algebra $\F^{\ast}_t$ is generated by the random variables $\Phi_t(\Theta)$ for $\Phi \in C_b(\Omega_{\ast})$. Hence, if we can prove that
\begin{equation} \label{eq:martingale_property_dense}
    \ev_{\ast}\bigl[\bigl(\cal{M}^{\varphi}_t(\Theta) - \cal{M}^{\varphi}_s(\Theta)\bigr)\Phi_s(\Theta)\bigr] = 0
\end{equation}
for all $\Phi \in C_b(\Omega_{\ast})$ and all $s \leq t$ in some dense set of times $\bb{T} \subset [0, T + 1]$ which includes $T + 1$, then it clearly follows 
% from Lemma \ref{lem:martingale_property_dense} 
that $\cal{M}^{\varphi}$ is a martingale under $\pr_{\ast}$. Let us choose $\bb{T}$ to be the set of times $t \in [0, T + 1]$ for which $\pr_{\ast}(\Delta X_t = 0) = 1$, which we know to be cocountable and, hence, dense in $[0, T + 1]$ and inclusive of $T + 1$. Now, we have to verify Equation \eqref{eq:martingale_property_dense} for all $s \leq t$ in $\bb{T}$. Note that $\ev\bigl[(\cal{M}^{n, \varphi}_t - \cal{M}^{n, \varphi}_s)\Phi_s(X^n, W, \Gamma^n, \mu^n, W^0)\bigr] = 0$ because $\cal{M}^{n, \varphi}$ is a martingale. Further, we claim that $\Phi_s$ is $\pr_{\ast}$-a.s.\@ continuous for any $s \in \bb{T}$. Then, in view of \eqref{eq:martingale_convergence} we get the desired equality 
\begin{align*}
    0 &= \lim_{n \to \infty} \ev\bigl[(\cal{M}^{n, \varphi}_t - \cal{M}^{n, \varphi}_s)\Phi_s(X^n, W, \Gamma^n, \mu^n, W^0)\bigr] \\
    &= \ev_{\ast}\bigl[\bigl(\cal{M}^{\varphi}_t(\Theta) - \cal{M}^{\varphi}_s(\Theta)\bigr)\Phi_s(\Theta)\bigr].
\end{align*}
Let us prove the claim. Clearly, it is enough to establish continuity of the maps $\omega \mapsto x_{\cdot \land s}$ and $\omega \mapsto \pi_s^{\#}m$ at $\pr_{\ast}$-a.e.\@ $\omega = (x, w, \mathfrak{g}, m, b) \in \Omega_{\ast}$. For the first map $\pr_{\ast}$-almost sure continuity follows from our choice of $s$ as a $\pr_{\ast}$-almost sure continuity point of $X$. To show the desired continuity for $\omega \mapsto \pi_s^{\#}m$, let us choose a Borel set $A \subset \Omega_{\ast}$ of full measure on which the trajectories of $X$ are continuous at $s$. Then for any $\omega_0 \in \Omega_0$, we set $A_{\omega_0} = \{u \in \cal{S} \define (u, \omega_0) \in A\}$ and just as in Equation \eqref{eq:tonelli_argument} find that  $m(A_{\omega_0}) = 1$ for $\pr_0$-a.e.\@ $\omega_0 = (m, b) \in \Omega_0$ and, thus, at $\pr_{\ast}$-a.e.\@ $\omega = (u, \omega_0)$ of $\Omega_{\ast}$. However, the map $\pi_s$ is continuous on $A_{\omega_0}$ by our choice of $A$, so that $\omega \mapsto \pi_s^{\#}m$ is 
continuous at $\pr_{\ast}$-a.e.\@ $\omega \in \Omega_{\ast}$ by the continuous mapping theorem. This proves the claim. Consequently, $\cal{M}^{\varphi}$ is a martingale under $\pr_{\ast}$.

As we mentioned above the martingale $\cal{M}^{\varphi}$ has $\pr_{\ast}$-a.s.\@ continuous trajectories, so by Theorem II.7.2 from \cite{ikeda_sde_1989} it follows that $Y$ solves SDE \eqref{eq:continuous_sde_y}. This implies that $X$ is a solution to McKean--Vlasov SDE \eqref{eq:relaxed_weak_mfl_struc} under $\pr_{\ast}$. From there we proceed as at the end of the proof of Proposition \ref{prop:conv_to_ad_re_cr} to show that $\pr_0$ is a relaxed control rule.
\end{proof}

\subsection{Proof of Theorem \ref{thm:convergence_struc}} \label{sec:proves_singular}

\begin{proof}[Proof of Theorem \ref{thm:convergence_struc}]
By Lemma \ref{prop:sequence_tightness} the sequence $(\mu^n, W^0)_n$ is tight and Proposition \ref{prop:conv_to_ad_re_cr_stru} states that any subsequential limit $\pr_0$ of $(\mu^n, W^0)_n$ is an admissible relaxed control rule. Thus it remains to prove that the costs $J_n(\gamma^n) = \ev[\int_0^{\tau^n \land T} f(t, X^n_t, \nu^n_t, \gamma^n_t) \, \d t + \int_0^T \psi(t, \nu^n_t) \, \d t]$ converge subsequentially to $\cal{J}_{\textup{sg}}(\pr_0)$. However, this is a simple application of Lemma \ref{lem:mgale_integral_converges} along the lines of the proof of Proposition \ref{prop:conv_of_cost}.
\end{proof}

\subsection{The Case of Time-Dependent Coefficients} \label{sec:sing_simple}

We now consider the situation where $\lambda_n$ is just a function of $x$, the coefficients $\sigma$ and $\sigma_0$ only depend on time, and $b(t, x, v, g) = b_0(t) + b_1(t) g$ for measurable functions $b_0$, $b_1 \define [0, T] \to \R$. We can immediately proceed to the proof of Proposition \ref{prop:conv_struc_simple_coeff}.

\begin{proof}[Proof of Proposition \ref{prop:conv_struc_simple_coeff}]
We first show that there exists a sequence of controls admissible controls $(\gamma^n)_n$ for the regularised model with $\limsup_{n \to \infty} J_n(\gamma^n) \leq V_{\textup{sg}}$, where $J_n(\gamma^n)$ is defined in Equation \eqref{eq:cost_reg_seq}. Fix $\epsilon > 0$ and let $\pr_0 \in \P(\Omega_0)$ be a relaxed control rule with $\cal{J}_{\textup{sg}}(\pr_0) \leq V_{\textup{sg}} + \epsilon$. By Proposition \ref{prop:existence_uniqueness_stability}, on the probability space $(\Omega_{\ast}, \F_{\ast}, \pr_{\ast})$, we can uniquely solve the McKean--Vlasov SDE
\begin{equation} \label{eq:sde_simple_approx}
    \d X^n_t = \int_G (b_0(t) + b_1(t) g) \, \d \tilde{\Gamma}(t, g) + \sigma(t) \, \d W_t  + \sigma_0(t) \, \d B_t - \alpha(t) \, \d L^n_t, \quad X^n_0 = X_{0-},
\end{equation}
with $L^n_t = 1 - \ev_{\ast}[e^{-\Lambda^n_t} \vert \F^0_T]$, $\Lambda^n_t = \int_0^t \lambda_n(X^n_s) \, \d s$, and admissible relaxed control $\tilde{\Gamma}$ defined by $\int_{[0, T] \times G} \varphi(t, g) \, \d \tilde{\Gamma}(t, g) = \int_{[0, T] \times G} \varphi(t, \bf{1}_{\tau > t} g) \, \d \Gamma(t, g)$ for $\varphi \in C_b([0, T] \times G)$. It is important to add the indicator $\bf{1}_{\tau > t}$, where we recall $\tau = \inf\{0 < t \leq T + 1 \define X_t \leq 0\}$, to the control $\Gamma$ to ensure that $X^n$ does not have larger running cost than $X$. 
% Moreover, we can rewrite the drift in a form that more clearly illustrates that we are still within the scope of Proposition \ref{prop:existence_uniqueness_stability}. Indeed, let us introduce the admissible relaxed control $\tilde{\Gamma}$ defined by $\int_{[0, T] \times G} \varphi(t, g) \, \d \tilde{\Gamma}(t, g) = \int_{[0, T] \times G} \varphi(t, \bf{1}_{\tau > t} g) \, \d \Gamma(t, g)$ for $\varphi \in C_b([0, T] \times G)$. Then the drift in Equation \eqref{eq:sde_simple_approx} may be expressed as $\int_G (b_0(t) + b_1(t) g) \, \d \tilde{\Gamma}(t, g)$. 
We claim that $X^n_t \geq X_t$ for all $t \in [0, \tau]$, which by Assumption \ref{ass:monotonicity} implies that
\begin{equation} \label{eq:reg_to_sin}
    J_0^n := \ev_{\ast}\biggl[\int_{[0, T] \times G} e^{-\Lambda^n_t} f(t, X^n_t, \nu^n_t, g) \, \d \tilde{\Gamma}(t, g) + \int_0^T \psi(t, \nu^n_t) \, \d t\biggr] \leq \cal{J}_{\textup{sg}}(\pr_0) \leq V_{\textup{sg}} + \epsilon,
\end{equation}
where $\nu^n$ is defined by $\langle \nu^n_t, \varphi\rangle = \ev_{\ast}[e^{-\Lambda^n_t}\varphi(X^n_t) \vert \F^0_T]$ for $\varphi \in C_b(\R)$ and $t \in [0, T]$. We prove the claim through an approximation argument.

We set $L^{n, 0}_t = 0$ and then for $m \geq 1$ define
\begin{equation} \label{eq:approximation_scheme}
    \d X^{n, m}_t = \int_G (b_0(t) + b_1(t) g) \, \d \tilde{\Gamma}(t, g) + \sigma(t) \, \d W_t + \sigma_0(t) \, \d B_t - \alpha(t) \, \d L^{n, m - 1}_t
\end{equation}
with $X^{n, m}_0 = X_{0-}$, and set $L^{n, m}_t = 1 - \ev_{\ast}[e^{-\Lambda^{n, m}_t} \vert \F^0_T]$, where $\Lambda^{n, m}_t = \int_0^t \lambda_n(X^{n, m}_s) \, \d s$. It is not difficult to see that $L^{n, m}_t \leq L_t$ for all $t \in [0, T]$. This is certainly true for $m = 0$. Then, for the purpose of induction let us assume the statement holds for some $m \geq 0$. It follows that $X^{n, m}_t \geq X_t$ on $[0, \tau]$, but $\tau$ is the first time that $X$ visits $(-\infty, 0]$, so that the first hitting time $\tilde{\tau}_{n, m}$ of $X^{n, m}_t$ on $(-\infty, 0]$ must be at least as large as $\tau$. Next, by Assumption \ref{ass:struc_limit} \ref{it:intensity_convergence}, $\lambda^n$ vanishes on $[0, \infty)$, so $\Lambda^{n, m}_t = 0$ on $[0, \tilde{\tau}^{n, m} \land (T + 1)]$. Hence, we see that
\begin{align*}
    L^{n, m}_t = 1 - \ev_{\ast}[e^{-\Lambda^{n, m}_t} \vert \F^0_T] \leq \pr_{\ast}(\tilde{\tau}_{n, m} \leq t \vert \F^0_T) \leq \pr_{\ast}(\tau \leq t \vert \F^0_T) = L_t,
\end{align*}
which concludes the induction. From $L^{n, m}_t \leq L_t$ we obtain that $X^{n, m}_t \geq X_t$ for all $t \in [0, T]$ and $m \geq 1$. Now we simply take the limit as $m \to \infty$. Since $(L^{n, m})_m$ is an increasing sequence, it converges and its limit coincides with $L^n$. Similarly, the processes $(X^{n, m})_m$ form a decreasing sequence with limit $X^n$, which implies the desired inequality $X^n_t \geq X_t$ for $0 \leq t \leq T$ and proves the claim.

Next we invoke Theorem \ref{thm:convergence_ps}, whereby the minimal cost $V_n$ achievable for McKean--Vlasov SDE \eqref{eq:mfl_sequence} with intensity function $\lambda^n$ over admissible strong controls (i.e.\@ those which are square-integrable and $\bb{F}^{\xi, W, W^0}$-progressively measurable) and over admissible relaxed control in the sense of Definition \ref{def:rel_cntrl} coincides. Consequently, for any sequence $(\epsilon_n)_n$ of positive numbers tending to zero, we can find a sequence of admissible strong controls $(\gamma^n)_n$, such that if $\tilde{X}^n$ denotes the solution to McKean--Vlasov SDE \eqref{eq:mfl_sequence} with control $\gamma^n$, then
\begin{equation}
     J_n(\gamma^n) \leq V_n + \epsilon_n \leq J_0^n + \epsilon_n \leq V_{\textup{sg}} + \epsilon + \epsilon_n,
\end{equation}
where $J_n(\gamma^n)$ is defined in Equation \eqref{eq:cost_reg_seq} and the last inequality is simply Equation \eqref{eq:reg_to_sin}. Taking the limit superior as $n \to \infty$ implies $\limsup_{n \to \infty} J_n(\gamma^n) \leq V_{\textup{sg}} + \epsilon$.

Let us next deduce the lower bound $V_{\textup{sg}} \leq \liminf_{n \to \infty} J_n(\gamma^n)$. Take a subsequence $(n_k)_k$ which realises the limit inferior of the sequence $(J_n(\gamma^n))_n$ and for which the random variables $(\cal{L}(\tilde{X}^{n_k}, W, \Gamma^{n_k} \vert W^0), W^0)_k$ converge weakly to some relaxed control rule $\pr_0^{\ast} \in \P(\Omega_0)$ (for the singular model), where $\Gamma^n$ is the relaxed control associated to $\gamma^n$. This can be achieved according to Theorem \ref{thm:convergence_struc}, which further states that the costs $J_{n_k}(\gamma^{n_k})$ converge to the cost $\cal{J}_{\textup{sg}}(\pr_0^{\ast})$ of the limiting relaxed control rule. Consequently, we have
\begin{equation*}
    V_{\textup{sg}} \leq \cal{J}_{\textup{sg}}(\pr_0^{\ast}) \leq \lim_{k \to \infty} J_{n_k}(\gamma^{n_k}) = \liminf_{n \to \infty} J_n(\gamma^n) \leq \limsup_{n \to \infty} J_n(\gamma^n) \leq V_{\textup{sg}} + \epsilon.
\end{equation*}
Letting $\epsilon \to 0$ we obtain $\lim_{n \to \infty} V_n = \lim_{n \to \infty} J_n(\gamma^n) = V_{\textup{sg}}$, which concludes the proof.
\end{proof}

Finally, we assume that $b_1 = 0$, so we are considering a problem without controls.

\begin{proof}[Proof of Proposition \ref{eq:minimal_convergence}]
Let $L$ be the process constructed above Proposition \ref{eq:minimal_convergence} and define $X$ through Equation \eqref{eq:minimal}. Our first goal is to verify that for every time $t \in [-1, T + 1]$ the sequence $(L^n_t)_n$ converges to $L_t$ on the set $\{\Delta L_t = 0\}$. Note that we may assume that $t \in [0, T + 1)$, since the convergence holds for $t \in [-1, 0)$ and $t = T + 1$ by definition. We split the convergence into two inequalities: $\lim_{n \to \infty} L^n_t \leq L_t$ and $\lim_{n \to \infty} L^n_t \geq L_t$. The first one holds even outside of $\{\Delta L_t = 0\}$. Indeed, let $\epsilon > 0$ and fix $s \in (t, T + 1] \cap \bb{Q}$ with $\ell_s - L_t \leq \epsilon$, which is possible by construction of $L$. Then by monotonicity of $L^n$ we have
\begin{equation*}
    \lim_{n \to \infty} L^n_t \leq \lim_{n \to \infty} L^n_s = \ell_s \leq L_t + \epsilon.
\end{equation*}
Letting $\epsilon \to 0$ gives $\lim_{n \to \infty} L^n_t \leq L_t$. Next, assuming that $\Delta L_t = 0$, for any $\epsilon > 0$ we can find $s \in [-1, t) \cap \bb{Q}$ with $L_t - \ell_s \leq \epsilon$. Then
\begin{equation*}
    \lim_{n \to \infty} L^n_t \geq \lim_{n \to \infty} L^n_s = \ell_s \geq L_t - \epsilon.
\end{equation*}
Now we let $\epsilon \to 0$ again to see that $\lim_{n \to \infty} L^n_t \leq L_t$ and thus $\lim_{n \to \infty} L^n_t = L_t$. Since the jumps in $X$ are caused by $L$, we can transfer the convergence of $(L^n)_n$ to $(X^n)_n$, so altogether we have $(X^n_t, L^n_t) \to (X_t, L_t)$ on $\{\Delta L_t = 0\}$.

By \cite[Theorem 12.5.1 (iv)]{whitt_stoch_limits_2002} a sequence of nondecreasing maps $(x^n)_n$ in $D[-1, T + 1]$ converges to $x \in D[-1, T + 1]$ if $x^n_t \to x_t$ for a dense set of times $t \in [-1, T + 1]$ including $-1$ and $T + 1$. This is a.s.\@ the case for $(L^n)_n$, so that $L^n \to L$ a.s.\@ in $D[-1, T + 1]$. Since $X^n$ is the sum of a continuous process (that does not change with $n$) and $L^n$, we deduce that $X^n \to X$ a.s.\@ in $D[-1, T + 1]$ as well.

It remains to show that $(X, L)$ is minimal. For that, we use the approximation scheme from \eqref{eq:approximation_scheme}. Let $(X', L')$ be any other solution to McKean--Vlasov SDE \eqref{eq:minimal}. Then defining $(X^{n, m}, L^{n, m})$ as in \eqref{eq:approximation_scheme} we can inductively show that $L^{n, m}_t \leq L'_t$ for all $t \in [0, T + 1]$. Since $L^{n, m}_t \to L^n_t$ for all $t \in [0, T + 1]$ we have that
\begin{equation*}
    L_t = \lim_{n \to \infty} L^n_t = \lim_{n \to \infty}\lim_{m \to \infty} L^{n, m}_t \leq L'_t
\end{equation*}
on $\{\Delta L_t = 0\}$. From this we can conclude that $L_t \leq L'_t$ for all $t \in [0, T + 1]$. Since $(X', L')$ was arbitrary it follows that $(X, L)$ is minimal.
\end{proof}

\begin{remark}
The minimality of $(X, L)$ does not only hold among the class of strong solutions $(X', L')$ for which $L'$ is $\bb{F}^{W^0}$-adapted. Indeed, if we assume that $\bb{G} = (\cal{G}_t)_{0 \leq t \leq T + 1}$ is another filtration that satisfies the conditions outlined at the beginning of Subsection \ref{sec:existence_uniqueness} and that $(X', L')$ is a solution of McKean--Vlasov SDE \eqref{eq:minimal} with $L' = \pr(\tau' \leq t \vert \cal{G}_T)$, where $\tau' = \inf\{t > 0 \define X'_t \leq 0\}$, then the arguments from the proof of Proposition \ref{eq:minimal_convergence} above still apply. Hence, we can conclude that $L_t \leq L'_t$ for all $t \in [0, T + 1]$. 

In fact, since $(X, L)$ is a strong solution, we can transfer it to any other probability space that can accommodate random variables with the law $\L(\xi, W, W^0)$. We simply use that $(X, L) = S(\xi, W, W^0)$ for a measurable function $S \define \R \times C([0, T + 1]) \times C([0, T + 1]) \to D[0, T + 1] \times D[0, T + 1]$. So if $(X', L')$ is a weak solution to McKean--Vlasov SDE \eqref{eq:minimal} (in the above sense that $L'$ is not necessarily adapted to the filtration of the common noise but a possibly larger filtration satisfying the conditions outlined at the beginning of Subsection \ref{sec:existence_uniqueness}) on any given probability space equipped with random variables $\xi'$, $W'$, and $B'$ such that $(\xi', W', B') \sim (\xi, W, W^0)$, then $S(\xi', W', B')$ is a strong solution to McKean--Vlasov SDE \eqref{eq:minimal} and $L'$ will be lower bounded by the second component of $S(\xi', W', B')$.
\end{remark}

\section{Numerical Simulation} \label{sec:num_sim}

In this section, we discuss a scheme for simulating the strong formulation of the regularised mean-field control problem \eqref{eq:mfl}. We do this in the context of the model for government interventions from Subsection \ref{sec:toy_model}. Since the coefficients and cost functions of this model satisfy the assumptions of Theorem \ref{thm:all_equiv}, the particle (or finite banking system) system is well approximated by the strong mean-field control problem. We use a policy gradient method to search for optimal controls. That is, we approximate the dynamics \eqref{eq:mfl} for a parametrised control $\gamma$, compute the associated cost $J(\gamma)$, and then update the parameters of $\gamma$ based on the gradient of $J(\gamma)$ with respect to the parameters. 

\subsection{Finite Element Scheme for the Stochastic Fokker--Planck Equation}

It is extremely costly to use a particle system to simulate \eqref{eq:mfl}, as this requires a two-fold approximation: for each realisation of the common noise we need to estimate $\nu_t = \ev[$ through a Monte-Carlo approximation based on $K \geq 1$ samples of the idiosyncratic noise $W$. Then, we approximate the cost $J(\gamma)$ by averaging over these estimates for $K_0 \geq 1$ realisations of the common noise $W^0$. The resulting Monte-Carlo error is at best of order $(K_0 \land K)^{-1/2}$ while the computational complexity is $O(K_0 K)$. This is too expensive, so instead we proceed via the stochastic Fokker--Planck equation satisfied by the flow $\nu = (\nu_t)_{0 \leq t \leq T}$. 

To pursue this approach, we have to assume that the control $\gamma$ is in \textit{closed-loop} form, i.e.\@ there exists a measurable function $g \define [0, T] \times \R \times \cal{M}^1_{\leq 1}(\R) \to G$ with $\gamma_t = g(t, X_t, \nu_t)$ for $t \in [0, T]$. From Theorem \ref{thm:all_equiv} we know that the optimal cost for (relaxed) closed-loop controls coincides with the value of the strong mean-field control problem, justifying our approach. However, to ensure that McKean--Vlasov SDE \eqref{eq:mfl} is (strongly) well-posed for any closed-loop control $g$, we have to assume that $g$ satisfies the assumptions imposed on $\lambda$ in Assumptions \ref{ass:red_form_cont_prob} \ref{it:growth} and \ref{it:continuity}. Well-posedness then follows from Proposition \ref{prop:existence_uniqueness_stability}. We call such $g$ \textit{strong closed-loop controls} and denote their collection by $\bb{G}_{\text{cl}}$. The \textit{cost functional} for closed-loop controls $g \in \bb{G}_{\text{cl}}$ is defined by
\begin{align} \label{eq:cost_cl}
    J_{\text{cl}}(g) &= \ev\biggl[\int_0^T e^{-\Lambda_t} f\bigl(t, X_t, \nu_t, g(t, X_t, \nu_t)\bigr) \, \d t + \psi(\nu_T)\biggr] \notag \\
    &= \ev\biggl[\int_0^T \bigl\langle \nu_t, f\bigl(t, \cdot, \nu_t, g(t, \cdot, \nu_t)\bigr)\bigr\rangle \, \d t + \psi(\nu_T)\biggr].
\end{align}
As mentioned above, the flow $\nu$ satisfies a stochastic Fokker--Planck equation, which takes the form
\begin{equation} \label{eq:mfl_spde}
    \d \langle \nu_t, \varphi\rangle = \bigl\langle \nu_t, \L\varphi\bigl(t, \cdot, \nu_t, g(t, \cdot, \nu_t)\bigr)\bigr\rangle \, \d t + \langle \nu_t, \sigma_0(t, \cdot, \nu_t) \partial_x\varphi\rangle \, \d W^0_t
\end{equation}
for $\varphi \in C_c^2(\R)$, with initial condition $\nu_0 = \L(\xi)$. The definition of the differential operator $\L$ is given in Equation \eqref{eq:diff_op_sfpe}. For details of the derivation of this SPDE, see the proof of Theorem \ref{thm:all_equiv} in Subsection \ref{eq:proof_all_equiv}.

\begin{remark}
Under Assumption \ref{ass:red_form_cont_prob} it is possible to show that the conditional subprobability $\nu_t = \ev[e^{-\Lambda_t} \delta_{X_t} \vert W^0]$ is the unique strong solution to SPDE \eqref{eq:mfl_spde}, where uniqueness is understood in a pathwise sense. This can be achieved through arguments similar to those in \cite{coghi_sfpe_unique_2019}.
\end{remark}

We will next propose a discretisation scheme for SPDE \eqref{eq:mfl_spde}. Our discussion will be informal and we do not justify the method theoretically. We discretise SPDE \eqref{eq:mfl_spde} in space by a finite element scheme with hat basis and in time by a semi-implicit Euler--Maruyama scheme. More precisely, fix $x_0 < 0 < x_{n + 1}$ and let $x_0 < x_1 < \dots x_{n + 1}$ be an equidistant grid on $[x_0, x_{n + 1}]$ with mesh size $h = x_1 - x_0$. For $1 \leq i \leq n$ we define the hat functions $v_i$ by $v_i(x) = 0 \lor (1 - h^{-1} \lvert x - x_i\rvert)$. Next, let $0 = t_0 < t_1 < \dots < t_m = T$ be a uniform time grid with mesh size $t_1$. We define the map $A \define [0, T] \times \cal{M}^1_{\leq 1}(\R) \times \cal{B}_1(\R; G) \to \R^{n \times n}$ by
\begin{align} \label{eq:bilin_mat}
\begin{split}
    A_{ij}(t, \nu, u) &= \int_{\R}  \Bigl(b(t, x, \nu, u(x)) - \alpha(t, x, \nu)\langle \nu, \lambda(t, \cdot, \nu)\rangle\Bigr) v_j(x) \partial_x v_i(x) \, \d x \\
    &\ \ \ - \int_{\R} \partial_x( a(t, x, \nu)  v_j(x)) \partial_x v_i(x) \, \d x - \int_{\R} \lambda(t, x, \nu)v_j(x) v_i(x) \, \d x
\end{split}
\end{align}
for $1 \leq i, j \leq n$. Here $\cal{B}_1(\R; G)$ denotes the space of measurable functions $\R \to G$ of at most linear growth. In practice the integrals in \eqref{eq:bilin_mat} have to be approximated via some numerical integration method. 

Next, we consider the SDE
\begin{equation} \label{eq:spde_semidiscrete}
    \d \rho^{ni}_t = \sum_{j = 1}^n A_{ij}(t, \nu^n_t, g(t, \cdot, \nu^n_t)) \rho^{nj}_t \, \d t + \sum_{j = 1}^n \bigl\langle v_j, \sigma_0(t, \cdot, \nu^n_t) \partial_x v_i\bigr\rangle \rho^{nj}_t \, \d W^0_t
\end{equation}
with initial condition $\rho^{ni}_0 = \langle \nu_0, v_i\rangle$. The processes $\rho^{ni}$ are the coefficients of the finite-element approximation of the density of $\nu_t$ in the hat basis $v_1$,~\ldots, $v_n$. Unfortunately, there is no guarantee that $\rho^{ni}_t \geq 0$ nor that $r^n_t = \sum_{n = 1}^n (\rho^{ni}_t)_+ \leq 1$. So if we were to naively define the finite-element approximation $\rho^n_t$ of the density through $\langle \rho^n_t, v_i\rangle = \rho^{ni}_t$, then $\d \nu^n_t(x) = \rho^n_t(x) \, \d x$ is not a subprobability distribution. Instead, we define $\rho^n_t$ as the unique element in the linear span of $(v_1, \dots, v_n)$ with $\langle \rho^n_t, v_i) = (\rho^{ni})t)_+/(r^n_t \lor 1)$ and define $\nu^n_t$ by $\d \nu^n_t(x) = \rho^n_t(x) \, \d x$.

We discretise SDE \eqref{eq:spde_semidiscrete} in time with a semi-implicit Euler-Maruyama scheme: we set $p^{ni}_0 = \rho^{ni}_0$ and for $k = 0$,~\ldots, $m - 1$ let $p^{ni}_{k + 1}$, $i = 1$,~\ldots, $n$, be the solution to
\begin{align} \label{eq:spde_discrete}
\begin{split}
    p^{ni}_{k + 1} + t_1 \sum_{j = 1}^n A_{ij}&\bigl(t_k, \nu^{nm}_k, g(t_k, \cdot, \nu^{nm}_k)\bigr) p^{nj}_{k + 1} \\
    &= p^{ni}_k + \sum_{j = 1}^n \bigl\langle v_j, \sigma_0(t_k, \cdot, \nu^{nm}_k) v_i\bigr\rangle p^{nj}_k (W^0_{t_{k + 1}} - W^0_{t_k}).
\end{split}
\end{align}
As above the measure $\nu^{nm}_k$ has density $p^n_k$ which is defined as the unique element in the linear span of $(v_j)_{1 \leq j \leq n}$ with $\langle p^n_k, v_j\rangle = (p^{nj}_k)_+/(r^{nm}_k \lor 1)$,
where $r^{nm}_k = \sum_{j = 1}^n (p^{nj}_k)_+$. With this discretisation we can approximate the cost $J_{\text{cl}}(g)$ by
\begin{equation} \label{eq:cost_discrete}
     \ev \biggl[\sum_{k = 0}^{m - 1} \int_{\R}f\bigl(t_k, x, \nu^{nm}_k, g(t_k, x, \nu^{nm}_k)\bigr) \d \nu^{nm}_k(x) + \psi(\nu^{nm}_m)\biggr].
\end{equation}
We can estimate this expectation using the Monte-Carlo method with $K$ independent samples of the common noise $W^0$.

Our discretisation scheme involves a threefold approximation: in time, space, and the randomness of the common noise. For each of the $K$ realisations of the common noise and at each of the $m$ time steps we have to solve the linear system \eqref{eq:spde_discrete}. Since the matrix $A\bigl(t_k, \nu^{nm}_k, g(t_k, \cdot, \nu^{nm}_k)\bigr)$ is tridiagonal this can be achieved with a cost of $O(n)$. Assuming that the complexity of computing the entries of $A\bigl(t_k, \nu^{nm}_k, g(t_k, \cdot, \nu^{nm}_k)\bigr)$ is of order $n$, we therefore get a total cost of $O(nmK)$.

\subsection{Policy Gradient Method for the Discretisation of the Stochastic Fokker--Planck Equation}

To make use of the gradient descent algorithm to search for optimal closed-loop controls, we require an (approximate) parametrisation of the space $\bb{G}_{\text{cl}}$. An application of the Stone-Weierstrass theorem implies that functions $g \define [0, T] \times \R \times \cal{M}^1_{\leq 1}(\R) \to G$ of the form $(t, x, v) \mapsto g_1(t, x, \langle v, g_0\rangle)$ for Lipschitz continuous maps $g_0 \define \R \to \R^{d_0}$ and $g_1 \define [0, T] \times \R \times \R^{d_0} \to \R$ with $d_0 \geq 1$ are locally dense in $\bb{G}_{\text{cl}}$ with respect to the supremum metric. For a given $d_0 \geq 1$ we can approximate the functions $g_0$ and $g_1$ by neural networks $g^{\vartheta}_0 \define \R \to \R^{d_0}$ and $g^{\vartheta}_1 \define [0, T] \times \R \times \R^{d_0} \to \R$ with parameters $\vartheta \in \R^{d_{\vartheta}}$. Let us fix such a $d_0 \geq 1$ as well as neural networks $g^{\vartheta}_0$ and $g^{\vartheta}_1$ and set $g^{\vartheta}(t, x, v) = g^{\vartheta}_1(t, x, \langle v, g^{\vartheta}_0\rangle)$. Then we can compute the finite element approximation of $\nu$ for the control $g = g^{\vartheta}$ for $K$ realisations of the common noise. We denote the resulting Monte-Carlo estimate of the discretised cost by $\hat{J}_{\text{cl}}(\vartheta)$. For a given learning rate $\eta > 0$, we can iteratively update the parameters $\vartheta$ according to $\vartheta \mapsto \vartheta - \eta \nabla_{\vartheta} \hat{J}_{\text{cl}}(\vartheta)$. We terminate this scheme after a predetermined number of iterations.

\subsection{Numerical Experiments}

For our numerical experiments we assume that the coefficients $\sigma$, $\sigma_0$, and $\alpha$ are constant, $\lambda(t, x, v) = \lambda_0 \max\{-x, 0\}$ for some positive number $\lambda_0$, and $b(t, x, v, g) = g$, which leads to the state
\begin{equation*}
    \d X_t = \bigl(g^{\vartheta}(t, X_t, \nu_t) - \alpha \lambda_0 \langle \nu_t, (\cdot)_-\rangle\bigr) \, \d t + \sigma \, \d W_t + \sigma_0 \, \d W^0_t, \quad \d \Lambda_t = \lambda_0 (X_t)_- \, \d t
\end{equation*}
with $\nu_t = \ev[e^{-\Lambda_t} \delta_{X_t} \vert W^0]$ for $\vartheta \in \R^{d_{\vartheta}}$. We set $G = [0, \infty)$, $f(t, x, v, g) = w g$, and $\psi(v) = 1- v(\R)$, so the cost functional is
\begin{equation*}
    J_{\text{cl}}(g^{\vartheta}) = \ev\biggl[\int_0^T w e^{-\Lambda_t} g^{\vartheta}(t, X_t, \nu_t) \, \d t + L_T\biggr],
\end{equation*}
In the context of the financial model discussed in Section \ref{sec:toy_model}, the weight $w > 0$ captures the trade-off between the cost of capital injection and the cost of insolvencies. We fix $w = 5$ and set the time horizon $T = 1$. The choice of $w$ ensures that the running and terminal cost are of the same order of magnitude and that the neural networks produce sensible results. The remaining parameters vary based on the experiment. Following \cite{cuchiero_optimal_bailout_2023}, we assume the initial condition $\xi$ is distributed according to a gamma distribution with density
\begin{equation*}
    f(x; k, \theta) = \frac{1}{\Gamma(k)\theta^k}x^{k - 1}e^{-\frac{x}{\theta}}.
\end{equation*}
We set the shape $k = 6$ and scale $\theta = 1/60$, meaning that mass is very tightly concentrated around the mean $1/10$ and all on the positive half-line. (Here $\theta$ is of course not the exponential random variable from above but the scale parameter of the gamma distribution.) For the finite element scheme we truncate space to the interval $[-1, 1]$.

We choose feedforward neural networks with two hidden layers for the functions $g^{\vartheta}_0$ and $g^{\vartheta}_1$. The dimension $d_0$ as well as the size of the hidden layers of $g^{\vartheta}_0$ are equal to $10$ while the number of units in the hidden layers of $g^{\vartheta}_1$ is $50$. We follow a two-level approach to train the neural networks in each experiment: first, we train for $500$ epochs with $n = 128$ grid points, $m = 128$ time steps, and $K = 128$ realisations of the common noise and then a further $100$ epochs with $n = 256$, $m = 256$, and $K = 256$. The code for our numerical experiments can be found on GitHub\footnote{\url{https://github.com/philkant/systemic-risk-model}}.

Next, we discuss the different numerical experiments. We frame the experiments in terms of the financial model that described in Section \ref{sec:toy_model}. The particles represent commercial banks with mutual obligations. Killing corresponds to a default of an institution and the contagion results from failure to repay obligations. The control $g^{\vartheta}$ can be interpreted as a capital injection by a government, intended to prevent default cascades.

\textit{Feedback.}  First, we vary the feedback parameter $\alpha$ within $\{0.5, 1.0, 1.5, 2.0, 2.5\}$ while fixing $\lambda_0 = 10$ and $\sigma = \sigma_0 = 0.1$. In Figure \ref{fig:feedback_cost} we report the convergence of the stochastic gradient descent algorithm for the two levels of discretisation. Further, we compare how the running cost
\begin{equation*}
    \ev \int_0^T w e^{-\Lambda_t} g^{\vartheta}(t, X_t, \nu_t) \, \d t = \ev \int_0^T w \langle \nu_t, g^{\vartheta}(t, \cdot, \nu_t)\rangle \, \d t
\end{equation*}
and terminal cost $\ev L_T = 1 - \ev \nu_T(\R)$ change for increasing levels of feedback $\alpha$. As expected their sum $J_{\text{cl}}(g^{\vartheta})$ becomes larger. However, this trend is fully driven by the running cost whereas the terminal cost at first shrinks for higher $\alpha$. We believe this is because at low levels of feedback, individual insolvencies barely impact the larger network, so the central agent is content with letting institutions default. As the interconnectedness mounts, insolvencies become more costly as they precipitate further defaults, so the controller injects larger levels of capital to prevent default cascades from materialising. 

\begin{figure}[tb] 
    \makebox[\linewidth][c]{
    \begin{subfigure}[b]{0.5\columnwidth}
        \centering
        \includegraphics[width=\columnwidth]{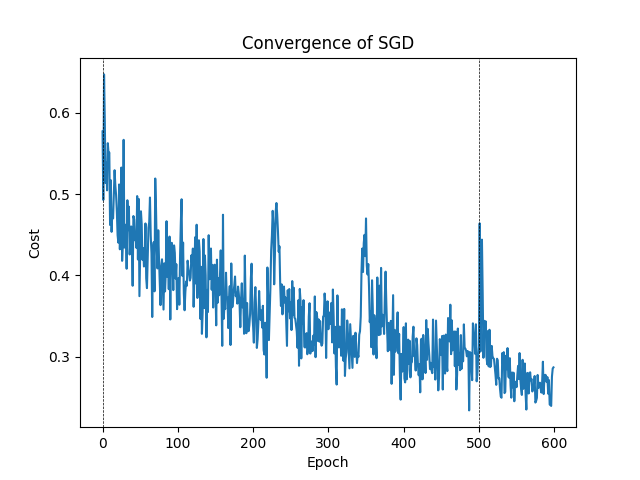}
    \end{subfigure}

    \hspace{-0.5cm}

    \begin{subfigure}[b]{0.5\columnwidth}
        \centering
        \includegraphics[width=\columnwidth]{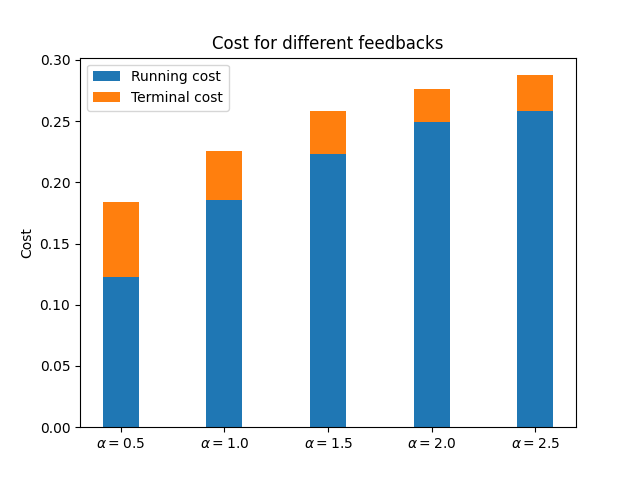}
    \end{subfigure}
    }
    \caption{Convergence of stochastic gradient descent and cost for different feedback parameters $\alpha$. The dotted line in the left panel indicates the split between the two training regimes with different values for $n$, $m$, and $K$.}
    \label{fig:feedback_cost}
 \end{figure}

\begin{figure}[tb] 
    \makebox[\linewidth][c]{
    \hspace{0.55cm}
    \begin{subfigure}[b]{0.53\columnwidth}
        \centering
        \includegraphics[width=\columnwidth]{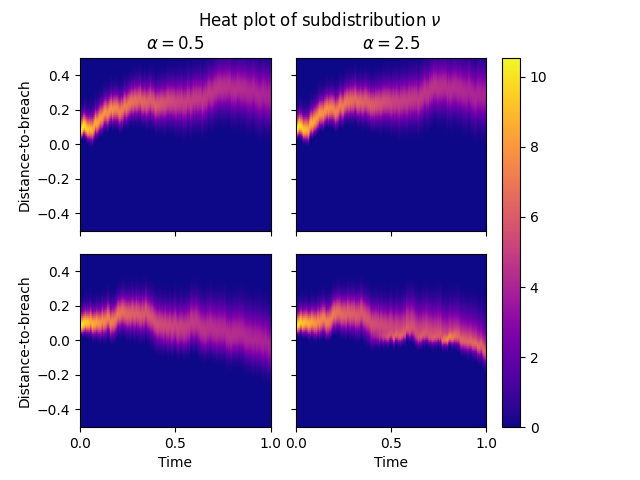}
    \end{subfigure}

    \hspace{-1.1cm}

    \begin{subfigure}[b]{0.53\columnwidth}
        \centering
        \includegraphics[width=\columnwidth]{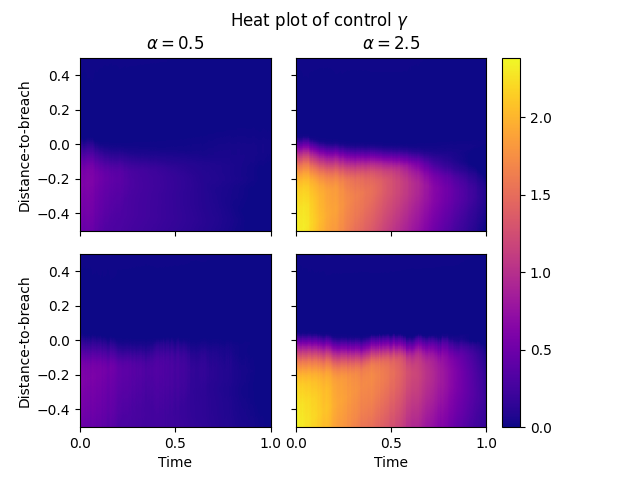}
    \end{subfigure}
    }
    \caption{Heat plot of flow of subprobability distributions $\nu = (\nu_t)_{0 \leq t \leq T}$ and the control $(g^{\vartheta}(t, \cdot, \nu_t))_{0 \leq t \leq T}$ for $\alpha = 0.5$, $\alpha = 2.5$ and two different realisations of the common noise $W^0$ in the upper and lower panel.}
    \label{fig:feedback_heatplot}
\end{figure}

We provide a heat plot of the flow of subprobability distributions $\nu = (\nu_t)_{0 \leq t \leq T}$ (or rather their densities) and the control $(g^{\vartheta}(t, \cdot, \nu_t))_{0 \leq t \leq T}$ in Figure \ref{fig:feedback_heatplot} for two realisations of the common noise. Bright colours indicate a high concentration of institutions with a given distance-to-breach $X_t$ (cf.\@ Section \ref{sec:toy_model}). The idiosyncratic noise spreads banks apart, while their mean is directed by the common noise, the interaction term $\int_0^t \alpha \lambda_0 \langle \nu_s, (\cdot)_- \rangle \, \d s$, and the control. The realisation of the second noise (bottom row) is more adverse as it pushes banks below the breach threshold. For the larger feedback parameter $\alpha = 2.5$ we can clearly perceive the controller's activity, which attempts to keep institutions above the capital threshold, and as a result squeezes the banks together (cf.\@ the bottom right depiction on the left plot of Figure \ref{fig:feedback_heatplot}). 

The different levels of activity by the central agent for $\alpha = 0.5$ and $\alpha = 2.5$ are contrasted by the heat plot of the control in the right panel of Figure \ref{fig:feedback_heatplot}. The controls are of bang-bang type, meaning that a bailout only occurs when a bank breaches the capital threshold, at which point the central agent injects heavily. Finally, we see that an adverse realisation of the noise leads to prolonged activity by the controller.

\textit{Correlation.} Next, we vary the correlation $\rho = \frac{\sigma_0^2}{\sigma^2 + \sigma_0^2}$ between the institutions' outside assets within $\{0.2, 0.4, 0.6, 0.8\}$ while keeping the total volatility $\sigma^2 + \sigma_0^2$ fixed at $0.04$ and $\lambda_0 = 10$. We contrast the impact of increased volatility with that of mounting interconnectedness in Figure \ref{fig:correlation_cost}. While the controller is capable of fighting the endogenous feedback even for large $\alpha$, there is a marked phase transition for asset correlation. As the correlation moves up from $0.6$ to $0.8$, the central agent precipitously cuts capital injections. We interpret this as an inability to combat the exogenous effect of the common exposures once they reach sufficiently large levels. In other words, if the controller keeps the number of insolvencies in check, the endogenous feedback $\int_0^t \alpha \lambda_0 \langle \nu_s, (\cdot)_- \rangle \, \d s$ is contained. However, this has no impact on the exogenous common exposures, which can still drag on banks' balance sheets.

\begin{figure}[tb] 
    \makebox[\linewidth][c]{
    \begin{subfigure}[b]{0.5\columnwidth}
        \centering
        \includegraphics[width=\columnwidth]{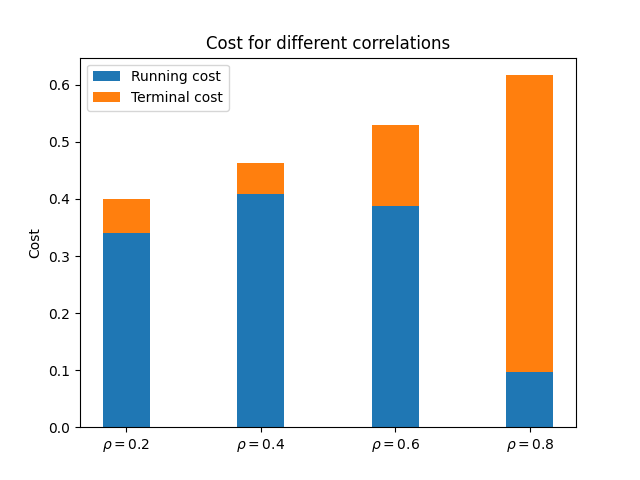}
    \end{subfigure}

    \hspace{-0.5cm}

    \begin{subfigure}[b]{0.5\columnwidth}
        \centering
        \includegraphics[width=\columnwidth]{plots/plot_feedback_cost.png}
    \end{subfigure}
    }
    \caption{Cost for different correlation parameters $\rho$ versus different feedback parameters $\alpha$ (note the different scales).}
    \label{fig:correlation_cost}
\end{figure}

% \begin{figure}[tb] 
%     \makebox[\linewidth][c]{
%     \hspace{0.55cm}
%     \begin{subfigure}[b]{0.53\columnwidth}
%         \centering
%         \includegraphics[width=\columnwidth]{plots/plot_correlation_density.png}
%     \end{subfigure}

%     \hspace{-1.1cm}

%     \begin{subfigure}[b]{0.53\columnwidth}
%         \centering
%         \includegraphics[width=\columnwidth]{plots/plot_correlation_control.png}
%     \end{subfigure}
%     }
%     \caption{Heat plot of flow of subprobability distributions $\nu = (\nu_t)_{0 \leq t \leq T}$ and the control $(g^{\vartheta}(t, \cdot, \nu_t))_{0 \leq t \leq T}$ for $\rho = 0.2$, $\rho= 0.8$ and two different realisations of the common noise $W^0$.}
%     \label{fig:feedback_heatplot}
% \end{figure}

\textit{Intensity.} Finally, we vary the intensity $\lambda_0$ within $\{5, 10, 25, 50\}$ while fixing $\alpha = 1.5$ and $\sigma = \sigma_0 = 0.1$. As the heat plot of the controls in Figure \ref{fig:intensity_heatplot} indicates, the control boundary in green, which separates the regions of activity and inactivity of the controller, is pushed outwards for larger intensities, because for large $\lambda_0$ banks default almost immediately upon breaching the capital threshold. By lifting the control boundary the central agent lowers the risk of breaches by financial institutions. This is also illustrated by the heat plot of the subprobability distribution on the left panel of Figure \ref{fig:intensity_heatplot}, where we plot the flow $\nu$ for different $\lambda_0$ and the same realisation of the common noise. For $\lambda_0 = 5$ and to a lesser extent $\lambda_0 = 10$ breaches are tolerated, whereas for higher intensity parameters institutions are kept above or even at a distance from the threshold.

\begin{figure}[tb] 
    \makebox[\linewidth][c]{
    \hspace{0.55cm}
    \begin{subfigure}[b]{0.53\columnwidth}
        \centering
        \includegraphics[width=\columnwidth]{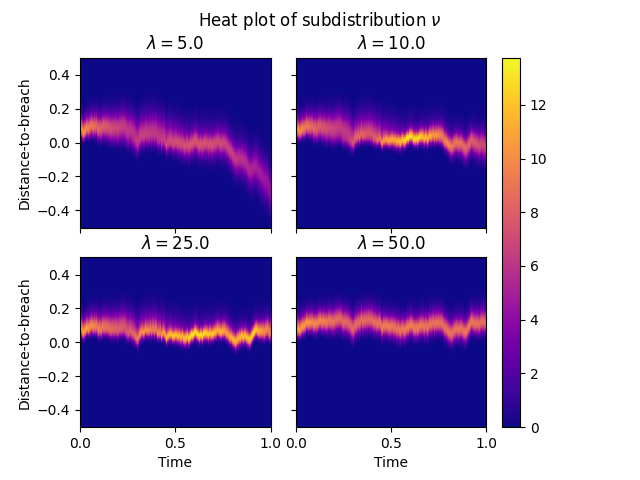}
    \end{subfigure}

    \hspace{-1.1cm}

    \begin{subfigure}[b]{0.53\columnwidth}
        \centering
        \includegraphics[width=\columnwidth]{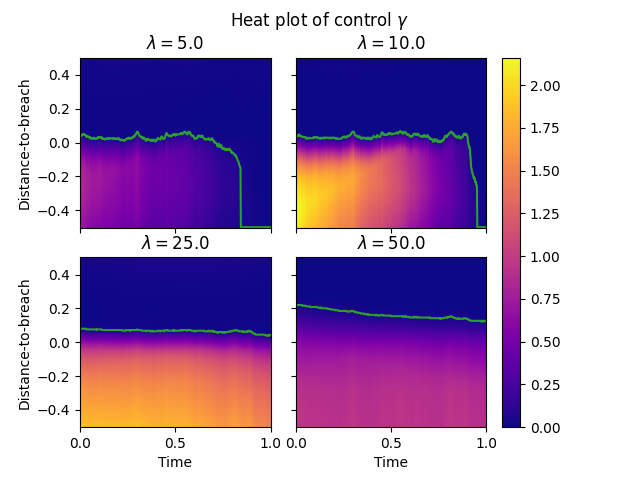}
    \end{subfigure}
    }
    \caption{Heat plot of flow of subprobability distributions $\nu = (\nu_t)_{0 \leq t \leq T}$ and the control $(g^{\vartheta}(t, \cdot, \nu_t))_{0 \leq t \leq T}$ for different $\lambda_0$. The green lines in the plot on the right-hand side indicate the control boundary.}
    \label{fig:intensity_heatplot}
\end{figure}

\appendix

\section{Appendix}

\subsection{Existence, Uniqueness and Stability for McKean--Vlasov SDEs with Common Noise under Local Lipschitz Condition} \label{app:mvsde_loc_lip}

In the following, let us fix a probability space $(\Omega, \F, \pr)$ equipped with two filtrations $\bb{G}$ and $\bb{F}$ with $\cal{G}_t \subset \F_t$ for $t \in [0, T]$, an $\F_0$-measurable $\R^{d_X}$-valued random variable $\xi$ for $d_X \geq 1$, and two $d_W$-dimensional $\bb{F}$-Brownian motions $W$ and $W^0$ for $d_W \geq 1$. We assume that $W^0$ is adapted to $\bb{G}$ and the pair $(\xi, W)$ is independent of $\cal{G}_T$. Finally, we require that for all $t \in [0, T]$ we have $\pr(A \vert \cal{G}_t) = \pr(A \vert \cal{G}_T)$ a.s.\@ for all $A \in \F_t \lor \sigma(W)$.

Let $\bb{M}_T^2(G)$ be the space of square-integrable measures on $[0, T] \times G$, for a non-empty and closed subset $G \subset \R^d$ with $d \geq 1$, which have the Lebesgue measure as their first marginal. We equip $\bb{M}_T^2(G)$ with the $2$-Wasserstein distance. We say that a $\bb{M}_T^2(G)$-valued random variable $\Gamma$ is $\bb{F}$-progressively measurable if for all $t \in [0, T]$, the random variable $\Gamma([0, s] \times A)$ is $\F_t$-measurable for any $s \in [0, t]$ and $A \in \cal{B}(G)$. Fix such a random measure $\Gamma$ and assume that $\ev \int_0^T \lvert g \rvert^2 \, \d \Gamma(t, g) < \infty$. Then we consider the McKean--Vlasov SDE
\begin{equation} \label{eq:local_lipschitz}
    \d X_t = \int_G b(t, X_t, \mu_t, g) \, \d \Gamma(t, g) + \sigma(t, X_t, \mu_t) \, \d W_t + \sigma_0(t, X_t, \mu_t) \, \d W^0_t,
\end{equation}
started from $X_0 = \xi$ with $\mu_t = \L(X_t \vert \cal{G}_T)$. The coefficients are functions $b \define [0, T] \times \R^{d_X} \times \P^2(\R^{d_X}) \times G \to \R^{d_X}$ and $\sigma$, $\sigma_0 \define [0, T] \times \R^{d_X} \times \P^2(\R^{d_X}) \to \R^{d_X \times d_W}$. Here $\P^2(\R^{d_X})$ denotes the space of square-integrable probability measures on $\R^{d_X}$ equipped with the $2$-Wasserstein distance $W_2$. We also set $M_2(\mu) = \bigl(\int_{\R^{d_X}} \lvert x \rvert^2 \, \d \mu(x)\bigr)^{1/2}$ for $\mu \in \P^2(\R^{d_X})$.
% Note that $X$ can be a multi-dimensional process, while both Brownian motions are one-dimensional. We could easily extend the well-posedness to multi-dimensional noise, but we refrain from doing so as this increases the notational overhead and a one-dimensional noise suffices for our purpose.

The pair $(X, \mu)$ is called a \textit{strong solution} of McKean--Vlasov SDE \eqref{eq:local_lipschitz} if (i) the process $X$ is a strong solution to \eqref{eq:local_lipschitz} when viewed as an SDE with random coefficients, the randomness coming from the mean-field component $\mu$ and the control $\Gamma$, and (ii) $\mu_t$ is the conditional law of $X_t$ with respect to $\cal{G}_T$, so there is no additional information other than $\bb{G}$ in the conditioning. Under the above hypothesis $\pr(A \vert \cal{G}_t) = \pr(A \vert \cal{G}_T)$ for all $A \in \F_t \lor \sigma(W)$, we have $\mu_t = \L(X_t \vert \cal{G}_T) = \L(X_t, \vert \cal{G}_t)$, so that $\mu$ is $\bb{G}$-adapted.

\begin{assumption} \label{ass:local_lipschitz}
Let $b \define [0, T] \times \R^{d_X} \times \P^2(\R^{d_X}) \times G \to \R^{d_X}$ and $\sigma$, $\sigma_0 \define [0, T] \times \R^{d_X} \times \P^2(\R^{d_X}) \to \R^{d_X \times d_W}$ be measurable and $\L(\xi) \in \P^2(\R^{d_X})$. We assume there exists a constant $C > 0$, such that
\begin{enumerate}[noitemsep, label = (\roman*)]
    \item \label{it:growth_general} the norms of the coefficients $\sigma$ and $\sigma_0$ are bounded by $C$ and for all $t$, $x$, $\mu$, $g$ we have
    \begin{align*}
        \lvert b(t, x, \mu, g) \rvert \leq C(1 + \lvert x \rvert + M_2(\mu) + \lvert g\rvert);
    \end{align*}
    \item \label{it:continuity_general} the coefficient $b$ is continuous in $g$ and for $h = \sigma$, $\sigma_0$ and all $t$, $x$, $x'$, $\mu$, $\mu'$, $g$ we have
    \begin{align*}
        \lvert b(t, x, \mu, g) - b(t, x', \mu'&, g)\rvert + \lvert h(t, x, \nu) - h(t, x', \nu')\rvert \\
        &\leq C\bigl(1 + M_2^2(\mu) \lor M_2^2(\mu')\bigr)(\lvert x - x'\rvert + W_2(\mu, \mu')).
    \end{align*}
\end{enumerate}
\end{assumption}

\begin{proposition} \label{prop:local_lipschitz}
Let Assumption \ref{ass:local_lipschitz} be satisfied. Then for any $\bb{F}$-progressively measurable $\bb{M}_T^2(G)$-valued $\Gamma$ with $\ev \int_0^T \lvert g \rvert^2 \, \d \Gamma(t, g) < \infty$, the McKean--Vlasov SDE \eqref{eq:local_lipschitz} has a unique strong solution. If $b$ does not depend on $g$, i.e.\@ $b(t, x, m, g) = b_0(t, x, m)$ for some function $b_0 \define [0, T] \times \R^{d_X} \times \P^2(\R^{d_X}) \to \R^{d_X}$, then $\mu_t = \L(X_t \vert W^0)$ for all $t \in [0, T]$ almost surely.

Moreover, if $(\epsilon_n)_n$ is a sequence of positive real numbers tending to zero and $(\Gamma_n)_{n \geq 1}$ is a sequence of $\bb{M}_T^2(G)$-valued $\bb{F}$-progressively measurable processes such that $\ev W_2^2(\Gamma_n, \Gamma) \to 0$, then $\ev(\lvert X^n - X \rvert^{\ast}_T)^2 \to 0$ as $n \to \infty$. Here $X^n$ is the unique strong solution to SDE \eqref{eq:local_lipschitz} started from $\xi$ at $\epsilon_n$ with input $\Gamma_n$. 
\end{proposition}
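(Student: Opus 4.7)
The plan is to construct the solution via Picard iteration, exploiting the minimum structure in the Lipschitz condition through stopping-time localization to handle the unboundedness of the Lipschitz constant, and then to deduce uniqueness, the conditional-law identification, and stability from parallel It\^o--Gronwall arguments.

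First I would establish a uniform a priori bound: for any strong solution $(X, \mu)$, $\ev\sup_{0 \le t \le T}|X_t|^2 \le C_0$ for a constant $C_0$ depending only on $\ev|\xi|^2$, $\ev\int_0^T \lvert g \rvert^2 \, \d\Gamma(t,g)$, $C_b$, $C_\sigma$, and $T$. This is standard: apply It\^o to $|X_t|^2$, use Assumption \ref{ass:local_lipschitz}\ref{it:growth_general}, observe $\ev M_2^2(\mu_t) = \ev|X_t|^2$, and close via BDG and Gronwall. The same bound applies uniformly to the Picard iterates $(X^{(n)})$: start from $X^{(0)} = \xi$ and, given $X^{(n)}$ with $\bb{G}$-adapted flow $\mu^{(n)}_t = \L(X^{(n)}_t \mid \cal{G}_T)$, take $X^{(n+1)}$ to be the unique strong solution of \eqref{eq:local_lipschitz} with $\mu^{(n)}$ frozen in for $\mu$ (well-posed pathwise since for each $\omega$ the coefficients are Lipschitz in $x$ with random but finite constant).

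To show $(X^{(n)})$ is Cauchy, the crucial observation is
\begin{equation*}
|b(t, X^{(n+1)}_t, \mu^{(n)}_t, g) - b(t, X^{(n)}_t, \mu^{(n-1)}_t, g)| \le C\bigl(1 + M_2^2(\mu^{(n-1)}_t)\bigr)\bigl(|X^{(n+1)}_t - X^{(n)}_t| + W_2(\mu^{(n)}_t, \mu^{(n-1)}_t)\bigr),
\end{equation*}
using the min in Assumption \ref{ass:local_lipschitz}\ref{it:continuity_general} to drop $\mu^{(n)}$ from the Lipschitz constant (analogously for $\sigma, \sigma_0$). Introducing the $\bb{G}$-stopping times $\tau_R = \inf\{t \ge 0 : M_2^2(\mu^{(n-1)}_t) \ge R\} \land T$ and combining $W_2^2(\mu^{(n)}_s, \mu^{(n-1)}_s) \le \ev\bigl[|X^{(n)}_s - X^{(n-1)}_s|^2 \mid \cal{G}_T\bigr]$ with a standard It\^o--BDG--Gronwall estimate yields
\begin{equation*}
\ev \sup_{0 \le s \le t}\bigl|X^{(n+1)}_{s \land \tau_R} - X^{(n)}_{s \land \tau_R}\bigr|^2 \le C_R \int_0^t \ev \sup_{0 \le u \le s}\bigl|X^{(n)}_{u \land \tau_R} - X^{(n-1)}_{u \land \tau_R}\bigr|^2 \, \d s.
\end{equation*}
Iterating gives summable bounds, while Markov's inequality and the a priori estimate produce $\pr(\tau_R < T) \le C_0/R$ uniformly in $n$, so letting $R \to \infty$ upgrades the result to convergence in probability, which under the uniform $L^2$ bound refines to $L^2$-convergence uniformly on $[0, T]$. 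The limit satisfies \eqref{eq:local_lipschitz}. Uniqueness follows by the same localized argument applied to the difference of two solutions, once again using the min to eliminate one measure from the Lipschitz constant.

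When $b$ does not depend on $g$, pathwise uniqueness gives a measurable representation $X_t = F_t(\xi, W, W^0)$ (readable off the Picard scheme), and the independence of $(\xi, W)$ from $\cal{G}_T$ yields $\L(X_t \mid \cal{G}_T) = \L(X_t \mid W^0)$. For stability, I would decompose the drift difference between the SDEs for $X^n$ and $X$ into three contributions: the vanishing $\epsilon_n$-mismatch in initial times (handled via continuity of $X$), the perturbation $\int_G b(s, X_s, \mu_s, g) \, \d(\Gamma_n - \Gamma)(s, g)$ (controlled by $\ev W_2^2(\Gamma_n, \Gamma) \to 0$ together with continuity of $b$ in $g$ and the uniform moment bound), and the Lipschitz difference, handled exactly as in the uniqueness proof via $\tau_R$-localization. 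The main technical obstacle is closing $L^2$ estimates with a Lipschitz constant of order $1 + M_2^2(\mu) \land M_2^2(\mu')$: squaring through the diffusion's BDG bound would naively produce quartic moments beyond what the hypotheses control. The minimum structure is essential because it lets one pin the Lipschitz constant to a single measure---the one with the smaller moment, or the previous iterate in Picard---so that, combined with stopping-time localization keyed on that single measure, the estimates close using only second-moment information.
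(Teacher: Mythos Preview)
Your uniqueness, conditional-law identification, and stability sketches are fine and track the paper's arguments closely (the paper localises with the stopping time $\varrho_k = \inf\{t : M_2^2(\mu_t) \ge k\}$ based on the \emph{limit} measure, exactly as you propose). The existence argument, however, has a genuine gap.

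The issue is that your localising time $\tau_R = \inf\{t : M_2^2(\mu^{(n-1)}_t) \ge R\}$ depends on the iteration index. The displayed inequality
\[
\ev \sup_{0 \le s \le t}\bigl|X^{(n+1)}_{s \land \tau_R} - X^{(n)}_{s \land \tau_R}\bigr|^2 \le C_R \int_0^t \ev \sup_{0 \le u \le s}\bigl|X^{(n)}_{u \land \tau_R} - X^{(n-1)}_{u \land \tau_R}\bigr|^2 \, \d s
\]
has the same $\tau_R$ on both sides, but when you pass to the next level down the min structure forces you to pick $M_2^2(\mu^{(n-2)}_t)$ in the Lipschitz constant, hence to localise with $\tau_R^{(n-2)}$. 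The right-hand side above is therefore \emph{not} the quantity bounded at the previous step, and the chain does not telescope. Replacing $\tau_R$ by $\inf_m \tau_R^{(m)}$ does not help: the uniform a priori bound gives $\pr(\tau_R^{(m)} < T) \le C_0/R$ for each $m$, but not $\pr(\inf_m \tau_R^{(m)} < T) \to 0$. You correctly identify that squaring through BDG would produce uncontrolled quartic moments, but localisation keyed to a \emph{single} iterate does not resolve this inside a Picard scheme.

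The paper avoids the difficulty by a different construction: it truncates the measure argument via the projection $\pi_n^{\ast}(\mu) = \L\bigl((\frac{n}{\lVert X\rVert_{L^2}} \land 1) X\bigr)$, which is $1$-Lipschitz for $W_2$ and maps into $\{M_2 \le n\}$. Setting $b^n(t,x,\mu,g) = b(t,x,\pi_n^{\ast}\mu,g)$ (and similarly for $\sigma, \sigma_0$) yields globally Lipschitz coefficients with constant $C(1+2n)$, so standard McKean--Vlasov theory furnishes a unique solution $\tilde X^n$. These solutions agree up to $\varrho_n = \inf\{t : M_2^2(\tilde\mu^n_t) \ge n\}$, the a priori bound forces $\varrho_n \uparrow \infty$, and patching gives the solution. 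The point is that truncating the \emph{coefficient} produces a single well-posed problem at each level, rather than an $n$-indexed family of stopping times that must be reconciled.
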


In the statement of the proposition starting $X^n$ from $\xi$ at time $\epsilon_n$ means that $X^n$ solves the SDE \eqref{eq:local_lipschitz} on the interval $[\epsilon_n, T]$ with initial condition $X^n_{\epsilon_n} = \xi$. We extend $X^n$ to the whole interval $[0, T]$ by setting $X^n_t = \xi$ for $t \in [0, \epsilon_n)$. Here for a continuous function $h \define [0, T] \to \R^{d_X}$ we denote by $\lvert h\rvert^{\ast}_T$ the running supremum $\sup_{0 \leq t \leq T} \lvert h_t\rvert$.

\begin{proof}[Proof of Proposition \ref{prop:local_lipschitz}]
\textit{Existence and uniqueness}: Our proof strategy is as follows: we introduce a sequence of nonlinearities $b^n$, $\sigma^n$, and $\sigma_0^n$, $n \geq 1$, which coincide with $b$, $\sigma$, and $\sigma_0$ for elements $\mu \in \P^2(\R^{d_X})$ with $M_2(\mu) \leq n$ and are uniformly Lipschitz continuous in $x$ and $\mu$. Then, we replace the coefficients of the McKean--Vlasov SDE \eqref{eq:local_lipschitz} by $b^n$, $\sigma^n$, and $\sigma_0^n$ and obtain a unique strong solution $\tilde{X}^n$. Here we use the same notion of strong solution as for McKean--Vlasov SDE \eqref{eq:local_lipschitz}. Clearly, up to the first time $\varrho_n$ at which the square of the second moment of $\L(\tilde{X}^n_t \vert \cal{G}_T)$ exceeds $n$, $X^n$ solves the original SDE \eqref{eq:local_lipschitz}. Here we use that the conditional law $\L(\tilde{X}^n_{t \land \varrho_n} \vert \cal{G}_T)$ of the process stopped at $\varrho_n$   coincides with the conditional law $\L(\tilde{X}^n_s \vert \cal{G}_T)\vert_{s = t \land \varrho_n}$ stopped at $\varrho_n$ because $\varrho_n$ is a $\bb{G}$-stopping. Hence, setting $X$ equal to $\tilde{X}^n$ on the interval $[0, \varrho_n]$ yields the desired solution. Let us expand on this sketch.

Let $B_n(0)$ denote the centered ball of radius $n$ in the space $L^2(\Omega, \F, \pr; \R^{d_X})$ and let $\pi_n$ be the projection onto $B_n(0)$, that is
\begin{equation*}
    \pi_n(X) = \biggl(\frac{n}{\lVert X \rVert_{L^2}} \land 1\biggr) X \quad \text{for } X \in L^2(\Omega, \F, \pr; \R^{d_X}).
\end{equation*}
The map $\pi_n$ is $1$-Lipschitz continuous and we can push it forward to $\P^2(\R^{d_X})$ through the map $L^2(\Omega, \F, \pr; \R^{d_X}) \to \P^2(\R^{d_X})$, $X \mapsto \L(X)$. That is, we define $\pi^{\ast}_n \define \P^2(\R^{d_X}) \to \P^2(\R^{d_X})$ by $\pi^{\ast}_n(\mu) = \L(\pi_n(X))$, where $X$ is a random variable in $L^2(\Omega, \F, \pr; \R^{d_X})$ with law $\mu$. Note that this map is well-defined, since $\L(\pi_n(X)) = \L(\pi_n(Y))$ whenever $X$, $Y \in L^2(\Omega, \F, \pr; \R^{d_X})$ have the same law and it inherits the $1$-Lipschitz continuity from $\pi_n$. Indeed, let $\mu$ and $\nu \in \P^2(\R^{d_X})$ and choose random variables $X$ and $Y$ with laws $\mu$ and $\nu$, respectively, such that $W_2^2(\mu, \nu) = \ev\lvert X - Y\rvert^2$. Then we have
\begin{equation*}
    W_2^2(\pi^{\ast}_n \mu, \pi^{\ast}_n\nu) \leq \ev \lvert \pi_n(X) - \pi_n(Y)\rvert^2 \leq \ev\lvert X - Y\rvert^2 = W_2^2(\mu, \nu).
\end{equation*}
Now, we set $b^n(t, x, \mu, g) = b(t, x, \pi^{\ast}_n \mu, g)$ and similarly define $\sigma^n$ and $\sigma_0^n$. Then it follows from Assumption \ref{ass:local_lipschitz} \ref{it:continuity_general} that
\begin{align*}
    \lvert b^n(t, x_1, \mu_1, g)& - b^n(t, x_2, \mu_2, g)\rvert \\
    &= \bigl\lvert b(t, x_1, \pi^{\ast}_n \mu_1, g) - b(t, x_2, \pi^{\ast}_n \mu_2, g)\bigr\rvert \\
    &\leq C_b \bigl(1 + M_2^2(\pi^{\ast}_n \mu_1) \lor M_2^2(\pi^{\ast}_n \mu_2)\bigr)\bigl(\lvert x_1 - x_2 \rvert + W_2(\pi^{\ast}_n \mu_1, \pi^{\ast}_n\mu_2)\bigr) \\
    &\leq C_b (1 + n) (\lvert x_1 - x_2 \rvert + W_2(\mu_1, \mu_2))
\end{align*}
with obvious modifications for $\sigma^n$ and $\sigma_0^n$. Moreover, all three coefficients satisfy the linear growth condition stated in Assumption \ref{ass:local_lipschitz} \ref{it:growth_general}, so that the McKean--Vlasov SDE 
\begin{equation*}
    \d \tilde{X}^n_t = \int_G b^n(t, \tilde{X}^n_t, \tilde{\mu}^n_t, g) \, \d \Gamma(t, g) + \sigma^n(t, \tilde{X}^n_t, \tilde{\mu}^n_t) \, \d W_t + \sigma_0^n(t, \tilde{X}^n_t, \tilde{\mu}^n_t) \, \d W^0_t,
\end{equation*}
started from $\tilde{X}^n_0 = \xi$ with $\tilde{\mu}^n_t = \L(\tilde{X}^n_t \vert \cal{G}_T)$ is in the standard Lipschitz regime. Consequently, it has a unique strong solution $\tilde{X}^n$ by \cite[Theorem A.3]{djete_mvoc_dpp_2022}.

Define $\varrho_n = \inf\{t > 0 \define M_2^2(\tilde{\mu}^n_t) \geq n\}$. Then $\varrho_n$ is a $\bb{G}$-stopping time, so that
\begin{equation*}
    \tilde{\mu}^n_{t \land \varrho_n} = \L(\tilde{X}^n_s \vert \cal{G}_T)\vert_{s = t \land \varrho_n} = \L(\tilde{X}^n_{t \land \varrho_n} \vert \cal{G}_T).
\end{equation*}
Consequently, if we set $Y^n = \tilde{X}^n_{\cdot \land \varrho_n}$ and $\nu^n = \tilde{\mu}^n_{\cdot \land \varrho_n}$, then for $t \in [0, \varrho_n]$ it holds that
\begin{equation*}
    Y^n_t = \int_{[0, t] \times G} b(s, Y^n_s, \nu^n_s, g) \, \d \Gamma(s, g) + \int_0^t \sigma(s, Y^n_s, \nu^n_s) \, \d W_s + \int_0^t \sigma_0(s, Y^n_s, \nu^n_s) \, \d W^0_s
\end{equation*}
with $\nu^n_t = \L(Y^n \vert \cal{G}_T)$. Hence, $Y^n$ is a solution to the McKean--Vlasov SDE \eqref{eq:local_lipschitz} on the interval $[0, \varrho_n]$. Since $Y^n$ is unique, we have that $Y^n = Y^m$ on $[0, \varrho_m]$ and $\varrho_m \leq \varrho_n$ whenever $n \geq m$. This allows us to define the process $X$ by $X_t = Y^n_t$ if $t \in [0, \varrho_n]$. Clearly, $X$ solves SDE \eqref{eq:local_lipschitz} up to $\varrho = \lim_{n \to \infty} \varrho_n$. However, $\varrho_n$ is the first time that $\mu_t = \L(X_t \vert \cal{G}_T)$ is equal to or greater than $n$ and a simple Gr\"onwall estimate, which relies on the linear growth condition from Assumption \ref{ass:local_lipschitz} \ref{it:growth_general} implies that $\ev \sup_{0 \leq s \leq t \land \varrho} M_2^2(\mu_s) < \infty$ for any $t \geq 0$. Now assume that $\pr(\varrho < \infty) > 0$. Then, we can find a large enough $t \geq 0$ such that $p = \pr(\varrho \leq t) > 0$. Further, we can choose $n \geq 1$ with $pn > \ev \sup_{0 \leq s \leq t \land \varrho} M_2^2(\mu_s)$. But this leads to the contradiction
\begin{equation*}
    pn > \ev \sup_{0 \leq s \leq t \land \varrho} M_2^2(\mu_s) \geq \ev[\bf{1}_{\varrho_n \leq t} M_2^2(\mu_{\varrho_n})] \geq \pr(\varrho_n \leq t) n \geq \pr(\varrho \leq t) n \geq pn.
\end{equation*}
Hence, it must hold that $\varrho = \infty$ almost surely. 

Finally, we remark that uniqueness of $X$ immediately follows from the uniqueness of the $Y^n$ on the intervals $[0, \varrho_n]$.

$\bb{F}^{W^0}$-\textit{adaptedness}: In the case that $b$ does not depend on $g$, let us replace the filtration $\bb{G}$ by $\bb{F}^{W^0}$. Note that $\bb{F}^{W^0}$ verifies the same conditions as $\bb{G}$ outlined at the beginning of the section. Consequently, we can apply the existence and uniqueness result we just established to obtain a strong solution $(X^0, \mu^0)$ to McKean--Vlasov SDE \eqref{eq:local_lipschitz} with $\mu^0_t = \L(X^0_t \vert W^0)$. Now, if we can prove that $\mu^0_t = \L(X^0_t \vert \cal{G}_T)$, then $(X^0, \mu^0)$ also solves the McKean--Vlasov SDE \eqref{eq:local_lipschitz} for the filtration $\bb{G}$, so by the pathwise uniqueness we must have $(X, \mu) = (X^0, \mu^0)$, which implies that $\mu_t = \mu^0_t = \L(X^0_t \vert W^0) = \L(X_t \vert W^0)$ as required. To show $\mu^0_t = \L(X^0_t \vert \cal{G}_T)$, we use that $X^0$ is a strong solution to McKean--Vlasov SDE \eqref{eq:local_lipschitz} so that there exists a measurable function $S \define \R^{d_X} \times C([0, T]) \times C([0, T]) \to \R^{d_X}$ with $X^0_t = S(X_0, W, W^0)$. Since $(X_0, W) \perp \cal{G}_T$ and $W^0$ is $\cal{G}_T$-measurable we have for any bounded and measurable map $\varphi \define \R^{d_X} \to \R$ that
\begin{align*}
    \ev[\varphi(X^0_t) \vert \cal{G}_T] &= \ev[\varphi(S(X_0, W, W^0) \vert \cal{G}_T] \\
    &= \int_{\R^{d_X} \times C([0, T])} \ev[\varphi(S(x, w, W^0) \vert \cal{G}_T] \, \d \L(X_0, W)(x, w) \\
    &= \int_{\R^{d_X} \times C([0, T])} \ev[\varphi(S(x, w, W^0) \vert W^0] \, \d \L(X_0, W)(x, w) \\
    &= \ev[\varphi(S(X_0, W, W^0) \vert W^0] \\
    &= \ev[\varphi(X^0_t) \vert W^0].
\end{align*}
This readily implies $\L(X^0_t \vert \cal{G}_T) = \L(X^0_t \vert W^0) = \mu^0_t$.

\textit{Stability}: Let $(X^n)_n$ be as in the statement of the proposition. Note that the family $\ev(\lvert X^n \rvert^{\ast}_T)^2$, $n \geq 1$, is uniformly integrable as the same is true for $(M_2^2(\Gamma_n))_n$ by the $L^2$-convergence of the sequence $(\Gamma_n)_n$. Now, we proceed in two steps. First, we prove that the difference $\lvert X^n_t - X_t\rvert^2$ tends to zero in expectation. Then, combining this with the easily established tightness of $(X^n)_n$ on $C([0, T])$ we conclude that $\ev(\lvert X^n - X \rvert^{\ast}_T)^2 \to 0$.

For the first step, recall that $\varrho_k = \inf\{t > 0 \define M_2^2(\tilde{\mu}^k_t) \geq k\} = \inf\{t > 0 \define M_2^2(\mu_t) \geq k\}$. Now, using Assumption \ref{ass:local_lipschitz}, elementary SDE estimates show that
\begin{align*}
    \ev \lvert &X^n_{t \land \varrho_k} - X_{t \land \varrho_k}\rvert^2 \\
    &\leq C_k \int_0^t \ev \lvert X^n_{s \land \varrho_k} - X_{s \land \varrho_k}\rvert^2 \, \d s + 2 \ev \biggl\lvert \int_{[0 , t\land \varrho_k] \times G} b(s, X_s, \mu_s, g) \, \d (\Gamma_n - \Gamma)(s, g)\biggr\rvert^2 \\
    &\ \ \ + C \epsilon_n (1 + \ev(\lvert X^n \rvert^{\ast}_T)^2) + C \ev\biggl\lvert \int_{[0, \epsilon_n] \times G} \lvert g \rvert^2 \, \d \Gamma(t, g)\biggr\rvert^2,
\end{align*}
with constants $C$, $C_k > 0$ which do not depend on $n \geq 1$. The expressions in the third line capture the difference in starting time between $X^n$ and $X$, whereas the integrals in the second line bound the error between the two solutions as it unfolds over time. Let us define the quantities $M_n = C \epsilon_n (1 + \ev(\lvert X^n \rvert^{\ast}_T)^2) + C \ev\bigl\lvert \int_{[0, \epsilon_n] \times G} \lvert g \rvert^2 \, \d \Gamma(t, g)\bigr\rvert^2$ and $D^{n, k}_t = \bigl\lvert \int_{[0 , t\land \varrho_k] \times G} b(s, X_s, \mu_s, g) \, \d (\Gamma_n - \Gamma)(s, g)\bigr\rvert^2$. By Gr\"onwall's inequality we have
\begin{equation} \label{eq:bound_on_second_moment_with_k}
    \ev \lvert X^n_{t \land \varrho_k} - X_{t \land \varrho_k}\rvert^2 \leq C(M_n + \ev D^{n, k}_t) + C_k \int_0^t \ev D^{n, k}_s \, \d s,
\end{equation}
where we enlarge $C_k$ if necessary. Note that the integral term appears because $s \mapsto \ev D^{n, k}_s$ is not necessarily nondecreasing. Clearly, $M_n$ tends to zero as $n \to \infty$. Moreover, since $\Gamma^n$ converges to $\Gamma$ on $\bb{M}_T^2(G)$ in $L^2$, it follows from Lemma \ref{lem:mgale_integral_converges} that $D^{n, k}_t \to 0$ in probability. % Paper Reference Lemma {lem:mgale_integral_converges}
Moreover, we have that
\begin{equation*}
    D^{n, k}_s \leq C \bigl(1 + (\lvert X \rvert^{\ast}_T)^2 + \ev[(\lvert X \rvert^{\ast}_T)^2 \vert \cal{G}_T]\bigr) + C \biggl\lvert \int_{[0, T] \times G} \lvert g \rvert^2 \, \d (\Gamma_n + \Gamma)(s, g)\biggr\rvert^2,
\end{equation*}
and the expression on the right-hand side is uniformly integrable in $n 
\geq 1$ by the uniform square-integrability of the family $(\Gamma_n)_n$. Thus, Vitali's convergence theorem implies that the expression on the right-hand side of Equation \eqref{eq:bound_on_second_moment_with_k} converges to zero as $n \to \infty$, whence $\lim_{n \to \infty}\ev \lvert X^n_{t \land \varrho_k} - X_{t \land \varrho_k}\rvert^2 = 0$. 

From this, we can deduce that $\lvert X^n_t - X_t \rvert^2$ tends to zero in probability. Indeed, fix $\delta$, $\epsilon > 0$ and choose $k \geq 1$ such that $\pr(\varrho_k \leq t) \leq \epsilon/2$ and then $N \geq 1$ large enough such that $\ev \lvert X^n_{t \land \varrho_k} - X_{t \land \varrho_k}\rvert^2 \leq \delta \epsilon/2$ for $n \geq N$. Then, we have that
\begin{align*}
    \pr(\lvert X^n_t - X_t \rvert^2 > \delta) &\leq \pr\bigl(\lvert X^n_{t \land \varrho_k} - X_{t \land \varrho_k}\rvert^2 > \delta\bigr) + \pr(\varrho_k \leq t) \\
    &\leq \frac{1}{\delta} \ev \lvert X^n_{t \land \varrho_k} - X_{t \land \varrho_k}\rvert^2 + \frac{\epsilon}{2} = \epsilon
\end{align*}
for $n \geq N$. Since $\delta$ and $\epsilon$ were arbitrary this shows $\lvert X^n_t - X_t \rvert^2$ converges to zero in probability. Finally, we know that the variables $(\lvert X^n \rvert^{\ast}_T)^2$, $n \geq 1$, are uniformly integrable, so by Vitali's convergence theorem we conclude that $\ev \lvert X^n_t - X_t \rvert^2 \to 0$ as $n \to \infty$.

Next, we strengthen the pointwise $L^2$-convergence to convergence in $L^2$-$\sup$. Since the diffusion coefficients are bounded and the sequence $(\Gamma_n)_n$ is uniformly square-integrable, it follows easily from Kolmogorov's tightness criterion that the family $(X^n)_n$ is tight on $C([0, T])$. Consequently, for any $\delta > 0$ we can find a modulus of continuity $\omega_{\delta} \define [0, \infty) \to [0, \infty)$, such that if we let $\cal{K}_{\delta}$ be the set of function in $C([0, T])$ with modulus $\omega_{\delta}$, it holds that $\pr(X^n \in \cal{K}_{\delta}) \geq 1 - \delta$ and $\pr(X \in \cal{K}_{\delta}) \geq 1 - \delta$. Next, fix an $\epsilon > 0$. Since the sequence $(\lvert X^n \rvert^{\ast}_T)^2$, $n \geq 1$ is uniformly integrable, we can pick $\delta > 0$ sufficiently small such that
\begin{equation} \label{eq:unif_int_est_epsilon}
    \ev\bigl[\bf{1}_{X^n \notin \cal{K}_{\delta}} (\lvert X^n \rvert^{\ast}_T)^2 + \bf{1}_{X \notin \cal{K}_{\delta}} (\lvert X \rvert^{\ast}_T)^2\bigr] \leq \frac{\epsilon}{3}
\end{equation}
for all $n$ larger than some appropriately chosen $N \geq 1$. Finally, let us choose $\epsilon_0 > 0$ with the property that $\omega_{\delta}(\epsilon_0) \leq \sqrt{\epsilon/6}$ and define $t_k = k \epsilon_0$ for $k = 1$,~\ldots, $T/\epsilon_0$ (where we assume for simplicity that $T/\epsilon_0$ is an integer). Then, increasing $N$ if necessary, we have that $\ev \lvert X^n_{t_k} - X_{t_k}\rvert^2 \leq \frac{\epsilon \epsilon_0}{6T}$ for $k = 1$,~\ldots, $T/\epsilon_0$ and all $n \geq N$ by the pointwise $L^2$-convergence. Now, on the set $\{X^n, X \in \cal{K}_{\delta}\}$, it holds that
\begin{equation*}
    (\lvert X^n - X \rvert^{\ast}_T) \leq \biggl(\sup_{1 \leq k \leq T/\epsilon_0} \lvert X^n_{t_k} - X_{t_k}\rvert + \frac{\sqrt{\epsilon}}{\sqrt{6}} \biggr)^2 \leq 2 \sup_{1 \leq k \leq T/\epsilon_0} \lvert X^n_{t_k} - X_{t_k}\rvert^2 + \frac{\epsilon}{3}.
\end{equation*}
Hence, we see that for all $n \geq N$,
\begin{align*}
    \ev(\lvert X^n - X \rvert^{\ast}_T)^2 &\leq \ev\bigl[\bf{1}_{\{X^n, X \in \cal{K}_{\delta}\}}(\lvert X^n - X \rvert^{\ast}_T)^2\bigr] + \ev\bigl[\bf{1}_{\{X^n, X \notin \cal{K}_{\delta}\}}(\lvert X^n - X \rvert^{\ast}_T)^2\bigr] \\
    &\leq 2\ev \sup_{1 \leq k \leq T/\epsilon_0} \lvert X^n_{t_k} - X_{t_k}\rvert^2 + \frac{\epsilon}{3} + \frac{\epsilon}{3} \\
    &\leq 2\sum_{k = 1}^{T/\epsilon_0} \ev\lvert X^n_{t_k} - X_{t_k}\rvert^2 + \frac{2\epsilon}{3} = \epsilon,
\end{align*}
where we applied the bound from \eqref{eq:unif_int_est_epsilon} in the second equality. This implies the desired convergence of $\ev(\lvert X^n - X \rvert^{\ast}_T)^2$ to zero.
\end{proof}

\section{Uniqueness for Linear Stochastic Fokker--Planck Equation with Random Coefficients} \label{app:spde_uniqueness}

Let $(\Omega, \F, \bb{F}, \pr)$ be a filtered probability space which supports an $\bb{F}$-Brownian motion $B = (B_t)_{t \in [0, T]}$. We consider the linear stochastic Fokker--Planck equation
\begin{equation} \label{eq:spde}
    \d \langle \nu_t , \varphi\rangle = \langle \nu_t, \L \varphi(t, \cdot) \rangle \, \d t + \langle \nu_t, \sigma_t \partial_x \varphi\rangle \, \d B_t
\end{equation}
for $\varphi \in C_b^2(\R)$, with initial condition $\nu_0 \in \P^2(\R)$. The random differential operator $\L$ acts on $\varphi \in C_b^2(\R)$ by
\begin{equation*}
    \L\varphi(t, x) =  - \lambda_t(x) \varphi(x) + \bigl(b^0_t(x) + b^1_t(x)\bigr) \partial_x \varphi(x) + a_t(x) \partial_x^2 \varphi(x)
\end{equation*}
for $\bb{F}$-progressively measurable random functions $\lambda \define [0, T] \times \Omega \times \R \to [0, \infty)$ and $b^0$, $b^1$, $a$, and $\sigma \define [0, T] \times \Omega \times \R \to \R$. We make the following assumptions.

\begin{assumption} \label{ass:spde}
Let $\lambda \define [0, T] \times \Omega \times \R \to [0, \infty)$ and $b^0$, $b^1$, $a$, and $\sigma \define [0, T] \times \Omega \times \R \to \R$ be $\bb{F}$-progressively measurable. We assume that there exist $c$, $C > 0$ and a square-integrable random variable $\zeta \geq 0$ such that
\begin{enumerate}[noitemsep, label = (\roman*)]
    \item the coefficients $b^1$, $a$, and $\sigma$ are bounded by $C$ and for all $t$, $\omega$, $x$ we have
    \begin{equation*}
        \lvert b^0_t(x)\rvert + \lvert \lambda_t(x)\rvert \leq C (\zeta + \lvert x\rvert);
    \end{equation*}
    \item for all $h \in \{\lambda, b^0, a\}$ and all $t$, $\omega$, $x$, $y$ we have
    \begin{equation*}
        \lvert h_t(x) - h_t(y) \rvert \leq C \lvert x - y\rvert;
    \end{equation*}
    \item \label{it:nondegenerate} for all $t$, $\omega$, $x$ we have $a_t(x) - \frac{\sigma_t(x)}{2} \geq c$.
\end{enumerate}
\end{assumption}

For notational convenience, we introduce the shorthand $b_t = b^0_t + b^1_t$. We shall establish a uniqueness result for SPDE \eqref{eq:spde} in the class of subprobability measure-valued solutions. A \textit{subprobability measure-valued solution} is any $\bb{F}$-progressively measurable $C([0, T]; \cal{M}_{\leq 1}^2(\R))$-valued process $\nu = (\nu_t)_{t \in [0, T]}$ with $\ev \sup_{0 \leq t \leq t} M_2^2(\nu_t) < \infty$, which satisfies SPDE \eqref{eq:spde}. Here for $p \geq 1$ and any measure $m$ in $\R$, we define $M_p(m) = (\int_{\R} \lvert x\rvert^p \, \d \mu(x))^{1/p}$.  

\begin{theorem} \label{thm:uniqueness_spde}
Let Assumption \ref{ass:spde} be satisfied. Then, for a given initial condition in $\P^2(\R)$, SPDE \eqref{eq:spde} has a unique subprobability measure-valued solution.
\end{theorem}

The idea of the proof of Theorem \ref{thm:uniqueness_spde} is to use suitable $L^2$-energy estimates for SPDE \eqref{eq:spde}. This strategy cannot be directly implemented since the initial condition $\nu_0 \in \P^2(\R)$ is not necessarily an element of $L^2(\R)$. So instead, we derive energy estimate for the cumulative distribution function $F_t(x) = \nu_t((-\infty, x]))$ of a solution $\nu = (\nu_t)_{0 \in [0, T]}$. However, while $F_t$ is a function as opposed to a measure, it is only locally square-integrable since it does not decay at infinity. Thus, we will proceed via energy estimates in weighted $L^2$-spaces, which we introduce next. 

For $\eta > \R$, we define the weight $w_{\eta}(x) = \exp(-\eta \sqrt{1 + \lvert x\rvert^2})$ and introduce the Hilbert space $L^2_{\eta}(\R)$ consisting of all measurable functions $f \define \R \to \R$ such that
\begin{equation*}
    \lVert f \rVert_{\eta} = \biggl(\int_{\R} \lvert f(x)\rvert^2 w_{\eta}(x) \, \d x\biggr)^{1/2} < \infty.
\end{equation*}
We equip $L_{\eta}^2(\R)$ with the inner product
\begin{equation*}
    (f, g) \mapsto \langle f, g\rangle_{\eta} = \int_{\R} f(x) g(x) w_{\eta}(x) \, \d x.
\end{equation*}
Next, let $g_{\epsilon} \define \R \to \R$ be the heat kernel, given by $g_{\epsilon} = (2\pi \epsilon)^{-1/2} e^{- x^2/(2\epsilon)}$, and define the operator $T_{\epsilon}$ which acts on finite signed measures $m$ on $\R$ by $T_{\epsilon} m(x) = \int_{\R} g_{\epsilon}(x - y) \, \d m(y)$ for $x \in \R$. Its action on measurable functions $f \define \R \to \R$ is analogously defined by $T_{\epsilon} f(x) = \int_{\R} g_{\epsilon}(x - y) f(y) \, \d y$ whenever the integral is defined for all $x \in \R$. Typically, $f$ will be an element of the space $L_{\eta}^2(\R)$, in which case the integral exists. Lastly, for a finite signed measure $m$ on $\R$, we denote by $\lvert m\rvert$ the variation of $m$ and by $\lVert m\rVert_{\textup{TV}}$ its total variation, i.e.\@ $\lVert m\rVert_{\textup{TV}} = \lvert m\rvert(\R)$. 

Let us now collect some estimates which will be useful for the proof of Theorem \ref{thm:uniqueness_spde}.

\begin{lemma} \label{lem:estimates}
Let Assumption \ref{ass:spde} be satisfied. Let $m$ be a finite signed measure on $\R$ with $M_1(\lvert m\rvert) < \infty$ and define $F$ by $F(x) = m((-\infty, x])$ for $x \in \R$. Set $\Delta^{h, \epsilon}_t(x) = T_{\epsilon}(h_t m)(x) - h_t(x) T_{\epsilon} m(x)$ for $h \in \{b, \sigma\}$ and $\tilde{\Delta}^{a, \epsilon}_t(x) = \partial_x T_{\epsilon}(a_t m)(x) - a_t(x) \partial_x T_{\epsilon} m(x)$. Then, for all $\eta > 0$,
\begin{enumerate}[noitemsep, label = (\roman*)]
    \item \label{it:lambda} it holds that
    \begin{align*}
        \int_{\R} T_{\epsilon}F(x) \lambda_t(x) \bigl\langle m, G_{\epsilon}(x, \cdot) - 1\bigr)\bigr\rangle w_{\eta}(x) \, \d x &= \bigl\lVert \sqrt{\lambda_t }T_{\epsilon}F\bigr\rVert_{\eta}^2, \\
        \biggl\lvert \int_{\R} T_{\epsilon} F(x) \Bigl\langle m, \bigl(\lambda_t - \lambda_t(x)\bigr)\bigl(G_{\epsilon}(x, \cdot) - 1\bigr)\Bigr\rangle w_{\eta}(x) \, \d x \biggr\rvert &\leq C_{\lambda} \epsilon \lVert m \rVert_{\textup{TV}}^2 \biggl(\int_{\R} w_{\eta}(x) \, \d x\biggr);
    \end{align*}
    \item \label{it:b} it holds that
    \begin{align*}
        \biggl\lvert \int_{\R} T_{\epsilon} F(x) T_{\epsilon}m(x) b_t(x) w_{\eta}(x) \, \d x \biggr\rvert &\leq C_b \lVert m \rVert_{\textup{TV}} \Bigl((1 + \zeta)\lVert m \rVert_{\textup{TV}} + M_1(\lvert m\rvert)\Bigr), \\
        \biggl\lvert \int_{\R} T_{\epsilon} F(x) \Delta^{b, \epsilon}_t(x) w_{\eta}(x) \, \d x \biggr\rvert &\leq C_b (2 + \epsilon) \lVert T_{\epsilon}F\lVert_{\eta} \lVert T_{\epsilon}\lvert m\rvert\lVert_{\eta};
    \end{align*}
    \item \label{it:a} it holds that
    \begin{align*}
        \int_{\R} T_{\epsilon} F(x) \partial_x T_{\epsilon}m(x) a_t(x) w_{\eta}(x) \, \d x &= - \int_{\R} T_{\epsilon} F(x) T_{\epsilon}m(x) \partial_x\bigl(a_t(x) w_{\eta}(x)\bigr) \, \d x \\
        &\ \ \ - \bigl\lVert \sqrt{a_t} T_{\epsilon}m\bigr\rVert_{\eta}^2, \\
        \biggl\lvert\int_{\R} T_{\epsilon} F(x) T_{\epsilon}m(x) \partial_x\bigl(a_t(x) w_{\eta}(x)\bigr) \, \d x \biggr\rvert &\leq C_a (1 + \eta) \lVert T_{\epsilon} F\rVert_{\eta} \lVert T_{\epsilon} \lvert m\rvert \rVert_{\eta}, \\
        \biggl\lvert \int_{\R} T_{\epsilon} F(x) \tilde{\Delta}^{a, \epsilon}_t(x) w_{\eta}(x) \, \d x\biggr\rvert &\leq C_a \epsilon \lVert T_{\epsilon} F\rVert_{\eta} \lVert T_{\epsilon} \lvert m\rvert \rVert_{\eta} ;
    \end{align*}
    \item \label{it:sigma} for all $\delta > 0$ it holds that
    \begin{align*}
        \lVert T_{\epsilon} (\sigma_t m)\rVert_{\eta}^2 &= \lVert \sigma_t T_{\epsilon} m\rVert_{\eta}^2 + 2\bigl\langle \sigma_t T_{\epsilon} m, \Delta^{\sigma, \epsilon}_t\bigr\rangle_{\eta} + \bigl\lVert \Delta^{\sigma, \epsilon}_t\bigr\rVert_{\eta}^2, \\
        \biggl\lvert 2\bigl\langle \sigma_t T_{\epsilon} m, \Delta^{\sigma, \epsilon}_t\bigr\rangle_{\eta} + \bigl\lVert \Delta^{\sigma, \epsilon}_t\bigr\rVert_{\eta}^2\biggr\rvert &\leq C_{\sigma}^2 \delta \lVert T_{\epsilon} m\rVert_{\eta}^2 + \frac{C_a \epsilon (1 + \delta)}{\delta} \lVert T_{\epsilon} F\rVert_{\eta} \lVert T_{\epsilon} \lvert m\rvert \rVert_{\eta}.
    \end{align*}
\end{enumerate}
\end{lemma}

We can now proceed to the proof of the uniqueness result.

\begin{proof}[Proof of Theorem \ref{thm:uniqueness_spde}]
Let $\nu^i = (\nu^i_t)_{t \in [0, T]}$, $i = 1$, $2$ be two subprobability measure-valued solutions of SPDE \eqref{eq:spde} with the same initial condition. Fix $\nu \in \{\nu^1, \nu^2, \nu^1 - \nu^2\}$ and set $F^{\epsilon}_t = T_{\epsilon} F_t$ and $\nu^{\epsilon}_t = T_{\epsilon} \nu_t$. We will derive an equation for $F^{\epsilon}_t$ from SPDE \eqref{eq:spde} (since SPDE \eqref{eq:spde} is linear, $\nu^1 - \nu^2$ also solves this equation). To that end, we define the function $G_{\epsilon} \define \R^2 \to \R$ by
\begin{equation*}
    G_{\epsilon}(x, y) = \int_{(-\infty, y]} g_{\epsilon}(x - z) \, \d z
\end{equation*}
for $(x, y) \in \R^2$. Then integration by parts yields
\begin{equation*}
    F^{\epsilon}_t(x) = \int_{\R} F_t(y) g_{\epsilon}(x - y) \, \d y = \nu_t(\R) - \int_{\R} G_{\epsilon}(x, y) \, \d \nu_t(y).
\end{equation*}
Since $y \mapsto G_{\epsilon}(x, y)$ is an element of $C_b^2(\R)$ for every $x \in \R$, it follows from SPDE \eqref{eq:spde} that
\begin{align*}
    \d F^{\epsilon}_t(x) &= \d \nu_t(\R) - \bigl\langle \nu_t, \L G_{\epsilon}(x, \cdot)(t, \cdot) \bigr\rangle \, \d t - \bigl\langle \nu_t, \sigma_t \partial_y G_{\epsilon}(x, \cdot)\bigr\rangle \, \d B_t \\
    &= - \Bigl\langle \nu_t, -\lambda_t \bigl(G_{\epsilon}(x, \cdot) - 1\bigr) + b_t g_{\epsilon}(x - \cdot) + a_t \partial_y g_{\epsilon}(x - \cdot) \Bigr\rangle \, \d t \\
    &\ \ \ - \bigl\langle \nu_t, \sigma_t g_{\epsilon}(x - \cdot)\bigr\rangle \, \d B_t.
\end{align*}
Here we used in the second equality that $\nu_t(\R) = \nu_0(\R) - \int_0^t \langle \nu_s, \lambda_s\rangle \, \d s$, which follows from testing SPDE \eqref{eq:spde} with the function that is constantly equal to one. Hence, by It\^o's formula, we have
\begin{align*}
    \d \lvert F^{\epsilon}_t(x)\rvert^2 &= 2 F^{\epsilon}_t(x) \Bigl(\bigl\langle \nu_t, -\lambda_t \bigl(G_{\epsilon}(x, \cdot) - 1\bigr)\bigr\rangle - T_{\epsilon}(b_t\nu_t)(x) + \partial_x T_{\epsilon}(a_t\nu_t)(x)\Bigr) \, \d t \\
    &\ \ \ - \Bigl(2 F^{\epsilon}_t(x) T_{\epsilon}(\sigma_t\nu_t)(x)\Bigr) \, \d B_t + \frac{1}{2} \lvert T_{\epsilon}(\sigma_t\nu_t)(x)\rvert^2 \, \d t.
\end{align*}
We multiply both sides of this equation by the weight $w_{\eta}(x)$, integrate over $x \in \R$, and, finally, take expectation to make the stochastic integral term disappear. This yields
\begin{align*}
    \ev \lVert F^{\epsilon}_t\rVert_{\eta}^2 &= \lVert F^{\epsilon}_0\rVert_{\eta}^2 - 2 \ev \int_0^t \biggl(\int_{\R} F^{\epsilon}_s(x) \bigl\langle \nu_s, \lambda_s\bigl(G_{\epsilon}(x, \cdot) - 1\bigr)\bigr\rangle w_{\eta}(x) \, \d x\biggr) \, \d s \\
    &\ \ \ - 2 \ev \int_0^t \biggl(\int_{\R} F^{\epsilon}_s(x) T_{\epsilon}(b_s \nu_s)(x) w_{\eta}(x) \, \d x\biggr) \, \d s \\
    &\ \ \ + 2 \ev \int_0^t \biggl(\int_{\R} F^{\epsilon}_s(x) \partial_x T_{\epsilon}(a_s \nu_s)(x) w_{\eta}(x) \, \d x\biggr)\, \d s \\
    &\ \ \ + \ev\int_0^t \lVert T_{\epsilon}(\sigma_t \nu_t)\rVert_{\eta}^2.
\end{align*}
This equality can be rewritten as
\begin{align} \label{eq:fundamental_equality}
\begin{split}
    \ev \lVert F^{\epsilon}_t\rVert_{\eta}^2 &= \lVert F^{\epsilon}_0\rVert_{\eta}^2 - \ev \int_0^t \biggl(\bigl\lVert \sqrt{\lambda_s} F^{\epsilon}_s \bigr\rVert_{\eta}^2 + \int_{\R} b_s(x) F^{\epsilon}_s(x) \nu^{\epsilon}_s(x) w_{\eta}(x) \, \d x\biggr) \, \d s \\
    &\ \ \ - \ev \int_0^t \Bigl(2 \bigl\lVert \sqrt{a_s} \nu^{\epsilon}_s \bigr\rVert_{\eta}^2 - \bigl\lVert \sigma_s \nu^{\epsilon}_s \bigr\rVert_{\eta}^2\Bigr) \d s + \ev \int_0^t \cal{E}^{\epsilon}_s \, \d s
\end{split}
\end{align}
for an error term $\cal{E}^{\epsilon}_s$. For this error, Lemma \ref{lem:estimates} provides the estimate
\begin{align} \label{eq:error_est}
    \lvert \cal{E}^{\epsilon}_s\rvert &\leq C \epsilon \lVert \nu_s\rVert_{\textup{TV}}^2 + C \delta \lVert \nu^{\epsilon}_s\rVert_{\eta}^2 + \frac{C (\delta + \epsilon(1 + \delta))}{\delta} \lVert F^{\epsilon}_s\rVert_{\eta} \lVert T_{\epsilon} \lvert \nu_s\rvert \rVert_{\eta} \notag \\
    &\leq C \epsilon \lVert \nu_s\rVert_{\textup{TV}}^2 + C \delta \lVert \nu^{\epsilon}_s\rVert_{\eta}^2 + 2C \delta \lVert T_{\epsilon} \lvert \nu_s\rvert \rVert_{\eta}^2 + \frac{3C}{\delta^2} \lVert F^{\epsilon}_s\rVert_{\eta}^2
\end{align}
for all $\delta$, $\epsilon \in (0, 1)$ and a constant $C > 0$ independent of $\epsilon$ and $\delta$. Lemma \ref{lem:estimates} \eqref{it:b} further implies that
\begin{align} \label{eq:drift_est}
    \biggl\lvert \int_{\R} b_s(x) F^{\epsilon}_s(x) \nu^{\epsilon}_s(x) w_{\eta}(x) \, \d x\biggr\rvert &\leq C \lVert \nu_s\rVert_{\textup{TV}} \Bigl((1 + \zeta)\lVert \nu_s \rVert_{\textup{TV}} + M_1(\lvert \nu_s\rvert)\Bigr) \\
    &\leq 4 C \bigl(1 + \zeta + M_1(\lvert \nu_s\rvert)\bigr)
\end{align}

Now, in the case that $\nu$ is either of the subprobability measures $\nu^1$ or $\nu^2$, we have $T_{\epsilon} \lvert \nu_s\rvert = T_{\epsilon} \nu_s = \nu^{\epsilon}_s$. Then inserting the previous two inequalities into \eqref{eq:fundamental_equality} and using that $2 a_s(x) - \sigma_s^2(x) \geq 2c$ by Assumption \ref{ass:spde} \eqref{it:nondegenerate}, we find that
\begin{align*}
    \ev \lVert F^{\epsilon}_t\rVert_{\eta}^2 + (2c - 3C\delta) \ev \int_0^t \lVert \nu^{\epsilon}_s\rVert_{\eta}^2 \, \d s &\leq \lVert F^{\epsilon}_0\rVert_{\eta}^2 + \frac{3C}{\delta^2} \ev \int_0^t \lVert F^{\epsilon}_s\rVert_{\eta}^2 \, \d s \\
    &\ \ \ + 8 C \ev \int_0^t \bigl(1 + \zeta + M_1(\lvert \nu_s\rvert)\bigr) \, \d s.
\end{align*}
Now, choosing $\delta \in (0, 1)$ sufficiently small, a simple application of Gr\"onwall's inequality implies that
\begin{equation*}
     \sup_{0 \leq t \leq T} \ev \lVert F^{\epsilon}_t\rVert_{\eta}^2 + \ev \int_0^T \lVert \nu^{\epsilon}_t\rVert_{\eta}^2 \, \d s \leq C\biggl(\lVert F^{\epsilon}_0\rVert_{\eta}^2 + \ev \int_0^T \bigl(1 + \zeta + M_1(\lvert \nu_s\rvert)\bigr) \, \d t\biggr),
\end{equation*}
where we enlarge the constant $C$ if necessary. Since the right-hand side is bounded uniformly in $\epsilon \in (0, 1)$, letting $\epsilon \to 0$ implies that $\pr$-a.s.\@ the measure $\nu_t$ admits a density in $L_{\eta}^2(\R)$ (which for simplicity we denote by the same symbol $\nu_t$). 

Next, let us consider the situation that $\nu = \nu^1 - \nu^2$. The previous observation tells us that $\nu_t$ has a density (denoted by the same letter) in $L_{\eta}^2(\R)$ and that
\begin{equation*}
    \sup_{0 \leq t \leq T}\ev\lVert F_t \rVert_{\eta}^2 + \ev\int_0^T \lVert \nu_t \rVert_{\eta}^2 \, \d t < \infty.
\end{equation*}
Now let us revisit Equation \eqref{eq:fundamental_equality}. Owing to the bound on $F$ and $\nu$, we can pass to the limit as $\epsilon \to 0$ in this expression giving
\begin{align} \label{eq:fundamental_equality_2}
\begin{split}
    \ev \lVert F_t\rVert_{\eta}^2 &= - \ev \int_0^t \biggl(\bigl\lVert \sqrt{\lambda_s} F_s \bigr\rVert_{\eta}^2 + \int_{\R} b_s(x) F_s(x) \nu_s(x) w_{\eta}(x) \, \d x\biggr) \, \d s \\
    &\ \ \ - \ev \int_0^t \Bigl(2 \bigl\lVert \sqrt{a_s} \nu_s \bigr\rVert_{\eta}^2 - \bigl\lVert \sigma_s \nu_s \bigr\rVert_{\eta}^2\Bigr) \d s + \ev \int_0^t \cal{E}_s \, \d s,
\end{split}
\end{align}
where the error $\cal{E}_s$ satisfies the estimate
\begin{equation} \label{eq:error_est_2}
    \lvert \cal{E}_s\rvert \leq 3C\delta \lVert \nu_s\rVert_{\eta}^2 + \frac{3C}{\delta^2} \lVert F_s\rVert_{\eta}^2
\end{equation}
for all $\delta \in (0, 1)$. Note that unlike in \eqref{eq:error_est}, here the constant $C$ can be chosen independently of $\eta \in (0, 1)$ since the only estimates from Lemma \ref{lem:estimates} that we use in deriving \eqref{eq:error_est_2} and whose constants depend on $\eta$ are the second line of Lemma \ref{lem:estimates} \eqref{it:lambda} and the second line of Lemma \ref{lem:estimates} \eqref{it:a}. But the right-hand side of the second line of Lemma \ref{lem:estimates} \eqref{it:lambda} vanishes as $\epsilon \to 0$ and the factor $(1 + \eta)$ appearing in the second line of Lemma \ref{lem:estimates} \eqref{it:a} can simply be bounded by $2$ if $\eta \in (0, 1)$. Hence, proceeding as below \eqref{eq:drift_est}, we can show that
\begin{equation*}
    \sup_{0 \leq t \leq T} \ev \lVert F_t\rVert_{\eta}^2 + \ev \int_0^T \lVert \nu_t\rVert_{\eta}^2 \, \d s \leq C \ev \int_0^T \bigl(1 + \zeta + M_1(\lvert \nu_s\rvert)\bigr) \, \d t
\end{equation*}
for a possibly enlarged constant $C$ which does not depend on $\eta \in (0, 1)$. We can now take $\eta \to 0$, which in view of the monotone convergence theorem implies that 
\begin{equation} \label{eq:nonweighted_est}
    \sup_{0 \leq t \leq T} \ev \lVert F_t\rVert_{L^2}^2 + \ev \int_0^T \lVert \nu_t\rVert_{L^2}^2 \, \d s < \infty.
\end{equation}

Finally, we return to \eqref{eq:fundamental_equality_2}. Using \eqref{eq:nonweighted_est}, we can pass to the limit $\eta \to 0$ in \eqref{eq:fundamental_equality_2}. This implies
\begin{align} \label{eq:fundamental_equality_3}
\begin{split}
    \ev \lVert F_t\rVert_{L^2}^2 &= - \ev \int_0^t \biggl(\bigl\lVert \sqrt{\lambda_s} F_s \bigr\rVert_{L^2}^2 + \int_{\R} b_s(x) F_s(x) \nu_s(x) \, \d x\biggr) \, \d s \\
    &\ \ \ - \ev \int_0^t \Bigl(2 \bigl\lVert \sqrt{a_s} \nu_s \bigr\rVert_{L^2}^2 - \bigl\lVert \sigma_s \nu_s \bigr\rVert_{L^2}^2\Bigr) \d s + \ev \int_0^t \cal{E}_s \, \d s.
\end{split}
\end{align}
Similarly to \eqref{eq:error_est_2}, we have the bound $\lvert \cal{E}_s\rvert \leq 3C \delta \lVert \nu_s\rvert_{L^2}^2 + \frac{3C}{\delta^2} \lVert F_s\rVert_{L^2}^2$ on the error $\cal{E}_s$ in the above. This holds for all $\delta \in (0, 1)$ and a constant $C > 0$ independent of $\delta$. Next, we estimate
\begin{align} \label{eq:drift_est_alternative}
    \biggl\lvert \int_{\R} b_s(x) F_s(x) \nu_s(x) \, \d x\biggr\rvert &\leq \biggl\lvert \int_{\R} \partial_x \lvert F_s(x)\rvert^2 b^0_s(x) \, \d x\biggr\rvert + C_b \lVert F_s\rVert_{L^2} \lVert \nu_s\rVert_{L^2} \notag \\
    &\leq C_b \lVert F_s\rVert_{L^2} \bigl(\lVert F_s\rVert_{L^2} + \lVert \nu_s\rVert_{L^2}\bigr).
\end{align}
Here we used in the second step that with $b^{0, n}_s(x) = -n \lor b^0_s(x) \land n$, we have
\begin{align*}
    \int_{\R} \partial_x \lvert F_s(x)\rvert^2 b^0_s(x) \, \d x &= \lim_{n \to \infty} \int_{\R} \partial_x \lvert F_s(x)\rvert^2 b^{0, n}_s(x) \, \d x \\
    &= \lim_{n \to \infty} \biggl(\bigl(\lvert F_s(x)\rvert^2 b^{0, n}_s(x)\bigr)\big\vert_{-\infty}^{\infty} - \int_{\R} \lvert F_s(x)\rvert^2 \partial_x  b^{0, n}_s(x) \, \d x\biggr) \\
    &= -\int_{\R} \lvert F_s(x)\rvert^2 \partial_x b^0_s(x) \, \d x,
\end{align*}
where the third equality follows from the fact that $\lim_{\lvert x\rvert \to \infty} f(x) = 0$ for $f \in H^1(\R)$. Inserting \eqref{eq:drift_est_alternative} and the estimate on the error term into \eqref{eq:fundamental_equality_3} and then arguing similarly as below \eqref{eq:drift_est}, we can conclude that
\begin{equation*}
    \sup_{0 \leq t \leq T} \ev \lVert F_t\rVert_{L^2}^2 + \ev \int_0^T \lVert \nu_t\rVert_{L^2}^2 \, \d s \leq C \lVert F_0\rVert_{L^2}^2 = 0.
\end{equation*}
This completes the proof.
\end{proof}

\subsection{Technical Results for Section \ref{sec:ps}} \label{sec:appendix}

\begin{lemma} \label{lem:metr_space_emb}
Let $(E, d_E)$ be a complete separable metric space and let $\Phi$ be an injective map from some set $F$ to $E$ such that $\Phi(F)$ is closed. Define $d_F \define E \times E \to [0, \infty)$ by $d_F(x, y) = d_E(\Phi(x), \Phi(y))$ for $(x, y) \in E \times E$. Then $(F, d_F)$ is a complete separable metric space and the map $\Phi$ is an isometry onto $(E, d_E)$.
\end{lemma}

The proof of Lemma \ref{lem:metr_space_emb} is straightforward, so we do not provide it here. Let us recall that $\bf{M}^p = \cal{M}^p_{\leq 1}(\R)$ for $p \geq 1$.

\begin{proposition}[Subprobabilities form a Polish space] \label{prop:space_of_subprobs}
The space $\bf{M}^p$ endowed with the distance function $d_p$ defined in Equation \eqref{eq:subprob_metric} forms a complete separable metric space. Moreover, a family $\cal{K} \subset \bf{M}^p$ is precompact if and only if $\{v + (1 - v(\R))\delta_0 \define v \in \cal{K}\} \subset \P^p(\R)$ is precompact.
\end{proposition}

\begin{proof}
For the first statement, owing to Lemma \ref{lem:metr_space_emb} and the definition of $d_p$ in \eqref{eq:subprob_metric}, it is enough to show that the image of $\bf{M}^p$ under the map $\Phi \define \bf{M}^p \to \P^p(\R) \times [0, 1]$, $v \mapsto (v + (1 - v(\R))\delta_0, v(\R))$ is closed in $\P^p(\R) \times [0, 1]$. Fix a sequence $(v_n)_{n \geq 1}$ in $\bf{M}^p$ such that $(m_n, r_n) = \Phi(v_n)$ converges to some $(m, r) \in \P^p(\R) \times [0, 1]$. Then by the Portmanteau theorem, it holds that
\begin{equation*}
    r = \lim_{n \to \infty} r_n \leq \limsup_{n \to \infty} m_n(\{0\}) \leq m(\{0\}).
\end{equation*}
Hence, setting $v = m - r \delta_0$, we find that $\lim_{n \to \infty} (m_n, r_n) = (m, r) = (v + r\delta_0, r) = \Phi(v) \in \Phi(\bf{M}^p)$. Consequently, $\Phi(\bf{M}^p)$ is closed.

Next, let us suppose that $\cal{K} \subset \bf{M}^p$ is precompact. Then, since $\Phi$ is continuous by Lemma \ref{lem:metr_space_emb}, it follows that $\Phi(\bf{M}^p)$ is a precompact subset of $\P^p(\R) \times [0, 1]$. But $\{v + (1 - v(\R))\delta_0 \define v \in \cal{K}\}$ is simply the image of $\Phi(\bf{M}^p)$ under the projection of $\P^p(\R) \times [0, 1]$ onto its first component and is therefore also precompact. Conversely, assume that the set $\{v + (1 - v(\R))\delta_0 \define v \in \cal{K}\}$ is precompact and fix a sequence $(v_n)_{n \geq 1}$ in $\cal{K}$. By precompactness of $\{v + (1 - v(\R))\delta_0 \define v \in \cal{K}\}$, there exists a convergent subsequence $(v_{n_k} + (1 - v_{n_k}(\R))\delta_0)_{k \geq 1}$. Since $[0, 1]$ is compact, by selecting a further subsequence if necessary, we may assume that $(\Phi(v_{n_k})_{k \geq 1}$ converges to some $m = \Phi(v) \in \Phi(\bf{M}^p)$. Then, from the definition of $\Phi$, we get
\begin{equation*}
    d_p(v_{n_k}, v) = \Bigl(W_p\bigl(v_{n_k} + (1 - v_{n_k}(\R))\delta_0, v + (1 - v(\R))\delta_0\bigr) + \lvert v_{n_k}(\R) - v(\R)\rvert\Bigr) \to 0
\end{equation*}
as $k \to \infty$. Hence, $(v_n)_{n \geq 1}$ contains a convergent subsequence. Since $(v_n)_{n \geq 1} \subset \cal{K}$ was arbitrary, it follows that $\cal{K}$ is precompact, as required. 
\end{proof}

\begin{proposition}[Kantorovich--Rubinstein duality] \label{prop:krd}
It holds for all $v_1$, $v_2 \in \bf{M}^1$ that
\begin{equation*}
    d_1(v_1, v_2) = \sup_{\lvert \varphi(0)\rvert \lor \lVert \varphi\rVert_{\textup{Lip}} \leq 1}\langle v_1 - v_2, \varphi\rangle,
\end{equation*}
where the infimum runs over all Lipschitz functions $\varphi \define \R \to \R$ and $\lVert \varphi\rVert_{\textup{Lip}}$ denotes the Lipschitz constant of $\varphi$.
\end{proposition}

\begin{proof}
We set $m_i = v_i + (1 - v_i(\R))\delta_0$, $i = 1$, $2$, and compute
\begin{align*}
    \sup_{\lvert \varphi(0)\rvert \lor \lVert \varphi\rVert_{\textup{Lip}} \leq 1}\langle v_1 - v_2, \varphi\rangle &= \sup_{\lvert \varphi(0)\rvert \lor \lVert \varphi\rVert_{\textup{Lip}} \leq 1} \Bigl(\langle m_1 - m_2, \varphi\rangle + \varphi(0)\bigl(v_2(\R) - v_1(\R)\bigr)\Bigr) \\
    &= \sup_{\lVert \varphi\rVert_{\textup{Lip}} \leq 1} \langle m_1 - m_2, \varphi\rangle + \sup_{\lvert \varphi(0)\rvert \leq 1} \varphi(0)\bigl(v_2(\R) - v_1(\R)\bigr) \\
    &= W_1(m_1, m_2) + \lvert v_1(\R) - v_2(\R)\rvert \\
    &= d_1(v_1, v_2),
\end{align*}
where we used in the second equality that the expression $\langle m_1 - m_2, \varphi\rangle$ is invariant under translations of $\varphi$ since $m_1$ and $m_2$ have unit mass and in the third equality applied the Kantorovich--Rubinstein duality for probability measures.
\end{proof}

\begin{lemma}[Weak convergence and uniform square-integrability] \label{lem:weak_conv_upgrade}
Let $(\nu^n)_n$ be a sequence of random variables with values in $D_{\bf{M}^2}[0, T]$, equipped with the $J1$-topology. Then $(\nu^n)_n$ converges weakly in $D_{\bf{M}^2}[0, T]$ if and only if $(\nu^n)_n$ converges weakly in $D_{\bf{M}^1}[0, T]$ and
\begin{equation} \label{eq:uniform_square}
    \lim_{R \to \infty} \sup_{n \geq 1} \ev \sup_{t \in [0, T]} \int_{\R} \bf{1}_{\lvert x \rvert \geq R} \lvert x \rvert^2 \, \d \nu^n_t(x) = 0.
\end{equation}
\end{lemma}

\begin{proof}
We only prove that weak convergence in $D_{\bf{M}^1}[0, T]$ together with the uniform square-integrability condition \eqref{eq:uniform_square} implies weak convergence in $D_{\bf{M}^2}[0, T]$. The converse direction is simpler. By appealing to the Skorokhod representation theorem, we may assume that the sequence $(\nu^n)_n$ converges a.s.\@ to some $D_{\bf{M}^1}[0, T]$-valued random variable $\nu$. Our first goal is to show that $\nu$ is a.s.\@ an element of $D_{\bf{M}^2}[0, T]$. The uniform-square integrability of $(\nu^n)_n$ implies that a.s.\@ $\nu_t$ lies $\bf{M}^2$ for all $t \in [0, T]$, so it remains to show that $\nu$ has c\`adl\`ag trajectories. Let us define $\varphi_k(x) = \lvert x \rvert^2 \land k$ and set $m_{k, R}(v) = \int_{\R} \bf{1}_{\lvert x \rvert \geq R} \varphi_k(x) \, \d v(x)$ for $k \geq 1$, $R \geq 1$, and $v \in \bf{M}^2$. Then we estimate
\begin{align*}
    \ev\sup_{t \in [0, T]} m_{k, R}(\nu_t) &= \ev\sup_{t \in [0, T]}\lim_{n \to \infty} m_{k, R}(\nu^n_t) \\
    &\leq \lim_{n \to \infty} \ev\sup_{t \in [0, T]} m_{k, R}(\nu^n_t) \\
    &\leq \sup_{n \geq 1} \ev \sup_{t \in [0, T]} \int_{\R} \bf{1}_{\lvert x \rvert \geq R} \lvert x \rvert^2 \, \d \nu^n_t(x).
\end{align*}
Now, we first take $k$ and then $R$ to $\infty$ to see that
\begin{equation*}
    \lim_{R \to \infty}\ev \sup_{t \in [0, T]} \int_{\R} \bf{1}_{\lvert x \rvert \geq R} \lvert x \rvert^2 \, \d \nu_t(x) = 0.
\end{equation*}
From this and the c\`adl\`ag trajectories of $\nu$ with respect to $d_1$, it easily follows that $\nu$ has c\`adl\`ag trajectories with respect to $d_2$.

Next, we prove that $(\nu^n)_n$ converges weakly to $\nu$ in $D_{\bf{M}^2}[0, T]$. Let $r \define [0, T] \to [0, T]$ be an arbitrary time change, i.e.\@ an increasing homeomorphism. We define $\mu^n_t = \nu^n_{r(t)} + (1 - \nu^n_{r(t)}(\R))\delta_0$ and, similarly, $\mu_t = \nu_t + (1 - \nu_t(\R))\delta_0$, so that by definition $d_2(\nu^n_{r(t)}, \nu_t) = W_2(\mu^n_t, \mu_t) + \bigl\lvert \nu^n_{r(t)}(\R) - \nu_t(\R) \bigr\rvert$. Our goal is to estimate $W_2(\mu^n_t, \mu_t)$ in terms of $W_1(\mu^n_t, \mu_t)$, using the uniform square-integrability from \eqref{eq:uniform_square}. We have for all couplings $\pi \in \P^2(\R^2)$ between $\mu^n_t$ and $\mu_t$ that
\begin{align*}
    W_2^2(\mu^n_t, \mu_t) &\leq \int_{\R^2} \lvert x - y\rvert^2 \, \d \pi(x, y) \\
    &\leq \int_{B_R \times B_R} \lvert x - y\rvert^2 \, \d \pi(x, y) + \int_{\R} \bf{1}_{\lvert x \rvert \geq R} \lvert x\rvert^2 \, \d (\mu^n_t + \mu_t)(x) \\
    &\leq 2R \int_{\R^2} \lvert x - y\rvert  \, \d \pi(x, y) + \int_{\R} \bf{1}_{\lvert x \rvert \geq R} \lvert x\rvert^2 \, \d \bigl(\nu^n_{r(t)} + \nu_t\bigr)(x).
\end{align*}
Minimising the right-hand side over the set of couplings $\pi \in \P^2(\R^2)$ between $\mu^n_t$ and $\mu_t$, and adding $\bigl\lvert \nu^n_{r(t)}(\R) - \nu_t(\R) \bigr\rvert^2$ on both sides implies that
\begin{align*}
    \frac{1}{2} d_2^2(\nu^n_{r(t)}, \nu_t) &\leq W_2^2(\mu^n_t, \mu_t) + \bigl\lvert \nu^n_{r(t)}(\R) - \nu_t(\R) \bigr\rvert^2 \\
    &\leq 2 R W_1(\mu^n_t, \mu_t) + \bigl\lvert \nu^n_{r(t)}(\R) - \nu_t(\R) \bigr\rvert^2 + \int_{\R} \bf{1}_{\lvert x \rvert \geq R} \lvert x\rvert^2 \, \d \bigl(\nu^n_t + \nu_t\bigr)(x) \\
    &\leq 2 R d_1(\nu^n_{r(t)}, \nu_t) + d_1^2(\nu^n_{r(t)}, \nu_t) + \int_{\R} \bf{1}_{\lvert x \rvert \geq R} \lvert x\rvert^2 \, \d \bigl(\nu^n_t + \nu_t\bigr)(x).
\end{align*}
Next, take the supremum over $t \in [0, T]$, add the term $\frac{1}{2}\lVert r - \id_{[0, T]}\rVert_{\infty}$ on both sides, minimise over all time changes $r \define [0, T] \to [0, T]$, and finally take expectations on both sides to obtain
\begin{align*}
    \frac{1}{4}\ev J_1^2(\nu^n, \nu) &\leq 2R \ev J_1(\nu^n, \nu) + \ev J_1^2(\nu^n, \nu) + \ev\sup_{t \in [0, T]}\int_{\R} \bf{1}_{\lvert x \rvert \geq R} \lvert x\rvert^2 \, \d \bigl(\nu^n_t + \nu_t\bigr)(x) \\
    &\leq 2R \ev J_1(\nu^n, \nu) + \ev J_1^2(\nu^n, \nu) + \sup_{m \geq 1}\ev\sup_{t \in [0, T]}\int_{\R} \bf{1}_{\lvert x \rvert \geq R} \lvert x\rvert^2 \, \d \bigl(\nu^m_t + \nu_t\bigr)(x), 
\end{align*}
where $J_1$ denotes the $J1$-distance on $D_{\bf{M}^p}[0, T]$. Now, we first choose $R$ large enough to make the third term on the right-hand side arbitrarily small and then take $n \to \infty$ to have the two remaining terms on the right-hand side vanish. This show that $\lim_{n \to \infty} \ev J1^2(\nu^n, \nu) = 0$, which in turn establishes the weak convergence of $\nu^n$ to $\nu$ on $D_{\bf{M}^2}[0, T]$.
\end{proof}

% \subsection{Continuity of Integral Operator}
\begin{lemma}[Continuity of integral operator] \label{lem:mgale_integral_converges}
Let $(E, \rho)$ be a complete separable metric space and $\Phi \define [0, T] \times E \times G \to \R$ be a measurable function, such that $(x, g) \mapsto \Phi(t, x, g)$ is continuous for every $t \in [0, T]$. Suppose that $\lvert \Phi(t, x, g)\rvert \leq C_{\Phi}(1 + \rho^p(x, x_{\ast}) + \lvert g \rvert^p)$ for some $1 \leq p \leq 2$ and a fixed $x_{\ast} \in E$. Then the map
\begin{equation*}
    [0, T] \times L^p([0, T]; E) \times \bb{M}_T^2(G) \ni (t, x, \mathfrak{g}) \mapsto \int_{[0, t] \times G} \Phi(s, x_s, g) \, \d \mathfrak{g}(s, g)
\end{equation*}
is continuous.
\end{lemma}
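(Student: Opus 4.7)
The plan is to fix a convergent sequence $(t^n, x^n, \mathfrak{g}^n) \to (t, x, \mathfrak{g})$ in $[0, T] \times L^p([0, T]; E) \times \bb{M}_T^2(G)$ and verify that the corresponding integrals converge. First I would reduce to the case $t^n = t$: the difference
\[
\biggl\lvert \int_{[0, t^n] \times G} \Phi(s, x^n_s, g) \, \d \mathfrak{g}^n - \int_{[0, t] \times G} \Phi(s, x^n_s, g) \, \d \mathfrak{g}^n \biggr\rvert
\]
is at most $\int_{I_n \times G} \lvert \Phi(s, x^n_s, g) \rvert \, \d \mathfrak{g}^n$ with $I_n = [t \land t^n, t \lor t^n]$ of vanishing Lebesgue length, and the growth bound reduces this to controlling $\int_{I_n} \rho^p(x^n_s, x_\ast) \, \d s$ and $\int_{I_n \times G} \lvert g \rvert^p \, \d \mathfrak{g}^n$ on shrinking time-windows. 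Both are uniformly small in $n$: the first by uniform integrability of $(\rho^p(x^n_\cdot, x_\ast))_n$ (which follows from $L^p$-convergence $x^n \to x$), and the second by splitting $\int_{A \times G} \lvert g \rvert^p \, \d \mathfrak{g}^n \leq M^p \lvert A \rvert + \int_{\{\lvert g \rvert > M\}} \lvert g \rvert^p \, \d \mathfrak{g}^n$ and using that $W_2$-convergence gives moment convergence, hence a uniform tail bound for $p \leq 2$.

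With $t$ fixed, the central obstruction is that $\Phi$ is only measurable in $s$. I would invoke the Scorza--Dragoni theorem: for every $\delta > 0$ there exists a compact $K_\delta \subset [0, T]$ with $\lvert [0, T] \setminus K_\delta \rvert < \delta$ such that $\Phi$ is jointly continuous on $K_\delta \times E \times G$. This restriction admits a continuous extension $\Psi_\delta \define [0, T] \times E \times G \to \R$ satisfying the same polynomial growth bound -- either by factoring out the envelope $1 + \rho^p(x, x_\ast) + \lvert g \rvert^p$ and applying Tietze's theorem to the bounded quotient, or by linear interpolation in $s$ over the countably many open components of $K_\delta^c$. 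Writing $\Phi = \Psi_\delta + R_\delta$ with $R_\delta$ supported in $K_\delta^c \times E \times G$ and $\lvert R_\delta \rvert \leq 2 C_\Phi(1 + \rho^p(x, x_\ast) + \lvert g \rvert^p)$, the remainder's contribution is bounded by $2 C_\Phi \bigl(\delta + \int_{K_\delta^c} \rho^p(x^n_s, x_\ast) \, \d s + \int_{K_\delta^c \times G} \lvert g \rvert^p \, \d \mathfrak{g}^n\bigr)$, which tends to zero uniformly in $n$ as $\delta \to 0$ by the same uniform integrability arguments.

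It then remains to show that $\int_{[0, t] \times G} \Psi_\delta(s, x^n_s, g) \, \d \mathfrak{g}^n \to \int_{[0, t] \times G} \Psi_\delta(s, x_s, g) \, \d \mathfrak{g}$. I would approximate $x$ in $L^p([0, T]; E)$ by step functions $x^{(k)} = \sum_i y_i^k \bf{1}_{[a_i^k, a_{i + 1}^k)}$ with finitely many pieces (possible since $E$ is Polish), and split the error by triangle inequality into two ``tail'' pieces $\int \lvert \Psi_\delta(s, x^n_s, g) - \Psi_\delta(s, x^{(k)}_s, g) \rvert \, \d \mathfrak{g}^n$ (and its analogue with $\mathfrak{g}$) and a ``main'' piece
\[
\int_{[0, T] \times G} \Psi_\delta(s, x^{(k)}_s, g) \bf{1}_{[0, t]}(s) \, \d \mathfrak{g}^n \to \int_{[0, T] \times G} \Psi_\delta(s, x^{(k)}_s, g) \bf{1}_{[0, t]}(s) \, \d \mathfrak{g}.
\]
The tail pieces are handled by truncating $\Psi_\delta$ to a bounded function using the growth envelope, then invoking joint continuity combined with a subsequence argument (extract a subsequence of $x^n$ converging pointwise a.e.\ in $E$) and the uniform integrability of the envelope. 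The main piece follows from the generalised Portmanteau theorem for $W_2$-convergence: the integrand is continuous in $(s, g)$ away from the finitely many times $\{a_i^k\} \cup \{t\}$, and $\mathfrak{g}$ charges no slice $\{s_0\} \times G$ since its time marginal is Lebesgue, while the polynomial growth is absorbed by the convergence $\int \lvert g \rvert^p \, \d \mathfrak{g}^n \to \int \lvert g \rvert^p \, \d \mathfrak{g}$.

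The principal hurdle is the joint management, in this last step, of the only-measurable dependence of $s \mapsto x_s$ in $L^p$ (which forces the use of subsequences to obtain pointwise a.e.\ convergence) with the polynomial growth of $\Psi_\delta$ in $(x, g)$ (which forces a truncation argument before joint continuity can be applied). The argument must carefully interleave the Scorza--Dragoni regularisation, the step-function approximation, the truncation, and the subsequence principle, ensuring at each stage that approximation errors remain uniformly small in $n$ thanks to $L^p$-convergence of $(x^n)$ and $W_p$-convergence of $(\mathfrak{g}^n)$ for $p \leq 2$.
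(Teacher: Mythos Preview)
Your approach is correct but takes a genuinely different route from the paper. The paper's decomposition is
\[
I_1 + I_2 + I_3 + I_4,
\]
where $I_1$ is your time-cutoff reduction, $I_2$ restricts to $\lvert g\rvert > R$ via uniform $p$-integrability of $(\mathfrak{g}^n)_n$, $I_3$ handles the change $x^n\to x$ on the compact $G_R=\{\lvert g\rvert\le R\}$ by the bound
\[
\int_{[0,t]\times G_R}\bigl\lvert \Phi(s,x^n_s,g)-\Phi(s,x_s,g)\bigr\rvert\,\d\mathfrak{g}^n \le \int_0^t \sup_{g\in G_R}\bigl\lvert \Phi(s,x^n_s,g)-\Phi(s,x_s,g)\bigr\rvert\,\d s
\]
followed by Vitali (continuity of $\Phi$ in $(x,g)$ at each fixed $s$ suffices---no Scorza--Dragoni), and $I_4$ handles the change $\mathfrak{g}^n\to\mathfrak{g}$ with $x$ fixed by truncating and citing \cite[Corollary~2.9]{jacod_mc_conv_1981}, which directly gives $\int\varphi\,\d\mathfrak{g}^n\to\int\varphi\,\d\mathfrak{g}$ for bounded Carath\'eodory $\varphi(s,g)$ when the time marginal is Lebesgue. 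Your Scorza--Dragoni regularisation plus step-function approximation of $x$ is precisely a hands-on substitute for that citation: it forces the integrand in the ``main piece'' to be $\mathfrak{g}$-a.s.\ jointly continuous, so Portmanteau applies. This buys self-containedness at the price of heavier bookkeeping; in particular, your first tail piece $\int\lvert\Psi_\delta(s,x^n_s,g)-\Psi_\delta(s,x^{(k)}_s,g)\rvert\,\d\mathfrak{g}^n$ has both integrand and measure varying with $n$, and to control it you will in effect have to pass through the $\sup_{g\in G_R}$ bound above---i.e.\ reproduce the paper's $I_3$ argument inside your tail analysis---to decouple from $\mathfrak{g}^n$ before sending $k\to\infty$. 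That works, and Scorza--Dragoni is then only needed for the main piece.
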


Here $G$ is a non-empty closed subset of $\R^d$ and $L^p([0, T]; E)$ is the space of (equivalence classes of) measurable maps $x \define [0, T] \to E$ for which $\int_0^T \rho^p(x_t, x_{\ast}) \, \d t < \infty$. We equip $L^p([0, T]; E)$ with the metric
\begin{equation*}
    E \times E \ni (x, y) \mapsto \biggl(\int_0^T \rho^p(x_t, y_t) \, \d t\biggr)^{1/p}.
\end{equation*}
Note that if $(x^n)_n$ converges in $L^p([0, T]; E)$, then the family $((\rho^p(x_t, x_{\ast}))_t)_n$ is uniformly integrable. The space $\bb{M}_T^2$ consists of all square-integrable measures on $[0, T] \times G$, which have the Lebesgue measure as their first marginal, topologised by the $2$-Wasserstein distance.

\begin{proof}[Proof of Lemma \ref{lem:mgale_integral_converges}]
Fix $(t, x, \mathfrak{g}) \in [0, T] \times L^p([0, T]; E) \times \bb{M}_T^2(G)$ and assume that the sequence $(t_n, x^n, \mathfrak{g}^n)_n$ converges to $(t, x, \mathfrak{g})$. For $R > 0$ let $G_R$ denote the elements $g$ of $G$ with $\lvert g \rvert \leq R$. Then we write
\begin{align*}
    &\int_{[0, t_n] \times G} \Phi(s, x^n_s, g) \, \d \mathfrak{g}^n(s, g) - \int_{[0, t] \times G} \Phi(s, x_s, g) \, \d \mathfrak{g}(s, g) \\
    &\leq \biggl\lvert \int_{([0, t] \Delta [0, t_n]) \times G} \Phi(s, x^n_s, g) \, \d \mathfrak{g}^n(s, g)\biggr\rvert + \biggl\lvert\int_{[0, t] \times G_R^c} \Phi(s, x^n_s, g) - \Phi(s, x_s, g) \, \d \mathfrak{g}^n(s, g)\biggr\rvert \\
    &\ \ \ + \biggl\lvert\int_{[0, t] \times G_R} \Phi(s, x^n_s, g) - \Phi(s, x_s, g) \, \d \mathfrak{g}^n(s, g)\biggr\rvert + \biggl\lvert\int_{[0, t] \times G} \Phi(s, x_s, g) \, \d(\mathfrak{g}^n - \mathfrak{g})(s, g)\biggr\rvert \\
    &= I_1 + I_2 + I_3 + I_4,
\end{align*}
where $A \Delta B = A \setminus B \cup B \setminus A$ for two sets $A$ and $B$. First, let us disintegrate the measure $\mathfrak{g}^n$ as $\d \mathfrak{g}(s, g) = \d \mathfrak{g}^n_s(g) \d s$ for a family of measures $(\mathfrak{g}^n_s)_{0 \leq s\leq T}$ on $G$. Then let $\epsilon > 0$. We will prove that we can choose $R > 0$ and $N \geq 1$, such that $I_1 + I_2 + I_3 + I_4 \leq \epsilon$ for all $n \geq N$. 

By assumption $\lvert \Phi(s, x^n_s, g)\rvert \leq C_{\Phi}(1 + \rho^p(x^n_s, x_{\ast}) + \lvert g \rvert^p)$. Since $((\rho^p(x_s, x_{\ast}))_s)_n$ is uniformly integrable by the discussion below the statement of the lemma and $(\mathfrak{g}^n)_n$ is uniformly $p$-integrable as a $W_2$-convergent sequence, we can choose $N \geq 1$, such that for all $n \geq N$,
\begin{align*}
    I_1 &\leq \int_{([0, t] \Delta [0, t_n]) \times G} C_{\Phi}(1 + \rho^p(x^n_s, x_{\ast}) + \lvert g \rvert^p) \, \d \mathfrak{g}^n(s, g) \\
    &= \int_{[0, t] \Delta [0, t_n]} C_{\Phi}(1 + \rho^p(x^n_s, x_{\ast})) \, \d s + \int_{([0, t] \Delta [0, t_n]) \times G} C_{\Phi} \lvert g \rvert^p \, \d \mathfrak{g}^n(s, g)
    \leq \frac{\epsilon}{4}.
\end{align*}
For the second expression $I_2$ we note that similarly to above
\begin{equation} \label{eq:uniform_bound_phi}
    \lvert \Phi(s, x^n_s, g) - \Phi(s, x_s, g) \rvert \leq C_{\Phi} \bigl(2 + \rho^p(x^n_s, x_{\ast}) + \rho^p(x_s, x_{\ast}) + 2\lvert g \rvert^p\bigr).
\end{equation}
Consequently, we have
\begin{equation*}
    I_2 \leq \int_{[0, t]} C_{\Phi}\bigl(1 + \rho^p(x^n_s, x_{\ast}) + \rho^p(x_s, x_{\ast})\bigr) \mathfrak{g}^n_s(G_R^c) \, \d s + \int_{[0, t] \times G_R^c} \lvert g \rvert^p \, \d \mathfrak{g}^n(s, g).
\end{equation*}
The second term on the right-hand side can be made arbitrarily small, say smaller than $\frac{\epsilon}{12}$, by choosing $R > 0$ large enough since $(\mathfrak{g}^n)_n$ is uniformly $p$-integrable. Estimating the first one is slightly more involved. Let us define $y^n_t = C_{\Phi}\bigl(1 + \rho^p(x^n_s, x_{\ast}) + \rho^p(x_s, x_{\ast})\bigr)$, so that $((y^n_t)_t)_n$ forms a uniformly integrable family. Further, for $\delta > 0$ we set $A_{n, R, \delta} = \{s \in [0, t] \define \mathfrak{g}^n_s(G^c_R) > \delta\}$. Then, replacing $R$ by $R \lor 1$ if necessary,
\begin{equation} \label{eq:unif_vanish_meas}
    \sup_{n \geq 1} \text{Leb}(A_{n, R, \delta}) \leq \sup_{n \geq 1}  \frac{1}{\delta} \mathfrak{g}^n([0, t] \times G_R^c).
\end{equation}
The right-hand side vanishes as $R$ tends to infinity by the uniform $p$-integrability of $(\mathfrak{g}^n)_n$. Now we write
\begin{align*}
    \int_{[0, t]} y^n_s \mathfrak{g}^n_s(G_R^c) \, \d s &= \int_{A_{n, R, \delta}} y^n_s \mathfrak{g}^n_s(G_R^c) \, \d s + \int_{A_{n, R, \delta}^c} y^n_s \mathfrak{g}^n_s(G_R^c) \, \d s \\
    &\leq \int_{A_{n, R, \delta}} y^n_s \, \d s + \delta \int_{[0, t]} y^n_s \, \d s.
\end{align*}
Now we first pick $\delta > 0$ small enough so that the second expression in the second line becomes smaller than $\frac{\epsilon}{12}$. Then, we enlarge $R$ sufficiently so that $\int_{A_{n, R, \delta}} y^n_s \, \d s \leq \frac{\epsilon}{12}$. This is possible since the family $((y^n_t)_t)_n$ is uniformly integrable and $\sup_{n \geq 1} \text{Leb}(A_{n, R, \delta}) \to 0$ as $R \to \infty$ by \eqref{eq:unif_vanish_meas}. Putting all these estimates together yields $I_2 \leq \frac{\epsilon}{4}$.

For the integrand in $I_3$ we have
\begin{equation*}
    \biggl\lvert \int_{G_R} \Phi(s, x^n_s, g) - \Phi(s, x_s, g) \, \d \mathfrak{g}^n_s(g) \biggr\rvert \leq \sup_{g \in G_R} \lvert \Phi(s, x^n_s, g) - \Phi(s, x_s, g) \rvert.
\end{equation*}
The expression on the right-hand side converges to zero in measure as $n \to \infty$ since $\Phi$ is continuous in its last two arguments, $G_R$ is compact, and $x^n \to x$ in $L^p([0, T]; E)$. Moreover, appealing to the estimate in Equation \eqref{eq:uniform_bound_phi} shows that $\bigl(\bigl(\sup_{g \in G_R} \lvert \Phi(s, x^n_s, g) - \Phi(s, x_s, g) \rvert\bigr)_s\bigr)_n$ is uniformly integrable, so that $I_3 \to 0$ as $n \to \infty$ by Vitali's convergence theorem. In particular, enlarging $N \geq 1$ if necessary, we have $I_3 \leq \frac{\epsilon}{4}$ for all $n \geq N$.

For the last expression $I_4$, we note that $[0, T] \times G \ni (s, g) \mapsto \varphi(s, g) = \Phi(s, x_s, g)$ is measurable in the first component and continuous in the second. Thus if we define $\varphi_k(s, g) = -k \lor \varphi(s, g) \land k$ for $k \geq 1$, it follows from \cite[Corollary 2.9]{jacod_mc_conv_1981} that 
\begin{equation} \label{eq:conv_for_bounded_mc}
    \int_{[0, t] \times G} \varphi_k(s, g) \, \d (\mathfrak{g}^n - \mathfrak{g})(s, g) \to 0
\end{equation}
as $n \to \infty$. On the other hand,
\begin{equation*}
    \biggl\lvert\int_{[0, t] \times G} \varphi_k(s, g) - \varphi(s, g) \, \d (\mathfrak{g}^n - \mathfrak{g})(s, g)\biggr\rvert \leq \int_{[0, t] \times G} \bf{1}_{\lvert \varphi(s, g)\rvert \geq k} \lvert \varphi(s, g)\rvert \, \d (\mathfrak{g}^n + \mathfrak{g})(s, g).
\end{equation*}
Using once more that $(\mathfrak{g}^n)_n$ is uniformly $p$-integrable and $\lvert \varphi(s, g) \rvert \leq C_{\Phi}(1 + \rho^p(x_s, x_{\ast}) + \lvert g \rvert^p)$, we conclude that the quantity on the right-hand side above vanishes uniformly in $n \geq 1$ as $k \to \infty$. In view of Equation \eqref{eq:conv_for_bounded_mc}, increasing $N \geq 1$ if necessary and choosing $k$ large enough, we obtain $I_4 \leq \frac{\epsilon}{4}$ for $n \geq N$. Hence, it holds that $I_1 + I_2 + I_3 + I_4 \leq \epsilon$ as required.
\end{proof}

\begin{corollary}[Continuity of integral operator] \label{cor:mgale_integral_converges}
Let $(E_i, \rho_i)$, $i = 1$,\ldots, $n$, be a finite collection of complete separable metric space and $\Phi \define [0, T] \times E_1 \times \dots \times E_n \times G \to \R$ be a measurable function, such that $(x_1, \dots, x_n, g) \mapsto \Phi(t, x_1, \dots, x_n, g)$ is continuous for every $t \in [0, T]$. Suppose that
\begin{equation*}
    \lvert \Phi(t, x_1, \dots, x_n, g)\rvert \leq C_{\Phi}\bigl(1 + \rho_1^p(x_1, x_{1, \ast}) + \dots + \rho_1^p(x_n, x_{n, \ast}) + \lvert g \rvert^p\bigr)
\end{equation*}
for some $1 \leq p \leq 2$ and fixed $x_{i, \ast} \in E_i$. Then the map $[0, T] \times D_{E_1}[0, T] \times \dots \times D_{E_n}[0, T] \times \bb{M}_T^2(G) \to \R$,
\begin{equation*}
    (t, x^1, \dots, x^n, \mathfrak{g}) \mapsto \int_{[0, t] \times G} \Phi(s, x^1_s, \dots, x^n_s, g) \, \d \mathfrak{g}(s, g)
\end{equation*}
is continuous.
\end{corollary}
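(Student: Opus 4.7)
My plan is to reduce Corollary \ref{cor:mgale_integral_converges} to Lemma \ref{lem:mgale_integral_converges} by bundling the $n$ c\`adl\`ag paths into a single path valued in the product metric space and, crucially, by transferring $J1$-convergence of each component into $L^p$-convergence in time of the resulting tuple.

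Concretely, I will set $E = E_1 \times \cdots \times E_n$ equipped with the metric
\begin{equation*}
    \rho\bigl((x_1,\ldots,x_n),(y_1,\ldots,y_n)\bigr) = \Bigl(\sum_{i=1}^n \rho_i^p(x_i,y_i)\Bigr)^{1/p}
\end{equation*}
and basepoint $x_{\ast} = (x_{1,\ast}, \ldots, x_{n,\ast})$, which makes $(E,\rho)$ a complete separable metric space. Viewed with its first $n$ spatial arguments collapsed into a single variable $x \in E$, the function $\Phi$ is continuous in $(x, g)$ and satisfies the growth bound $\lvert \Phi(t, x, g)\rvert \leq C_{\Phi}'(1 + \rho^p(x, x_{\ast}) + \lvert g \rvert^p)$ for some constant $C_{\Phi}'$ depending on $n$, so the hypotheses of Lemma \ref{lem:mgale_integral_converges} are in place for this product setup.

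The key step will then be to show that the diagonal embedding $\iota \define D_{E_1}[0,T] \times \cdots \times D_{E_n}[0,T] \to L^p([0,T]; E)$ defined by $\iota(x^1,\ldots,x^n)_t = (x^1_t, \ldots, x^n_t)$ is continuous. Fix sequences $x^{i,k} \to x^i$ in $D_{E_i}[0,T]$ as $k \to \infty$. By the standard characterisation of $J1$-convergence via a common time change, the real-valued functional $y \mapsto \sup_{t \in [0,T]} \rho_i(y_t, x_{i,\ast})$ is $J1$-continuous for each $i$, so that $\sup_{k, t, i} \rho_i(x^{i,k}_t, x_{i,\ast}) < \infty$. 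Moreover, $x^{i,k}_t \to x^i_t$ in $E_i$ at every continuity point of $x^i$, and the union of the at most countably many discontinuity times over $i = 1, \ldots, n$ is Lebesgue null. Combining pointwise a.e.\@ convergence with the uniform bound and invoking dominated convergence yields $\int_0^T \rho^p\bigl(\iota(x^{1,k}, \ldots, x^{n,k})_t, \iota(x^1, \ldots, x^n)_t\bigr) \, \d t \to 0$, which is precisely continuity of $\iota$.

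Finally, I will observe that the map of Corollary \ref{cor:mgale_integral_converges} factorises as the composition of $\id_{[0,T]} \times \iota \times \id_{\bb{M}_T^2(G)}$ with the continuous map of Lemma \ref{lem:mgale_integral_converges} applied to the product space $E$, so the composition is continuous. The main obstacle is the third paragraph: the transfer from $J1$-convergence of each component to $L^p$-convergence of the tuple in time. Once this transfer is in hand, everything else reduces to a direct invocation of Lemma \ref{lem:mgale_integral_converges}.
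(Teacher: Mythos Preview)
Your proposal is correct and follows essentially the same route as the paper: both reduce the corollary to Lemma \ref{lem:mgale_integral_converges} by observing that the map from $D_{E_1}[0,T] \times \cdots \times D_{E_n}[0,T]$ into $L^p([0,T]; E_1 \times \cdots \times E_n)$ is continuous. The paper states this continuity in one line without justification, whereas you spell out the argument via $J1$-continuity of the running supremum, pointwise convergence at continuity points, and dominated convergence.
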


\begin{proof}
Simply note that the inclusion $D_{E_1}[0, T] \times \dots \times D_{E_n}[0, T] \to L^2([0, T]; E_1 \times \dots \times E_n)$ is continuous and apply Lemma \ref{lem:mgale_integral_converges}.
\end{proof}

\subsection{Technical Results for Section \ref{sec:sin_limit}}

% \subsection{A Suitable Space for Flows of Subprobabilities}
\begin{lemma}[Continuity of subprobability mapping] \label{lem:space_for_sub}
Let $T > 0$. Then the function $\Phi \define \P^2(D[0, T] \times D_{[0, 1]}[0, T]) \to L^2([0, T]; \cal{M}^2_{\leq 1}(\R))$ defined by
\begin{equation*}
    \langle \Phi(\mu)_t, \varphi \rangle = \int_{D[0, T] \times D_{[0, 1]}[0, T]} y_t \varphi(x_t) \, \d \mu(x, y)
\end{equation*}
for $\varphi \in C_b(\R)$, $t \in [0, T]$, and $\mu \in \P^2(D[0, T] \times D_{[0, 1]}[0, T])$ is continuous.
\end{lemma}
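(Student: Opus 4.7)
The plan is to prove sequential continuity. Fix a sequence $(\mu_n)_n$ in $\P^1(D[0, T] \times D_{[0, 1]}[0, T])$ converging to $\mu$ and verify that $\int_0^T d_1^2(\Phi(\mu_n)_t, \Phi(\mu)_t) \, \d t \to 0$. The strategy involves three steps: transfer to almost sure convergence via Skorokhod's representation theorem, establish pointwise convergence of $\Phi(\mu_n)_t$ to $\Phi(\mu)_t$ in $(\bf{M}, d_1)$ for Lebesgue-almost every $t \in [0, T]$, and then upgrade to $L^2$-convergence through a dominated-convergence argument in $t$.

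For the first step, I enlarge the probability space if necessary to obtain random variables $(X^n, Y^n) \sim \mu_n$ and $(X, Y) \sim \mu$ with $(X^n, Y^n) \to (X, Y)$ almost surely in $D[0, T] \times D_{[0, 1]}[0, T]$. The $\P^1$-convergence additionally yields uniform integrability of the path-space norms, which I translate to the uniform bound $\sup_n \sup_{0 \leq t \leq T}\ev[|X^n_t|] < \infty$ together with uniform integrability of $|X^n_t|$ at each fixed $t$. For Lebesgue-almost every $t \in [0, T]$, the time $t$ is almost surely a continuity point of both $X$ and $Y$ (as c\`adl\`ag paths admit at most countably many jumps), so $(X^n_t, Y^n_t) \to (X_t, Y_t)$ almost surely by definition of Skorokhod convergence.

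At such a $t$, I check $d_1$-convergence $\Phi(\mu_n)_t \to \Phi(\mu)_t$ componentwise using the definition of the metric in \eqref{eq:subprob_metric}. The masses $\Phi(\mu_n)_t(\R) = \ev[Y^n_t]$ converge to $\ev[Y_t] = \Phi(\mu)_t(\R)$ by bounded convergence, since $Y^n_t \in [0, 1]$. For the Wasserstein contribution, the renormalised probability measures $\Phi(\mu_n)_t + (1 - \Phi(\mu_n)_t(\R))\delta_0$ converge weakly by applying bounded convergence to $\ev[Y^n_t \varphi(X^n_t)]$ for each $\varphi \in C_b(\R)$, while their first moments $\ev[Y^n_t |X^n_t|]$ converge to $\ev[Y_t |X_t|]$ by Vitali's convergence theorem, invoking the uniform integrability of $|X^n_t|$ noted above. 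Together these yield $W_1$-convergence of the renormalisations and hence $d_1(\Phi(\mu_n)_t, \Phi(\mu)_t) \to 0$.

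For the final step, the crude bound $d_1(\Phi(\mu_n)_t, 0) \leq \ev[Y^n_t |X^n_t|] + \ev[Y^n_t] \leq \sup_s \ev[|X^n_s|] + 1$ is uniform in $(n, t)$, so by the triangle inequality $d_1^2(\Phi(\mu_n)_t, \Phi(\mu)_t)$ is dominated by a finite constant, and dominated convergence on $[0, T]$ delivers the desired $L^2$-convergence. The main technical subtlety lies in translating the $\P^1$-condition on the path space into both uniform integrability of and a uniform bound on the pointwise evaluations $|X^n_t|$; beyond this, the proof reduces to a routine combination of Skorokhod representation, bounded and Vitali convergence at each fixed $t$, and dominated convergence in $t$.
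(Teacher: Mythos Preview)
Your proposal is correct and follows essentially the same approach as the paper: Skorokhod representation, pointwise $d_1$-convergence at Lebesgue-almost every $t$ via uniform integrability of $\lvert X^n\rvert^\ast_T$, and then dominated convergence in $t$. The only cosmetic difference is that the paper bounds $d_1(\Phi(\mu^n)_t, \Phi(\mu)_t)$ directly by the coupling estimate $\ev[\lvert X^n_t - X_t\rvert] + \ev[(1 + \lvert X_t\rvert)\lvert I^n_t - I_t\rvert]$ rather than separately verifying mass and Wasserstein convergence as you do.
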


Here $\P^2(D[0, T] \times D_{[0, 1]}[0, T])$ denotes the space of probability distributions $\mu$ on $D[0, T] \times D_{[0, 1]}[0, T]$ for which
\begin{equation*}
    \int_{D[0, T] \times D_{[0, 1]}[0, T]} (\lvert x \rvert^{\ast}_T)^2 + (\lvert y \rvert^{\ast}_T)^2 \, \d \mu(x, y) < \infty,
\end{equation*}
where $\lvert \cdot \rvert^{\ast}_T$ denotes the running supremum over $[0, T]$ of the absolute value of a path $[0, T] \to \R$. We equip $\P^2(D[0, T] \times D_{[0, 1]}[0, T])$ with the $1$-Wasserstein distance, where $D[0, T] \times D_{[0, 1]}[0, T]$ is endowed with the metric $d_{M1}$ defined in Equation (3.7) in \cite[Chapter 12]{whitt_stoch_limits_2002}, which induces the topology of convergence in $M1$ on $D[0, T] \times D_{[0, 1]}[0, T]$. 
% If $(Z^n)_n$ is a sequence of $(D[0, T] \times D_{[0, 1]}[0, T])$-valued random variables such that $\ev d_{M1}(Z^n, Z) \to 0$ for some $(D[0, T] \times D_{[0, 1]}[0, T])$-valued random variable $Z$, then there exists a cocountable set $\bb{T} \subset [0, T]$ such that $\ev\lvert Z^n_t - Z_t\rvert \to 0$ as $n \to \infty$ for all $t \in \bb{T}$. Indeed, let $\bb{T}$ denote the set of times $t \in [0, T]$ for which $\Delta Z_t = 0$ almost surely. Then $Z^n_t \to Z_t$ in probability for any $t \in \bb{T}$ by $M1$-convergence of $Z^n$ to $Z$ in probability. Now by the triangle inequality,
% \begin{equation*}
%     \lvert Z^n \rvert^{\ast}_T = d_{M1}(Z^n, 0) \leq d_{M1}(Z^n, Z) + d_{M1}(Z, 0).
% \end{equation*}
% Clearly, the right-hand side is uniformly integrable, which means that the family $(\lvert Z^n \rvert^{\ast}_T)_n$ is uniformly integrable, so that $\ev\lvert Z^n_t - Z_t\rvert \to 0$ by Vitali's convergence theorem.
Note that if a sequence $(\mu^n)_n$ in $\P^2(D[0, T] \times D_{[0, 1]}[0, T])$ is convergent in $\P^2(D[0, T] \times D_{[0, 1]}[0, T])$, then it is uniformly square-integrable in the sense that
\begin{equation*}
    \sup_{n \geq 1}\int_{D[0, T] \times D_{[0, 1]}[0, T]} \bf{1}_{\lvert x \rvert^{\ast}_T \geq K} (\lvert x \rvert^{\ast}_T)^2\, \d \mu^n(x, y) \to 0
\end{equation*}
as $K \to \infty$.

\begin{proof}[Proof of Lemma \ref{lem:space_for_sub}]
Assume that $(\mu^n)_n$ converges to $\mu$ in $\P^2(D[0, T] \times D_{[0, 1]}[0, T])$. Then by the Skorokhod representation theorem, we find random variables $(X^n, I^n)$, $n \geq1$, and $(X, I)$ such that $(X^n, I^n) \sim \mu^n$ as well as $(X, I) \sim \mu$ and $X^n \to X$ as well as $I^n \to I$ a.s.\@ in $M1$. Now, by the definition of convergence in $M1$, both $X^n_t$ and $I^n_t$ converge a.s.\@ to $X_t$ and $I_t$ for $t$ in the cocountable set $\bb{T}$ of a.s.\@ continuity points of $X$ and $I$. From the comment below the statement, we also know that $(\lvert X^n\rvert^{\ast}_T)_n$ is a uniformly square-integrable family. Thus, Vitali's convergence theorem implies that $\ev[\lvert X^n_t - X_t\rvert^2] \to 0$ and $\ev[(1 + \lvert X_t\rvert^2) \lvert I^n_t - I_t\rvert] \to 0$ for all $t \in \bb{T}$. But, now we estimate as in \eqref{eq:split_of_metric} that
\begin{align*}
    \int_0^T d_2^2(\Phi(\mu^n)_t, \Phi(\mu)_t) \, \d t \leq \int_0^T 3\ev[\lvert X^n_t - X_t\rvert^2] + 3\ev[(1 + \lvert X_t\rvert^2) \lvert I^n_t - I_t\rvert] \, \d t,
\end{align*}
and the right-hand side vanishes as $n \to \infty$ by our previous remarks.
\end{proof}

% \subsection{Convergence of Integral against Nondecreasing Process}
\begin{lemma}[Integral against nondecreasing processes] \label{lem:conv_int_non_decr}
Let $\cal{I}_0[-1, T]$ denote the space of nondecreasing c\`adl\`ag functions $\ell \define [-1, T] \to \R$ with $\ell_t = 0$ for $t \in [-1, 0)$ and equip $\cal{I}_0[-1, T]$ with the $M1$-topology. For nonnegative $\varphi \in C([0, T])$ define $I^{\varphi} \define \cal{I}_0[-1, T] \to D[0, T]$ by $I^{\varphi}_t(\ell) = \int_0^t \varphi(s) \, \d \ell$. Then $I^{\varphi}$ is continuous at any element $\ell \in \cal{I}_0[-1, T]$, which is continuous at $t = T$.
% Let $(X^n)_n$ be a sequence of nondecreasing c\`adl\`ag stochastic processes on $[0, T]$ such that $(X^n_T - X^n_0)_n$ is tight and which converges weakly to $X$ on $L^2([0, T])$. Suppose that $X$ is a.s.\@ continuous at the endpoints of the interval $[0, T]$. Then if $\varphi \in C([0, T)$ it holds that $(\int_0^{\cdot} \varphi(t) \, \d X^n_t)_n$ converges weakly to $\int_0^{\cdot} \varphi(t) \, \d X_t$ on $(D[0, T], M1)$.
\end{lemma}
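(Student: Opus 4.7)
The plan is to reduce the continuity of $I^{\varphi}$ at $\ell$ to two classical facts about nondecreasing c\`adl\`ag paths: first, $M1$-convergence of a monotone sequence in a Skorokhod space is equivalent to pointwise convergence on a dense subset of times including the two endpoints, by \cite[Theorem 12.5.1 (iv)]{whitt_stoch_limits_2002} (the same result the paper already invokes in the proof of Proposition \ref{eq:minimal_convergence}); second, $M1$-convergence of nondecreasing functions $\ell^n \to \ell$, together with convergence at the right endpoint, translates into weak convergence of the associated Lebesgue--Stieltjes measures.

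Concretely, I would fix a sequence $\ell^n \to \ell$ in $\cal{I}_0[-1, T]$ with $\ell$ continuous at $T$. By the first fact, $\ell^n_t \to \ell_t$ at every continuity point of $\ell$ in $[-1, T]$ and at both endpoints -- the endpoint $-1$ trivially (both sides vanish) and the endpoint $T$ by hypothesis. Denote by $\mu^n$ and $\mu$ the Lebesgue--Stieltjes measures on $[0, T]$ with c.d.f.'s $\ell^n|_{[0, T]}$ and $\ell|_{[0, T]}$; convergence of their c.d.f.'s on the cocountable (hence dense) set of continuity points of $\ell$, combined with convergence of the total masses $\ell^n_T \to \ell_T$, yields $\mu^n \Rightarrow \mu$ weakly on $[0, T]$. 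For any $t \in (0, T]$ at which $\ell$ is continuous (so $\mu(\{t\}) = 0$), the boundary of $[0, t]$ in the subspace topology of $[0, T]$ reduces to $\{t\}$, and the Portmanteau theorem applied to $\varphi \in C([0, T])$ gives
\begin{equation*}
I^{\varphi}(\ell^n)(t) = \int_{[0, t]} \varphi(s) \, d\mu^n(s) \longrightarrow \int_{[0, t]} \varphi(s) \, d\mu(s) = I^{\varphi}(\ell)(t);
\end{equation*}
at $t = T$ the hypothesis supplies the continuity needed to apply Portmanteau, and at $t = 0$ both sides vanish under the convention $\int_0^0 = 0$.

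Finally, since $\varphi \geq 0$, each $I^{\varphi}(\ell^n)$ and $I^{\varphi}(\ell)$ is a nondecreasing c\`adl\`ag function in $D[0, T]$, and I have established pointwise convergence on the dense subset $\{0, T\} \cup \{t \in (0, T) \mathpunct{:} \ell \text{ continuous at } t\}$ of $[0, T]$. A second invocation of \cite[Theorem 12.5.1 (iv)]{whitt_stoch_limits_2002}, this time in $D[0, T]$, then promotes this to $M1$-convergence $I^{\varphi}(\ell^n) \to I^{\varphi}(\ell)$, which is precisely the sequential continuity of $I^{\varphi}$ at $\ell$. I do not expect any serious obstacle; the only subtle point is the indispensable role of the continuity hypothesis at $T$, without which $\ell^n_T$ need not converge to $\ell_T$, the total masses of $\mu^n$ could fail to converge to those of $\mu$, and the Portmanteau argument at the right endpoint would break down.
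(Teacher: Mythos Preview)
The proposal is correct and follows essentially the same approach as the paper's own proof: both arguments invoke \cite[Theorem 12.5.1 (iv)]{whitt_stoch_limits_2002} to pass from $M1$-convergence of $(\ell^n)$ to pointwise convergence on the continuity set of $\ell$, reinterpret this as weak convergence of the associated measures, use Portmanteau (respectively the continuous mapping theorem) to obtain pointwise convergence of $I^{\varphi}(\ell^n)$ on the same dense set, and then re-apply \cite[Theorem 12.5.1 (iv)]{whitt_stoch_limits_2002} to conclude $M1$-convergence of $I^{\varphi}(\ell^n)$. The only cosmetic difference is that the paper first normalises $\ell^n$ and $\ell$ to have total mass one and views them as cumulative distribution functions of random variables, whereas you work directly with the Lebesgue--Stieltjes measures of possibly varying total mass; your route is marginally more direct.
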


\begin{proof}
Let $(\ell^n)_n$ be a sequence in $\cal{I}_0[-1, T]$ that converges to $\ell \in \cal{I}_0[-1, T]$ which is continuous at $t = T$. By Item (iv) of \cite[Theorem 12.5.1]{whitt_stoch_limits_2002} is is enough to show that $I^{\varphi}_t(\ell^n) \to I^{\varphi}_t(\ell)$ for each $t$ in a dense subset of $[-1, T]$ including $-1$ and $T$. First note that we may assume that $\ell^n_T = \ell_T = 1$, otherwise replace $\ell^n$ by $t \mapsto (\ell^n_t + t_+)/(\ell^n_T + T)$, where $t_+$ denotes the positive part of $t$, and modify $\ell$ in an analogous manner. Now, convergence at $t = -1$ is obvious since $I^{\varphi}_0(\ell^n) = 0 = I^{\varphi}_0(\ell)$. Next, let $\bb{T}$ be the set of continuity points of $\ell$, which is dense in $[-1, T]$ and contains $T$. We claim that $I^{\varphi}_t(\ell^n) \to I^{\varphi}_t(\ell)$ for all $t \in \bb{T}$. Since $\ell^n \to \ell$ in $D[-1, T]$, Item (iv) \cite[Theorem 12.5.1]{whitt_stoch_limits_2002} tells us that $\ell^n_t \to \ell_t$ for each $t \in \bb{T}$. But we can view $\ell^n$ and $\ell$ as cumulative distribution functions of random variables $Y^n$ and $Y$ with values in $[-1, T + 1]$. Then $\ell^n_t \to \ell_t$ for all $t \in \bb{T}$ is precisely equivalent to weak convergence of $Y^n$ to $Y$ by the Portmanteau theorem. Finally, we note that for any $t \in \bb{T}$, the map $s \mapsto \varphi(s) \bf{1}_{s < t}$ is $\L(Y)$-a.s.\@ continuous, so by the continuous mapping theorem it holds that $I^{\varphi}_t(\ell^n) = \ev[\varphi(Y^n) \bf{1}_{Y^n < t}] \to \ev[\varphi(Y) \bf{1}_{Y < t}] = I^{\varphi}_t(\ell)$.
\end{proof}

\section*{Acknowledgement}
This research has been supported by the EPSRC Centre for Doctoral Training in Mathematics of Random Systems: Analysis, Modelling and Simulation (EP/S023925/1). The authors are grateful to the anonymous referees for their very careful reading of the manuscript and their insightful comments, which considerably improved the article. PJ thanks Alda\"{i}r Petronilia for discussions on this material.

\printbibliography

\end{document}